\numberwithin{equation}{section}
\newtheorem{thm}{Theorem}[section]
\newtheorem{lem}{Lemma}[section]
\newtheorem{prop}{Proposition}[section]
\newtheorem{cor}{Corollary}[section]
\newcommand{\la}{\langle}
\newcommand{\ra}{\rangle}
\newcommand{\lam}{\lambda}
\newcommand{\morse}{\mathsf{morse}\,}
\newcommand{\sgn}{\mathrm{sign}}
\newcommand{\Hess}{\mathsf{Hess}}
\numberwithin{equation}{section}
\newcommand{\FF}{\mathbb{F}}
\newcommand{\rg}{\mathrm{g}}
\newcommand{\cV}{\mathcal{V}}
\newcommand{\cL}{\mathcal{L}}
\newcommand{\cF}{\mathcal{F}}
\newcommand{\cU}{\mathcal{U}}
\newcommand{\cK}{\mathcal{K}}
\newcommand{\N}{\mathbb{N}}
\newcommand{\R}{\mathbb{R}}
\newcommand{\B}{\mathbb{B}}
\newcommand{\kap}{\kappa}
\newcommand{\ff}{\mathbf{F}_{p,q}^V}
\newcommand{\ii}{\mathrm{i}}
\newcommand{\intS}{\mathring\Si}
\newcommand{\var}{\varepsilon}   
\renewcommand{\d}{\mathrm{d}}
\newcommand{\oH}{\mathring H^1}
\newcommand{\Si}{\Sigma}
\newcommand{\al}{\alpha}
\newcommand{\pa}{\partial}  
\newcommand\mytop[2]{\genfrac{}{}{0pt}{}{#1}{#2}}
\begin{document}
	
	\title{\bf 	
		A degree-counting formula  for  a Keller-Segel equation  on a  surface with boundary 
	}      
	\author{\medskip  {\sc Mohameden Ahmedou\footnote{M. Ahmedou is supported by DFG grant AH 156/2-1.}}, \ \
		{\sc Zhengni Hu\footnote{Corresponding author}}, \ \ 
		{\sc Heming Wang}
	}

	\date{\today}
	\maketitle
	
	\begin{abstract}
		In this paper, we consider the following  Keller-Segel equation on a compact Riemann surface $(\Si, g)$ with  smooth boundary $\pa \Si$:
		\[
		\left\{\begin{aligned}
			-\Delta_g u &= \rho  \Big(\frac{V e^u}{\int_{\Si} V e^u \, \d v_g} - \frac{1}{|\Si|_g}\Big) & & \text{ in }\, \mathring{\Si},\\
			\pa _{\nu_g} u &= 0 & & \text{ on }\, \pa  \Si,
		\end{aligned}\right.
		\]
		where $V$ is a  smooth positive    function on $\Si$ and   $\rho > 0$ is a parameter. 
		
		We perform a refined blow-up analysis of bubbling solutions and establish sharper a priori estimates around their concentration points. We then compute the Morse index  of these solutions and use it to derive a counting formula for the Leray-Schauder degree in the non-resonant case (i.e., $\rho \notin 4 \pi  \N$). Our approach follows the strategy suggested by Y. Y. Li \cite{Li1999} and later implemented by C.-S. Lin and C.-C. Chen  \cite{CL2003, CL2002} for the mean field equations on closed surfaces and employs techniques from Bahri's critical points at infinity \cite{Bahri:1989}.   
	\end{abstract}
	
	{\noindent \small \bf Key words:}  Keller-Segel equation, Leray-Schauder degree, Morse theory,  Surface with boundary.
	
	{\noindent \small \bf Mathematics Subject Classification (2020)}\quad   35J60 · 35J25 · 35R01

	\section{Introduction}
	Let $(\Si, g)$ be a compact Riemann surface with smooth boundary $\pa\Si$. 
	We consider the following mean field type  equations with homogeneous Neumann boundary conditions:
	\begin{equation}\label{maineq} \tag{$P_\rho$}
		\left\{\begin{aligned}
			&-\Delta_g u =\rho \Big( \frac{V e^u}{\int_{\Si } V e^ u \,\d v_g} -1\Big)  && \text{  in }\, \intS,\\
			&	\pa_{\nu_g} u=0 && \text{ on }\, \pa\Si ,
		\end{aligned}
		\right.
	\end{equation}
	where $(\Si,g)$ is normalized with the unit volume $|\Si|_g=1$,  $\intS=\Si \setminus\pa\Si $ denotes the interior of $\Si $, $\Delta_g$ is the Laplace-Beltrami operator, $\nu_g$ is the unit outward normal to the boundary $\pa\Si $, $\d v_g$ is the volume element in $(\Si , g)$,  $V:\Si \to\R$ is a smooth positive potential  function, and $\rho > 0$ is a parameter. 
	The boundary value problem   \eqref{maineq}   arises naturally in the study of stationary solutions to the Keller-Segel model for chemotaxis; see, for instance, \cite{ks1,ks2,ko1, ko2,kel, ns,ssr, hl} and the references therein. 
	
	The  problem \eqref{maineq} is invariant by adding a constant to its solutions. Hence, we may assume without loss of generality that any solution $u$  satisfies $\int_{\Si}u \, \d v_g=0$. Let  $H^1(\Si )$ be the Sobolev space of $L^2$-functions whose first-order weak derivatives are also in $L^2(\Sigma)$. We  seek  weak solutions in     a subspace of $H^1(\Si )$ with average $0$ over $\Sigma$, that is 
	\[ \oH:= \Big\{u\in H^1(\Si ): \int_{\Si } u \,\d v_g =0\Big\}, \]
	which is equipped with the norm $\|\cdot\|$ induced by the inner product 	$\la u, v \ra:=\int_{\Si} \la \nabla u, \nabla v\ra_g \, \d v_g$ for any $u,v\in\oH$,  where $\la \cdot, \cdot\ra_g$ denotes the inner product induced by the Riemannian metric $g$.

	
	The boundary value problem   \eqref{maineq}  exhibits a variational structure, and the corresponding Euler-Lagrange functional is given by
	\[
	J_{\rho }(u)= \frac{1}{2} \int_{\Si } |\nabla_g u|^2 \,\d v_g -\rho  \ln \Big(\int_{\Si } V e^u \,\d v_g\Big),\quad u\in  \oH.
	\]
	The analytical feature on the functional  $J_{\rho}$ depends strongly on the range of values taken by the parameter $\rho$. Indeed, when $ \rho < 4 \pi$, the functional $J_{\rho}$ is lower bounded and lower semi-continuous and hence achieves its minimum. In contrast, the functional is lower bounded for the value $\rho = 4 \pi$, but the minimum is not necessarily achieved. For $\rho > 4 \pi$, the functional is no longer bounded below or above.
	In recent work, J. Li, L. Sun, and Y. Yang \cite{LSY2023} obtained a compactness result for the problem \eqref{maineq}  via blow-up analysis. They also determined the set of critical parameters $4\pi \N$ such that the a priori bounds for solutions of \eqref{maineq} may fail, where  $\N$ denotes the set of positive integers. More recently, A. Ahmedou, T. Bartsch, and Z. Hu \cite{HBA20240829} have established  some sufficient conditions  for the existence of  blow-up solutions to \eqref{maineq} as $\rho$ approaches 
	$4\pi \N$, where blow-ups can occur both in the interior and on the boundary of the surface. See also their subsequent work  \cite{HBA20240831} for related results on steady states of the Keller-Segel system.
	
	Set
	\[
	\cK (u):=(-\Delta_g) ^{-1} \Big(\frac{V e^u}{\int_{\Si} V e^u\, \d v_g}-1\Big),
	\] 
	which maps $\oH$ into itself. The existence of a priori bound in  \cite{LSY2023} implies there is a large $R>0$ such that $(I-\rho \cK) (u) \neq 0$ for any $u \in \pa \mathcal{B}_R$, where $\mathcal{B}_R:=\{u \in \oH: \|u\| \leq R\}$. Therefore, the Leray-Schauder degree
	\begin{equation}\label{def:degree}
		d_\rho:=\operatorname{deg}(I-\rho \cK, \mathcal{B}_R, 0)
	\end{equation}
	is well-defined provided that $\rho \notin 4\pi\N$ and the main goal of this paper is to compute the degree formula in this case.
	
	The degree-counting formula for mean field equations has been extensively investigated in the context of domains in $ \mathbb{R}^2$ under  Dirichlet boundary conditions,  as well as for closed surfaces.   
	Based on the quantization estimates in  \cite{LS1994} (see also the earlier paper \cite{BM1991}),  Y. Y. Li  \cite{Li1999}  proved that    the Leray-Schauder degree $d_\rho$ is well-defined for all
	$\rho\notin8\pi\N$. Furthermore he    proved that the degree $d_{\rho}$ is constant  in each interval    $\rho\in(8 (m-1)\pi, 8m\pi)$ with $m\in \N$  and completely determined by the Euler characteristic of the surface (or of the domain, in the planar case). Since $d_{\rho} = 1$ if $\rho < 8 \pi $,  Y. Y. Li \cite{Li1999}  pointed out that to compute the Leray-Schauder degree $d_\rho$  for $\rho\in (4\pi (m-1), 4\pi m)$ with  $m\in \N$, it is enough to count the jump in the degree as the parameter  $\rho$ crosses each critical value  $8 \pi l$ for $l \in \{ 1, \ldots, m-1\}$. 
	Subsequently, 
	C.-C. Chen and C.-S.  Lin \cite{CL2003, CL2002}, implementing such a strategy,  established an explicit degree-counting formula  in terms of the Euler characteristic in the non-resonant case (i.e.,  $ \rho \notin 8\pi \N$) and generalized their result to singular mean field equations in \cite{CCL2015}. \par 
	A similar feature also appears in our setting. Since the degree $d_\rho$ is homotopy invariant,   $d_\rho$ is a constant in each interval $\rho \in ( 4\pi (m-1), 4\pi m)$ with   $m\in  \N$, independently of the choice of the potential function $V$  and the Riemannian metric $g$. More precisely,  we derive the following degree formula in the non-resonant case, i.e., $\rho \notin 4 \pi \N$:
	\begin{thm}\label{thm:main}
		Let  $\rho\in (4\pi (m-1), 4\pi m)$ with $m\in  \N$, and let $\ell := [\frac{m-1}{2}]$ be  the greatest integer less than or equal to  $\frac{m-1}{2}$. Then the Leray-Schauder degree for  \eqref{maineq} is given
		by 	
		\begin{equation}\label{eq:degree-nonreso}
			d_{\rho} = \left\{\begin{aligned}
				&1&&\text{ if }\,m=1,2,	\\	&\frac{1}{\ell !}  \prod_{i=1}^{\ell} (i- \chi(\Si))&&\text{ if }\,m\geq 3,
			\end{aligned}\right. 
		\end{equation}
		where $\chi(\Si)$ is the Euler characteristic of the surface $\Si$,  given by  
		\[
		\chi(\Si) = 2 - 2\mathrm{g} - \mathrm{b},
		\]
		with $\mathrm{g}$ denoting the genus of $\Si$, and $\mathrm{b}$  the number of connected components of its  boundary $\pa  \Si$.
	\end{thm}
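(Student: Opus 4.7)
The plan is to follow the Chen--Lin strategy \cite{CL2003,CL2002} developed for mean-field equations on closed surfaces, adapted to our boundary setting via Bahri's critical-points-at-infinity framework \cite{Bahri:1989}. The proof has three natural steps: first, reduce to computing the jump of $d_{\rho}$ across each critical level $\rho=4\pi m$, $m\in\N$; second, express each jump as a Morse-theoretic count over the possible bubble configurations; third, carry out the combinatorial summation using the topology of these configuration spaces. By the a priori bound of \cite{LSY2023} and the homotopy invariance of the Leray--Schauder degree, the map $\rho\mapsto d_{\rho}$ is constant on each interval $(4\pi(m-1),4\pi m)$; denote this common value by $d_{m}$. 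For $\rho\in(0,4\pi)$ the mean-zero Moser--Trudinger inequality ensures that $J_{\rho}$ is coercive, so $d_{1}=1$.

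For the inductive step, fix $m\geq 2$. The refined blow-up analysis of the paper shows that any sequence of solutions with parameter $\rho_{n}\nearrow 4\pi(m-1)$ concentrates at $k$ interior points (each of local mass $8\pi$) and $l$ boundary points (each of local mass $4\pi$) with $2k+l=m-1$. A Lyapunov--Schmidt reduction around standard Liouville bubbles (resp.\ half-bubbles) produces, for each such $(k,l)$, a family of approximate critical points parametrised by the unordered configuration space
\[
\mathcal C_{k,l}:=\bigl\{(x_{1},\ldots,x_{k},y_{1},\ldots,y_{l}): x_{i}\in\intS,\ y_{j}\in\pa\Si,\ \text{all distinct}\bigr\}\big/(S_{k}\times S_{l}).
\]
Writing $i(k,l)$ for the Morse index at infinity (computed in the Morse-index sections of the paper), a stratified deformation argument adapted from \cite{CL2003}, combined with Bahri's filtration of a neighbourhood of infinity by bubble number, yields
\[
d_{m}-d_{m-1}=\sum_{2k+l=m-1}(-1)^{i(k,l)}\chi(\mathcal C_{k,l}).
\]

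Since $\pa\Si$ is a disjoint union of circles, every unordered configuration space of $l\geq 1$ distinct boundary points has vanishing Euler characteristic, so $\chi(\mathcal C_{k,l})=0$ whenever $l\geq 1$. Only all-interior configurations $(k,0)$ with $m-1=2k$ contribute, and $\chi(\mathcal C_{k,0})=\binom{\chi(\Si)}{k}$ by the classical formula for the Euler characteristic of unordered point configurations on a surface. Combined with the parity $i(k,0)\equiv k\pmod 2$ extracted from the Morse-index computation, the recursion becomes $d_{2k+1}-d_{2k}=(-1)^{k}\binom{\chi(\Si)}{k}$ and $d_{2k+2}-d_{2k+1}=0$. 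Summing from $d_{1}=1$ and using the identity $(-1)^{k}\binom{\chi}{k}=\binom{k-1-\chi}{k}$, the partial sums telescope to $d_{m}=\binom{\ell-\chi(\Si)}{\ell}=\tfrac{1}{\ell!}\prod_{i=1}^{\ell}(i-\chi(\Si))$ with $\ell=[(m-1)/2]$, which is exactly \eqref{eq:degree-nonreso}.

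The principal obstacle is the Morse-index computation $i(k,l)$ and its translation into the jump formula. Boundary half-bubbles are parametrised by only $l$ real parameters (rather than $2l$) and interact with interior bubbles through the Neumann Green's and Robin functions; the sharp a priori expansions of $J_{\rho}$ around concentration points are needed both to determine the signs of the scale-direction eigenvalues and to justify the deformation lemma across the critical level $4\pi(m-1)$. Once these ingredients are in hand, the final combinatorial step reduces to elementary binomial identities and the fact that $\chi(S^{1})=0$.
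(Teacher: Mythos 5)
Your proposal is correct and follows essentially the same route as the paper: reduce to the degree jump across each resonant level, express that jump as a Poincar\'e--Hopf count over bubble configuration spaces (so the boundary factor $\chi(S^1)=0$ kills configurations with any boundary concentration, whence vanishing jumps at odd-index resonances), and telescope via the hockey-stick identity from $d_1=1$. Your repackaging of the jump as $\sum_{2k+l=m-1}(-1)^{i(k,l)}\chi(\mathcal C_{k,l})$ over unordered configuration spaces is exactly the paper's combination of Lemmas~\ref{lem:degee_reduced}, \ref{lem:deg_count_0}, and \ref{lem:deg_jump}, which sum $(-1)^{p+\morse(\ff,\xi^*)}$ over critical points of $\ff$ on the ordered space $\Xi_{p,q}$, divide by $p!q!$, and apply Poincar\'e--Hopf.
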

	As a direct consequence of Theorem \ref{thm:main}, we obtain the following existence result:
	\begin{cor}
		Let $\rho\notin 4\pi \N$. If one of the following conditions holds:
		\begin{itemize}
			\item $\rho\in (0, 4\pi)\cup (4\pi, 8\pi)$;
			\item  $\rho\in (4\pi(m-1), 4\pi m)$ for $m\in \N\setminus\{ 1,2\}$ and  $\chi(\Si)\leq 0$;
		\end{itemize}
		then the boundary value problem  \eqref{maineq} admits at least one solution.	
	\end{cor}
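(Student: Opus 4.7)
The plan is to derive the corollary directly from Theorem \ref{thm:main}: under each of the listed hypotheses I will check that the Leray-Schauder degree $d_\rho$ is nonzero, which by the standard solution property of the degree forces \eqref{maineq} to admit a (weak) solution.

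The first step is to recall that, by the compactness result of \cite{LSY2023}, the map $I - \rho\cK$ is nowhere vanishing on $\pa\mathcal{B}_R$ for any sufficiently large $R$ whenever $\rho \notin 4\pi\N$, so the degree $d_\rho$ defined in \eqref{def:degree} is well-defined. If $d_\rho \neq 0$, it produces a zero of $I - \rho\cK$ in $\oH$, which is exactly a weak solution of \eqref{maineq} normalized by $\int_\Si u \, \d v_g = 0$; conversely, every solution of \eqref{maineq} can be brought into $\oH$ by the additive translation invariance $u \mapsto u + c$ of the equation. It therefore suffices to verify $d_\rho \neq 0$ in each case via formula \eqref{eq:degree-nonreso}.

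For $\rho \in (0, 4\pi) \cup (4\pi, 8\pi)$ the index $m$ in Theorem \ref{thm:main} equals $1$ or $2$, so the first branch of \eqref{eq:degree-nonreso} immediately yields $d_\rho = 1$. For $\rho \in (4\pi(m-1), 4\pi m)$ with $m \geq 3$, one has $\ell = [(m-1)/2] \geq 1$ and
\[
d_\rho \;=\; \frac{1}{\ell!}\prod_{i=1}^{\ell}(i - \chi(\Si)).
\]
Under the hypothesis $\chi(\Si) \leq 0$, every factor obeys $i - \chi(\Si) \geq i \geq 1$, so the product is at least $\ell!$ and $d_\rho \geq 1$ (in fact $d_\rho = \binom{\ell - \chi(\Si)}{\ell}$ is a positive integer).

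Since the argument is essentially arithmetic on the explicit formula provided by Theorem \ref{thm:main}, there is no substantive technical obstacle; the only point deserving an explicit check is the correspondence between zeros of $I - \rho \cK$ in $\oH$ and solutions of the Neumann problem \eqref{maineq}, which follows from the translation invariance noted above together with the fact that $\cK$ takes values in $\oH$ by definition.
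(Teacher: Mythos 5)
Your proof is correct and follows exactly the route the paper intends (the paper simply calls the corollary a ``direct consequence of Theorem \ref{thm:main}'' without spelling out the arithmetic). You correctly reduce to checking $d_\rho \neq 0$ via \eqref{eq:degree-nonreso}, noting that for $m=1,2$ one has $d_\rho=1$ and for $m\geq 3$ with $\chi(\Si)\leq 0$ each factor $i-\chi(\Si)\geq i\geq 1$ forces $d_\rho\geq 1$.
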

	
	The method for deriving the degree counting formula \eqref{eq:degree-nonreso} is based on computing the degree jump of $d_{\rho}$ as the parameter $\rho$ crosses a critical value $4\pi l$, for $l = 1, \ldots, m$. This approach was first developed by Y. Y. Li \cite{Li1999} and then implemented   by C.-C. Chen and C.-S. Lin \cite{CL2002, CL2003}  to deal with  the mean field equation on closed surfaces  To carry it out, we perform a refined blow-up analysis of solutions as $\rho \uparrow 4\pi l$ and $\rho \downarrow 4\pi l$, combined with techniques from the theory of \emph{critical points at infinity}; see \cite{Bahri:1989, BC2, ABL2017, AB2023, AB2017}.
	Although we follow the general scheme introduced in \cite{CL2003}, new difficulties arise from the fact that the counting formula does not involve a single Kirchhoff-Routh function but rather a family of such functions. A further important and striking difference compared to the closed case is that the degree $d_{\rho}$ does not jump when crossing a critical value $4\pi m$ for an odd integer $m$.
	In addition, compared to \cite{CL2003}, some of our arguments are more concise, and we employ a different approximate solution.
	
	As a by-product of our jump-counting formula, we prove the existence of blow-up solutions to the problem \eqref{maineq} as $\rho \to 4\pi m$ with $m\in \N$. Namely, we have: 
	\begin{thm}\label{thm:blowup}
		Let  $\rho\in (4\pi (m-1), 4\pi m)$ with $m\in  \N$. If $\chi(\Si)< 0$ and $m$ is an even number,  then there exists a sequence of blow-up solutions to  \eqref{maineq} with $\rho\to  4\pi m$.  
	\end{thm}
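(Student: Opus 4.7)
The plan is to argue by contradiction, using Theorem~\ref{thm:main} to show that the Leray-Schauder degree $d_\rho$ must jump across $\rho = 4\pi m$ when $m$ is even and $\chi(\Sigma) < 0$; the absence of a blow-up sequence would force continuity of the degree and yield a contradiction.

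Suppose, toward a contradiction, that no sequence of solutions to \eqref{maineq} blows up as $\rho \to 4\pi m$. Combined with the a priori bounds of \cite{LSY2023} (which hold away from $4\pi\N$), this would imply that all solutions remain uniformly bounded in $\oH$ for $\rho$ ranging over a small closed interval $[4\pi m - \epsilon_0, 4\pi m + \epsilon_0]$. Fixing $R > 0$ large enough so that every such solution lies in the interior of $\mathcal{B}_R$, the family $\rho \mapsto I - \rho\cK$ is an admissible homotopy on $\mathcal{B}_R$ along this segment; by homotopy invariance of the Leray-Schauder degree,
\[
d_{4\pi m - \epsilon_0} = d_{4\pi m + \epsilon_0}.
\]

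On the other hand, the jump predicted by Theorem~\ref{thm:main} can be computed directly. Writing $m = 2k$ with $k \geq 1$, the interval $(4\pi(m-1), 4\pi m)$ corresponds to $\ell = k - 1$ in \eqref{eq:degree-nonreso}, while $(4\pi m, 4\pi(m+1))$ corresponds to $\ell = k$. A short algebraic manipulation of \eqref{eq:degree-nonreso} then yields
\[
d_{4\pi m + \epsilon} - d_{4\pi m - \epsilon} = \frac{-\chi(\Si)}{k!}\prod_{i=1}^{k-1}\bigl(i - \chi(\Si)\bigr),
\]
with the empty product equal to $1$ when $k = 1$; this formula also covers the transition across $8\pi$, where the left degree is $1$ and the right degree is $1 - \chi(\Si)$. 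Since $\chi(\Si) < 0$, every factor $i - \chi(\Si) \geq 1 - \chi(\Si)$ is strictly positive, and $-\chi(\Si) > 0$, so the right-hand side is strictly positive. This contradicts the equality derived from the no-blow-up hypothesis, completing the proof.

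Theorem~\ref{thm:blowup} is therefore essentially a corollary of the degree formula, and there is no significant analytic obstacle beyond invoking the compactness result of \cite{LSY2023}; the only real content is the algebraic verification of the jump identity above. For $m$ odd, the analogous calculation yields a vanishing jump because $[(m-1)/2]$ takes the same value on both sides of $4\pi m$, which explains the restriction to even $m$ in the hypothesis.
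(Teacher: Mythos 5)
Your proof is correct, and it takes a route that is genuinely different from the paper's. The paper proves this theorem \emph{before} Theorem~\ref{thm:main}, working directly from Lemma~\ref{lem:deg_jump}: the jump $d_m^+ - d_m^-$ is expressed there as a sum of local Leray--Schauder degrees $\deg(\nabla J_\rho, V(\mu,\xi^*), 0)$ over the neighborhoods at infinity; since the total is strictly positive for $m$ even and $\chi(\Sigma)<0$, some $V(\mu,\xi^*)$ carries nonzero degree and hence contains a solution, and letting $\mu\to 0$ produces the blow-up sequence with the additional information that it concentrates at a critical configuration $\xi^*$ of $\ff$. You instead assume Theorem~\ref{thm:main} and run the standard ``no blow-up $\Rightarrow$ uniform bound $\Rightarrow$ constant degree'' contradiction; your algebraic verification of the jump $\frac{-\chi(\Sigma)}{k!}\prod_{i=1}^{k-1}(i-\chi(\Sigma)) > 0$ for $m=2k$ is correct, including the boundary case $k=1$, and the step ``not uniformly bounded near $4\pi m$ $\Rightarrow$ blow-up'' is justified by the Brezis--Merle-type alternative in \cite{LSY2023}. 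Your argument is cleaner at the top level and does not invoke the Morse-theoretic genericity conditions (C1), (C2) explicitly, but it is logically downstream — in the paper the dependency runs the other way — and it yields only bare existence of a blow-up sequence, whereas the paper's construction also pins down the concentration points as critical points of $\ff$ with $2p+q=m$.
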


	Now, we consider the following mean field type equations with non-homogeneous Neumann boundary conditions:
	\begin{equation}\label{eq:Geo}
		\left\{
		\begin{aligned}
			-\Delta_g u &= \rho \left( \frac{K e^u}{\int_{\Sigma}K e^u \, \d v_g} - 1 \right) + \int_{\partial\Sigma} \psi \, \d s_g && \text{in } \, \intS, \\
			\partial_{\nu_g} u &= -\psi && \text{on } \partial\Sigma,
		\end{aligned}
		\right.
	\end{equation}
	where $\psi \in C^\infty(\Sigma)$ is a given function, $K \in C^\infty(\Sigma)$ is positive, and $\mathrm{d}s_g$ denotes the line element on $\partial\Sigma$.
	
	Let $K_g$ denote the Gaussian curvature of $\Sigma$, and $k_g$ the geodesic curvature of its boundary $\partial\Sigma$. A particular geometric instance of \eqref{eq:Geo} arises when we choose $\psi = 2k_g$, in which case the problem becomes:
	\begin{equation}\label{eq:Geo1}
		\left\{
		\begin{aligned}
			-\Delta_g v &= \rho \left( \frac{K e^v}{\int_{\Sigma} K e^v \, \d v_g} - 1 \right) + 2 \int_{\partial\Sigma} k_g \, \d s_g && \text{in } \, \intS, \\
			\partial_{\nu_g} v &= -2k_g && \text{on } \partial\Sigma,
		\end{aligned}
		\right.
	\end{equation}
	where $K \in C^\infty(\Sigma)$ is a given positive function. This boundary value problem is naturally connected to classical problems in differential geometry, specifically the prescription of Gaussian and geodesic curvatures. Indeed, it is related to the conformal deformation problem given by:
	\begin{equation*}
		\left\{
		\begin{aligned}
			-\Delta_g u &= -K_g + K e^{2u} && \text{in } \, \intS, \\
			\partial_{\nu_g} u &= -k_g && \text{on } \partial\Sigma,
		\end{aligned}
		\right.
	\end{equation*}
	where $K_g$ is constant, and the goal is to find a conformal metric with prescribed Gaussian curvature $K$ and boundary geodesic curvature $0$ on a Riemann surface $(\Sigma,g)$ with Gaussian curvature $K_g$ and boundary geodesic curvature $k_g$.
	
	To reduce the non-homogeneous Neumann condition in \eqref{eq:Geo1}, we introduce an auxiliary function $\phi \in C^\infty(\Sigma)$ solving
	\[
	\left\{
	\begin{aligned}
		-\Delta_g \phi &= -2 \int_{\partial \Sigma} k_g \, \d v_g && \text{in } \, \intS, \\
		\partial_{\nu_g} \phi &= 2k_g && \text{on } \partial\Sigma.
	\end{aligned}
	\right.
	\]
	Setting $u = v + \phi$ and $V = K e^{-\phi}$, the problem \eqref{eq:Geo1} is transformed into a standard mean field type equation of the form considered in Theorem~\ref{thm:main}. Notably, since $K > 0$ and $\phi$ is smooth, the associated potential $V$ remains strictly positive on $\Sigma$.
	
	As an immediate consequence, applying Theorem~\ref{thm:main} with the substitutions $u = v + \phi$ and $V = K e^{-\phi}$, we obtain the following corollary, which also yields the Leray--Schauder degree for the geometric boundary condition problem \eqref{eq:Geo1}:
	
	\begin{cor}
		Under the assumptions of Theorem~\ref{thm:main}, the Leray--Schauder degree associated to \eqref{eq:Geo} is given by:
		\[
		d_\rho =
		\left\{
		\begin{aligned}
			&	1 && \text{ if }\, m = 1, 2, \\
			&	\frac{1}{\ell!} \prod_{i=1}^{\ell} (i - \chi(\Si)) && \text{ if }\, m \geq 3,
		\end{aligned}
		\right.
		\]
		where $\ell := [\frac{m - 1}{2} ]$ and $\chi(\Si)$ is the Euler characteristic of $\Sigma$.
	\end{cor}


	
	The remainder of the paper is organized as follows. In Section \ref{sec:2}, we perform a refined blow-up analysis for blowing-up solutions; in particular, we establish sharper pointwise estimates near the blow-up points. In Section \ref{sec:3}, we introduce a useful parametrization of the so-called \emph{neighborhood at infinity}, perform a finite-dimensional reduction,  derive precise expansions for the gradient and energy functional in this neighborhood, and  establish  estimates for $\rho_k- 4\pi m $. The main results  are proved in Section \ref{sec:4}. In the appendix, we provide some basic estimates used in the proofs of our main results, as well as detailed proofs that are omitted in Section \ref{sec:3}.


	
	%
	\newpage
	\noindent{\bf  Acknowledgements} 
	
	The first-named author is grateful to Professor Yanyan Li for his valuable comments on an earlier version of this paper, which greatly improved its presentation, and for suggesting the inclusion of the geometric boundary condition. The second-named author gratefully acknowledges Professor Thomas Bartsch for many insightful discussions.



	
	\section*{Notation}
	The main notations used throughout the paper are listed as follows: 
	\begin{itemize}
		\item 	$B(a,b)=\int_0^1 t^{a-1}(1-t)^{b-1} \, \d t$ is the Beta function for $a,b>0$. 
		\item  $C>0$ is a generic constant which can vary from different occurrences or equations. 
		\item   $C(\al, \beta, \ldots)$ means that the positive constant $C$ depends on $\al, \beta, \ldots$.
		\item	$\morse(f,a)$ represents the Morse index of the function $f$ at point $a$. 
		\item $\mathbb{N}=\{1,2,\ldots\}$ denotes the set of positive integers and 	$\mathbb{N}_0=\mathbb{N}\cup \{0\}$. 
		\item $\mathbb{R}_+=(0,+\infty)$ and $\R_+^2=\{ y=(y_1, y_2)\in \R^2: y_2\geq 0 \}$.	
		\item  The index $\ii(\xi)=2$ if $\xi\in \intS$ and $1$ if $\xi\in \pa\Si$.
		
		\item The binomial coefficient is given by $\binom{n}{k} = \frac{n!}{k!(n-k)!}$  for  integers   $0 \leq k \leq n$.
	\end{itemize}

	\section{Bubbling off  analysis}\label{sec:2}
	In this section, we carry out a refined blow-up analysis of the solutions to  \eqref{maineq}. To this end, we begin with several preliminary results.
	
	\subsection{The isothermal coordinates and Green's functions}\label{sec:2.1}
	Every Riemann surface $(\Si,g)$ is locally conformally flat,  and the local coordinates in which $g$ is conformal to the Euclidean metric are called isothermal coordinates (see, e.g., \cite{C1955, HW1955, V1955, L1957}). In this paper, we use a family of isothermal coordinates based on those applied in \cite{EF2014, YZ2021}, which map open neighborhoods in $\Si $ onto open disks or half-disks in $\R^2$. 	We first define 
	\begin{gather*}
		\B_r(y^*):=\{y=(y_1, y_2) \in \R^2:(y_1-y_1^*)^2+(y_2-y_2^*)^2<r^2\}, \\ \B_r:=\B_r(0),\quad  \text { and } \quad 
		\B_r^{+}:=\B_r \cap\{y=(y_1, y_2) \in \R^2: y_2 \geq 0\}.	
	\end{gather*}
	\begin{itemize}
		\item If  $\xi\in\intS$,  there exists an isothermal coordinate  $(U(\xi), y_{\xi})$ such that $y_{\xi}$ maps an open neighborhood $U(\xi)$ around $\xi$ onto an open disk $B^{\xi}:=\B_{2r_{\xi}}$  and transforms $g$ to $e^{\hat{\varphi}_{\xi}}\langle\cdot,\cdot\rangle_{\R^2}$ with $y_{\xi}(\xi)=(0,0)$ and $\overline{U(\xi)} \subset \intS$.
		\item 	If $\xi \in \pa \Si$, there exists an isothermal coordinate  $(U(\xi), y_{\xi})$ around $\xi$ such that the image of $y_{\xi}$ is a half disk $\B_{2r_{\xi}}^+$ and $y_{\xi}(U(\xi) \cap \pa \Si)=\B_{2r_{\xi}}^+ \cap \pa\R^2_+$ and transforms $g$ to $e^{\hat{\varphi}_{\xi}}\langle\cdot,\cdot\rangle_{\R^2}$ with $y_{\xi}(\xi)=(0,0)$.  In this case, we take $B^{\xi}:=\B_{2r_{\xi}}^+$.
	\end{itemize}
	In both cases, the conformal factor $\hat{\varphi}_{\xi}: B^{\xi} \to \R$  satisfies
	$	-\Delta \hat{\varphi}_{\xi}(y)=2 K_g(y_{\xi}^{-1}(y)) e^{\hat{\varphi}_{\xi}(y)}$ for all $ y \in B^{\xi},$
	where $K_g$ is the Gaussian curvature  of $\Si$.
	Additionally, when $\xi\in \pa \Si$, the boundary condition
	$
	\frac{\pa }{\pa y_2}\hat{\varphi}_{\xi}(y)=-2k_g(y)e^{\frac{\hat{\varphi}_{\xi}(y)}{2} } \quad \text { for all }\, y \in \pa B^{\xi}\cap \pa \R_+^{2}
	$
	is satisfied, where	 $k_g$ denotes the geodesic curvature of the boundary $\pa \Si$ with respect to the Riemannian  metric $g$. 
	Both $y_{\xi}$ and $\hat{\varphi}_{\xi}$ are assumed to depend  smoothly on $\xi$  in a local manner as in \cite{EF2014,F2024}. In addition, $\hat{\varphi}_{\xi}$ satisfies   
	\begin{equation}\label{varphixi}
		\hat{\varphi}_{\xi}(0)=0 \quad \text{ and } \quad \nabla\hat{\varphi}_{\xi}(0)=\begin{cases}
			0 & \text{ if }\, \xi\in \intS,\\
			(0,-2k_g(\xi)) & \text{ if }\, \xi\in \pa  \Si.
		\end{cases}
	\end{equation}
	Under a conformal change of metric $\tilde{g}=e^{\varphi} g$,   the Laplace-Beltrami operator transforms as
	\begin{equation}\label{conformalchange}
		\Delta_{\tilde{g}}=e^{-\varphi} \Delta_g.
	\end{equation}
	Moreover, the isothermal coordinates reserve the Neumann boundary conditions, i.e.,  for any $x \in$ $U(\xi)\cap \pa \Si$, we have
	\[	
	(y_{\xi})_*(\nu_g(x))= -\exp\Big( -\frac{\hat{\varphi}_{\xi}(y)}2\Big) \frac {\pa} { \pa y_2 }\Big|_{	y=y_{\xi}(x)},
	\]where $\nu_g(x)$ is the outward unit normal vector at $x$ with respect to the metric $g$.

	For $\xi \in \Si$ and $0<r \leq 2 r_{\xi}$, we define 
	\[	B_r^{\xi}:=B^{\xi} \cap\{y \in \R^2:|y|<r\}, \quad   U_r(\xi):=y_{\xi}^{-1}(B_r^{\xi}),\]
	and 
	$\cU_r(\xi):=\{x\in\Si:d_g(x,\xi)<0\},$
	where $d_g(x, \xi)$ denotes the geodesic distance between $x$ and $\xi$ with respect to the Riemannian metric $g$.

	For any $\xi\in \Si $, we define the Green's function for the problem \eqref{maineq} by following equations:
	\begin{equation}\label{Green}
		\left\{\begin{aligned}
			&-\Delta_g G^g(x,\xi)  =\delta_{\xi} -\frac{1}{|\Si |_g}, &&  x\in \intS,\\
			&\pa_{ \nu_g } G^g(x,\xi) =0, && x\in \pa \Si, \\
			&\int_{\Si } G^g(x,\xi) \,\d v_g(x) =0,&&
		\end{aligned}
		\right.
	\end{equation}
	where $\delta_{\xi}$ is the Dirac mass on $\Si$ concentrated at $\xi$.	
	We state several important properties of Green's functions (see \cite[Lemma 6]{YZ2021}):
	\begin{itemize}
		\item[a.] there exists a unique Green's function $G^g( \cdot,\xi) \in L^1(\Si )$ solves \eqref{Green} 
		in the distributional sense;
		\item [b.] for any  distinct points $x, \xi \in {\Si }$, $G^g(x, \xi)=G^g(\xi, x)$;
		\item [c.] the representation formula holds, for any $h\in C^2(\Si )$, 
		\begin{equation}
			\label{Greenrepresentation}
			h(x)- \frac 1 {|\Si |_g}\int_{\Si } h\, \d v_g= -\int_{\Si }G^g(\cdot,x)\Delta_g h \, \d v_g+ \int_{\pa \Si } G^g(\cdot,x) \pa_{\nu_g}h\, \d s_g, 
		\end{equation}
		where $\d s_g$ is the line element in $\pa \Si;$
		\item [d.]for any  $x, \xi \in {\Si}$ with $x\neq \xi$, there exists a constant $C>0$ such that
		\[
		|G^g(x, \xi)| \leq C(1+|\ln  d_g(x, \xi)|), \quad|\nabla_{g, \xi} G^g(x, \xi)| \leq Cd^{-1}_g(x, \xi),
		\]
	\end{itemize}

	Let $\chi$ be a radial cut-off function in  $C^{\infty}(\R,[0,1])$ such that
	\begin{equation}
		\label{eq:cut_off} \chi(s)=\begin{cases}
			1 &\text{ if }\,|s|\leq 1,\\
			0 & \text{ if }\, |s|\geq 2 .
		\end{cases}
	\end{equation}	
	For any fixed $\xi\in\Si$, define  
	\[
	\chi_{\xi}(x)=\chi\Big(\frac{4|y_{\xi}(x)|}{r_{\xi}}\Big)\quad \text{ and }\quad \Gamma^g(x,\xi)= \frac{4\chi_{\xi}(x)}{\kap ( \xi)} \ln \frac{1}{|y_{\xi}(x)|},\quad x\in\Si,
	\]
	where $\kap(\xi)=8\pi$ if $\xi\in\intS$ and $\kap(\xi)=4\pi$  if $\xi\in\pa\Si$.
	Decomposing  the Green's function $G^g( x,\xi)$ as 
	\[
	G^g(x, \xi)=\Gamma^g(x,\xi)+H^g(x, \xi),\quad x\in\Si.
	\]
	We deduce from \eqref{Green} that $H^g(x, \xi)$ solves the following equations:
	\begin{equation}\label{Hfunction}
		\left\{\begin{aligned}
			&-\Delta_g H^g(x, \xi)  =- \frac{4}{\kap( \xi)}(\Delta_g \chi_{\xi}) \ln  |y_{\xi}|-\frac{8}{\kap( \xi)}\langle\nabla \chi_{\xi}, \nabla \ln  |y_{\xi}|\rangle_g-\frac{1}{|\Si |_g} &&\text { in }\, \intS,\\ 
			&\pa_{\nu_g} H^g(x, \xi)=\frac{4}{\kap( \xi)}(\pa_{\nu_g} \chi_{\xi}) \ln  |y_{\xi}|+\frac{4}{\kap( \xi)} \chi_{\xi} \pa_{\nu_g} \ln  |y_{\xi}| &&\text { on }\, \pa \Si  , \\
			&\int_{\Si } H^g(x, \xi) \,\d v_g  =\frac{4}{\kap( \xi)} \int_{\Si } \chi_{\xi} \ln  |y_{\xi}| \, \d v_g.&& 
		\end{aligned}\right.
	\end{equation}
	By the regularity of the elliptic equations (see \cite{N2014,HBA20240829,A1959,W2004}, for instance), there is a unique  solution $H^g(x, \xi)$, which solves \eqref{Hfunction} in $C^{2, \al}(\Si)$.  $H^g(x, \xi)$ is the regular part of $G^g(x, \xi)$ and $R^g(\xi):=H^g(\xi, \xi)$ is so-called \emph{Robin function} on $\Si$. It is easy to check that $H^g(x, \xi)$ is independent of the choice of the cut-off function $\chi$ and the local chart.

	\subsection{A Kirchhoff-Routh type  function}
	
	In this subsection, we introduce a family of  Kirchhoff-Routh type functions, whose critical points determine the locations of the concentration points of the blowing-up solutions.\par

	For any $p,q\in \mathbb{N}_0$ with $2p+q\in\N$,   we define the thick diagonal set as
	\[
	\FF_{p,q}(\Si):=\{\xi=(\xi_1,\ldots,\xi_{p+q})\in (\intS)^p \times (\pa\Si)^{q}: \text{$\xi_i=\xi_j$ for some $ i\neq j$}\}
	\]
	and  
	\begin{equation}
		\label{def:Xi_pq} \Xi_{p,q}:=\intS^p \times (\pa\Si )^{q}\setminus \FF_{p,q}(\Si).
	\end{equation}
	In the sequel, all expressions involving $ \sum_i $, $ \max_i $, $ \min_i $, $ \bigcup_i $, etc., are understood to be taken over the index range $ i = 1, \ldots, p+q $. Similarly, $ \sum_{j \ne i} $ denotes summation over $ j $ in $ \{1, \ldots, p+q\} \setminus \{i\} $.
	For any given  $\xi:=(\xi_1,\ldots,\xi_{p+q})\in \Xi_{p,q}$,   we define  a family functions as follows: 
	\begin{equation}\label{eq:def_F}
		\cF_{\xi}^i(x):= V(x) \exp\Big\{\kap( \xi_i)H^g(x,\xi_i)+ \sum_{j\neq i} \kap( \xi_j) G^g(x,\xi_j)\Big\},
	\end{equation}
	for each $i=1,\ldots,p+q$ and the domain of $\cF_{\xi}^i$ is given by $\intS$ for $i=1,\ldots, p$ and  by $\partial\Sigma$ for $i=p+1,\ldots, p+q.$ Here,  $G^g(\cdot,\cdot)$ is the Green's function associated with problem \eqref{maineq}, and $H^g(\cdot,\cdot)$ is its regular part, as introduced in Section \ref{sec:2.1}. Furthermore, we define a Kirchhoff-Routh type function   $\ff:\Xi_{p,q}\to \R $  by  
	\begin{equation}\label{eq:reduce_fun_0}
		\ff(\xi):=\sum_{i} 2\kap( \xi_i)\ln  V(\xi_i) +\sum_{i} \kap^2( \xi_i) R^g(\xi_i) + \sum_{i}\sum_{j\neq i} \kap( \xi_i)\kap( \xi_j) G^g(\xi_i,\xi_j),
	\end{equation}
	where  $R^g(\xi_i):=H^g(\xi_i,\xi_i)$ is the Robin function; see also Section \ref{sec:2.1}.
	For $\xi\in \Xi_{p,q}$, we define
	\begin{equation}
		\label{def:l_xi}
		\cL(\xi):=\left\{\begin{aligned}
			&		\cL_1(\xi) & &\text{ if }\, q\neq 0,\\
			&\cL_2(\xi) & & \text{ if }\, q=0,
		\end{aligned}\right.
	\end{equation} 
	where
	\begin{equation}\label{eq:def_cL_1}
		\cL_1(\xi
		):=- \sum_{i=p+1}^{p+q} (\pa_{\nu_g}   \ln V +2k_g) \sqrt{\cF_{\xi}^i}\Big|_{\xi_i}	
	\end{equation} 
	and 
	\begin{equation}
		\label{eq:def_cL_2}
		\cL_2(\xi):= \sum_i 
		\kap(\xi_i)(\Delta_g\cF^i_{\xi}
		- 2K_g \cF^i_{\xi})\Big|_{\xi_i}.  
	\end{equation}
	For any $p,q\in \mathbb{N}_0$ with $2p+q\in\N$, we also define 
	\begin{equation}\label{eq:def_V-}
		\cV_{p,q}^-:=\{ \xi\in \Xi_{p,q}:  \text{$\xi$ is a critical point of $\ff$ with $\cL(\xi)<0$}\}
	\end{equation}
	and 
	\begin{equation}\label{eq:def_V+}
		\cV_{p,q}^+:=\{ \xi\in \Xi_{p,q}: \text{$\xi$ is a critical point of $\ff$ with $\cL(\xi)>0$}\}.
	\end{equation}
	
	Now we introduce the following assumptions on  $\ff$ and $\cL(\xi)$: 
	\begin{itemize}
		\item[(C1)]\label{item:C1} $\ff$ is a Morse function  with non-empty critical point set;
		\item[(C2)] \label{item:C2} $\cL(\xi)\neq 0$ for  any critical point $\xi$ of $\ff$. 
	\end{itemize}
	By perturbing the potential function $V$ or the Riemannian  metric $g$, conditions \hyperref[item:C1]{(C1)} and \hyperref[item:C2]{(C2)} can be satisfied by a transversality theorem; see \cite{Bartsch2019, HB2024, Ahmedou2023, LLWY2018} and references therein for details.
	


	\subsection{Projected bubbles}
	
	Bubbling-off solutions of $(P_{\rho})$ converge, after scaling, to specific entire solutions known as bubbles. Away from the blow-up points, these solutions converge to a linear combination of Green's functions with poles at the blow-up points. Hence, to obtain an accurate approximation of these solutions, one needs to use modified bubbles that incorporate their global behavior. \par 
	It is well known that  for any $(\lam, z)\in (0,+\infty)\times  \R^2 $, 
	\begin{equation}\label{bubble}
		\tilde{\delta}_{\lam,z}(y)=\ln  \frac{ 8\lam^2 }{(1 +\lam^2|y-z|^2)^2},\quad y\in \R^2
	\end{equation} 
	are  all the solutions to  the following Liouville equation: 
	\[
	\left\{\begin{aligned}
		&-\Delta u= e^u  &&\text{  in }\, \R^2,\\
		&	\int_{\R^2}  e^u <\infty. &&
	\end{aligned}\right.
	\]
	Moreover,  $\int_{\R^2}e^{\tilde{\delta}_{\lam,z}}\, \d y=8\pi $ for all  $(\lam, z)\in (0,+\infty)\times  \R^2 $;   see \cite{CL1991}.

	Using the isothermal coordinate $(y_{\xi}, U(\xi))$, we  pull-back $\tilde{\delta}_{\lam,0}$ to a function defined on the Riemann surface $\Si$, that is 
	\begin{equation}
		\label{def:bubble}\delta_{\lam,\xi}(x)=	\left\{\begin{aligned}
			&\tilde{\delta}_{\lam,0}(y_{\xi}(x)) &&\text{ if }\, x\in U(\xi),\\
			&	0 &&  \text{ if }\, x\in \Si\setminus U(\xi).
		\end{aligned}\right.
	\end{equation}
	From  \eqref{conformalchange} , it follows that $\delta_{\lam,\xi}$ satisfies 
	\begin{equation}\label{deltachange}
		-\Delta_g  \delta_{\lam,\xi}=e^{ \delta_{\lam,\xi}-\varphi_\xi}\quad \text{ in }\, U(\xi),
	\end{equation}
	where 	$\varphi_{\xi}=\hat{\varphi}_{\xi}\circ y_{\xi}^{-1}$. 
	For any $f\in L^1(\Si)$, we denote 
	$
	\overline{f}=\frac 1 {|\Si|_g}\int_\Si f \,\d v_g.
	$
	We  introduce a projection operator $P$  that maps $ \delta_{\lam,\xi}$ into the space $\oH$. More precisely, 
	$P  \delta_{\lam,\xi}$ is defined as the solution to the following  problem: 
	\begin{equation}\label{projectedbubble}
		\left\{\begin{aligned}
			&	-\Delta_g P  \delta_{\lam,\xi}=    \chi_{\xi} e^{\delta_{\lam,\xi}}- \overline{  \chi_{\xi} e^{\delta_{\lam,\xi}}  } &&  \text{ in }\, \intS,\\
			&	\pa_{ \nu_g } P  \delta_{\lam,\xi}=0 && \text{ on }\, \pa \Si , \\
			&\int_{\Si } P  \delta_{\lam,\xi} \,\d v_g=0. &&
		\end{aligned}\right.
	\end{equation}
	By classical elliptic  theory,  problem \eqref{projectedbubble}  admits a unique solution $ P\delta_{\lam,\xi}$ in $\oH$, and standard regularity theory implies that  $ P\delta_{\lam,\xi}\in C^{\infty}({\Si })$; see, e.g., \cite{A1959,YZ2021,N2014}. We refer to the function  $ P\delta_{\lam,\xi}$   as a \emph{projected bubble. }

	\subsection{Sharper estimates for blow-up solutions}
	Let $\{u_k\}\subset \oH$ be a sequence of blow-up solutions to  \eqref{maineq} with $\rho=\rho_k$, where   $\lim_{k\to\infty}\rho_k=  \rho_*:=4\pi m$ for some  $m\in\N$.
	The blow-up analysis in \cite[Lemma 3.1]{LSY2023} shows that $\{u_k\}$ blows up at  distinct points  $(\xi_1,\ldots,\xi_{p+q})\in \Xi_{p,q}$ for some $p,q\in \N_0$ satisfying $2p+q=m$.  Moreover, we have  
	\[
	\rho_k V e^{u_k} \, \d v_g \stackrel{*}{\rightharpoonup}  \sum_i \kap(\xi_i) \delta_{\xi_i}\quad\text{(weak-$*$ convergence)},
	\]
	and 
	$\delta_{\xi_i}$ denotes the Dirac measure  at $\xi_i\in\Si$.

	Let  
	\begin{equation}\label{def:w_k}
		w_k:=u_k-\ln \Big(\int_\Si  V u_k \, \d v_g\Big).
	\end{equation}A direct computation shows that $w_k$ solves the following equation
	\[
	\left\{
	\begin{aligned}
		&		-\Delta_g w_k = \rho_k( V e^{w_k} -1) && \text{ in }\, \intS,\\
		&		\pa _{\nu_g}  w_k=0 && \text{ on }\, \pa \Si,
	\end{aligned}
	\right.
	\]
	and satisfies   $\int_\Si V e^{w_k}\, \d v_g=1$. By applying the  blow-up analysis in \cite[Lemma 3.1]{LSY2023}, we  conclude that $w_k(x)\to -\infty$ uniformly in any compact set of $\Si\setminus\{\xi_1,\ldots,\xi_{p+q}\}$, and   the local maximum satisfies
	\begin{equation}\label{def:h_kj}
		h_{k,j}:= \sup_{ U_{r_0}(\xi_j)} w_k =w_k(\xi_{k,j})\to +\infty
	\end{equation}
	as $k\to +\infty$, where  $r_0>0$ is sufficiently small,  and $\xi_{k,j}$ denotes the local maximum point of $w_k$  satisfying $\xi_{k,j}\to  \xi_j$ as $k\to +\infty$.   
	Let  $
	h_{k}=\max_{x\in \Si} w_k(x)$ for $ k\in  \N.$
	Then, for $k\in  \N$ sufficiently large, it follows that $h_{k}=\max_{j=1,\ldots, p+q} h_{k,j}$. 
	
	Using the method of moving planes and a $\sup +  \inf$ inequality from H.  Brezis, Y.Y. Li and I. Shafrir\cite{BLS1993},  Y.Y. Li\cite{Li1999} derived a uniform estimate for the behavior of $ w_k $ near the blow-up point $ \xi_j $ on closed surfaces or domains with Dirichlet boundary conditions. In our setting, when $ \xi_j \in \intS $, the analysis coincides with the Dirichlet case. When $ \xi_j \in \pa  \Si $, similar treatment is possible by employing isothermal coordinates and the reflection method, following the approaches in \cite{LSY2023, YZ2021}. Thus, we have the following lemma as in \cite[Theorem 2.1]{CL2002}. 
	
	\begin{lem}\label{lem:w_k_1}
		Let $r_0>0$ be sufficiently small. Then there exists a constant $C>0$, independent of $j$ and  $k$,  such that 
		\[	
		\Big|w_k(x) - \ln  \frac{ e^{h_{k,j}}}{( 1+ \frac{ \rho_k V(\xi_{k,j}) e^{h_{k,j}}}{8} |y_{\xi_{j}}(x)|^2)^2} \Big|\leq C
		\]
		for all  $x\in U_{r_0}(\xi_j)$,  where $ (y_{\xi_j}, U_{r_0}(\xi_j))$ is an isothermal coordinate around $\xi_{j}$ with $y_{\xi_{j}}(\xi_{j})=0$. 
	\end{lem}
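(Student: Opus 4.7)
The plan is to adapt the argument of \cite[Theorem~2.1]{CL2002}, which handles interior blow-up on closed surfaces, to the present setting where the blow-up point $\xi_j$ may lie on $\pa\Si$. Two ingredients drive the proof: the blow-up classification of \cite[Lemma~3.1]{LSY2023} on the inner scale $\lambda_k^{-1}$, and a $\sup+\inf$ type inequality in the spirit of Brezis-Li-Shafrir \cite{BLS1993} on the outer annulus. A reflection trick across $\pa\R_+^2$ unifies the interior and boundary cases.

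First I pass to the isothermal coordinate $y_{\xi_j}$ and set $\tilde w_k:=w_k\circ y_{\xi_j}^{-1}$ on $B^{\xi_j}$. By \eqref{conformalchange}, $\tilde w_k$ satisfies
\[
-\Delta \tilde w_k=\rho_k V_k^{\sharp}e^{\tilde w_k}-\rho_k e^{\hat\varphi_{\xi_j}},\qquad V_k^{\sharp}:=(V\circ y_{\xi_j}^{-1})\,e^{\hat\varphi_{\xi_j}},
\]
together with the Neumann condition $\pa_{y_2}\tilde w_k=0$ on $\pa\R_+^2\cap B^{\xi_j}$ when $\xi_j\in\pa\Si$. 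In the boundary case, I exploit the Neumann condition to extend $\tilde w_k$ by even reflection across $\{y_2=0\}$, producing a Liouville-type equation on the full disk $\B_{2r_{\xi_j}}$ whose effective potential is positive and Lipschitz across $\{y_2=0\}$; this regularity suffices for both the moving-plane scheme and the $\sup+\inf$ inequality. This reduces both cases to an interior blow-up problem on a disk.

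On the inner scale I set $\lambda_k^2:=\rho_k V(\xi_{k,j})e^{h_{k,j}}/8$ and
\[
v_k(z):=\tilde w_k(\lambda_k^{-1}z)-h_{k,j},
\]
using $|y_{\xi_j}(\xi_{k,j})|=O(\lambda_k^{-1})$ (a standard by-product of the blow-up analysis) to absorb the recentering into an $O(1)$ error. By \eqref{varphixi} we have $\hat\varphi_{\xi_j}(0)=0$, so a direct computation shows that the equation for $v_k$ has right-hand side converging in $C^0_{\mathrm{loc}}(\R^2)$ to $8\,e^{v_k}$. Combined with the classification of \cite[Lemma~3.1]{LSY2023} and the normalization $v_k(0)=0$, this yields $v_k\to U$ in $C^2_{\mathrm{loc}}(\R^2)$ with $U(z)=-2\ln(1+|z|^2)$. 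Since $h_{k,j}+U(\lambda_k y)$ is precisely the logarithmic profile in the lemma (viewed as a function of $y=y_{\xi_j}(x)$), the estimate follows on the region $|y_{\xi_j}(x)|\leq R_k\lambda_k^{-1}$ for some $R_k\to\infty$.

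The principal step --- and the genuine obstacle --- is to promote this inner estimate to a uniform bound on all of $U_{r_0}(\xi_j)$. I would cover the outer annulus $\{R_k\lambda_k^{-1}\leq|y|\leq r_0\}$ by dyadic shells $A_s:=\{2^s R_k\lambda_k^{-1}\leq|y|\leq 2^{s+1}R_k\lambda_k^{-1}\}$ and, on each shell, apply the $\sup+\inf$ inequality of \cite{BLS1993} to $\tilde w_k$ rescaled by $2^s R_k\lambda_k^{-1}$. The uniform mass bound $\int_{B^{\xi_j}}V_k^{\sharp}e^{\tilde w_k}\leq C$ ensures that the hypotheses of the inequality hold with constants independent of both $k$ and $s$, producing on each $A_s$ a $C$-independent two-sided comparison between $\tilde w_k(y)$ and $h_{k,j}-2\ln(1+\lambda_k^2|y|^2)$ up to an $O(1)$ additive error. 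Summing over the $O(\ln\lambda_k)$ shells yields the claim. The most delicate technical point is verifying that the constants in the $\sup+\inf$ inequality remain stable under the even reflection across $\{y_2=0\}$, where the reflected potential is only Lipschitz; this is overcome by a standard mollification argument in the spirit of \cite{YZ2021, LSY2023}.
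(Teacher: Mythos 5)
Your overall reduction (isothermal coordinates, even reflection across $\pa\R^2_+$ in the boundary case) matches the paper's, but the paper then makes a short decisive move that your proposal omits and which makes the rest of your plan unnecessary. After reflecting, the paper subtracts a uniformly bounded Dirichlet solution $\tilde\zeta_{k,j}$ of $-\Delta\tilde\zeta_{k,j}=\tilde f_{k,j}$ in $\B_{r_0}$ with $\tilde\zeta_{k,j}=0$ on $\pa\B_{r_0}$, where $\tilde f_{k,j}$ is $\rho_k e^{\hat\varphi_{\xi_j}}$ extended by even reflection. Then $\tilde u_k:=\tilde w_{k,j}+\tilde\zeta_{k,j}$ solves the \emph{pure} Liouville equation $-\Delta\tilde u_k=\rho_k\tilde V_k\,e^{\tilde u_k}$ on the full disk, with $\tilde V_k$ uniformly bounded above and below because $\|\tilde\zeta_{k,j}\|_{L^\infty}\le C$; the lemma is then an immediate application of \cite[Theorem~0.3]{Li1999}, which is exactly the one-bubble uniform profile estimate for such equations. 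No outer-annulus analysis is required.

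Your proposal instead leaves the constant source term $-\rho_k e^{\hat\varphi_{\xi_j}}$ in the equation and sets out to re-derive Li's profile estimate from scratch via an inner blow-up plus a $\sup+\inf$ argument on dyadic shells. This is where the gap lies: the Brezis--Li--Shafrir inequality \cite{BLS1993} is proved for equations of the form $-\Delta u=Ve^u$ with $0<a\le V\le b$, not for equations carrying a lower-order inhomogeneity. After rescaling by $r=2^s R_k\lambda_k^{-1}$, the source term contributes an $O(r^2)$ inhomogeneity that is bounded but nonzero on the outer shells (where $r$ is comparable to $r_0$), so the inequality as stated does not apply to $\tilde w_k$ there. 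The repair is precisely the paper's absorption step --- but once you have absorbed the term and reduced to $-\Delta\tilde u_k=\rho_k\tilde V_k\,e^{\tilde u_k}$, the conclusion you want is exactly Li's Theorem~0.3, and re-proving it via inner blow-up plus $\sup+\inf$ on shells (which, note, also requires a moving-plane/symmetry input that your sketch glosses over) is substantially more work than a citation. The Lipschitz-regularity issue for the reflected potential that you flag is real but secondary: both routes face it identically, and it is handled by the same standard observation that Li's argument needs only a modulus of continuity on the potential.
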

	\begin{proof}
		Here, we provide a sketch of the proof for the case  $ \xi_j \in \pa  \Si$. The argument for the interior case  $\xi_j\in\intS$ can be found in  \cite[Theorem 0.3]{Li1999}.
		
		Let  $( y_{\xi_j}, U(\xi_j))$ be  the isothermal coordinates with  $y_{\xi_j}: U(\xi_j)\to  \B^+_{2r_{\xi_j}}$. Define
		$ \tilde{w}_{k,j}(y):= w_k\circ y_{\xi_j}^{-1}(y)$.  Then, according to  \eqref{conformalchange},   $\tilde{w}_{k,j}$ satisfies
		\[
		\left\{
		\begin{aligned}
			&	-\Delta \tilde{w}_{k,j}= \rho_k (V( y_{\xi_j}^{-1}(y))   e^{\tilde{w}_{k,j}} - 1 )e^{\hat{\varphi}_{\xi_j}}  && \text{ in }\, \B_{r_0}^+,\\
			&	\frac{\pa }{\pa  y_2} \tilde{w}_{k,j}=0 && \text{ on }\, \B_{r_0}^+\cap \pa \R_+^2,
		\end{aligned}
		\right.
		\]
		where $0<r_0\leq 2 r_{\xi_j}$.  Using the reflection along the $y_2$-axis, we extend $\tilde{w}_{k,j}$ to be defined on the open disk $\B_{r_0}$. 
		Let $\tilde{\zeta}_{k,j}$ be the unique solution of the following problem: 
		\[
		\left\{
		\begin{aligned}
			&	-\Delta \tilde{\zeta}_{k,j} = \tilde{f}_{k,j} && \text{ in } \,\B_{r_0},\\
			&	\tilde{\zeta}_{k,j}=0 && \text{ on }\, \pa  \B_{r_0} ,
		\end{aligned}
		\right.
		\]
		where 
		\[	
		\tilde{f}_{k,j}(y_1, y_2):=\left\{
		\begin{aligned}
			&\rho_k e^{\hat{\varphi}_{\xi_j}(y_1, y_2)} &&  \text{ in } \,\B_{r_0}^+,\\
			&\rho_k e^{\hat{\varphi}_{\xi_j}(y_1, -y_2)}  && \text{ in }\, \B_{r_0}\cap \{ y_2<0\}.
		\end{aligned}
		\right.
		\]
		By standard elliptic estimates, there exists a constant  $C>0$ independent of $j$ and $k$, such that $\|\tilde{\zeta}_{k,j}\|_{L^{\infty}(\B_{r_0})}\leq C$. 
		Let $\tilde{u}_k:= \tilde{w}_{k,j}+\tilde{\zeta}_{k,j}$. Then, we have 
		\[	
		-\Delta \tilde{u}_k= \rho_k \tilde{V}_{k} e^{\tilde{u}_k} \quad  \text{ in }\,\B_{r_0},
		\]
		where
		\[	  
		\tilde{V}_{k}(y_1, y_2):= \left\{	\begin{aligned}
			&	V( y_{\xi_j}^{-1}(y_1, y_2)) e^{\hat{\varphi}_{\xi_j}(y_1, y_2)} e^{-\tilde{\zeta}_{k,j}(y_1, y_2)} && \text{ in }\, \B_{r_0}^+,\\
			&	V( y_{\xi_j}^{-1}(y_1, -y_2)) e^{\hat{\varphi}_{\xi_j}(y_1, -y_2)} e^{-\tilde{\zeta}_{k,j}(y_1, y_2)} && \text{ in } \B_{r_0}\cap \{ y_2<0\}.
		\end{aligned}\right.
		\]
		By \cite[Theorem 0.3]{Li1999}, we obtain
		\[ \Big| \tilde{u}_k - \ln \frac{ e^{h_{k,j}} }{ (1+ \frac{ \rho_k V(\xi_{k,j}) }{8} e^{h_{k,j}} |y|^2)^2} \Big| \leq C\quad  \text{ in }\,  \B_{r_0}. \]
		Lemma \ref{lem:w_k_1} then follows immediately.
	\end{proof}
	Due to \cite{Battaglia2019},  we also have the following estimate when $x$ is away from blow-up points: 
	\begin{lem}\label{lem:w_k_2} As $k\to +\infty$,  we have
		$
		w_k-\overline{w}_k\to  \sum_j \kap(\xi_j) G^g(\cdot,\xi_j)
		$ in  $C^2(\Si\setminus\{ \xi_1,\ldots,\xi_{p+q}\})$,  where   
		$G^g(\cdot,\cdot)$ is the Green's function defined in  \eqref{Green}. 
	\end{lem}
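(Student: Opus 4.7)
The plan is to combine the Green's representation formula with the weak-$*$ concentration of $\rho_k V e^{w_k} \, \d v_g$ at the $\xi_j$, and then bootstrap the resulting pointwise limit to $C^2$ convergence using Schauder theory away from the blow-up set.

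Applying the representation formula \eqref{Greenrepresentation} to $h = w_k$, using $\pa_{\nu_g} w_k = 0$ and the fact that $\int_\Si G^g(y, x) \, \d v_g(y) = 0$ (from the normalization in \eqref{Green}), one obtains
\[
w_k(x) - \overline{w}_k = \rho_k \int_\Si G^g(y, x) V(y) e^{w_k(y)} \, \d v_g(y).
\]
For any fixed $x \in \Si \setminus \{\xi_1, \ldots, \xi_{p+q}\}$, the function $y \mapsto G^g(y, x)$ is continuous in a neighborhood of each concentration point, so the weak-$*$ convergence $\rho_k V e^{w_k} \, \d v_g \stackrel{*}{\rightharpoonup} \sum_j \kap(\xi_j) \delta_{\xi_j}$ yields the pointwise limit $\sum_j \kap(\xi_j) G^g(x, \xi_j)$. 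The logarithmic bound $|G^g(x, y)| \leq C(1 + |\ln d_g(x, y)|)$ from property (d) in Section \ref{sec:2.1}, together with the uniform decay of $V e^{w_k}$ outside small neighborhoods of the $\xi_j$ provided by Lemma \ref{lem:w_k_1}, promotes this to uniform convergence on every compact set $K \subset \Si \setminus \{\xi_1, \ldots, \xi_{p+q}\}$.

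To upgrade to $C^2$, let $\tilde{w}_k := w_k - \overline{w}_k$, which satisfies
\[
-\Delta_g \tilde{w}_k = \rho_k V e^{w_k} - \rho_k \ \text{in} \ \intS, \qquad \pa_{\nu_g} \tilde{w}_k = 0 \ \text{on} \ \pa \Si.
\]
On every compact $K \subset \Si \setminus \{\xi_1, \ldots, \xi_{p+q}\}$, Lemma \ref{lem:w_k_1} forces $V e^{w_k} \to 0$ in $C^\infty(K)$, so the right-hand side converges to $-\rho_*$ in $C^\infty_{\mathrm{loc}}$. Combined with the uniform $C^0$ bound on $\tilde{w}_k$ established above, interior Schauder estimates---and the corresponding boundary Schauder estimates for the Neumann problem along $\pa \Si \setminus \{\xi_1, \ldots, \xi_{p+q}\}$---produce uniform $C^{2,\al}$ bounds, and an Arzel\`a--Ascoli argument promotes the pointwise limit to $C^2$ convergence. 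The main technical point is handling boundary concentration points $\xi_j \in \pa \Si$, where one must invoke boundary Schauder theory for the Neumann problem on half-ball neighborhoods; this is precisely the regularity package supplied by \cite{Battaglia2019}, whereas at interior points one relies only on standard elliptic regularity.
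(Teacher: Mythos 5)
The paper does not prove Lemma~\ref{lem:w_k_2}; it cites \cite{Battaglia2019} directly, so there is no in-text proof to compare against. Your reconstruction---Green's representation, weak-$*$ passage to the limit, then elliptic bootstrap---is the standard skeleton for this result and the overall structure is sound. However, there is a genuine gap in the bootstrap step: the assertion that ``Lemma~\ref{lem:w_k_1} forces $V e^{w_k}\to 0$ in $C^\infty(K)$'' is not correct as stated. Lemma~\ref{lem:w_k_1} is a purely $C^0$ estimate (a pointwise comparison with the bubble profile) that holds only in the neighborhoods $U_{r_0}(\xi_j)$; it carries no information about $\nabla w_k$ and says nothing on compacts disjoint from those neighborhoods. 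The uniform $C^0$ decay $w_k\to-\infty$ on compacts of $\Si\setminus\{\xi_1,\dots,\xi_{p+q}\}$ comes from the cited blow-up analysis of \cite[Lemma 3.1]{LSY2023}, not from Lemma~\ref{lem:w_k_1}.

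More importantly, the higher-order decay of $V e^{w_k}$ is an \emph{output} of the bootstrap, not an available input. To apply Schauder to $-\Delta_g\tilde{w}_k = \rho_k V e^{w_k}-\rho_k$ you need a $C^{0,\al}_{\mathrm{loc}}$ bound on the right-hand side, which requires control of $\nabla w_k$ (since $\nabla(Ve^{w_k}) = (\nabla V + V\nabla w_k)e^{w_k}$); but control of $\nabla w_k$ is exactly what you are trying to produce. The standard way out is to interpolate an $L^p$ step: from the $L^\infty_{\mathrm{loc}}$ bound on $\tilde{w}_k$ (obtained via the representation formula) and the $L^\infty_{\mathrm{loc}}$ decay of $Ve^{w_k}$, use $W^{2,p}$ elliptic estimates (interior and Neumann boundary, e.g.\ \cite{A1959,N2014}) to bound $\tilde{w}_k$ in $C^{1,\al}_{\mathrm{loc}}$, which then bounds $Ve^{w_k}$ in $C^{0,\al}_{\mathrm{loc}}$ (and in fact shows it tends to $0$ there), and only then invoke Schauder to reach $C^{2,\al}_{\mathrm{loc}}$ and Arzel\`a--Ascoli. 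Without this intermediate $L^p$ step the argument is circular. Also note that the boundary Schauder/Neumann regularity package is classical (Agmon--Douglis--Nirenberg), not something supplied by \cite{Battaglia2019}; the latter's role is the Brezis--Merle type concentration analysis for the Neumann problem that identifies the limit measure in the first place.
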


	\begin{cor} \label{cor:rates}For any $j=1,\ldots, p+q$ and $\delta_0>0$ sufficiently small, we have 
		\[	|h_{k,j}-h_{k}|\leq C \quad \text{ and }\quad  | w_k +h_{k}| \leq C\quad  \text{ in } \, \Si\setminus\cup_j \cU_{\delta_0}(\xi_j). \]
	\end{cor}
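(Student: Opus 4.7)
The plan is to extract both estimates simultaneously by matching an inner expansion (from Lemma \ref{lem:w_k_1}) and an outer expansion (from Lemma \ref{lem:w_k_2}) of $w_k$ on a fixed ``circle'' around each blow-up point. The key intermediate identity I aim to derive is
\[
\bar w_k + h_{k,j} = O(1) \quad \text{for every } j=1,\ldots,p+q,
\]
with a constant independent of $j$ and $k$, from which both conclusions will follow almost immediately.

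To obtain this identity, I would fix a small $r\in(0,r_0)$ such that the sets $\{|y_{\xi_j}(\cdot)|\le r\}$ are pairwise disjoint (half-disks when $\xi_j\in\pa\Si$), and work on $S_j:=\{x\in U_{r_0}(\xi_j): |y_{\xi_j}(x)|=r\}$. Using $h_{k,j}\to+\infty$ and $\rho_k V(\xi_{k,j})\to 4\pi m\, V(\xi_j)>0$, Lemma \ref{lem:w_k_1} yields
\[
w_k(x) = h_{k,j} - 2\ln\Bigl(1+\tfrac{\rho_k V(\xi_{k,j}) e^{h_{k,j}}}{8} r^2\Bigr) + O(1) = -h_{k,j} - 4\ln r + O(1)
\]
uniformly on $S_j$. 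Since $S_j$ sits in a fixed compact subset of $\Si\setminus\{\xi_1,\ldots,\xi_{p+q}\}$, Lemma \ref{lem:w_k_2} together with the local expansion $\kap(\xi_j)G^g(\cdot,\xi_j)=-4\ln|y_{\xi_j}(\cdot)|+O(1)$ (the $i\ne j$ terms stay bounded on $S_j$) gives
\[
w_k(x)-\bar w_k = -4\ln r + O(1) \quad \text{uniformly on } S_j.
\]
Subtracting the two expressions and cancelling $-4\ln r$ produces $\bar w_k=-h_{k,j}+O(1)$.

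Since the $O(1)$ constants are uniform in $j$ and $k$, the values $\{h_{k,j}\}_j$ cluster within a bounded range around $-\bar w_k$; combined with $h_k=\max_j h_{k,j}$ (valid for large $k$), this yields the first estimate $|h_{k,j}-h_k|\le C$. For the second, on the compact set $\Si\setminus\bigcup_j \cU_{\delta_0}(\xi_j)$ the sum $\sum_j\kap(\xi_j)G^g(\cdot,\xi_j)$ is uniformly bounded, so Lemma \ref{lem:w_k_2} delivers
\[
w_k = \bar w_k + \sum_j\kap(\xi_j)G^g(\cdot,\xi_j) + o(1) = -h_k + O(1),
\]
which is exactly $|w_k+h_k|\le C$ on that set. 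The only potentially delicate point is that the matching constants on each $S_j$ must be uniform in both $j$ and $k$; this is directly guaranteed by the $j,k$-uniform error in Lemma \ref{lem:w_k_1} and the $C^2$ convergence in Lemma \ref{lem:w_k_2}, so no substantial new obstacle arises.
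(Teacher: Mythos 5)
Your proposal is correct and follows essentially the same route as the paper: compare the inner expansion from Lemma \ref{lem:w_k_1} with the outer expansion from Lemma \ref{lem:w_k_2} on a fixed circle $\{|y_{\xi_j}|=r\}$ around each blow-up point, obtain $\overline{w}_k + h_{k,j}=O(1)$ uniformly in $j$, and then read off both conclusions. Your version is marginally more explicit in isolating the $-4\ln r$ contribution on each side before subtracting; the paper simply fixes $\delta_0$ and absorbs that term into the $O(1)$ constants, which is an equivalent bookkeeping choice.
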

	\begin{proof}
		It follows from 	Lemma \ref{lem:w_k_2} that 
		$|w_k(x) -\overline{w}_k|\leq C$ for $x\in \pa  \cU_{\frac 1 2 \delta_0}(\xi_j)$
		with  $\delta_0>0$. 
		On the other hand, Lemma \ref{lem:w_k_1} yields that  for $x\in\pa  \cU_{\frac 1 2 \delta_0}(\xi_j)$, 
		$ |w_k(x)+h_{k,j}|\leq C$. This estimate, together with the triangle inequality, implies that 
		$ |h_{k,j}+\overline{w}_k |\leq C$ and $|h_{k,j}-h_{k}|\leq C$   for  $x\in \pa  \cU_{\frac 1 2 \delta_0}(\xi_j).$  Moreover, 
		for any $x\in \Si\setminus\cup_j \cU_{\delta_0}(\xi_j) $, 
		$ |w_k(x)+h_{k,j}|\leq C$. 
	\end{proof}

	For $r_0>0$ sufficiently small, we  define the local mass of 
	$w_k$ at $\xi_j$ as 
	$
	\rho_{k,j}:= \rho_k \int_{\cU_{r_0}(\xi_j)} V e^{w_k} \, \d v_g.
	$
	It follows from Corollary \ref{cor:rates} that
	\begin{equation}\label{eq:est_rhok_rho_kj}
		\rho_k -\sum_{j} \rho_{k,j}=\rho_k  \int_{\Si\setminus \cup_j \cU_{r_0}(\xi_j)} V e^{w_k} \, \d v_g = O( e^{-h_{k}}).
	\end{equation}

	We have the following estimate for $h_{k,j}$: 
	\begin{lem}\label{lem:lambda_kj}
		For  any $j=1,\ldots, p+q$,  the following estimate holds as   $k\to +\infty$:
		\[	
		\Big| h_{k,j} + \overline{w}_k + F^*_{k,j}(\xi_{k,j})+ 2 \ln\Big( \frac{\rho_k V(\xi_{k,j})}{8}\Big)\Big|=O( e^{-\frac 1 2 h_{k,j}} ),
		\]
		where
		\begin{equation}
			\label{eq:F_star} F^*_{k,j}(x):= \rho_{k,j} H^g(x, \xi_{k,j}) + \sum_{ i\neq j} \rho_{k,i} G^g(x,\xi_{k,i}),
		\end{equation}
		and $H^g(\cdot,\cdot)$ is the regular part of the Green's function $G^g(\cdot,\cdot)$ as defined in  \eqref{Hfunction}. 
	\end{lem}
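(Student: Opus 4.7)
The plan is to apply the Green representation formula \eqref{Greenrepresentation} to $w_k$ at $x=\xi_{k,j}$ and then evaluate the resulting integral asymptotically. Using the homogeneous Neumann boundary condition and the zero-average normalization $\int_\Si G^g(\cdot,\xi_{k,j})\,\d v_g=0$, the identity reduces to
\[
h_{k,j}-\overline{w}_k \;=\; \rho_k\int_\Si G^g(y,\xi_{k,j})\,V(y)\,e^{w_k(y)}\,\d v_g(y),
\]
so the goal becomes showing that the right-hand side equals $2h_{k,j}+2\ln(\rho_kV(\xi_{k,j})/8)+F^*_{k,j}(\xi_{k,j})+O(e^{-h_{k,j}/2})$; the claim will then follow by rearranging.

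I would split this integral into pieces supported on $\cU_{r_0}(\xi_i)$ for $i=1,\ldots,p+q$ plus a far-field remainder. By Corollary \ref{cor:rates} the far-field is $O(e^{-h_k})=O(e^{-h_{k,j}})$. For each index $i\ne j$ the kernel $y\mapsto G^g(y,\xi_{k,j})$ is smooth on $\cU_{r_0}(\xi_i)$, so a first-order Taylor expansion around $\xi_{k,i}$, together with the concentration of $\rho_kVe^{w_k}$ on the scale $\mu_{k,i}^{-1}\sim e^{-h_{k,i}/2}$ (where $\mu_{k,i}^{2}:=\rho_kV(\xi_{k,i})e^{h_{k,i}}/8$), produces the leading contribution $G^g(\xi_{k,i},\xi_{k,j})\rho_{k,i}$ with remainder $O(e^{-h_{k,j}/2})$, using $|h_{k,i}-h_{k,j}|\le C$ from Corollary \ref{cor:rates}.

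For the piece over $\cU_{r_0}(\xi_j)$ I use the decomposition $G^g=\Gamma^g+H^g$ from Section \ref{sec:2.1}. The smooth regular part contributes $R^g(\xi_{k,j})\rho_{k,j}+O(e^{-h_{k,j}/2})$ by the same Taylor argument. For the singular part $\Gamma^g(y,\xi_{k,j})=\tfrac{4}{\kappa(\xi_{k,j})}\chi_{\xi_{k,j}}(y)\ln(1/|y_{\xi_{k,j}}(y)|)$, I would pass to local coordinates $z=y_{\xi_{k,j}}(y)$ centered at $\xi_{k,j}$, invoke Lemma \ref{lem:w_k_1} to replace the density by its leading bubble profile $8\mu_{k,j}^{2}/(1+\mu_{k,j}^{2}|z|^{2})^{2}$, and rescale $u=\mu_{k,j}z$. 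The explicit integrals $\int du/(1+|u|^{2})^{2}$ and $\int \ln|u|/(1+|u|^{2})^{2}\,du$ over $\R^{2}$ (interior) or $\R_+^{2}$ (boundary) evaluate to $\pi$ or $\pi/2$, and $0$, respectively; in both cases the factor $4/\kappa(\xi_{k,j})$ conspires to produce the leading contribution $4\ln\mu_{k,j}=2h_{k,j}+2\ln(\rho_kV(\xi_{k,j})/8)$.

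The principal obstacle will be to sharpen the error in the $\Gamma^g$ step down to the required $O(e^{-h_{k,j}/2})$, since Lemma \ref{lem:w_k_1} only controls $w_k$ minus its bubble profile by $O(1)$ pointwise. I intend to handle this by a standard rescaling argument: with $\tilde{u}_k(u):=w_k(y_{\xi_{k,j}}^{-1}(u/\mu_{k,j}))-h_{k,j}+\ln 8$, elliptic estimates applied to the rescaled PDE yield $C^{2}_{\mathrm{loc}}$ convergence of $\tilde u_k$ to the normalized Liouville bubble, and translating this convergence back and integrating against the weight $\ln(1/|z|)$ upgrades the $O(1)$ pointwise error to an integrated error of order $\mu_{k,j}^{-1}=O(e^{-h_{k,j}/2})$. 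The same bound absorbs the corrections coming from $V(y)-V(\xi_{k,j})$, from the conformal factor $\hat{\varphi}_{\xi_{k,j}}$, and from the cutoff at $|z|=r_0$. Summing all contributions and rearranging then yields the claimed identity.
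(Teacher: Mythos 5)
Your overall strategy---applying the Green representation formula \eqref{Greenrepresentation} at $\xi_{k,j}$, decomposing the resulting integral into near-field and far-field pieces, and extracting the leading asymptotics from the log-singular kernel---is structurally sound and is indeed the route Chen--Lin follow for Estimate D in \cite{CL2002}, which is exactly what the paper cites for this lemma. Your leading-order computation of the singular contribution (with the cancellation $\int_{\R_{\xi_j}}\ln|u|\,(1+|u|^2)^{-2}\,\d u=0$ producing $4\ln\mu_{k,j}=2h_{k,j}+2\ln(\rho_kV(\xi_{k,j})/8)$), and the first-order Taylor estimates for the pieces near $\xi_i$, $i\neq j$, and for the regular part $H^g(\cdot,\xi_{k,j})$ near $\xi_j$, all yielding $O(e^{-h_{k,j}/2})$, are correct as far as they go.

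The gap is the step you flagged yourself as "the principal obstacle" and then tried to dismiss. You propose to upgrade the $O(1)$ pointwise error of Lemma \ref{lem:w_k_1} by $C^2_{\mathrm{loc}}$ convergence of the rescaled solution $\tilde u_k$ to the Liouville bubble, "translating this back and integrating against the weight $\ln(1/|z|)$." But $C^2_{\mathrm{loc}}$ convergence is a \emph{qualitative} statement: on every fixed compact set of the rescaled variable $u$ it gives an error that is $o(1)$, with no control on the rate; it does not, and cannot, produce an $O(\mu_{k,j}^{-1})=O(e^{-h_{k,j}/2})$ bound. Concretely, write the singular contribution as $\Lambda_{k,j}\ln\mu_{k,j}+B$ with $\Lambda_{k,j}:=\rho_k\int\chi_{\xi_{k,j}}Ve^{w_k}\,\d v_g$ and $B:=\int\chi(u/\mu_{k,j})\bigl(-\ln|u|\bigr)\tilde D_k(u)\,\d u$. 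To obtain the claimed error one needs simultaneously $\Lambda_{k,j}-\kappa(\xi_{k,j})=o\bigl(e^{-h_{k,j}/2}/h_{k,j}\bigr)$ and $B=O(e^{-h_{k,j}/2})$; $C^2_{\mathrm{loc}}$ convergence plus dominated convergence gives only $B=o(1)$ and says nothing quantitative about $\Lambda_{k,j}-\kappa(\xi_{k,j})$. What is actually needed is a \emph{quantitative} pointwise estimate of the form $\bigl|w_k-h_{k,j}+2\ln(1+\mu_{k,j}^2|z|^2)\bigr|\leq C\bigl(\mu_{k,j}|z|+1\bigr)^{-\sigma}+C\mu_{k,j}^{-1}\ln(2+\mu_{k,j}|z|)$ or similar, together with a refined mass estimate for $\rho_{k,j}$. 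These are exactly what Chen--Lin establish in the chain Estimate A $\Rightarrow$ B $\Rightarrow$ C (via the sup$+$inf inequality of Brezis--Li--Shafrir \cite{BLS1993} and a Pohozaev-type identity) before deriving Estimate D, and the paper cannot avoid importing this chain either---which is why it only cites \cite[Estimate D]{CL2002} rather than giving a self-contained argument. Your proposal would become correct if the hand-wave at this point were replaced by an appeal to (or a reproof of, adapted to the Neumann/boundary setting) those intermediate sharp estimates.
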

	\begin{proof}
		The proof is similar to \cite[Estimate D]{CL2002}, with a minor modification involving the boundary blow-up points, which we omit here.
	\end{proof}

	For any $j=1,\ldots, p+q$ and $r_0>0$ sufficiently small, we define 
	\[
	\eta_{k,j}(x):= w_{k}(x) - ( F^*_{k,j}(x)- F^*_{k,j}(\xi_{k,j}))\quad  \text{ in }\, \cU_{r_0}(\xi_j).
	\]
	\begin{lem}\label{lem:refine_wk}
		There exists some  constant $C>0$, independent of $j$ and  $k$,  such that 
		\[ |\nabla (\eta_{k,j}- \delta_{\lam_{k,j},\xi_{k,j}})(\xi_{k,j})|_g \leq C, \]
		where $\delta_{\lam,\xi}$ is defined in  \eqref{def:bubble}.
	\end{lem}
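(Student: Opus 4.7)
The proof plan is to exploit the fact that the evaluation point $\xi_{k,j}$ is precisely where $w_k$ attains its local maximum, so that derivatives of $w_k$ vanish there, and then to observe that the bubble $\delta_{\lambda_{k,j},\xi_{k,j}}$ is radial around $\xi_{k,j}$ in isothermal coordinates, so its gradient also vanishes at $\xi_{k,j}$. The gradient of the combination will then reduce to the gradient of the smooth object $F_{k,j}^{*}$ at $\xi_{k,j}$, which is readily controlled.

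First I would verify that $\nabla w_k(\xi_{k,j}) = 0$ in both possible cases. If $\xi_{k,j}\in\intS$, this is immediate since $\xi_{k,j}$ is an interior maximum. If $\xi_{k,j}\in\pa\Si$, then the Neumann condition $\pa_{\nu_g} w_k=0$ kills the normal component, while the fact that $\xi_{k,j}$ is also the maximum of $w_k$ restricted to $\pa\Si$ in a neighborhood forces the tangential component to vanish as well; hence $\nabla w_k(\xi_{k,j})=0$ in either case. Note that in the boundary case, for $k$ sufficiently large, $\xi_{k,j}$ cannot lie on the curved part of $\pa U_{r_0}(\xi_j)$ because $w_k\to-\infty$ away from the blow-up points by Corollary \ref{cor:rates}.

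Next I would exploit the structure of the bubble. Writing $\delta_{\lam_{k,j},\xi_{k,j}}(x)=\tilde\delta_{\lam_{k,j},0}(y_{\xi_{k,j}}(x))$ with $\tilde\delta_{\lam,0}$ being the radial Liouville profile \eqref{bubble}, one has $\nabla_y \tilde\delta_{\lam,0}(0)=0$; by the chain rule and $y_{\xi_{k,j}}(\xi_{k,j})=0$ this gives $\nabla \delta_{\lam_{k,j},\xi_{k,j}}(\xi_{k,j})=0$ (independent of the precise choice of $\lam_{k,j}$ from Lemma \ref{lem:lambda_kj}, namely $\lam_{k,j}^2=\tfrac18\rho_k V(\xi_{k,j})e^{h_{k,j}}$). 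Combining with the definition of $\eta_{k,j}$, I get
\[
\nabla\bigl(\eta_{k,j}-\delta_{\lam_{k,j},\xi_{k,j}}\bigr)(\xi_{k,j}) \;=\; \nabla w_k(\xi_{k,j}) - \nabla F_{k,j}^{*}(\xi_{k,j}) - \nabla\delta_{\lam_{k,j},\xi_{k,j}}(\xi_{k,j}) \;=\; -\nabla F_{k,j}^{*}(\xi_{k,j}).
\]

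It then remains to bound $|\nabla F_{k,j}^{*}(\xi_{k,j})|_g$ uniformly in $k$ and $j$. By the definition \eqref{eq:F_star}, this splits into $\rho_{k,j}\nabla_x H^g(\xi_{k,j},\xi_{k,j})$ and $\sum_{i\ne j}\rho_{k,i}\nabla_x G^g(\xi_{k,j},\xi_{k,i})$. The local masses $\rho_{k,i}$ are bounded since $\rho_k\to 4\pi m$ and $\int_\Si Ve^{w_k}\,\d v_g=1$. The first gradient is controlled because $H^g$ is the regular part of the Green's function and is $C^{2,\alpha}$ up to the diagonal by \eqref{Hfunction}; the remaining gradients are bounded by property (d) of Green's functions since, for large $k$, the points $\xi_{k,j}$ and $\xi_{k,i}$ ($i\ne j$) remain uniformly separated, staying near the distinct blow-up limits $\xi_j,\xi_i$. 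There is no real obstacle here --- the proof is essentially a one-line computation once one notices that both the critical point and the radial symmetry of the bubble force two of the three gradients to vanish exactly at $\xi_{k,j}$.
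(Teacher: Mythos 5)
Your argument is correct and establishes the lemma as literally stated, but it takes a fundamentally different and far more economical route than the paper's. The two observations that do all the work---that $\nabla w_k(\xi_{k,j})=0$ (interior local maximum; or boundary local maximum together with the Neumann condition $\pa_{\nu_g}w_k=0$, with $\xi_{k,j}\to\xi_j$ guaranteeing it is not on the curved part of $\pa U_{r_0}(\xi_j)$) and that $\nabla_g\delta_{\lam_{k,j},\xi_{k,j}}(\xi_{k,j})=0$ (the bubble profile is radial about its own center, and $\hat\varphi_{\xi_{k,j}}(0)=0$ by \eqref{varphixi} so the metric is Euclidean at that point)---collapse the claimed gradient exactly to $-\nabla F^*_{k,j}(\xi_{k,j})$, whose boundedness follows from the $C^{2,\alpha}$ regularity of $H^g(\cdot,\xi)$ and its smooth dependence on $\xi$, the uniform separation of the points $\xi_{k,i}$ entering the off-diagonal Green's function terms, and the boundedness of the local masses $\rho_{k,i}$. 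The paper instead works in rescaled isothermal coordinates, extends by reflection when $\xi_j\in\pa\Sigma$, and applies the Chen--Lin gradient estimate from \cite{CL2002} to the rescaled error $\tilde\epsilon_{k,j}$. What the paper's heavier machinery buys is a gradient bound throughout a neighborhood of $\xi_{k,j}$, not merely at the point itself; that stronger statement is what is actually needed in the next step, Lemma~\ref{lem:est_wk2}, where the Lipschitz bound on $w_k-\delta_{\lam_{k,j},\xi_{k,j}}-\ln(\rho_k V(\xi_{k,j}))$ over $U_{r_0}(\xi_{k,j})$ is obtained by integrating the gradient along a path from $\xi_{k,j}$ to $x$, which your single-point identity cannot supply. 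So your proof is a valid and clean proof of the statement as written, and it reveals that the stated lemma is a weakening of what the paper's proof (and the downstream argument) actually delivers; if one wanted to carry the rest of the chain with your approach, one would still have to establish the neighborhood gradient bound by some other means.
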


	\begin{proof}
		We just consider the case $\xi_j\in\pa\Si$,  as the interior case 
		$\xi_j\in\intS$ can be found in \cite[Lemma 4.1]{CL2002}. 
		
		Let $\tilde{\eta}_{j}:=\eta_{k,j}\circ y_{\xi_j}^{-1}(y)$. 	For some $r_0>0$ sufficiently small, $\tilde{\eta}_{j}$ satisfies
		\[
		\Delta \tilde{\eta}_{j}+ \rho_k \tilde{V}_{j} e^{\tilde{\eta}_{j}} =\Big(\rho_k -\sum_{j} \rho_{k,j} \Big) e^{\hat{\varphi}_{\xi_j}}= O( e^{-h_k})\quad \text{ in }\, B_{r_0}^{\xi_j}
		\] 
		in view of \eqref{eq:est_rhok_rho_kj},	where 	$
		\tilde{V}_j(y)=V\circ y_{\xi_j}^{-1}(y) e^{\hat{\varphi}_{\xi_j}(y)} e^{ F^*_{k,j}\circ y_{\xi_j}^{-1}(y) - F^*_{k,j}( \xi_{k,j})}.
		$
		Denote $
		\tilde{\delta}_{k,j}(y):=\delta_{\lam_{k,j},\xi_{k,j}}\circ y_{\xi_{j}}^{-1}(y)-\ln(\rho_k V(\xi_{k,j})),
		$
		where  $\lam_{k,j}=\sqrt{ \frac{ \rho_k V(\xi_{k,j}) e^{h_{k,j}}}{8}}\to +\infty$ as $k\to +\infty$.  
		It follows that   $\tilde{\epsilon}_{k,j}(y):=(\tilde{\eta}_j-\tilde{\delta}_{k,j})( \frac{y}{\lam_{k,j}})$ solves 
		\[ \Delta \tilde{\epsilon}_{k,j}+\frac{ 8  }{(1+|y|^2)^2} \Big( \frac{\tilde{\tilde{V}}_j e^{\tilde{\epsilon}_{k,j}}}{\tilde{\tilde{V}}_j (z_{k,j})} -1\Big)=O(e^{-h_k})\quad  \text{ in }\,  \lam_{k,j} B_{r_0}^{\xi_j}, \]
		where  $\tilde{\tilde{V}}_j(y)= \tilde{V}_j(y/\lam_{k,j} ) $ and  $z_{k,j}= \lam_{k,j} y_{\xi_j}^{-1}(\xi_{k,j})$.
		Since $\xi_j\in \pa \Si$, we have $\lam_{k,j} B_{r_0}^{\xi_j}= \B_{\lam_{k,j}r_0}^+$ and $\pa _{y_2} \tilde{\epsilon}_{k,j}=0$ on $\B_{\lam_{k,j}r_0}^+\cap \pa \R_+^2$.  
		Using the reflection technique as in Lemma \ref{lem:w_k_1}, $\tilde{\epsilon}_{k,j}$ can be extended to the open disk $\B_{\lam_{k,j}r_0}$. For simplicity, we continue to denote the extended function by $\tilde{\epsilon}_{k,j}$.
		
		Using the gradient estimate in \cite[Lemma 4.1]{CL2002},  for $z_{k,j}= y_{\xi_j}(\xi_{k,j})$ we have
		\[ 
		|\nabla \tilde{\epsilon}_{k,j}(z)|\leq C\Big( \frac{e^{-\lam_{k,j}}}{ 1+|z-z_{k,j}|}+ \frac{\sup_{\frac 1 2 |z-z_{k,j}|\leq |y|\leq 2 |z-z_{k,j}|}  |\tilde{\epsilon}_{k,j}(y)|}{1+|z-z_{k,j}|} \Big)
		\]
		in a small neighborhood of $z_{k,j}$. 
		Hence,	Lemma \ref{lem:refine_wk} is concluded. 
	\end{proof}
	\begin{lem}\label{lem:est_wk2}
		For any $j=1,\ldots, p+q$, there exist  $C>0$ and $r_0>0$ such that  as $x\to  \xi_{k,j}$,
		\[ \left| w_k(x) - \delta_{\lam_{k,j},\xi_{k,j}}(x)-\ln( \rho_k  V(\xi_{k,j})) \right| \leq C d_g(x,\xi_{k,j}),\]
		in $U_{r_0}(\xi_{k,j})$.  
	\end{lem}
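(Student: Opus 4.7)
The plan is to define $\Psi_k(x) := w_k(x) - \delta_{\lam_{k,j},\xi_{k,j}}(x) + \ln(\rho_k V(\xi_{k,j}))$ (the sign of the logarithm in the statement appears to be a typographical misprint: with the $+$ shown here, the identity $8\lam_{k,j}^2 = \rho_k V(\xi_{k,j})\,e^{h_{k,j}}$ gives $\Psi_k(\xi_{k,j}) = 0$, matching the normalized bubble $\tilde\delta_{k,j}$ used in the proof of Lemma \ref{lem:refine_wk}). The task then reduces to a Lipschitz bound $|\Psi_k(x)| \le C\,d_g(x,\xi_{k,j})$ on $U_{r_0}(\xi_{k,j})$, uniform in $k$.

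I would decompose $\Psi_k = (\eta_{k,j} - \delta_{\lam_{k,j},\xi_{k,j}} + \ln(\rho_k V(\xi_{k,j}))) + (F^*_{k,j}(x) - F^*_{k,j}(\xi_{k,j}))$, where $\eta_{k,j}$ and $F^*_{k,j}$ are as in Lemma \ref{lem:refine_wk}. The second summand is $O(d_g(x,\xi_{k,j}))$ by uniform smoothness of $F^*_{k,j}$ near $\xi_{k,j}$: the regular part $H^g$ is of class $C^2$, and each $G^g(\cdot,\xi_{k,i})$ for $i\ne j$ is smooth there because $\xi_{k,i}$ stays bounded away from $\xi_{k,j}$. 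The first summand is the genuinely nontrivial piece: in the blow-up rescaling used in the proof of Lemma \ref{lem:refine_wk}, with $\tilde y = \lam_{k,j}\,y_{\xi_j}(x)$ and $z_{k,j} = \lam_{k,j}\,y_{\xi_j}(\xi_{k,j})$, the rescaled difference $\tilde\epsilon_{k,j}(\tilde y)$ is uniformly bounded on $\lam_{k,j} B_{r_0}^{\xi_j}$ by Lemma \ref{lem:w_k_1}, vanishes together with its gradient at $z_{k,j}$ (using that $\xi_{k,j}$ is a critical point of $w_k$ and that $\nabla\delta_{\lam_{k,j},\xi_{k,j}}(\xi_{k,j}) = 0$), and satisfies a perturbed linearized Liouville equation whose inhomogeneity has size $O(|\tilde y - z_{k,j}|/\lam_{k,j}) + O(\lam_{k,j}^{-2})$ arising from Taylor expansion of $V$ and of the conformal factor around $\xi_{k,j}$.

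From the gradient estimate of Chen--Lin \cite[Lemma 4.1]{CL2002}---exactly the estimate invoked in the proof of Lemma \ref{lem:refine_wk}---combined with a short bootstrap that uses the vanishing of $\tilde\epsilon_{k,j}$ and its gradient at $z_{k,j}$ to absorb the scaling and translation modes of the linearized Liouville operator $-\Delta - 8/(1+|\tilde y - z_{k,j}|^2)^2$, one obtains the sharpened rescaled estimate $|\tilde\epsilon_{k,j}(\tilde y)| \le C(1+|\tilde y - z_{k,j}|)/\lam_{k,j}$ on the whole rescaled ball. Translating back to original coordinates via the bi-Lipschitz character of isothermal charts yields the desired bound on the first summand. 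For $\xi_j \in \pa\Si$, the same argument applies after extending $\tilde\epsilon_{k,j}$ to the full rescaled disk by even reflection across $\{y_2 = 0\}$, as in the proof of Lemma \ref{lem:w_k_1}, which is justified by the Neumann condition shared by $w_k$ and the projected bubble. The principal obstacle is precisely the $1/\lam_{k,j}$ factor in the rescaled estimate: a naive $L^\infty$ argument on $\tilde\epsilon_{k,j}$ would translate back to an unacceptable $O(\lam_{k,j}\,d_g)$ bound on $\Psi_k$, so the cancellation between $w_k$ and its bubble approximation must be quantified at the linearized level through the kernel-projection (equivalently, the Chen--Lin gradient-estimate) argument above.
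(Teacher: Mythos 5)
You correctly spot that the statement should read $+\ln(\rho_k V(\xi_{k,j}))$: with $8\lam_{k,j}^2 = \rho_k V(\xi_{k,j}) e^{h_{k,j}}$ the quantity vanishes at $x=\xi_{k,j}$ only with this sign, matching the paper's own normalization $\tilde\delta_{k,j} = \delta_{\lam_{k,j},\xi_{k,j}}\circ y_{\xi_j}^{-1} - \ln(\rho_k V(\xi_{k,j}))$ used in the proof of Lemma~\ref{lem:refine_wk}. Your decomposition of $\Psi_k$ into an $\eta_{k,j}$-piece and an $F^*_{k,j}$-piece is exactly what the paper relies on, the $F^*_{k,j}$-piece is handled correctly, and you correctly identify that the rescaling carries a factor $\lam_{k,j}$ which must be recovered through cancellation at the linearized level---this is indeed the load-bearing step that the paper's one-line proof elides.

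Two points, however, are not quite right. First, $\nabla\tilde\epsilon_{k,j}(z_{k,j})$ does not vanish: $\tilde\epsilon_{k,j}$ rescales $\eta_{k,j}-\delta_{\lam_{k,j},\xi_{k,j}}$ (not $w_k - \delta_{\lam_{k,j},\xi_{k,j}}$), and since $\nabla\eta_{k,j}(\xi_{k,j}) = \nabla w_k(\xi_{k,j}) - \nabla F^*_{k,j}(\xi_{k,j}) = -\nabla F^*_{k,j}(\xi_{k,j})$ is generically nonzero, one gets $\nabla\tilde\epsilon_{k,j}(z_{k,j}) = O(\lam_{k,j}^{-1})$ rather than $0$; it is the rescaling of the full $\Psi_k$, $F^*_{k,j}$-piece included, that vanishes exactly to first order at $z_{k,j}$. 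Second, and more substantively, the sharpened estimate you assert, $|\tilde\epsilon_{k,j}(\tilde y)|\le C(1+|\tilde y-z_{k,j}|)/\lam_{k,j}$, gives after pulling back only $|\Psi_k(x)|\le C\, d_g(x,\xi_{k,j}) + C\lam_{k,j}^{-1}$; the additive $C\lam_{k,j}^{-1}$ term is not dominated by $C\,d_g(x,\xi_{k,j})$ when $d_g(x,\xi_{k,j})\ll\lam_{k,j}^{-1}$, so the lemma as stated is not recovered. The clean bound requires $|\tilde\epsilon_{k,j}(\tilde y)|\le C|\tilde y-z_{k,j}|/\lam_{k,j}$ with no additive offset, and that stronger inequality rests precisely on the exact vanishing of $\tilde\epsilon_{k,j}$ at $z_{k,j}$ together with the Chen--Lin orthogonality-to-kernel machinery (their refined pointwise estimates, not a short bootstrap). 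You have diagnosed the correct obstacle, but the decisive quantitative estimate is invoked in the wrong form and not actually derived---though to be fair the published one-sentence proof is even terser on the same point.
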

	\begin{proof}
		The proof follows from Lemma \ref{lem:refine_wk} immediately. 
	\end{proof}

	Let 
	\[ \psi(\alpha,\xi,\lambda):=\sum_{i} \al_i P\delta_{\xi_i,\lam_{i}},\]
	for $\al:=(\al_1,\ldots,\al_{p+q})\in (\R_+)^{p+q}$, $\xi:=(\xi_1,\ldots,\xi_{p+q})\in \Xi_{p,q}$ and $\lam:=(\lam_1,\ldots,\lam_{p+q})\in (\R_+)^{p+q}$. 
	For any $\var>0$, we define a neighborhood of potential critical points at infinity. Given $\rho>0$, $p,q\in \N_0$ with $2p+q\in\N$, and  $\eta>0$, we set
	\begin{align}
		V_\rho^{\eta}(p,q,\var): =\Big\{ u\in \oH: &\,\|\nabla J_{\rho }(u)\|< \var, \exists\, \lam=(\lam_1,\ldots,\lam_{p+q})\in (\R_+)^{p+q} \text{ with } \lam_1,\ldots,\lam_{p+q} > \var^{-1},\nonumber\\
		& \text{ and }\exists\, \xi=(\xi_1,\ldots, \xi_{p+q})\in \Xi_{p,q}\text{ with } \min _{\mytop{i,j=1,\ldots,p+q}{i\neq j} }d_g(\xi_i, \xi_j )> \eta\nonumber \\
		&
		\text{ and } \min_{i=1,\ldots, p} d_g(\xi_i,\pa \Si)>\frac \eta 2, \text{ s.t. }   \| u-\psi(e,\xi,\lam)\|<\var \Big\}, \label{def:Vm}
	\end{align}
	where $e=(1,\ldots, 1)\in \R^{p+q}$. 

	\begin{prop}\label{prop:u_k_in_V^eta}
		Let $\{u_k\}$ be a sequence of blow-up solutions to \eqref{maineq} with $\rho=\rho_k$.  Assume that  $\lim_{k\to+\infty}\rho_k=  \rho_*:=4\pi m$,  and  the blow-up set of $\{u_k\}$ is $\{ \xi^*_1,\ldots,\xi^*_{p+q}\}$ with $(\xi^*_1,\ldots,\xi^*_{p+q})\in \Xi_{p,q}$ up to a permutation, where $p,q\in\N_0$ and $m=2p+q\in \N$.  Then there exist constants  $ \var_0 > 0 $ and $ \eta_0 > 0 $ such that, for any $ \var\in (0,\varepsilon_0) $, for some $k_0\in \N$ we have  $$u_k \in V_{\rho_*}^{\eta_0}(p, q, \var),\quad  \forall   k \geq k_0.$$
	\end{prop}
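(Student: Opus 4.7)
The plan is to read off the concentration parameters directly from the refined blow-up analysis of Section \ref{sec:2} and then verify the three defining conditions of $V_{\rho_*}^{\eta_0}(p,q,\var)$ one at a time. Set $\xi_k:=(\xi_{k,1},\ldots,\xi_{k,p+q})$, with $\xi_{k,j}$ the local maximum of $w_k$ near $\xi_j^*$ as in \eqref{def:h_kj}, and set
\[
\lam_{k,j}:=\sqrt{\tfrac{\rho_k V(\xi_{k,j})e^{h_{k,j}}}{8}},\qquad \lam_k:=(\lam_{k,1},\ldots,\lam_{k,p+q}).
\]
Since $\xi_{k,j}\to \xi_j^*$ with $(\xi_1^*,\ldots,\xi_{p+q}^*)\in \Xi_{p,q}$, the choice
\[
\eta_0:=\tfrac{1}{2}\min\bigl(\min_{i\neq j}d_g(\xi_i^*,\xi_j^*),\ \min_{1\leq i\leq p}d_g(\xi_i^*,\pa\Si)\bigr)>0
\]
guarantees $d_g(\xi_{k,i},\xi_{k,j})>\eta_0$ for $i\neq j$ and $d_g(\xi_{k,i},\pa\Si)>\eta_0/2$ for interior indices $i=1,\ldots,p$, uniformly in $k$ large; moreover $h_{k,j}\to+\infty$ in \eqref{def:h_kj} forces $\lam_{k,j}>\var^{-1}$ for $k$ large.

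The main step is the $\oH$-closeness $\|u_k-\psi(e,\xi_k,\lam_k)\|<\var$. Setting $\phi_k:=u_k-\psi(e,\xi_k,\lam_k)$ and combining \eqref{maineq} with \eqref{projectedbubble}, I obtain
\[
-\Delta_g\phi_k=f_k-\overline{f_k},\quad f_k:=\rho_k Ve^{w_k}-\sum_j\chi_{\xi_{k,j}}e^{\delta_{\lam_{k,j},\xi_{k,j}}},\quad \pa_{\nu_g}\phi_k=0,
\]
whence $\|\phi_k\|^2=\int_\Si\phi_k f_k\,\d v_g$ and it suffices to prove $|\int_\Si f_k\varphi\,\d v_g|=o(\|\varphi\|)$ uniformly for $\varphi\in\oH$. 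Splitting $\Si=\bigcup_j\cU_{r_0}(\xi_{k,j})\cup A_k$ for a small fixed $r_0$, Corollary \ref{cor:rates} yields $\rho_k V e^{w_k}=O(e^{-h_k})$ on $A_k$, while each bubble contribution is $O(\lam_{k,j}^{-2})$ there, so the outer region is harmless. Inside each $\cU_{r_0}(\xi_{k,j})$, Lemma \ref{lem:est_wk2} gives
\[
\rho_k V(x)e^{w_k(x)}=\frac{V(x)}{V(\xi_{k,j})}\,e^{\delta_{\lam_{k,j},\xi_{k,j}}(x)}\bigl(1+O(d_g(x,\xi_{k,j}))\bigr),
\]
so that the integrand differs from the bubble only by an $O(d_g)$ factor; combined with standard weighted bubble estimates (to be recorded in the appendix), this delivers the desired $o(\|\varphi\|)$ bound.

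For the gradient condition, the identity $u_k=\rho_k\cK(u_k)$ (since $u_k$ solves \eqref{maineq}) gives
\[
\nabla J_{\rho_*}(u_k)=u_k-\rho_*\cK(u_k)=\frac{\rho_k-\rho_*}{\rho_k}\,u_k.
\]
Testing \eqref{maineq} against $u_k$ and invoking Corollary \ref{cor:rates} yields $\|u_k\|\leq C\sqrt{h_k}$, while integrating the sharp expansion of Lemma \ref{lem:lambda_kj} together with \eqref{eq:est_rhok_rho_kj} shows that $|\rho_k-\rho_*|$ is exponentially small in $h_k$. Hence $\|\nabla J_{\rho_*}(u_k)\|\to 0$, and in particular $\|\nabla J_{\rho_*}(u_k)\|<\var$ for $k$ large, which completes the verification.

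The principal obstacle is the dual-norm estimate on $f_k$: although Lemmas \ref{lem:w_k_1}--\ref{lem:est_wk2} provide sharp pointwise control of $w_k$ near each blow-up point, turning them into a bound of the form $|\int_\Si f_k\varphi\,\d v_g|=o(\|\varphi\|)$ requires carefully exploiting the cancellation $V(x)/V(\xi_{k,j})-1=O(d_g)$ together with weighted $L^p$-estimates for the bubble family tested against functions of bounded $\oH$-norm; the presence of boundary blow-up points (whose isothermal chart is a half-disk) introduces an additional technical wrinkle, handled via the reflection method already used in Section \ref{sec:2}.
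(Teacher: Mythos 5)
Your proposal captures the substance of the paper's argument: same choice of $\xi_{k,j}$ and $\lam_{k,j}$, same $\eta_0$, and the key ingredients (Corollary \ref{cor:rates}, Lemma \ref{lem:est_wk2}, and the tail estimate \eqref{eq:est_rhok_rho_kj}) match those the paper feeds into the error bound. The one genuine divergence is in how you control $\|u_k-\psi(e,\xi_k,\lam_k)\|$: you set up a duality pairing $\|\phi_k\|^2=\int_\Si \phi_k f_k\,\d v_g$ and propose to bound $\sup_{\|\varphi\|\leq 1}|\int f_k\varphi|$ via weighted $L^q$/Sobolev estimates, while the paper instead tests against no $\varphi$ at all, gets $\|f_k\|_{L^p}=O(\sum_j\lam_{k,j}^{-(2-p)/p})$ for $p\in(1,2)$ (in fact $p=8/7$), and then uses $W^{2,p}$-elliptic regularity plus the compact embedding $W^{2,p}\hookrightarrow H^1$. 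Both approaches rest on the same pointwise bound $f_k=O(\sum_j e^{\delta_{k,j}}d_g(\cdot,\xi_{k,j})\mathbbm{1}_{U_{r_0}(\xi_j)}+\sum_j\lam_{k,j}^{-1})$; the paper's route is more elementary to quantify, while yours is a slightly cleaner functional-analytic reformulation that you would still need to close with the same $L^{q'}$-vs-$H^1\hookrightarrow L^q$ weighted bubble computation. On the gradient, you are actually more careful than the paper: you observe explicitly that $\nabla J_{\rho_*}(u_k)=\frac{\rho_k-\rho_*}{\rho_k}u_k$, that $\|u_k\|\sim\sqrt{h_k}$, and that $|\rho_k-\rho_*|=O(e^{-h_k/2})$, so the product still vanishes; the paper's terse "$\|\nabla J_{\rho_*}(u_k)\|=O(|\rho_k-\rho_*|)$" elides the unbounded factor $\|\cK(u_k)\|=\|u_k\|/\rho_k$ but reaches the same conclusion. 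In short: correct, same essential route, minor methodological variant on the elliptic estimate, and a welcome refinement of the gradient bound.
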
 
	
	\begin{proof}
		Let $w_k$ be defined as in \eqref{def:w_k}. Using the blow-up analysis in \cite{LSY2023}, there exists $r_0>0$ such that $\cU_{r_0}(\xi_i)\cap \cU_{r_0}(\xi_j)=\emptyset$ for  $i\neq j$, and $ \cU_{r_0}(\xi_i)\cap \pa \Si=\emptyset$ for  $i=1,\ldots,p$.
		Let $h_{k,j}$ and $\xi_{k,j}$ be defined as in \eqref{def:h_kj}. Then, $h_{kj}=w_k(\xi_{k,j})\to +\infty$ and $\xi_{k,j}\to  \xi_j$ as $k\to +\infty$.
		Let 
		\[ \lam_{k,j}:= \sqrt{ \frac{ \rho_k V(\xi_{k,j} )}{8}} e^{\frac 1 2 h_{k,j}}.\]
		By Corollary \ref{cor:rates}, it follows that
		\begin{equation}\label{eq:w_k_out}
			e^{w_k(x)} =O( e^{h_{k,j}})= O\Big( \frac 1 {\lam_{k,j}^2}\Big)\quad  \text{ in } \, \Si \setminus\bigcup_j \cU_{ r_0}(\xi_j).
		\end{equation} 
		Define 
		\[ \epsilon_k:= w_k - \sum_j P\delta_{\lam_{k,j},\xi_{k,j}}.\] Denote $P\delta_{k,j}:=P \delta_{\lam_{k,j},\xi_{k,j}}$, $\delta_{k,j}= \delta_{\lam_{k,j},\xi_{k,j}}$, $ \chi_j= \chi( 4|y_{\xi_{k,j}}|/ r_0) $, $\hat{\varphi}_j: = \hat{\varphi}_{\xi_{k,j}}$ and $\varphi_j :=\hat{\varphi}_{\xi_{k,j}}\circ y_{\xi_{k,j}}$. Then, $\epsilon_k$ satisfies the equation
		\[	
		-\Delta_g \epsilon_k= \rho_k V e^{w_k} -\sum_{j} \chi_j e^{\delta_{k,j}} -\Big(\rho_k- \sum_{j} \overline{ \chi_j  e^{\delta_{k,j}}}\Big)   :=f_k.
		\]
		Observe that 
		\begin{equation}\label{eq:kappa_i}
			\begin{split}
				\rho_k-\sum_j \int_\Si\chi_je^{\delta_{k,j}}\, \d v_g& =\rho_k -\sum_{j}\rho_{k,j}+ \sum_j \int_{\Sigma} (V e^{w_k} -\chi_j e^{\delta_{k,j}} ) \, \d v_g
				\\ &=O\Big(  \sum_j \int_{\cU_{ r_0}(\xi_{k,j})} e^{\delta_{k,j}} d_g(x,\xi_{k,j})\, \d v_g+ \sum_j \frac 1 {\lam^2_{k,j}} \Big) =O\Big(\sum_j \frac 1 {\lam_{k,j}}\Big),
			\end{split}
		\end{equation}
		from Lemma \ref{lem:est_wk2} and \eqref{eq:est_rhok_rho_kj}. 
		Combining \eqref{eq:w_k_out} with Lemma \ref{lem:est_wk2} and \eqref{eq:kappa_i}, we have  
		\[ f_k(x)= O\Big(\sum_j  e^{\delta_{k,j}} d_g(x,\xi_{k,j}) \mathbbm{1}_{ \cU_{ r_0}(\xi_{j})} + \sum_{j} \frac 1 {\lam_{k,j}}\Big),\]
		where $\mathbbm{1}_A$ is the characteristic function of a  set $A\subset \Si$, defined by $\mathbbm{1}_A(x)= \begin{cases}
			1 &  x\in A \\
			0 & \text{otherwise}
		\end{cases}$. 
		By straightforward calculation,  for $p\in (1,2)$, it holds
		$ 
		\|f_k\|_p^p= O( \sum_{j} \frac{1}{\lam_{k,j}^{2-p}}).
		$
		Using the regularity theory (see \cite{YZ2021,A1959}) together with Corollary \ref{cor:rates},  we obtain for $p\in (1,2)$
		\[ \|\epsilon_k-\bar{\epsilon}_k\|_{W^{2,p}(\Si)}\leq \| f_k\|_p= O\Big( \sum_{j} \frac{1}{\lam_{k,j}^{2/p-1}}\Big).\]
		In particular, choosing
		$p=\frac 87 $, the Sobolev embedding theorem $ W^{2,p}(\Sigma)\hookrightarrow H^1(\Sigma)$ yields that 
		\[ \|\epsilon_k-\bar{\epsilon}_k\|_{H^1(\Si)}= O\Big( \sum_{j} \frac{1}{\lam_{k,j}^{2/p-1}}\Big) =O\Big( \sum_j \frac 1 {\lam_{k,j}^{3/4}}\Big).\] 
		Note that by definition, we have $\epsilon_k-\overline{\epsilon}_k= u_k - \sum_{j}P\delta_{k,j}$. 
		Since $u_k$ solves \eqref{maineq} with  $\rho=\rho_k$,  we have $\|\nabla J_{\rho_k}(u_k) \|=0$, and it follows that $\|\nabla J_{\rho_*}(u_k) \| =O( |\rho_k-\rho_*|)$.  
		Therefore, for any fixed  $\var>0$. there exist constants $k_0>0$ sufficiently large  and $\eta_0>0$ sufficiently small such that $u_k \in V_{\rho_*}^{\eta_0}(p,q,\var)$ for all $k\geq k_0$. 
	\end{proof}
	\section{Finite dimensional reduction}\label{sec:3}
	In this section, we begin by parametrizing the neighborhood at infinity to perform a finite dimensional reduction. We then perform asymptotic expansions of both the gradient and the energy functional in this neighborhood. Additionally, we also derive estimates of $\rho_k-4\pi m $ for any sequence of blow-up solutions of \eqref{maineq} with parameter $\rho=\rho_k\to 4\pi m$ as $k\to +\infty$.
	
	\subsection{Parametrization of the neighborhood at infinity}\label{sec:3.1}
	For any integers $p,q\in \N_0$ with $m=2p+q\in  \N$,   $\var>0$ and a constant $\eta>0$, $V^\eta_{\rho_*}(p,q,\var)$ defined in  \eqref{def:Vm}       is  the sequel neighborhood of potential critical
	points at infinity for $\rho_*=4\pi m$ with $m\in \N$. 
	
	Given that  $p,q\in \N_0$ with $2p+q\in  \N$.   For any $\var>0$ and $\eta>0$,  we define the set $\tilde{B}^\eta_{\var}$ as the collection of  $(\al,\xi,\lam)\in(\R_+)^{p+q}\times \Xi_{p,q}\times(\R_+)^{p+q} $:
	\begin{align}
		\tilde{B}^\eta_{\var}:=\Big\{(\al,\xi,\lam):&\,\al=(\al_1,\ldots,\al_{p+q}),\,|\al_i-1|<\frac{1}{\eta},\,\forall\, i=1,\ldots, p+q; \xi=(\xi_1,\ldots,\xi_{p+q})\in\Xi_{p,q},\notag\\&\, d_g(\xi_i,\xi_j)>\eta,\,\forall \,i\neq j,\text{ and }d_{g}(\xi_i,\pa\Si)>\frac{\eta}{2},\forall \, i=1,\ldots, p;\notag\\&\,\lam=(\lam_1,\ldots,\lam_{p+q}),\lam_i>\var^{-1},\forall\,i=1,\ldots, p+q\Big\}.\label{def:ballB_var^eta}	
	\end{align}
	
	If $u$ is a function in $V_{\rho_*}^\eta(p,q,\var)$, one can find an optimal representation, following the ideas introduced in \cite{BC1, BC2}. Namely, we have:
	\begin{prop}\label{prop:unique}
		There exists $\var_0>0$	such that,  for any $\var\in (0,\var_0)$, if $u\in V_{\rho_*}^\eta(p,q,\var)$, 
		then the following minimization problem
		\begin{equation}
			\label{eq:min}
			\min_{(\al,\xi,\lam)\in \bigcup_{R>0}\tilde{B}^\eta_{4R}} \|u-\psi(\al,\xi,\lam)\|
		\end{equation}
		admits a unique solution $(\al , \xi, \lam)$ up to a permutation.  Moreover, the minimizer is achieved in $\tilde{B}^\eta_{2\var}$. In particular, $u$ admits a unique decomposition of the form as follows:
		\[
		u=\sum_i \al _i P\delta_{\lam_i,\xi_i}+w,
		\] where $w\in \oH$  satisfies the orthogonality conditions
		\begin{align}\label{orthogonality}
			\la w, P\delta_{\lam_i,\xi_i}\ra =
			\Big\la w, \frac{\pa P\delta_{\lam_i,\xi_i}}{\pa \lam_i}\Big\ra	 =	\Big\la w,\frac{\pa P\delta_{\lam_i,\xi_i}}{\pa (\xi_i)_j} \Big\ra=0
		\end{align}
		for any $i\in\{1,\ldots,p+q\}$ and $j\in\{1,\ii(\xi_i)\}$.	In addition, the variables $\{\alpha_i\}_{i=1}^{p+q}$ satisfy
		\begin{equation}\label{eq:apha_i_estimate}
			\max_{i}|\al _i-1|=o\Big(\sum_i \frac{1}{\sqrt{\ln \lam_i}}\Big)=o\Big(\frac 1 {\sqrt{|\ln\var|}}\Big)
		\end{equation}
		as $\var\to  0^+$.
	\end{prop}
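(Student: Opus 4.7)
The plan follows the classical strategy for critical points at infinity of Bahri-Coron \cite{BC2}, adapted to the Liouville setting with mixed interior/boundary concentration.

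\emph{Step 1 (Existence of a minimizer).} By the definition of $V_{\rho_*}^\eta(p,q,\var)$, there exists a reference configuration $(e,\xi^0,\lam^0)\in \tilde{B}^\eta_\var$ with $\|u-\psi(e,\xi^0,\lam^0)\|<\var$, so the infimum in \eqref{eq:min} is at most $\var$. Continuity of $(\al,\xi,\lam)\mapsto \|u-\psi(\al,\xi,\lam)\|$ together with the compactness of the closure of $\tilde{B}^\eta_{4R}$ (for fixed large $R$) then produces a minimizer.

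\emph{Step 2 (Minimizer lies in $\tilde{B}^\eta_{2\var}$).} At the minimizer, the triangle inequality yields $\|\psi(\al,\xi,\lam)-\psi(e,\xi^0,\lam^0)\|<2\var$. Combined with the standard expansion $\|P\delta_{\lam_i,\xi_i}\|^2=\kap(\xi_i)\ln\lam_i+O(1)$ as $\lam_i\to\infty$ (deferred to the appendix), this closeness forces each $\lam_i>(2\var)^{-1}$, each $|\al_i-1|<(2\var)^{-1}$, and each $\xi_i$ to remain well inside the admissible separation constraints. Hence the minimizer lies in the interior of $\tilde{B}^\eta_{4R}$, and thus in $\tilde{B}^\eta_{2\var}$.

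\emph{Step 3 (Orthogonality).} Setting $w:=u-\psi(\al,\xi,\lam)$ and differentiating $\|u-\psi(\al,\xi,\lam)\|^2$ with respect to $\al_i$, $\lam_i$ and the local coordinates of $\xi_i$ (only the tangential ones when $\xi_i\in\pa\Si$, reflecting $\ii(\xi_i)=1$), the first-order conditions for an interior minimum give exactly the orthogonality relations in \eqref{orthogonality}. A dimension count ($4p+3q$ parameters against $4p+3q$ scalar equations) confirms that the system is square.

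\emph{Step 4 (Uniqueness).} This is the main technical point. Consider the map
\[
\Phi(\al,\xi,\lam):=\Bigl(\la w,P\delta_{\lam_i,\xi_i}\ra,\, \la w,\lam_i\pa_{\lam_i}P\delta_{\lam_i,\xi_i}\ra,\, \la w,\pa_{(\xi_i)_j}P\delta_{\lam_i,\xi_i}\ra\Bigr)_{i,j},
\]
whose zeros characterize candidate minimizers. At the reference configuration, its Jacobian (after the natural rescaling $\lam_i\pa_{\lam_i}$) is shown to be invertible with a uniform lower bound on its smallest singular value as $\var\to 0^+$. This rests on the fact that the bubbles, their $\lam$-derivatives and their $\xi$-derivatives form a nearly diagonal system at leading order, with diagonal blocks of order $\ln\lam_i$ and constants depending on $\kap(\xi_i)\in\{4\pi,8\pi\}$, while all cross-entries between different bubble centers are $O(1)$ thanks to the separation $d_g(\xi_i,\xi_j)>\eta$. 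The implicit function theorem then produces a unique solution in $\tilde{B}^\eta_{2\var}$.

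\emph{Step 5 (Estimate on $\al_i$).} Testing the decomposition against $P\delta_{\lam_i,\xi_i}$ and using the first relation in \eqref{orthogonality} yields
\[
\la u,P\delta_{\lam_i,\xi_i}\ra=\sum_{j}\al_j\la P\delta_{\lam_j,\xi_j},P\delta_{\lam_i,\xi_i}\ra.
\]
Writing $u=\psi(e,\xi^0,\lam^0)+r$ with $\|r\|<\var$ and applying Cauchy-Schwarz gives $|\la r,P\delta_{\lam_i,\xi_i}\ra|=O(\var\sqrt{\ln\lam_i})$. The coefficient matrix of the above linear system is strictly diagonally dominant at leading order (diagonal entries $\kap(\xi_i)\ln\lam_i+O(1)$, off-diagonal entries $O(1)$). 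Inverting this near-diagonal system gives
\[
|\al_i-1|=O\Big(\frac{1}{\ln\lam_i}\Big)+O\Big(\frac{\var}{\sqrt{\ln\lam_i}}\Big),
\]
and since $\lam_i>\var^{-1}$ by Step 2, both terms are $o(1/\sqrt{|\ln\var|})$, yielding \eqref{eq:apha_i_estimate}.

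The principal obstacle is Step 4: the sharp leading-order description of the Gram matrix of $\{P\delta_{\lam_i,\xi_i}\}$ together with its $\lam$- and $\xi$-derivatives must accommodate the two possible values $\kap(\xi_i)\in\{4\pi,8\pi\}$ and the reduced tangential direction at boundary concentration points. These expansions are by now standard but delicate in the mixed interior/boundary setting; full details will appear in the appendix.
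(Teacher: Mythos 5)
Your outline has the right high-level structure — existence, location, orthogonality, uniqueness, $\al$-estimate — and Steps 1--3 track the paper's Lemma \ref{lem:small_2vare} and the standard first-order-condition argument. Step 4, however, takes a genuinely different route from the paper (implicit function theorem instead of a contradiction argument), and as written it has a real gap.

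The paper's uniqueness proof (Appendix \ref{app:B}) proceeds by contradiction: assuming two distinct minimizers $(\al,\xi,\lam)\neq(\tilde\al,\tilde\xi,\tilde\lam)$, it first invokes Lemma \ref{lem:pre_unique}, which shows that $\|\psi(\al,\xi,\lam)-\psi(\tilde\al,\tilde\xi,\tilde\lam)\|\to 0$ forces the parameter differences to be small in the normalized sense: $\lam_i/\tilde\lam_i\to 1$, $\lam_i\tilde\lam_i d_g^2(\xi_i,\tilde\xi_i)\to 0$, $|\al_i-\tilde\al_i|^2\ln\lam_i\to 0$. With these normalized differences $a_i=\lam_i d_g(\xi_i,\tilde\xi_i)$, $b_i=\lam_i/\tilde\lam_i-1$, $\iota_i=\al_i-\tilde\al_i$ shown small, it then uses the orthogonality relations at both minimizers together with bilinear estimates of $\la P\delta_j-P\tilde\delta_j,\,\cdot\,\ra$ to derive a homogeneous linear system forcing $a_i=b_i=\iota_i=0$.

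Your IFT replacement has two problems. First, the IFT gives only local uniqueness near the reference configuration, but the minimization runs over $\bigcup_{R>0}\tilde B^\eta_{4R}$, which is not a small set: $|\al_i-1|<1/\eta$ is an $O(1)$-sized constraint, the $\xi$-variables range over the full configuration manifold, and $\lam_i$ is unbounded above. Passing from local to global uniqueness requires a separate argument that every candidate minimizer already clusters near the reference; this is exactly the content of the paper's Lemma \ref{lem:pre_unique}, and your outline does not supply an analogue of it. (In particular, your Step 2 establishes only that the minimizer satisfies the $\tilde B^\eta_{2\var}$ constraints, which is much weaker than parameter proximity to $(e,\xi^0,\lam^0)$.) Second, the claim of a "uniform lower bound on the smallest singular value" after only the $\lam_i\pa_{\lam_i}$ rescaling does not hold: the diagonal Gram blocks have disparate magnitudes $\la P\delta_i,P\delta_i\ra\sim 4\kap(\xi_i)\ln\lam_i$, $\la\lam_i\pa_{\lam_i}P\delta_i,\lam_i\pa_{\lam_i}P\delta_i\ra=O(1)$, $\la\pa_{(\xi_i)_j}P\delta_i,\pa_{(\xi_i)_j}P\delta_i\ra=O(\lam_i^2)$, so uniform invertibility requires the full normalization $P\delta_i/\sqrt{\ln\lam_i}$, $\lam_i\pa_{\lam_i}P\delta_i$, $\lam_i^{-1}\pa_{(\xi_i)_j}P\delta_i$, which you do not state and which further shrinks the IFT neighborhood in the $\al$-direction to $O(1/\ln\lam_i)$.

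Your Step 5 is morally correct, but it silently uses $\la P\delta^0_j,P\delta_i\ra = \la P\delta_j,P\delta_i\ra + o(\ln\lam_i)$ and in particular $\la P\delta^0_i,P\delta_i\ra=4\kap(\xi_i)\ln\lam_i+O(1)$, which again presupposes $\lam_i/\lam^0_i\to 1$ and $\lam_i\lam^0_i d_g^2(\xi_i,\xi^0_i)\to 0$ — the missing Lemma \ref{lem:pre_unique}-type convergence. Also note the diagonal coefficient is $4\kap(\xi_i)\ln\lam_i$, not $\kap(\xi_i)\ln\lam_i$. With the global-to-local lemma in hand, the IFT route would be a defensible alternative to the paper's contradiction argument; without it, Steps 4 and 5 do not close.
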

	\begin{proof}
		The complete proof can be found in Appendix \ref{app:B}.
	\end{proof}
	Given  $\xi=(\xi_1, \ldots, \xi_{p+q})\in\Si^{p+q}$ and $\lam=(\lam_1, \ldots, \lam_{p+q})\in(\R_+)^{p+q}$, we further define 
	\[
	E_{\lam,\xi}^{p,q}:=\{w \in \oH:  \text {$w$ satisfies  \eqref{orthogonality} and $\|w\|\leq C \var$}\}
	\]
	for some universal constant $C>0$.  This set will be used throughout the subsequent analysis.
	
	\subsection{Expansions of the gradient  in the neighborhood at infinity}\label{sec:3.2}
	To analyze the behavior  of the functional $J_{\rho}  $ in the set $V^{\eta}_{\rho}(p,q,\var) $, one needs to  compute the  derivatives of $J_{\rho}$ with respect to the variables $(\al,\xi,\lam)$. To maintain clarity of exposition, we provide the technical details of the proof in Appendix \ref{app:C}.
	
	In this section, we   assume that  $p,q\in \N_0$ with  $m:=2p+q\in\N$, $\rho_*:=4\pi m$, $\eta>0$, $\mu\in \R$  is a  small  constant  and $\var>0$ is sufficiently small.  In addition, We use the standard asymptotic notation: 
	$ g = O(f)$ means that $ |g/f| \leq C$ for some constant $ C > 0$, while $ g = o(f)$ means that $ g/f \to 0$.
	
	Recall that 
	\[ 	\cF_{\xi}^i(x):= V(x) \exp\Big\{\kap( \xi_i)H^g(x,\xi_i)+ \sum_{j\neq i} \kap( \xi_j) G^g(x,\xi_j)\Big\},\quad \text{ for }x\in\Si. \]
	We also introduce the following auxiliary functions for given $\xi=(\xi_1,\ldots, \xi_{p+q})\in \Xi_{p,q}$ and $i=1,\ldots, p+q$:
	\begin{equation}
		\label{def:rg}
		\rg_{\xi}^i(x) := \exp\Big\{(\al_i-1) \kap( \xi_i)H^g(x,\xi_i)+ \sum_{j\neq i}(\al_j-1) \kap( \xi_j) G^g(x,\xi_j) \Big\},
	\end{equation}
	and 
	\begin{equation}
		\label{def:frak_f2} 	\mathfrak{f}_2(x):=\left\{
		\begin{array}{ll}
			0& \text{ if }\,x\in \intS,\\
			-(\pa_{\nu_g}  \ln  (\cF^i_{\xi}\rg^i_{\xi})+2k_g )\cF^i_{\xi}\rg^i_{\xi}
			\Big|_{x}& \text{  if }\,	x\in \pa\Si.
		\end{array}
		\right.
	\end{equation} 	
	
	\begin{prop}\label{prop:lamexpansion}
		Let $u=u_0+w \in V_{\rho}^\eta(p,q,\var)$ with $\rho=(1+\mu)\rho_*$, where   $u_0=\sum_{i} \al_i P\delta_{\lam_i,\xi_i}$ and  $w\in E_{\lam,\xi }^{p,q}$. Then, for each $i=1,\ldots,p+q$,  the following expansion holds as  $\var\to  0^+$:
		\begin{align}
			\Big\langle \nabla J_{\rho }(u),\lam_i\frac{\pa P\delta_{\lam_i,\xi_i}}{\pa \lam_i}\Big\rangle =&\,2\al _i \kap(\xi_i)   (\tau_i-\mu+\mu\tau_i)-\frac \pi 2   \frac{\rho\mathfrak{f}_2(\xi_i)  }{\int_{\Si} V e^u \, \d v_g} \lam_i^{4 \al _i-3}\nonumber\\
			&\,+\mu\rho_*\frac{\kap(\xi_i)}{|\Si|_g}\frac{\ln\lam_i}{\lam_i^2}- (1+\mu)(1-\tau_i)\sum_{j}\frac{\kap(\xi_i)\kap(\xi_j)}{|\Si|_g}\frac{\ln\lam_j}{\lam_j^2} \nonumber\\
			&\,+O\Big(\sum_j |\al _j-1|^2+\|w\|^2 \Big)+o\Big(\sum_j\frac{\ln\lam_j}{\lam_j^2}\Big),\label{prop:lamexpansion-1}
		\end{align}
		where 
		\begin{gather}
			\label{def:tau_i}
			\tau_i:= 1-\frac{\rho_*}{8(2 \al _i-1)} \frac{(\mathcal{F}_{\xi}^i
				\mathrm{g}_{\xi}^i )(\xi_i)}{\int_{\Si} V e^u\, \d v_g} \lam_i^{4 \al _i-2},
		\end{gather}
		and $\cF_{\xi}^i,\rg_{\xi}^i$ and $ \mathfrak{f}_2$ are given by \eqref{eq:def_F}, \eqref{def:rg} and \eqref{def:frak_f2}, respectively.
		Moreover, we have
		\begin{equation}
			\label{eq:est_tau_i}
			|\tau_i|=O\Big(\var+|\mu|+\sum_j |\al _j-1|^2\Big).
		\end{equation} 
	\end{prop}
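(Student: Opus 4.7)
The plan is to unfold the gradient by testing with $\phi_i:=\lam_i \pa_{\lam_i} P\delta_{\lam_i,\xi_i}$, namely
\[
\Big\la \nabla J_{\rho}(u),\phi_i\Big\ra
=\int_{\Si}\la\nabla u,\nabla \phi_i\ra_g\,\d v_g
-\rho\,\frac{\int_{\Si}V e^u\phi_i\,\d v_g}{\int_{\Si}V e^u\,\d v_g},
\]
and then use the projection equation \eqref{projectedbubble}, differentiated in $\lam_i$, to convert the Dirichlet piece into an integral against $\lam_i\pa_{\lam_i}(\chi_{\xi_i}e^{\delta_{\lam_i,\xi_i}})$; the mean-zero constraint on $u$ kills the averaged correction. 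I would then split $\Si=U_{r_0}(\xi_i)\cup(\Si\setminus U_{r_0}(\xi_i))$, rescale by $y\mapsto y/\lam_i$ in the isothermal chart $(y_{\xi_i},U(\xi_i))$, and write $u=u_0+w$ with $u_0=\sum_j \al_j P\delta_{\lam_j,\xi_j}$ in the bubble region.

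Next I would handle the two pieces separately. In the inner region $U_{r_0}(\xi_i)$, the classical expansion $P\delta_{\lam_j,\xi_j}(x)=\kap(\xi_j)G^g(x,\xi_j)+O(\lam_j^{-2})$ for $j\neq i$ plus $P\delta_{\lam_i,\xi_i}=\delta_{\lam_i,\xi_i}-\ln(8\lam_i^2)+\kap(\xi_i)H^g(x,\xi_i)+o(1)$ turns $\rho V e^u$ into $\rho(\cF^i_{\xi}\rg^i_{\xi})(x)\,\lam_i^{2\al_i}e^{\al_i\tilde\delta_{1,0}}(1+w+O(w^2))$ after Taylor-expanding $e^{(\al_j-1)P\delta_{\lam_j,\xi_j}}$. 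Freezing $\cF^i_{\xi}\rg^i_{\xi}$ at $\xi_i$ (up to Taylor remainders), changing variables, and using the Beta-function identities
\[
\int_{\R^2}\!\frac{(1-|y|^2)\,|y|^a}{(1+|y|^2)^{2\al_i+1}}\,\d y,\qquad
\int_{\pa\R^2_+}\!\frac{(1-|y|^2)\,|y|^a}{(1+|y|^2)^{\al_i+1}}\,\d y,
\]
produces the combination that defines $\tau_i$ via \eqref{def:tau_i}; the boundary integral is precisely the source of the half-power $\lam_i^{4\al_i-3}$ and the factor $\mathfrak{f}_2(\xi_i)$, which vanishes when $\xi_i\in\intS$ because then $\pa_{\nu_g}$ is not applied along a boundary. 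In the outer region I would use Lemma \ref{lem:w_k_1}/Lemma \ref{lem:est_wk2} to bound $Ve^u$ by $O(\lam_{\max}^{-2})$ and use $\lam_i\pa_{\lam_i}P\delta_{\lam_i,\xi_i}\to 0$ in $C^1_{\mathrm{loc}}(\Si\setminus\{\xi_i\})$ together with the identity $P\delta_{\lam_j,\xi_j}=\kap(\xi_j)G^g(\cdot,\xi_j)+O(\lam_j^{-2})$ to extract the average terms $\kap(\xi_j)/|\Si|_g\cdot \ln\lam_j/\lam_j^2$, which explains the coefficients $\kap(\xi_i)\kap(\xi_j)/|\Si|_g$ and the prefactor $1-\tau_i$ coming from the mass balance.

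Finally, I would use the orthogonality conditions \eqref{orthogonality} to cancel the linear-in-$w$ contribution from the Dirichlet integral, so that $w$ enters only quadratically, yielding the $O(\|w\|^2)$ remainder, and invoke \eqref{eq:apha_i_estimate} to convert $(\al_j-1)$-error terms into $O(\sum|\al_j-1|^2)$. The bound \eqref{eq:est_tau_i} follows by combining the definition of $\tau_i$ with the total-mass identity $\rho=(1+\mu)\rho_*$: comparing $\rho \int_{U_{r_0}(\xi_i)} Ve^u$ with the local bubble integral $8\pi\al_i/(2\al_i-1)\cdot(1+o(1))$ forces $\tau_i=O(\mu)+O(|\al_i-1|)+o(1)$, and a further iteration using \eqref{eq:apha_i_estimate} yields the claimed quadratic dependence on $|\al_j-1|$. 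The main obstacle is the bookkeeping of boundary versus interior contributions: the geodesic-curvature term in \eqref{varphixi} and the Neumann condition satisfied by $\hat\varphi_{\xi_i}$ cause the Taylor expansion of the conformal factor to start at linear, rather than quadratic, order, which is precisely why the boundary coefficient $\mathfrak{f}_2$ survives at the order $\lam_i^{4\al_i-3}$ and must be isolated before integrating against the odd kernel $\lam_i\pa_{\lam_i}e^{\delta_{\lam_i,\xi_i}}$.
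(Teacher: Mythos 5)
Your high-level strategy — split the gradient into a Dirichlet piece and a $\rho V e^u$ piece, use the orthogonality/mean-zero structure to simplify the Dirichlet piece, expand $Ve^{u_0}$ near each concentration point and rescale, then treat $w$ via orthogonality plus nonlinear remainders — is the same as the paper's (the paper uses $\langle u_0+w,\phi_i\rangle=\langle u_0,\phi_i\rangle$ directly via \eqref{orthogonality} and Lemma \ref{lem:projbubble_derivative_integral} where you differentiate \eqref{projectedbubble} and integrate by parts; these are equivalent). However, there are two substantive gaps and several inaccuracies in your sketch.

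First, for the outer region $\Si\setminus U_{r_0}(\xi_i)$ you propose to invoke Lemma \ref{lem:w_k_1} and Lemma \ref{lem:est_wk2}. Those lemmas concern a sequence of genuine blow-up \emph{solutions} $w_k$ of the PDE; Proposition \ref{prop:lamexpansion} is a statement about an arbitrary element $u=u_0+w$ of the neighborhood $V_\rho^\eta(p,q,\var)$, which is not a solution. The correct tool is Lemma \ref{lem:Ve^{u_0}_pointwise}, which gives the pointwise asymptotics of $Ve^{u_0}$ for $u_0=\sum_j\al_jP\delta_{\lam_j,\xi_j}$ directly from Lemma \ref{lem:projbubble_pointwise}. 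As written, your argument would not apply.

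Second, your identification of the source of the $\mathfrak f_2(\xi_i)\lam_i^{4\al_i-3}$ term as a line integral over $\pa\R^2_+$ is not right; the displayed one-dimensional integrals do not appear in the computation. The half-power arises from the \emph{interior} integral over the half-disk $\B^+_{r_0}$, where the first-order Taylor term of $e^{\hat\varphi_{\xi_i}}\cF^i_\xi\rg^i_\xi$ (in the $y_2$ direction, driven by \eqref{varphixi}) integrated against $\frac{4\lam_i^{4\al_i}y_2}{(1+\lam_i^2|y|^2)^{2\al_i+1}}$ fails to vanish because the half-disk is not symmetric in $y_2$; this produces the Beta-function coefficient $B\bigl(\tfrac32,2\al_i-\tfrac12\bigr)$ in the paper's computation (the display \eqref{Taylor2alpha+1}). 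When $\xi_i\in\intS$ the corresponding full-disk integral of $y_j/(\cdots)$ vanishes by oddness, which is why $\mathfrak f_2\equiv 0$ there, not because ``$\pa_{\nu_g}$ is not applied.'' There are also minor slips — $-\ln(8\lam_i^2)$ should be $\ln(\lam_i^2/8)$, and the rescaled prefactor is $\lam_i^{4\al_i}$ rather than $\lam_i^{2\al_i}$ — and the quadratic error $O(\sum_j|\al_j-1|^2)$ comes from Taylor expansion identities such as $\tfrac1{\al_i^2}-\tfrac1{2\al_i-1}=O(|\al_i-1|^2)$ and \eqref{Phibound}, not from any ``conversion'' via \eqref{eq:apha_i_estimate}. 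Finally, for \eqref{eq:est_tau_i} the paper simply reads it off the already-proved expansion \eqref{prop:lamexpansion-1} together with $\|\nabla J_{\rho}(u)\|<\var$ and $\|\lam_i\pa_{\lam_i}P\delta_i\|=O(1)$; your mass-balance route would need more care to produce the claimed bound.
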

	\begin{proof}
		The proof is provided in  Appendix \ref{app:C}.
	\end{proof}

	From \eqref{def:tau_i} and \eqref{eq:est_tau_i},  there exists a constant $C>0$ such that 
	\[	
	\frac{1}{C}<\frac{(\cF_{\xi}^i g^i_\xi)(\xi_i) \lam_i^{4\al_i-2}}{(\cF_{\xi}^j g^j_\xi)(\xi_j) \lam_j^{4\al_j-2}}<C,\quad \forall\, i,j=1,\ldots,p+q.
	\]
	In particular, under stronger assumptions, the blow-up scales become asymptotically synchronized:
	\begin{cor}\label{cor:lam_i_samespeed}
		Under the same assumptions as in Proposition \ref{prop:lamexpansion}, if, in addition, we assume that $|\mu |\leq C\var$ for some constant $C>0$ and $\sum_s|\al_s-1|\ln \lam_s=o(1)$, then  for any $i,j=1,\ldots, p+q$, the following estimate holds:
		\begin{equation}\label{eq:est_lam_i_j}
			\left|1- \frac{\cF^i_\xi(\xi_i) \lam_i^2}{\cF^j_\xi(\xi_j) \lam_j^2} \right|=o(1).
		\end{equation}
	\end{cor}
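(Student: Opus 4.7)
The plan is to read off the conclusion directly from the identity defining $\tau_i$ in \eqref{def:tau_i}, by first showing that $\tau_i = o(1)$ and then isolating $\mathcal{F}^i_{\xi}(\xi_i)\lambda_i^2$.

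First I would verify the decay $\tau_i = o(1)$ from the estimate \eqref{eq:est_tau_i} in Proposition \ref{prop:lamexpansion}, which gives $|\tau_i| = O(\varepsilon + |\mu| + \sum_j |\alpha_j-1|^2)$. The assumption $|\mu| \leq C\varepsilon$ handles the first two terms, while the hypothesis $\sum_s |\alpha_s-1| \ln \lambda_s = o(1)$ together with $\lambda_s > \varepsilon^{-1}\to\infty$ forces $\max_s|\alpha_s-1|=o(1)$, hence $\sum_j|\alpha_j-1|^2=o(1)$. Therefore $\tau_i = o(1)$ for every $i$.

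Next I would simplify the three factors that multiply $\mathcal{F}^i_\xi(\xi_i)\lambda_i^2$ in the definition of $\tau_i$. Since $\max_s|\alpha_s-1|=o(1)$, we have $2\alpha_i-1 = 1 + o(1)$. For $\mathrm{g}^i_\xi(\xi_i)$, the exponent
\[
(\alpha_i-1)\kappa(\xi_i)R^g(\xi_i) + \sum_{j\neq i}(\alpha_j-1)\kappa(\xi_j)G^g(\xi_i,\xi_j)
\]
is bounded by $C\max_s|\alpha_s-1|$ because $R^g(\xi_i)$ and $G^g(\xi_i,\xi_j)$ for $i\neq j$ are bounded on the compact set $\{\xi : d_g(\xi_i,\xi_j)>\eta\}$; hence $\mathrm{g}^i_\xi(\xi_i) = 1 + o(1)$. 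Finally $\lambda_i^{4\alpha_i-2} = \lambda_i^2 \exp\bigl(4(\alpha_i-1)\ln\lambda_i\bigr) = \lambda_i^2 (1+o(1))$, precisely by the hypothesis $\sum_s |\alpha_s-1|\ln\lambda_s = o(1)$.

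Combining these, the definition \eqref{def:tau_i} rewrites as
\[
1 - \tau_i = \frac{\rho_*}{8}(1+o(1))\, \frac{\mathcal{F}^i_\xi(\xi_i)\lambda_i^2}{\int_\Sigma V e^u\, \mathrm{d}v_g},
\]
so that
\[
\mathcal{F}^i_\xi(\xi_i)\lambda_i^2 = \frac{8}{\rho_*}\bigl(1+o(1)\bigr) \int_\Sigma V e^u\, \mathrm{d}v_g,
\]
independently of $i$. Taking the ratio of this identity for two indices $i$ and $j$ yields \eqref{eq:est_lam_i_j}. No genuine obstacle is expected: the content has already been packaged into Proposition \ref{prop:lamexpansion}, and the corollary is a bookkeeping consequence of the two additional smallness hypotheses, which are exactly what is needed to absorb the error terms $\lambda_i^{4(\alpha_i-1)}$ and $\mathrm{g}^i_\xi(\xi_i)$ into $1+o(1)$.
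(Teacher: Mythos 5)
Your proof is correct and follows essentially the same route as the paper's: the paper likewise invokes Proposition \ref{prop:lamexpansion} with $|\mu| \le C\varepsilon$ to get $\tau_i = o(1)$, and then uses $\mathrm{g}^i_\xi(\xi_i) = 1 + O(\sum_j|\alpha_j-1|)$ together with $\sum_s|\alpha_s-1|\ln\lambda_s = o(1)$ to replace $(2\alpha_i-1)^{-1}(\mathcal{F}^i_\xi\mathrm{g}^i_\xi)(\xi_i)\lambda_i^{4\alpha_i-2}$ by $\mathcal{F}^i_\xi(\xi_i)\lambda_i^2(1+o(1))$ before taking the ratio. Your write-up is a slightly more detailed version of the same bookkeeping.
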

	\begin{proof}
		Using Proposition \ref{prop:lamexpansion} with the assumption  $|\mu|\leq C \var$ for some constant $C>0$, we have 
		\[ \frac{(\cF_{\xi}^i g^i_\xi)(\xi_i)}{2\al_i-1} \lam_i^{4\al_i-2} =(1+o(1)) \frac{ 8 \int_\Si V e^u \, \d v_g}{\rho_*}, \quad\text{as } \varepsilon\to 0^+. \]
		Since $g^i_\xi(\xi_i)=1+ O(\sum_j |\al _j-1|)$ and $\sum_s|\al_s-1|\ln \lam_s=o(1)$,  it follows that 
		\[ \cF^i_{\xi}(\xi_i)\lam_i^2=(1+o(1)) \cF_{\xi}^j(\xi_j)\lam_j^2,\quad \forall\, i,j=1,\ldots,p+q.\]
		This concludes the proof of \eqref{eq:est_lam_i_j}.
	\end{proof}

	\begin{prop}\label{prop:bubble_expansion}
		Under the same assumptions of Proposition \ref{prop:lamexpansion},	the following expansion  	holds as $\var\to0^+$:
		\begin{align}
			\Big\la\nabla J_{\rho} (u), \frac{P \delta_{\lam_i,\xi_i}  }{\ln \lam_i }\Big\ra=&\, 4\kap (\xi_i)( (\al _i-1)+(\tau_i-\mu+\mu\tau_i))\nonumber \\&\, - \Big(2\pi-\frac{(1+2\ln 2)\pi}{\ln\lam_i}\Big) \frac{\rho \mathfrak{f}_2(\xi_i)}{\int_\Si  V e^u  \, \d v_g} \lam_i^{4\al_i-3} -\sum_{j}  \frac{\rho \pi  H^g(\xi_j,\xi_i)\mathfrak{f}_2(\xi_j) }{2\int_\Si   V e^u \, \d v_g } \frac{\lam_{j}^{4\al _j-3}}{\ln \lam _i}\nonumber\\
			&
			\,+O \Big(\sum_{j}  \frac{|\al _j-1|}{\ln \lam _i}+\sum_{j}  \frac{ |\tau_j-\mu+\mu\tau_j|}{\ln \lam _i}+\Big(\frac{1}{\ln \lam_{i}}+ |\al _i-1|\Big)\|w\|+\|w\|^2\Big)\nonumber\\
			&\, + O\Big(\frac 1 {\ln \lam_i}\sum_j \frac{\ln \lam_j}{\lam_j^2}+ \frac{\ln \lam_i}{\lam_i^2} +\sum_j \frac 1 {\lam_j^{4\al _j-2}}\Big),\label{eq:est_PU_0_gradient}
		\end{align}
		where $\cF_{\xi}^i$,    $\rg_{\xi}^i$, $\mathfrak{f}_2$  and $\tau_i$, are  defined in  \eqref{eq:def_F},  \eqref{def:rg}, \eqref{def:frak_f2} and \eqref{def:tau_i},     respectively.  
	\end{prop}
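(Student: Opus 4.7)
The plan is to adapt, step by step, the strategy used in the proof of Proposition \ref{prop:lamexpansion}, replacing the test function $\lam_i\pa_{\lam_i}P\delta_{\lam_i,\xi_i}$ by $P\delta_{\lam_i,\xi_i}$. Integration by parts via \eqref{projectedbubble}, combined with $\int_\Si u\,\d v_g=0$ and the homogeneous Neumann boundary condition, first reduces the pairing to
\begin{equation*}
\langle\nabla J_\rho(u),P\delta_{\lam_i,\xi_i}\rangle = \int_\Si u\, \chi_i e^{\delta_{\lam_i,\xi_i}}\,\d v_g - \frac{\rho}{\int_\Si V e^u\,\d v_g}\int_\Si V e^u\, P\delta_{\lam_i,\xi_i}\,\d v_g =: I_1-I_2.
\end{equation*}
Both pieces are then expanded asymptotically in the isothermal coordinates of Section \ref{sec:2.1}, and \eqref{eq:est_PU_0_gradient} follows by collecting terms and dividing by $\ln\lam_i$.

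For $I_1$, I would substitute $u=\sum_j \al_j P\delta_{\lam_j,\xi_j}+w$ and observe that the orthogonality $\langle w,P\delta_{\lam_i,\xi_i}\rangle = 0$ from \eqref{orthogonality} gives, after integration by parts, $\int_\Si w\,\chi_i e^{\delta_{\lam_i,\xi_i}}\,\d v_g=0$, so that $w$ contributes only through higher-order cross-terms. The remaining pairings $\int_\Si P\delta_{\lam_j,\xi_j}\,\chi_i e^{\delta_{\lam_i,\xi_i}}\,\d v_g$ are evaluated from $P\delta_{\lam_j,\xi_j}=\delta_{\lam_j,\xi_j}+\kap(\xi_j)H^g(\cdot,\xi_j)-\ln(8\lam_j^2)+O(\lam_j^{-2})$ near $\xi_j$ and $P\delta_{\lam_j,\xi_j}=\kap(\xi_j)G^g(\cdot,\xi_j)+o(1)$ elsewhere; the self term ($j=i$) yields the leading $4\kap(\xi_i)\al_i\ln\lam_i$, while the cross terms ($j\ne i$) contribute through $\kap(\xi_j)G^g(\xi_i,\xi_j)$ factors.

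For $I_2$, I would expand $V e^u$ near each $\xi_j$ in the form $\cF^j_\xi(\xi_j)\rg^j_\xi(\xi_j)\,e^{\al_j\delta_{\lam_j,\xi_j}}$ times a first-order Taylor correction of $V\exp(\hat\varphi_{\xi_j})$ and of $\sum_{\ell\ne j}\al_\ell\kap(\xi_\ell)G^g(\cdot,\xi_\ell)$, and split the integration into a neighborhood of $\xi_i$, neighborhoods of the other $\xi_j$'s, and an exterior region which is exponentially small by Corollary \ref{cor:rates}. The $\xi_i$-piece produces, via the definition \eqref{def:tau_i} of $\tau_i$, the $-4\kap(\xi_i)(1-\tau_i+\mu-\mu\tau_i)\ln\lam_i$ contribution together with the diagonal boundary coefficient in front of $\mathfrak{f}_2(\xi_i)\lam_i^{4\al_i-3}$; each $\xi_j$-piece ($j\ne i$) contributes the cross term involving $H^g(\xi_j,\xi_i)\mathfrak{f}_2(\xi_j)\lam_j^{4\al_j-3}$ via the evaluation $P\delta_{\lam_i,\xi_i}(\xi_j)\approx \kap(\xi_i)G^g(\xi_j,\xi_i)$. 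The vanishing of $\mathfrak{f}_2$ at interior points is enforced by the parity of the bubble in isothermal coordinates, while at boundary points the identity $\nabla\hat\varphi_{\xi_i}(0)=(0,-2k_g(\xi_i))$ from \eqref{varphixi} produces exactly the $-2k_g$ summand inside \eqref{def:frak_f2}.

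The main obstacle will be tracking, with the correct numerical constants, the subleading boundary terms, namely the factor $(1+2\ln 2)\pi/\ln\lam_i$ multiplying $\mathfrak{f}_2(\xi_i)\lam_i^{4\al_i-3}$ and the coefficient $\pi/2$ in the cross term. These come from explicit Beta-function type integrals of $\ln(1+|y|^2)$ and $y_2$ against $(1+|y|^2)^{-2}$ on $\R^2$ or $\R_+^2$, and must be propagated correctly through the rescaling $y\mapsto y/\lam_i$ and the expansion $\ln(8\lam_j^2) = \ln 8 + 2\ln\lam_j$. The remainder budget, namely $\|w\|$-errors from the orthogonality defect, $|\al_j-1|$-errors from linearizing in $\al$, $\ln\lam_j/\lam_j^2$ from the $P\delta$-expansion residues, and $\lam_j^{-(4\al_j-2)}$ from self-interaction remainders, parallels that of Proposition \ref{prop:lamexpansion} and introduces no new difficulty.
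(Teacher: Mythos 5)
Your proposal takes essentially the same route as the paper. The opening reduction $\langle\nabla J_\rho(u),P\delta_{\lam_i,\xi_i}\rangle = \int_\Si u\,\chi_i e^{\delta_{\lam_i,\xi_i}}\,\d v_g - \rho\,(\int_\Si Ve^u P\delta_{\lam_i,\xi_i}\,\d v_g)/(\int_\Si Ve^u\,\d v_g)$ is exactly what the paper carries out (after dividing by $\ln\lam_i$), and by the orthogonality condition your first piece reduces to $\sum_j\alpha_j\langle P\delta_{\lam_j,\xi_j}, P\delta_{\lam_i,\xi_i}\rangle$, which is the paper's starting identity; from there both arguments expand the bubbles, Taylor-expand the coefficient functions, and use the same Beta-function integrals to extract the $\mathfrak{f}_2$ terms.

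Three minor slips are worth flagging, though none is a genuine gap. First, the near-$\xi_j$ expansion should read $P\delta_{\lam_j,\xi_j} = \delta_{\lam_j,\xi_j} + \ln(\lam_j^2/8) + \kappa(\xi_j)H^g(\cdot,\xi_j) + O(\ln\lam_j/\lam_j^2)$ as in Lemma \ref{lem:projbubble_pointwise}, not $-\ln(8\lam_j^2)$; the two expressions differ by $4\ln\lam_j$, and only the former yields the $4\kappa(\xi_i)\alpha_i\ln\lam_i$ self-interaction you in fact anticipate, so this is just a typo. Second, the smallness of the exterior region in the $\int Ve^u P\delta_i$ integral should be justified by Lemma \ref{lem:Ve^{u_0}_pointwise} (which gives $Ve^{u_0}=O(1)$ away from the bubbles), not by Corollary \ref{cor:rates}: the latter is a pointwise estimate for blow-up solution sequences, whereas here $u=u_0+w$ is an arbitrary point of the neighborhood at infinity, not assumed to solve the equation. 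Third, you do not separate the $w$-linear and higher-order-in-$w$ contributions to the $\int Ve^u P\delta_i$ term; these are what produce the $\|w\|$- and $\|w\|^2$-weighted remainders in \eqref{eq:est_PU_0_gradient}, and the paper handles them as separate pieces $I_2$, $I_3$ via Lemmas \ref{lem:w_error_integral_estimates} and \ref{lem:A.5}. The computation is routine and parallels Proposition \ref{prop:lamexpansion} as you say, but it needs to be written out explicitly to land the stated error budget.
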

	\begin{proof}
		The proof is provided in  Appendix \ref{app:C}.
	\end{proof}
	
	\begin{prop}\label{prop:3} Under the same assumptions as in Proposition \ref{prop:lamexpansion}, the following expansion 	holds as $\var\to0^+$:
		\begin{align}	
			\Big\la \nabla J_\rho (u), \frac{1}{\lam_i}\frac{\pa P\delta_{\lam_i , \xi_i}}{\pa(\xi_i)_j} \Big\ra =&\, -\frac{(1+\mu)}{2}\frac{1}{\lam_i}\frac{\pa \ff(\xi)}{\pa(\xi_i)_j} +O\Big( \frac{|\mu|}{\lam_i}+\sum_s\frac{|\tau_s|}{\lam_i}+ \frac{\ln \lam_i}{\lam_i^2}\Big)\nonumber\\
			&\, +O\Big(\sum_s |\al _s-1|^2+\sum_s \frac{1}{\lam_s^2}+ \|w\|^2\Big),\label{eq:gradient_pa_xi_i}
		\end{align}
		where $j\in\{1,\ii(\xi_i)\}$ and  $\ff$ is defined in \eqref{eq:reduce_fun_0}.
	\end{prop}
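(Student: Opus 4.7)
The plan is to compute $\la \nabla J_\rho(u), v\ra$ with $v := \frac{1}{\lam_i}\frac{\pa P\delta_{\lam_i,\xi_i}}{\pa (\xi_i)_j}$ from the variational identity
\[
\la \nabla J_\rho(u), v\ra = \int_\Si \la \nabla_g u, \nabla_g v\ra_g\, \d v_g - \frac{\rho}{\int_\Si V e^u\, \d v_g}\int_\Si V e^u v\, \d v_g.
\]
Writing $u = u_0 + w$, the orthogonality condition \eqref{orthogonality} eliminates the $w$-contribution to the first integral, and \eqref{projectedbubble} combined with $\int_\Si v\, \d v_g = 0$ reduces it to $\sum_k \al_k \int_\Si \chi_k e^{\delta_{\lam_k,\xi_k}} v\, \d v_g$. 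Thus
\[
\la \nabla J_\rho(u), v\ra = \sum_{k} \al_k \int_\Si \chi_k e^{\delta_{\lam_k,\xi_k}} v\, \d v_g - \frac{\rho}{\int_\Si V e^u\, \d v_g}\int_\Si V e^u v\, \d v_g.
\]

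The next step is to combine the standard expansion $P\delta_{\lam_k,\xi_k} = \delta_{\lam_k,\xi_k} + 2\ln\lam_k - \ln 8 + \kap(\xi_k)H^g(\cdot,\xi_k) + o(1)$ with the two-scale description of $v$: near $\xi_i$, $\pa_{(\xi_i)_j} P\delta_{\lam_i,\xi_i}$ behaves as $-\pa_{y_j}\delta_{\lam_i,\xi_i}$ in isothermal coordinates, while on compact sets away from $\xi_i$ it behaves as $\kap(\xi_i)\pa_{(\xi_i)_j} H^g(\cdot,\xi_i) + O(\lam_i^{-2})$. Simultaneously, $Ve^u = Ve^{u_0}(1+w+O(w^2))$ takes the concentrated Liouville form $\cF^k_\xi\rg^k_\xi\,\lam_k^{4\al_k}(1+\lam_k^2|y_{\xi_k}|^2)^{-2\al_k}$ near each $\xi_k$; integrating gives $\int_\Si Ve^u\, \d v_g \approx \sum_k \frac{\kap(\xi_k)\lam_k^2\cF^k_\xi(\xi_k)}{8}$, and by Corollary \ref{cor:lam_i_samespeed} this equals $\frac{\rho_*\lam_i^2\cF^i_\xi(\xi_i)}{8}(1+o(1))$.

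The first integral decomposes at main order as $\frac{\al_i\kap^2(\xi_i)}{2\lam_i}\pa_{(\xi_i)_j}R^g(\xi_i) + \sum_{k\neq i}\frac{\al_k\kap(\xi_i)\kap(\xi_k)}{\lam_i}\pa_{(\xi_i)_j} G^g(\xi_k,\xi_i)$, using that $\chi_k e^{\delta_{\lam_k,\xi_k}}$ concentrates at $\xi_k$ with mass $\kap(\xi_k)$, together with the diagonal-symmetry identity $\pa_{(\xi_i)_j} R^g(\xi_i) = 2\pa_2 H^g(\xi_i,\xi_i)$; the bubble-derivative self-piece vanishes at main order by odd/even symmetry on the interior, and by the tangential restriction $j\in\{1,\ii(\xi_i)\}$ together with \eqref{varphixi} on the boundary. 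The main contribution $-\frac{1+\mu}{2\lam_i}\pa_{(\xi_i)_j}\ff(\xi)$ arises entirely from the self bubble-derivative piece of the second integral: a change of variables $z = \lam_i y_{\xi_i}(x)$ and a Taylor expansion of $\cF^i_\xi\rg^i_\xi$ at $\xi_i$, paired with $\int_{\R^2} z_j z_k (1+|z|^2)^{-3}\, \d z = \frac{\pi}{4}\delta_{jk}$ (halved for boundary $\xi_i$), produce a contribution proportional to the spatial derivative of $\ln\cF^i_\xi$ at $\xi_i$, which by direct inspection of \eqref{eq:reduce_fun_0} equals $\frac{1}{2\kap(\xi_i)}\pa_{(\xi_i)_j}\ff(\xi)$ up to $O(\sum_s|\al_s-1|)$; the $\int Ve^u$-normalization above delivers the prefactor $-\frac{1+\mu}{2\lam_i}$ exactly. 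The remaining self-$H^g$ and cross-region pieces of the second integral reproduce the Robin-and-Green contributions of the first integral with coefficient $-(1+\mu)$, so that their combination with the $+1$-coefficient copies from the first integral leaves only $-\mu$-weighted Robin-and-Green terms of size $O(|\mu|/\lam_i)$, which are absorbed into the claimed error.

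The main obstacle is the careful bookkeeping of remainders. The $w$-dependent piece is handled by a second-order Taylor expansion of the exponential: the orthogonality of $w$ against $P\delta_{\lam_i,\xi_i}$ and its $\xi_i$-derivatives kills the linear-in-$w$ contribution, and the quadratic remainder is controlled by $O(\|w\|^2)$ via Cauchy-Schwarz together with uniform $L^\infty$-bounds on the smooth part of $v$. The $\al$-dependence yields corrections in $\al_s - 1$ whose genuinely linear parts are either encoded in the $\tau_s$ quantities of Proposition \ref{prop:lamexpansion} (giving the $O(\sum_s|\tau_s|/\lam_i)$ term) or absorbed into $O(\sum_s|\al_s-1|^2)$ via an expansion of $\rg^s_\xi$. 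Finally, the residuals $O(\ln\lam_i/\lam_i^2)$ and $O(\sum_s\lam_s^{-2})$ arise respectively from the logarithmic cut-off in the Taylor expansion of $Ve^u$ against $v$ and from the subleading behavior of $P\delta$ away from its concentration points. The full technical verification is deferred to Appendix \ref{app:C}.
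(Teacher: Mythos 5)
Your proposal follows essentially the same route as the paper's Appendix~\ref{app:C} argument: reduce via \eqref{orthogonality} to $\sum_s \al_s\langle P\delta_s, v\rangle - \rho\,(\int_\Si Ve^u)^{-1}\int_\Si Ve^u v$, expand the first term with Lemma~\ref{lem:projbubble_derivative_integral}, split the second into $I_1 = \int Ve^{u_0}v$, $I_2$, $I_3$, Taylor-expand $I_1$ near each bubble, and read off $\pa_j\ln\cF^i_\xi(\xi_i) = \frac{1}{2\kap(\xi_i)}\pa_{(\xi_i)_j}\ff(\xi)$ from the self-bubble-derivative piece while the self-$H^g$ and cross-Green pieces cancel the first-term contributions up to errors controlled by $\tau_s$, $\mu$, and $\al_s - 1$. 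One small imprecision: the $\int Ve^u$-normalization produces the prefactor $-\frac{(1+\mu)(1-\tau_i)}{2\al_i\lam_i}$, not $-\frac{1+\mu}{2\lam_i}$ "exactly," but the $\tau_i$ and $\al_i-1$ corrections fall into the claimed error (e.g.\ $|\al_i-1|/\lam_i \leq |\al_i-1|^2 + \lam_i^{-2}$), and you account for them in the remainder paragraph, so the sketch is sound.
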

	\begin{proof}
		The proof is provided in  Appendix \ref{app:C}.
	\end{proof}

	\begin{cor}\label{cor:1}
		Under the same assumptions as in Proposition \ref{prop:lamexpansion}, if we further assume that $|\mu|>0$ is sufficiently small, then the following expansion 	holds as $\var\to0^+$:
		\[
		\Big\langle\nabla J_{\rho}(u), \frac{P\delta_{\lam_i,\xi_i}}{\ln \lam_i}-2\frac{\lam_i}{\al _i}  \frac{\pa  P\delta_{\lam_i,\xi_i}}{\pa  \lam_i}\Big\rangle
		=  4\kap(\xi_i)(\al _i-1)+ \bm{\epsilon}(\al ,\xi,\lam,w),
		\]
		where 
		\begin{align*}
			&\bm{\epsilon}(\al ,\xi,\lam,w)
			\\=&\left\{\begin{aligned}
				&	O\Big(\|w\|^2+\frac{\|w\|}{\ln \lam_i}+ \sum_j\Big(\frac{|\tau_j-\mu+\mu \tau_j|+|\al _j-1|}{\ln \lam_i}+|\al _j-1|^2+ \frac{1}{\lam_j}\Big) \Big)	&&\text{ if }\, q\neq 0,\\
				&	O\Big(\|w\|^2+\frac{\|w\|}{\ln \lam_i}+ \sum_j\Big(\frac{|\tau_j-\mu+\mu \tau_j|+|\al _j-1|}{\ln \lam_i}+|\al _j-1|^2+\frac{\ln \lam_j}{\lam_j^2}+\frac{1}{\lam_j^{4\al_j-2}}  \Big)\Big) 
				&&\text{ if }\, q=0.
			\end{aligned}	\right.	
		\end{align*}
	\end{cor}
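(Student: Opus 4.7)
The plan is to form the stated linear combination so that the leading $(\tau_i - \mu + \mu\tau_i)$ contributions from Propositions \ref{prop:lamexpansion} and \ref{prop:bubble_expansion} cancel, isolating the $(\al_i - 1)$ term. Indeed, the main term in $\la\nabla J_\rho(u), P\delta_{\lam_i,\xi_i}/\ln\lam_i\ra$ is $4\kap(\xi_i)\bigl((\al_i-1)+(\tau_i-\mu+\mu\tau_i)\bigr)$, while the main term in $-(2/\al_i)\la\nabla J_\rho(u), \lam_i\,\pa_{\lam_i} P\delta_{\lam_i,\xi_i}\ra$ is $-4\kap(\xi_i)(\tau_i-\mu+\mu\tau_i)$, since the factor $2\al_i\kap(\xi_i)$ in Proposition \ref{prop:lamexpansion} is exactly cancelled by $-2/\al_i$. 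The sum leaves $4\kap(\xi_i)(\al_i-1)$, the advertised main term.

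The remainder is a careful bookkeeping of secondary terms. Substituting the two expansions, the boundary-curvature contributions combine into a single coefficient $\bigl(-2\pi + \pi(1+2\ln 2)/\ln\lam_i + \pi/\al_i\bigr)$ multiplying $(\rho/\!\!\int_\Si V e^u\,\d v_g)\,\mathfrak{f}_2(\xi_i)\,\lam_i^{4\al_i-3}$, together with the cross term $-\sum_j \rho\pi H^g(\xi_j,\xi_i)\mathfrak{f}_2(\xi_j)\lam_j^{4\al_j-3}/(2\ln\lam_i \!\int_\Si V e^u\,\d v_g)$ from Proposition \ref{prop:bubble_expansion}. All other error terms are inherited from the two propositions; using $1/\al_i = 1 + O(|\al_i-1|)$, $|\tau_i - \mu + \mu\tau_i|\leq |\tau_i|+O(|\mu|)$, and the elementary inequality $|\al_i-1|\|w\|\leq |\al_i-1|^2 + \|w\|^2$, one reorganizes these bounds. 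Note further that $\ln\lam_j/\lam_j^2 = o(1/\lam_j)$ and $1/\lam_j^{4\al_j-2}=o(1/\lam_j)$ since $\al_j\approx 1$, so these decay faster than $1/\lam_j$ and may be absorbed when needed.

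The decisive step is handling the $\mathfrak{f}_2$-terms, which splits according to whether $q=0$ or $q\neq 0$. If $q = 0$, then every $\xi_i\in\intS$, so $\mathfrak{f}_2(\xi_i)=0$ by \eqref{def:frak_f2}, and all $\lam_i^{4\al_i-3}$ contributions vanish identically; the remaining bounds (including $\ln\lam_j/\lam_j^2$ and $1/\lam_j^{4\al_j-2}$) assemble directly into the stated error. If $q\neq 0$, I would invoke the definition \eqref{def:tau_i} to solve
\[
\lam_i^{4\al_i-2} = (1-\tau_i)\cdot\frac{8(2\al_i-1)}{\rho_*}\cdot\frac{\int_\Si V e^u\,\d v_g}{(\cF_\xi^i \rg_\xi^i)(\xi_i)},
\]
which is uniformly bounded thanks to \eqref{eq:est_tau_i}, whence $\lam_i^{4\al_i-3} = O(1/\lam_i)$. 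The diagonal and cross $\mathfrak{f}_2$-contributions then become $O(1/\lam_i)$ and $O(\sum_j 1/(\lam_j\ln\lam_i))$, absorbed into $\sum_j 1/\lam_j$.

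In my view the main technical obstacle is exactly this point: without the $\tau_i$-identity, $\lam_i^{4\al_i-3}\sim\lam_i\to\infty$ as $\al_i\to 1$ would prevent any useful control of the $q\neq 0$ error, and it is precisely the $\tau_i$-relation that converts a would-be divergent quantity into the $O(1/\lam_i)$ remainder. Once this is secured, the rest of the argument is a routine aggregation of $O$-bounds through the triangle inequality and the smallness assumption $|\mu|\ll 1$.
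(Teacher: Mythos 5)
Your proposal is correct and follows the same route as the paper: form the combination so the $(\tau_i-\mu+\mu\tau_i)$ contributions from Propositions~\ref{prop:lamexpansion} and~\ref{prop:bubble_expansion} cancel, isolate $4\kap(\xi_i)(\al_i-1)$, and organize what is left. The paper simply asserts that the Corollary ``follows directly'' from the two propositions, so your write-up supplies exactly the bookkeeping the authors left implicit. You correctly identify the genuinely delicate point: for $q\neq 0$ the factor $\lam_i^{4\al_i-3}$ multiplying $\mathfrak{f}_2(\xi_i)$ is not a priori small, and it is the identity $\lam_i^{4\al_i-2}/\!\int_\Sigma V e^u\,\d v_g = (1-\tau_i)\,8(2\al_i-1)/(\rho_*(\cF_\xi^i\rg_\xi^i)(\xi_i)) = O(1)$ coming from \eqref{def:tau_i}--\eqref{eq:est_tau_i} that converts $\rho\mathfrak{f}_2(\xi_i)\lam_i^{4\al_i-3}/\!\int V e^u$ into $O(1/\lam_i)$; for $q=0$ the term vanishes since $\mathfrak{f}_2\equiv 0$ on $\intS$, producing the sharper $\ln\lam_j/\lam_j^2 + 1/\lam_j^{4\al_j-2}$ remainder. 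One small imprecision in your wording: the quantity that is uniformly bounded is the ratio $\lam_i^{4\al_i-2}/\!\int_\Sigma V e^u\,\d v_g$, not $\lam_i^{4\al_i-2}$ itself, and likewise the conclusion should read $\lam_i^{4\al_i-3}/\!\int_\Sigma V e^u\,\d v_g = O(1/\lam_i)$; the surrounding computation makes clear this is what you meant.
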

	\begin{proof}
		The result follows directly from   Propositions \ref{prop:lamexpansion}  and \ref{prop:bubble_expansion}.
	\end{proof}
	\begin{cor}\label{cor:alpha_itau_i}
		Under the same assumption of Corollary \ref{cor:1}, it holds  as $\varepsilon\to 0^+$
		\begin{equation}	\label{tau_iestimate}
			|\al _i-1|=O(\var+|\mu|)	\quad  \text{ and } \quad |\tau_i|=O(\var+|\mu|).
		\end{equation}
	\end{cor}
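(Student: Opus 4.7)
The plan is to read the estimates off Corollary \ref{cor:1} and then close a self-referential inequality. Set $A:=\max_i |\al_i-1|$. By the very definition of $V_{\rho}^{\eta}(p,q,\var)$ we have $\|\nabla J_{\rho}(u)\|<\var$, and by the definition of $E_{\lam,\xi}^{p,q}$ we have $\|w\|\le C\var$. I would first check that the test function in Corollary \ref{cor:1} has $\oH$-norm bounded uniformly in $\var$: standard computations give $\|P\delta_{\lam_i,\xi_i}\|^2 = O(\kap(\xi_i)\ln\lam_i)$, hence $\|P\delta_{\lam_i,\xi_i}/\ln\lam_i\|=O(1/\sqrt{\ln\lam_i})=o(1)$, while $\|\lam_i\partial_{\lam_i} P\delta_{\lam_i,\xi_i}\|=O(1)$. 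Consequently the left-hand side of the identity in Corollary \ref{cor:1} is $O(\var)$ by Cauchy--Schwarz.

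Combining this with Corollary \ref{cor:1} yields, for every $i=1,\ldots,p+q$,
\[
4\kap(\xi_i)|\al_i-1| \le C\var + |\bm{\epsilon}(\al,\xi,\lam,w)|.
\]
Next I would control $\bm{\epsilon}$ by plugging in the a priori information: $\|w\|=O(\var)$, $|\al_j-1|\le A$, and the tau estimate \eqref{eq:est_tau_i} giving $|\tau_j-\mu+\mu\tau_j|\le (1+|\mu|)|\tau_j|+|\mu| = O(\var+|\mu|+A^2)$. Since $\lam_j>\var^{-1}$, the remaining terms $1/\lam_j$, $\ln\lam_j/\lam_j^2$, $1/\lam_j^{4\al_j-2}$ are all $O(\var)$ (using that $\al_j\to 1$, so $4\al_j-2>1$ for $\var$ small). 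Putting everything together produces the inequality
\[
A \le C_1(\var+|\mu|) + C_2\Bigl(A^2 + \frac{A}{\ln\lam_{\min}}\Bigr),
\]
where $\lam_{\min}:=\min_j\lam_j>\var^{-1}$.

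Now comes the absorption step: Proposition \ref{prop:unique} already provides $A = o(1/\sqrt{|\ln\var|})=o(1)$, and $\ln\lam_{\min}\to\infty$ as $\var\to 0^+$, so the terms $C_2 A^2$ and $C_2 A/\ln\lam_{\min}$ can be absorbed into the left-hand side for $\var$ sufficiently small, yielding $A = O(\var+|\mu|)$. Feeding this back into \eqref{eq:est_tau_i} gives $|\tau_i| = O(\var+|\mu|+A^2) = O(\var+|\mu|)$, which is precisely \eqref{tau_iestimate}.

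The main obstacle is the careful bookkeeping in the third paragraph: one must verify that every error term appearing in $\bm{\epsilon}$ can indeed be written as $O(\var+|\mu|) + o(1)\cdot A$ with the $o(1)$ factor uniform in the parameters, so that the absorption is legitimate. The distinction between $q\neq 0$ and $q=0$ in the statement of Corollary \ref{cor:1} does not affect the conclusion because in both cases the remainder involving $\lam_j$ is dominated by a power of $\var$.
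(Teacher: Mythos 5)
Your argument reproduces the paper's proof in substance: you invoke \eqref{eq:est_tau_i} for the $\tau$-bound, Corollary \ref{cor:1} together with the $O(1)$ norm bounds on $P\delta_{\lam_i,\xi_i}/\ln\lam_i$ and $\lam_i\partial_{\lam_i}P\delta_{\lam_i,\xi_i}$ for the $\alpha$-bound, and then close the self-referential inequality by absorbing the quadratic and $1/\ln\lam_i$ terms via the smallness guaranteed by \eqref{eq:apha_i_estimate}. The only cosmetic difference is that you track $A=\max_i|\al_i-1|$ while the paper sums over $i$; both lead to the same absorption argument.
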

	\begin{proof}
		From \eqref{eq:est_tau_i}, we have  
		\begin{equation} 	\label{tau_iestimate_0}
			|\tau_i|=O\Big(\varepsilon+ |\mu|+ \sum_j 
			|\alpha_j-1|^2\Big).
		\end{equation}
		Then, from Corollary \ref{cor:1}, the definition of $V_{\rho}^{\eta}(p,q,\var)$  and \eqref{eq:apha_i_estimate}, it holds  that 
		\begin{equation}
			\label{tau_iestimate_1}
			| \al _i-1|=O\Big(\var+ \sum_j \frac{|\tau_j|}{\ln \lam _i}+ \sum_j |\alpha_j-1|^2\Big),
		\end{equation}
		where we used the facts	 $\max\{\|\frac{P\delta_{\lam_i,\xi_i}}{\ln \lam_i}\|,\|\lam_i   \frac{\pa  P\delta_{\lam_i,\xi_i}}{\pa  \lam_i}\|\}=O(1)$ (see Lemmas \ref{lem:projbubble_pointwise} and \ref{lem:projbubble_lamderivative__pointwise}). 
		Summing \eqref{tau_iestimate_0} and \eqref{tau_iestimate_1} over $i\in \{1,\ldots, p+q\}$, we obtain 
		\[ \sum_i |\alpha_i-1|+\sum_i|\tau_i|=O(\varepsilon+|\mu|),\]
		in view of $|\alpha_i-1|=o(1)$ and $\frac 1 {\ln \lambda_i}=o(1)$ for $i=1,\ldots, p+q$. 
		Then, 
		\eqref{tau_iestimate} is concluded.
	\end{proof}

	Next, we expand the energy functional $J_\rho $ in $V^\eta_{\rho}(p,q,\var)$. 
	\begin{prop}\label{prop:J(u)expansion}
		Let $u=u_0+w \in V^\eta_{\rho}(p,q,\var)$ with  $\rho >0$, where $u_0=\sum_{i} \al_i P \delta_{\lam_i,\xi_i}$ and  $w\in E_{\lam,\xi}^{p,q}$. Then we have 
		\begin{equation}\label{eq:J_lam}
			J_{\rho }(u)=J_{\rho }(u_0)+f(w)+\frac{1}{2}Q(w,w)+O(\var\|w\|^2), 
		\end{equation}
		where  the linear term  $f(w)$ is given by
		\[	
		f(w):=-\rho  \frac{\int_{\Si } V e^{u_0} w \,\d v_g}{\int_{\Si } V e^{u_0}\,\d v_g}
		\] 
		and the bilinear form $Q(\cdot, \cdot)$  is defined as
		\[	 
		Q(w, w'):=\la w, w'\ra -\rho  \frac{\int_{\Si } V e^{u_0} w w' \,\d v_g}{\int_{\Si } V e^{u_0} \,\d v_g}+\rho \Big(\frac{\int_{\Si } V e^{u_0} w  \,\d v_g }{\int_{\Si } V e^{u_0} \,\d v_g }\Big)\Big(\frac{\int_{\Si } V e^{u_0} w' \,\d v_g }{\int_{\Si } V e^{u_0} \,\d v_g }\Big)
		\]
		for any $w,w'\in E_{\lam,\xi}^{p,q}$.
	\end{prop}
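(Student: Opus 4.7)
The plan is to Taylor-expand $J_{\rho}(u_0+w)$ around $u_0$ and identify the linear and quadratic contributions in $w$, with all higher-order terms collected into an $O(\var\|w\|^2)$ error.

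First I would directly decompose
\[
J_\rho(u_0+w) = \tfrac{1}{2}\|u_0\|^2 + \la u_0, w\ra + \tfrac{1}{2}\|w\|^2 - \rho\ln\Big(\int_{\Si} V e^{u_0+w}\,\d v_g\Big).
\]
The cross term $\la u_0, w\ra = \sum_i \al_i \la P\delta_{\lam_i,\xi_i}, w\ra$ vanishes identically, because the orthogonality conditions \eqref{orthogonality} are built into the definition of $E_{\lam,\xi}^{p,q}$. This removes any linear contribution from the Dirichlet energy and leaves only the $\tfrac{1}{2}\|w\|^2$ piece, which will assemble into $\tfrac{1}{2}Q(w,w)$.

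Next I would expand the logarithm. Setting $A := \int_{\Si} V e^{u_0}\,\d v_g$, $B := \int_{\Si} V e^{u_0}w\,\d v_g$, $D := \int_{\Si} V e^{u_0}w^2\,\d v_g$, and $R_1 := \int_{\Si} V e^{u_0}\bigl(e^w - 1 - w - \tfrac{1}{2}w^2\bigr)\,\d v_g$, I would apply the scalar expansion $\ln(1+t) = t - \tfrac{1}{2}t^2 + O(|t|^3)$ with $t := (B + D/2 + R_1)/A$. Matching orders, the term $-\rho B/A$ gives exactly $f(w)$, while the $w$-quadratic contributions $-\rho D/(2A) + \rho B^2/(2A^2)$ combine with $\tfrac{1}{2}\|w\|^2$ to reproduce $\tfrac{1}{2}Q(w,w)$ as defined in the statement. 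The proof then reduces to bounding $|\rho R_1|/A$ plus the cubic-and-higher log residual by $O(\var\|w\|^2)$.

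The main obstacle is this remainder estimate, which requires Moser--Trudinger-type control that is uniform in the bubble parameters $(\lam,\xi)$. Using the pointwise bound $|e^w - 1 - w - \tfrac{1}{2}w^2| \leq C|w|^3 e^{|w|}$ (from Taylor's integral remainder) together with H\"older's inequality on the probability measure $(Ve^{u_0}/A)\,\d v_g$ and the Moser--Trudinger inequality on $\oH$, one obtains
\[
|B| \leq C\|w\|\,A,\qquad D \leq C\|w\|^2\,A,\qquad |R_1| \leq C\|w\|^3\,A,
\]
so that $|t| = O(\|w\|)$ and the cubic log residual is $O(\|w\|^3)$. Since $w\in E^{p,q}_{\lam,\xi}$ enforces $\|w\|\leq C\var$, the total error satisfies $C\|w\|^3 \leq C\var\|w\|^2$, yielding the claimed expansion. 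The delicate point is verifying that the Moser--Trudinger constants stay uniform as the bubble parameters $\lam_i\to\infty$ and the concentration points approach $\pa\Si$, but this follows from standard refinements combined with the isothermal-coordinate reflection already employed in Section~\ref{sec:2}.
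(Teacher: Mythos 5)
Your proposal is correct and follows essentially the same route as the paper's own proof: drop the cross term $\la u_0, w\ra$ by the orthogonality built into $E^{p,q}_{\lam,\xi}$, expand $\ln\int V e^{u_0+w}$ about $u_0$ by writing $\ln(1+t)$ with $t=(B+D/2+R_1)/A$, and control the cubic remainder by $O(\|w\|^3)=O(\var\|w\|^2)$. The uniform-in-$(\lam,\xi)$ Moser--Trudinger bounds you invoke to get $|B|\le C\|w\|A$, $D\le C\|w\|^2 A$, $|R_1|\le C\|w\|^3 A$ are exactly what the paper packages as Lemma~\ref{lem:w_error_integral_estimates}, combined with the pointwise decomposition of $Ve^{u_0}$ from Lemma~\ref{lem:Ve^{u_0}_pointwise} and the estimate $A\sim\sum_j\lam_j^{4\al_j-2}$ from \eqref{lem:A.8-1}; these lemmas already encode the uniformity in the concentration parameter and handle the boundary case by the reflection argument, so the ``delicate point'' you flag at the end is precisely what is discharged there.
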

	\begin{proof}
		The proof is standard, and we leave the details in Appendix \ref{app:C}.
	\end{proof}
	

	
	\begin{prop}\label{prop:Q_0positive}
		Under the same assumptions as in Proposition \ref{prop:J(u)expansion},  there exists  a constant $c_0>0$ such that 
		\[ 
		Q(w, w)= Q_0(w,w) +o(\|w\|^2)\geq c_0 \|w\|^2, \quad \forall\, w \in E^{p,q}_{\lam,\xi},
		\]
		where the bilinear form $Q_0(\cdot,\cdot)$ is given by 
		\[ 
		Q_0(w,w'):= \la w, w'\ra  - \sum_{i} \int_{\Si } \chi_{\xi_i}   e^ {\delta_{\lam_i,\xi_i}} ww' \,\d v_g,\quad \forall\, w,w' \in E^{p,q}_{\lam,\xi}.
		\]
		In particular, $Q_0(\cdot,\cdot)$ is symmetric and positive definite.
	\end{prop}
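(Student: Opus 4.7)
\emph{Step 1: $Q\approx Q_0$.} Using $|\al_i-1|=o(1)$ and the pointwise asymptotics of $P\delta_{\lam_i,\xi_i}$ (Lemmas \ref{lem:projbubble_pointwise}--\ref{lem:projbubble_lamderivative__pointwise} in the appendix), the normalized density $\rho V e^{u_0}/\int_{\Si} V e^{u_0}\,\d v_g$ concentrates at the $\xi_i$ with masses tending to $\kap(\xi_i)$, and can be replaced by $\sum_i\chi_{\xi_i} e^{\delta_{\lam_i,\xi_i}}$ up to an error that, when paired against $w^2$, contributes only $o(\|w\|^2)$. For the linear-in-$w$ term, testing the orthogonality $\la w,P\delta_{\lam_i,\xi_i}\ra=0$ against the defining PDE \eqref{projectedbubble} for $P\delta_{\lam_i,\xi_i}$, together with $\int_{\Si} w\,\d v_g=0$, yields $\int_{\Si}\chi_{\xi_i} e^{\delta_{\lam_i,\xi_i}} w\,\d v_g=0$ for every $i$. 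Combined with Step~1 above this gives $|\int_{\Si} V e^{u_0} w\,\d v_g|=o(\|w\|)$, and the cross term in the definition of $Q$ is consequently $o(\|w\|^2)$.

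\emph{Step 2: Liouville limit via contradiction.} For the coercivity $Q_0\ge c_0\|w\|^2$, I argue by contradiction: suppose there exist sequences $(\lam_n,\xi_n)\in\tilde B^\eta_{\var_n}$ with $\var_n\to 0$ and $w_n\in E^{p,q}_{\lam_n,\xi_n}$ satisfying $\|w_n\|=1$ and $Q_0(w_n,w_n)=o(1)$. Rescale at each concentration point by $\tilde w_{n,i}(y):=w_n\circ y_{\xi_{n,i}}^{-1}(y/\lam_{n,i})$ on the scaled chart $\lam_{n,i}B^{\xi_{n,i}}_{r_0}$. Conformal invariance of the Dirichlet integral yields uniform $H^1_{\mathrm{loc}}$ bounds, so after passing to a subsequence $\tilde w_{n,i}\rightharpoonup w_i^\infty$ weakly in $H^1_{\mathrm{loc}}$; in the boundary case, the Neumann condition together with the reflection of isothermal coordinates (cf.\ Lemma \ref{lem:w_k_1}) extends $w_i^\infty$ symmetrically to all of $\R^2$. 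Passing to the limit in $Q_0(w_n,w_n)=o(1)$ identifies $w_i^\infty$ as a finite-energy weak solution of
\[
-\Delta v=\frac{8}{(1+|y|^2)^2}\, v,
\]
on $\R^2$ (interior case), or on $\R^2_+$ with homogeneous Neumann boundary condition (boundary case).

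\emph{Step 3: Non-degeneracy, orthogonalities, conclusion.} By the standard Liouville non-degeneracy result (see \cite{CL2002, Bahri:1989}), the space of such solutions on $\R^2$ is three-dimensional, spanned by the scaling mode $\frac{1-|y|^2}{1+|y|^2}$ and the translation modes $\frac{y_j}{1+|y|^2}$, $j=1,2$; the Neumann condition removes the $y_2$-mode and leaves a two-dimensional kernel in the boundary case. The orthogonality relations \eqref{orthogonality} transcribe, after rescaling and in the limit, to orthogonality of $w_i^\infty$ against exactly these admissible kernel modes---the index set $\{1,\ii(\xi_i)\}$ matching the number of admissible translations---forcing $w_i^\infty\equiv 0$ for each $i$. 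To reach the global contradiction with $\|w_n\|=1$, I split $\int_{\Si}|\nabla_g w_n|_g^2\,\d v_g$ into contributions on $\bigcup_i\cU_r(\xi_{n,i})$ and on its complement: outside the neighborhoods $\chi_{\xi_i}e^{\delta_{\lam_i,\xi_i}}\to 0$ uniformly, so $Q_0(w_n,w_n)=o(1)$ forces the exterior Dirichlet energy to be $o(1)$; inside, conformal invariance identifies this energy with that of the $\tilde w_{n,i}$, which vanishes by strong local $L^2$ convergence to $w_i^\infty=0$ combined with the weak equation. Symmetry of $Q_0$ is immediate.

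\emph{Main obstacle.} The delicate point is the boundary case: the reflection across $\{y_2=0\}$ must be applied consistently at every stage so that $w_i^\infty$ extends to a full-plane Liouville solution, the Neumann kernel of the linearized operator must be shown to be exactly two-dimensional, and this must match the conditions in \eqref{orthogonality} at boundary bubbles (where $\{1,\ii(\xi_i)\}=\{1\}$). Moreover, the orthogonality to $P\delta_{\lam_i,\xi_i}$ itself does not kill a kernel mode but rather eliminates the unique negative eigenmode of the linearized operator, and this role, together with the (positive) spectral gap that yields the uniform constant $c_0$, must be tracked carefully throughout, uniformly in the admissible parameter range $(\lam,\xi)\in\tilde B^\eta_{2\var}$.
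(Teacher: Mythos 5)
Your proposal follows the same two-stage structure as the paper: first show $Q(w,w)=Q_0(w,w)+o(\|w\|^2)$ by exploiting $|\al_i-1|=o(1)$, the pointwise bubble asymptotics, and the exact cancellation $\int_\Si \chi_{\xi_i}e^{\delta_{\lam_i,\xi_i}}w\,\d v_g=0$ coming from \eqref{orthogonality} and \eqref{projectedbubble}; then establish the coercivity of $Q_0$ via the blow-up/Liouville-nondegeneracy argument, which the paper simply cites from \cite[Lemma 6.4]{CL2003} and you carry out in more detail (including the boundary reflection and the role of the $P\delta_{\lam_i,\xi_i}$-orthogonality in removing the negative direction rather than a kernel mode). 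One small imprecision: it is the normalized quantity $\int_\Si Ve^{u_0}w\,\d v_g/\int_\Si Ve^{u_0}\,\d v_g$, not $\int_\Si Ve^{u_0}w\,\d v_g$ itself, that is $o(\|w\|)$; both numerator and denominator scale like $\lam^2$.
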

	\begin{proof}
		The proof is standard, and we leave the details in Appendix \ref{app:C}.
	\end{proof}
	\begin{cor}\label{cor:est_w_w1}
		Under the same assumption of Proposition \ref{prop:Q_0positive}, we have  
		\begin{align*}
			\la \nabla J_\rho(u_0+w), w_1 \ra =&\, f(w_1)+Q_0(w, w_1)+o(\|w\|\|w_1\|)\\
			=&\,Q_0(w, w_1)+O\Big(\sum_i \Big(|\al_i-1|+\frac {  |\nabla_{\xi_i}\cF^i_\xi(\xi_i)|+|\mathfrak{f}_2(\xi_i)|} {\lam_i}+ \frac {\ln ^{\frac 32} \lam_i} {\lam_i^2}\Big)\|w_1\|\Big)\\
			&\,+o(\|w\|\|w_1\|)
		\end{align*}
		for any $w_1\in E^{p,q}_{\lam,\xi}$.
	\end{cor}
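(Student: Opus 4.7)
The plan is to establish the two equalities separately.

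For the first equality, substitute $w+tw_1$ for $w$ in Proposition \ref{prop:J(u)expansion}:
\[
J_\rho(u_0+w+tw_1) = J_\rho(u_0) + f(w+tw_1) + \tfrac{1}{2}Q(w+tw_1, w+tw_1) + O(\var\|w+tw_1\|^2),
\]
and differentiate at $t=0$. Using the linearity of $f$ and the bilinearity of $Q$, this yields
\[
\la \nabla J_\rho(u_0+w), w_1\ra = f(w_1) + Q(w, w_1) + o(\|w\|\,\|w_1\|).
\]
By Proposition \ref{prop:Q_0positive}, $Q(w,w) = Q_0(w,w) + o(\|w\|^2)$, and polarizing the symmetric bilinear forms $Q$ and $Q_0$ gives $Q(w,w_1) = Q_0(w,w_1) + o(\|w\|\,\|w_1\|)$. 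This establishes the first equality.

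For the second equality, I need to estimate
\[
f(w_1) = -\rho\,\frac{\int_\Si V e^{u_0} w_1\,\d v_g}{\int_\Si V e^{u_0}\,\d v_g}
\]
by exploiting the orthogonality conditions \eqref{orthogonality} on $w_1$. The standard expansion of the projected bubble near each $\xi_i$ gives $P\delta_{\lam_i,\xi_i}(x) = \delta_{\lam_i,\xi_i}(x) + \kap(\xi_i)H^g(x,\xi_i) + O(\lam_i^{-1})$ inside $U_{r_0}(\xi_i)$ and $P\delta_{\lam_i,\xi_i}(x) = \kap(\xi_i)G^g(x,\xi_i) + O(\lam_i^{-1})$ outside. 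Combined with the definitions \eqref{eq:def_F} and \eqref{def:rg}, this leads to
\[
V e^{u_0}(x) = \sum_i (\cF^i_\xi\rg^i_\xi)(x)\, e^{\delta_{\lam_i,\xi_i}(x)} + O\Big(\sum_j\lam_j^{-2}\Big)
\]
uniformly on $\Si$. Plugging this into the numerator and Taylor expanding $\cF^i_\xi\rg^i_\xi$ around $\xi_i$, I decompose each concentrated integral as
\[
\int_\Si (\cF^i_\xi\rg^i_\xi)\, e^{\delta_{\lam_i,\xi_i}} w_1\,\d v_g = (\cF^i_\xi\rg^i_\xi)(\xi_i)\int_\Si e^{\delta_{\lam_i,\xi_i}} w_1\,\d v_g + \int_\Si R_i\, e^{\delta_{\lam_i,\xi_i}} w_1\,\d v_g,
\]
with $R_i(x) = (\cF^i_\xi\rg^i_\xi)(x) - (\cF^i_\xi\rg^i_\xi)(\xi_i)$. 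The constant piece vanishes modulo $O(\lam_i^{-2}\|w_1\|)$ thanks to the orthogonality $\la w_1, P\delta_{\lam_i,\xi_i}\ra = 0$, since $-\Delta_g P\delta_{\lam_i,\xi_i} = \chi_{\xi_i}e^{\delta_{\lam_i,\xi_i}} - \overline{\chi_{\xi_i}e^{\delta_{\lam_i,\xi_i}}}$ and $w_1\in\oH$. Moreover, $\rg^i_\xi(\xi_i) = 1 + O(\sum_j|\al_j-1|)$ by \eqref{def:rg}, which accounts for the $|\al_i-1|$ contribution. The first-order Taylor term of $\cF^i_\xi$ is absorbed via the orthogonality $\la w_1, \pa_{(\xi_i)_j}P\delta_{\lam_i,\xi_i}\ra = 0$, leaving the residual of size $|\nabla_{\xi_i}\cF^i_\xi(\xi_i)|/\lam_i$; for boundary concentration points $\xi_i\in\pa\Si$, the reflection procedure in the isothermal chart produces the additional boundary residual $|\mathfrak{f}_2(\xi_i)|/\lam_i$ (cf.\ the definition \eqref{def:frak_f2}). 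Finally, the quadratic Taylor remainder produces the $\ln^{\frac 32}\lam_i/\lam_i^2$ contribution by Cauchy--Schwarz together with the standard $L^2$ estimate $\int |x-\xi_i|^2 e^{\delta_{\lam_i,\xi_i}}\,\d v_g = O(\ln\lam_i/\lam_i^2)$, and the residual $O(\sum_j\lam_j^{-2})$ from the global bubble expansion is subsumed in this same term.

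The main technical obstacle will be the boundary case, where the reflection structure of the isothermal coordinates must be handled carefully to extract the $\mathfrak{f}_2(\xi_i)/\lam_i$ contribution from the integration-by-parts step rather than have it cancel spuriously. Since this bookkeeping directly parallels the manipulations already carried out for the gradient expansions in Propositions \ref{prop:lamexpansion}--\ref{prop:3}, the routine calculations can be placed in the appendix alongside those proofs.
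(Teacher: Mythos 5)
Your plan matches the paper's proof, which is a one-line citation of Propositions \ref{prop:J(u)expansion}--\ref{prop:Q_0positive} together with Lemma \ref{lem:A.7}: the first equality comes from linearizing the energy expansion and polarizing $Q-Q_0$, the second from bounding $f(w_1)$ via the estimate \eqref{lem:A.7-2} for $\int_\Si V e^{u_0}w\,\d v_g$ divided by $\int_\Si V e^{u_0}\,\d v_g$. Your second-equality derivation is in effect a re-derivation of Lemma \ref{lem:A.7}, so the structure is sound. A few corrections to the mechanism, however, are needed. (i) The asserted uniform expansion $Ve^{u_0} = \sum_i (\cF^i_\xi\rg^i_\xi)(x)\,e^{\delta_{\lam_i,\xi_i}(x)} + O(\sum_j\lam_j^{-2})$ is not scale-correct: Lemma \ref{lem:Ve^{u_0}_pointwise} gives, near $\xi_i$, a prefactor $\tfrac{\lam_i^{4\al_i-2}}{8}\Phi_i$ where $\Phi_i = (1+\lam_i^2|y_{\xi_i}|^2)^{2-2\al_i}$, and this cannot be dropped since $\al_i\neq 1$. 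The large prefactor is harmless in the end only because the same $\lam_i^{4\al_i-2}$ scaling dominates the denominator $\int_\Si V e^{u_0}\,\d v_g$ (see Lemma \ref{lem:Ve^u_integral}) and cancels in the quotient $f(w_1)$, but your proof should make that explicit. (ii) The first-order Taylor term of $\cF^i_\xi\rg^i_\xi$ is not absorbed by the orthogonality $\langle w_1, \pa_{(\xi_i)_j}P\delta_{\lam_i,\xi_i}\rangle = 0$: $\pa_{(\xi_i)_j}\delta_{\lam_i,\xi_i}$ carries the extra weight $\frac{\lam_i^2}{1+\lam_i^2|y_{\xi_i}|^2}$, so the two integrals are not proportional. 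In the paper the orthogonality \eqref{orthogonality} is used only to kill the zeroth-order term (as in \eqref{lem:A.7-5}); the first-order term is bounded directly via Lemma \ref{lem:A.5}, yielding the $|\nabla_{\xi_i}\cF^i_\xi(\xi_i)|/\lam_i$ contribution. (iii) The $\mathfrak{f}_2(\xi_i)$ term at boundary points is not an artifact of the reflection procedure; it arises because $\nabla\hat\varphi_{\xi_i}(0)=(0,-2k_g(\xi_i))$ by \eqref{varphixi}, so $\nabla(e^{\varphi_{\xi_i}}\cF^i_\xi\rg^i_\xi)(\xi_i) = \mathfrak{f}_2(\xi_i) + O(|\al_i-1|)$, and there is no orthogonality condition in the normal direction (only $(\xi_i)_1$ is a free parameter for $\xi_i\in\pa\Si$) to suppress it.
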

	\begin{proof}
		The result follows directly from Propositions \ref{prop:J(u)expansion}-\ref{prop:Q_0positive} and Lemma \ref{lem:A.7}. 
	\end{proof}



	As in \cite{CL2002,CL2003}, the asymptotic behavior of $\rho_k-4\pi m$ plays a crucial role in the computation of the Leray-Schauder degree. We establish the following key estimate:
	\begin{prop}\label{prop:estimates_rho_k}
		Let	 $\{u_k\}$  be a sequence of blow-up solutions of \eqref{maineq} with $\rho=\rho_k$.   Assume that  $\lim_{k\to+\infty}\rho_k=  \rho_*:=4\pi m$,  and  the corresponding blow-up point set  is $\{ \xi^*_1,\ldots,\xi^*_{p+q}\}$ with  $(\xi^*_1,\ldots,\xi^*_{p+q})\in \Xi_{p,q}$ up to a permutation, where $p,q\in\N_0$ and $m=2p+q\in \N$. 
		According to   Propositions \ref{prop:u_k_in_V^eta} and \ref{prop:unique},  for $k$ sufficiently large, $u_k$ admits a unique decomposition of the form
		\[	
		u_k =  \sum_i \al^k_i P\delta_{\lam^k_i,\xi^k_i}+w^k, 
		\]
		where $w^k\in E^{p,q}_{\lam,\xi}$. 
		For simplicity of notation, we will continue to use 
		$ u, w, \al , \xi, \lam,  \rho $ in place of $ u_k, w^k, \al ^k, \xi^k, \lam^k, \rho_k$.  
		Assume further that condition  \hyperref[item:C1]{(C1)} holds. Then, we have the following estimates: 
		\begin{equation}
			\label{eq:est_grad_J_zero}
			\max\Big\{ \|w\|, \sum_i|\al _i-1|,\sum_i  |\tau_i-\mu+\mu\tau_i|, \sum_i\frac{d_g(\xi_i,\xi_i^*)}{\ln \lam_i} \Big\}=O\Big(\sum_i \frac 1 {\lam_i}\Big)
		\end{equation}
		and 
		\begin{equation}
			\label{eq:est_mu_best} 	\mu =\frac{\rho-\rho_*} {\rho_*}=\left\{
			\begin{aligned}
				&	\frac {\pi} { 4} \frac{1}{\int_\Si V e^u \, \d v_g
				}\cL_1(\xi^*)\sqrt{\cF_{\xi}^1(\xi_1)} \lam_1 + o\Big(\frac 1 {\lam_1}\Big) && \text{ if }\, q\neq 0,\\
				&	\frac {1} { 16} \frac{1}{\int_\Si V e^u \, \d v_g
				}\cL_2(\xi^*) \ln  \lam_1  + o\Big(\frac{\ln \lam_1} {\lam^2_1}\Big)	&& \text{ if }\, q=0,	
			\end{aligned}\right. 
		\end{equation}
		where $	\cL_1$ and 	$\cL_2$ are defined in \eqref{eq:def_cL_1} and \eqref{eq:def_cL_2}, respectively. 
	\end{prop}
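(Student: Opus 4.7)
Since $u$ solves \eqref{maineq}, $\la\nabla J_\rho(u),\cdot\ra=0$ in every admissible direction. My plan is to test against $w$, $\lam_i\pa_{\lam_i}P\delta_{\lam_i,\xi_i}$, $P\delta_{\lam_i,\xi_i}/\ln\lam_i$, and $\lam_i^{-1}\pa_{(\xi_i)_j}P\delta_{\lam_i,\xi_i}$, then bootstrap the resulting identities (via Corollaries \ref{cor:est_w_w1}, \ref{cor:1} and Propositions \ref{prop:lamexpansion}, \ref{prop:3}) into the sharp bound \eqref{eq:est_grad_J_zero}, and finally sum over $i$ to isolate the leading asymptotics of $\mu$ for \eqref{eq:est_mu_best}.

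\textbf{Bounds on $w,\al_i,\tau_i$.} Testing Corollary \ref{cor:est_w_w1} with $w_1=w$ and invoking the $Q_0$-coercivity of Proposition \ref{prop:Q_0positive} yields $\|w\|=O\bigl(\sum_i(|\al_i-1|+1/\lam_i)\bigr)$, using boundedness of $|\nabla_{\xi_i}\cF_\xi^i|$ and $|\mathfrak{f}_2|$. Setting $\la\nabla J,\lam_i\pa_{\lam_i}P\delta_{\lam_i,\xi_i}\ra=0$ in Proposition \ref{prop:lamexpansion}, and using the identity $\cF_\xi^i(\xi_i)\lam_i^{4\al_i-2}/\!\int Ve^u\approx 8(2\al_i-1)/\rho_*$ that is forced by the smallness of $\tau_i$ via \eqref{def:tau_i}, the dominant right-hand term is $O(1/\lam_i)$ when $\xi_i\in\pa\Si$ (where $\mathfrak{f}_2(\xi_i)\ne 0$) and $O(\ln\lam_i/\lam_i^2)$ when $\xi_i\in\intS$ (where $\mathfrak{f}_2\equiv 0$). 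Combined with Corollary \ref{cor:1} for $\al_i$, bootstrapping then yields $\max\{\|w\|,\sum|\al_i-1|,\sum|\tau_i-\mu+\mu\tau_i|\}=O(\sum 1/\lam_i)$. Along the way, Corollary \ref{cor:lam_i_samespeed} (which becomes applicable once the preliminary bound $|\al_s-1|\ln\lam_s=o(1)$ has been achieved) provides the synchronization $\cF_\xi^i(\xi_i)\lam_i^2\approx\cF_\xi^j(\xi_j)\lam_j^2$, so that $\sum_i 1/\lam_i$ and $1/\lam_1$ are of the same order.

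\textbf{Position of $\xi_i$.} Proposition \ref{prop:3} with $\la\nabla J,\lam_i^{-1}\pa_{(\xi_i)_j}P\delta_{\lam_i,\xi_i}\ra=0$ combined with the preceding bounds implies $|\nabla\ff(\xi)|=O(\ln\lam_i/\lam_i)$. Passing to the limit along the blow-up sequence gives $\nabla\ff(\xi^*)=0$, so $\xi^*$ is a critical point of $\ff$; the Morse hypothesis (C1) then provides $|\nabla\ff(\xi)|\gtrsim d_g(\xi,\xi^*)$ near $\xi^*$, hence $d_g(\xi_i,\xi_i^*)=O(\ln\lam_i/\lam_i)$, completing \eqref{eq:est_grad_J_zero}.

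\textbf{The $\mu$-formula and main obstacle.} Summing the $\lam_i$-equation from Proposition \ref{prop:lamexpansion} over $i=1,\ldots,p+q$ and using $\sum_i\kap(\xi_i)=\rho_*$, the left-hand side collapses to $-2\mu\rho_*$ at leading order. When $q\ne 0$, the $\mathfrak{f}_2$ contributions $\mathfrak{f}_2(\xi_i)\lam_i^{4\al_i-3}/\!\int V e^u$, after applying the synchronization $\cF_\xi^i(\xi_i)\lam_i^2\approx\cF_\xi^1(\xi_1)\lam_1^2$ and the identity $\mathfrak{f}_2(\xi_i)\approx -(\pa_{\nu_g}\ln V+2k_g)\cF_\xi^i(\xi_i)$ (since $\rg_\xi^i\to 1$), assemble into $\sqrt{\cF_\xi^1(\xi_1)}\lam_1\,\cL_1(\xi^*)$, yielding the boundary case of \eqref{eq:est_mu_best}. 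When $q=0$, $\mathfrak{f}_2\equiv 0$ and a refined expansion is needed: the second-order corrections involving $\Delta_g\cF_\xi^i-2K_g\cF_\xi^i$ combine with the $\mu\rho_*\kap_i\ln\lam_i/|\Si|_g\lam_i^2$ and $(1+\mu)(1-\tau_i)\sum_j\kap_i\kap_j\ln\lam_j/|\Si|_g\lam_j^2$ terms in Proposition \ref{prop:lamexpansion} to reconstruct $\cL_2(\xi^*)\ln\lam_1$, giving the interior case. The principal difficulty is this last step: extracting the precise coefficients $\pi/4$ (resp.\ $1/16$) demands careful bookkeeping of conformal factors, the $\lam_i$-synchronization, and the identification of combinations of $\pa_{\nu_g}\ln V$, $k_g$, $H^g$, $G^g$ at the blow-up points that reconstruct $\cL_1$ (resp.\ $\cL_2$), and in the $q=0$ case a genuinely sharper expansion than Proposition \ref{prop:lamexpansion} is required to capture the second-order $\cL_2$ contribution.
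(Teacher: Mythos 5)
Your proposal follows essentially the same route as the paper: test $\nabla J_\rho(u)=0$ against $w$, $\lam_i\pa_{\lam_i}P\delta$, $P\delta/\ln\lam_i$, and $\lam_i^{-1}\pa_{(\xi_i)_j}P\delta$, bootstrap via Propositions \ref{prop:lamexpansion}, \ref{prop:3}, \ref{prop:Q_0positive} and Corollaries \ref{cor:lam_i_samespeed}, \ref{cor:1}, \ref{cor:est_w_w1} to reach \eqref{eq:est_grad_J_zero}, then sum the $\lam_i$-equation to extract $\mu$. The ``genuinely sharper expansion'' you correctly anticipate for the $q=0$ case is exactly what Lemma \ref{lem:Ve^u_integral} supplies: the $\cL_2$ contribution does not come from combining the explicit $\ln\lam_j/\lam_j^2$ terms of Proposition \ref{prop:lamexpansion} (those cancel against the $\ln\lam_i/\lam_i^2$ piece of $\sum\kap\tau_i$) but from the $\frac{\kap\mathfrak{f}_3}{16}\ln\lam_i$ term in the refined expansion \eqref{lem:A.8-2} of $\int_\Si Ve^u\,\d v_g$, which enters through $\sum_i\kap(\xi_i)\tau_i$ via the definition \eqref{def:tau_i} of $\tau_i$.
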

	\begin{proof}
		Since $u$ is a solution of \eqref{maineq},  it follows that $\nabla J_\rho(u)=0$.  Then we deduce from Proposition \ref{prop:Q_0positive}  and 	 Corollary \ref{cor:est_w_w1} with $w_1=w$ that 
		\begin{equation}
			\label{eq:est_w_best1}
			\|w\|= O\Big(\sum_i  \Big(|\al _i-1|+\frac{1}{\lam_i}\Big)\Big).
		\end{equation}
		Combining  Proposition \ref{prop:lamexpansion}, Corollary \ref{cor:1} with \eqref{eq:est_w_best1}, we derive that 
		\begin{align}
			| \tau_j-\mu+\mu\tau_j|=&\,O\Big( \sum_i |\al _j-1|^2 + \sum_i \frac 1 {\lam_i}\Big), \label{eq:est_tau_i_best1}\\
			\text{ and } \quad |\al _j-1|=& \,O\Big( \sum_i \frac{	 | \tau_i-\mu+\mu\tau_i|}{\ln \lam_i} + \sum_i \frac 1 {\lam_i}\Big), \label{tau_iestimate_best1}
		\end{align}
		for $j=1,\ldots, p+q.$
		In view of $|\al _i-1|=o(1)$ and $ | \tau_i-\mu+\mu\tau_i|=o(1)$, 
		\eqref{eq:est_tau_i_best1} and \eqref{tau_iestimate_best1}
		imply that 
		\begin{equation}
			\label{eq:est_tau_al_best}
			\sum_i 	|\al _i-1|=O\Big(\sum_i \frac 1 {\lam_i}\Big)\quad \text{ and }\quad  \sum_i | \tau_i-\mu+\mu\tau_i|=O\Big(\sum_i \frac 1 {\lam_i}\Big).
		\end{equation}
		It follows  from   \eqref{eq:est_w_best1} and  \eqref{eq:est_tau_al_best} that
		\begin{equation}
			\label{eq:est_w_best}
			\|w\|=O\Big(\sum_i \frac 1 {\lam_i}\Big). 
		\end{equation}
		Moreover, we know from \eqref{eq:est_tau_al_best} that $\sum_i|\al _i-1|\ln \lam_i=o(1)$.
		Applying Proposition \ref{prop:3} and condition \hyperref[item:C1]{(C1)}, we get  
		\begin{equation}\label{eq:est_xi*_best}	
			\sum_i d_g(\xi_i^*,\xi_i)=o(1).
		\end{equation}
		From   \eqref{eq:est_tau_al_best}, \eqref{eq:est_w_best} and \eqref{eq:est_xi*_best} immediately, we deduce 
		\[  	\max\Big\{ \|w\|, \sum_i|\al _i-1|,\sum_i  |\tau_i-\mu+\mu\tau_i| \Big\}=O\Big(\sum_i \frac 1 {\lam_i}\Big) .\]

		Next, we will utilize \eqref{eq:est_grad_J_zero}, together with Proposition \ref{prop:lamexpansion} and Lemma \ref{lem:Ve^u_integral},   to derive an estimate for $\mu=\frac{\rho-\rho_*}{\rho_*}$. By summing up \eqref{prop:lamexpansion-1} for $i=1,\ldots, p+q$,  we have 
		\begin{align}
			2\rho_* \mu=& \,2(1+\mu)\sum_i \kap(\xi_i)\tau_i-\frac{\pi}{2}\frac{\rho}{\int_{\Si} V e^u \, \d v_g}\sum_i \mathfrak{f}_2(\xi_i)\lam_i^{4 \al _i-3}  \nonumber\\
			&\,-\sum_j \frac{\rho\kap(\xi_j)}{|\Si|_g}\frac{\ln \lam_j}{\lam_j^2}+o\Big(\sum_i \frac{\ln \lam_i}{\lam_i^2}\Big),\label{eq:est_mu1}
		\end{align}
		where we used \eqref{eq:est_tau_i},  \eqref{eq:est_tau_al_best} and \eqref{eq:est_w_best}. 
		On the other hand, by applying Lemma \ref{lem:Ve^u_integral}  together with  \eqref{eq:est_tau_al_best} and \eqref{eq:est_w_best}, we obtain
		\begin{align}
			\sum_i \kap(\xi_i) \tau_i =&\,\frac{ \rho_*}{\int_{\Si} V e^u\, \d v_g}\Big(\int_{\Si} V e^u\, \d v_g- \sum_i\frac{\kap(\xi_i)(\mathcal{F}_{\xi}^i \mathrm{g}_{\xi}^i )(\xi_i)}{8(2 \al _i-1)} \lam_i^{4 \al _i-2}\Big) \nonumber \\
			=&\, \rho_*
			\sum_i\frac{\kap(\xi_i)}{2|\Si|_g}\frac{\ln \lam_i}{\lam_i^2}
			+	\frac \pi 2\frac{\rho_* } {\int_{\Si} V e^u\, \d v_g }  \sum_{i}  \mathfrak{f}_2(\xi_i) \lam_i^{4\al _i-3}\nonumber \\
			&\,+ \frac{\rho_*}{\int_\Si V e^u \, \d v_g}\sum_i  \frac{\kap(\xi_i)\mathfrak{f}_3(\xi_i)}{16} \ln \lam_i 
			+o\Big(\sum_{i} \frac{\ln \lam_i }{\lam_i^2}\Big),\label{eq:est_mu_2}
		\end{align}
		where $\mathfrak{f}_3$ is given by \eqref{def:frak_f3}
		The proof of \eqref{eq:est_mu_best} will be divided into the following two cases.

		\textbf{Case I:}  $q\neq 0$ and $\cL_1(\xi^*)\neq 0$.
		By combining  \eqref{eq:est_lam_i_j}, \eqref{eq:est_tau_al_best}, \eqref{eq:est_mu1} and \eqref{eq:est_mu_2}, we have
		\begin{align*}
			\mu=& \,\frac {\pi} { 4} \frac{1}{\int_\Si V e^u \, \d v_g
			} \sum_i \mathfrak{f}_2(\xi_i) \lam_i + o\Big(\sum_i \frac 1 {\lam_i}\Big)\\
			=&\,	\frac {\pi} { 4} \frac{1}{\int_\Si V e^u \, \d v_g
			}\cL_1(\xi^*)\sqrt{\cF_{\xi}^1(\xi_1)}  \lam_1 + o\Big(\frac 1 {\lam_1}\Big). 
		\end{align*}
		where we also used the facts $|\mu|=o(1)$,  $\sum_i d_g(\xi^*_i,\xi_i)=o(1)$ and $\sum_i|\al _i-1|\ln \lam_i=o(1)$. 
		
		\textbf{Case II:} $q=0$ and $\cL_2(\xi^*)\neq 0$.
		Similarly,  using \eqref{eq:est_lam_i_j}, \eqref{eq:est_tau_al_best}, \eqref{eq:est_mu1} and \eqref{eq:est_mu_2}, we obtain 
		\begin{align*}
			\mu=& \,\frac {1} { 16} \frac{1}{\int_\Si V e^u \, \d v_g
			}\sum_i \kap(\xi_i)\mathfrak{f}_3(\xi_i)\ln \lam_i + o\Big(\sum_i \frac{\ln\lam_i}{\lam_i}\Big)\\
			=&\,	\frac {1} { 16} \frac{1}{\int_\Si V e^u \, \d v_g
			}\cL_2(\xi^*)\ln  \lam_1  + o\Big(\frac{\ln \lam_1} {\lam^2_1}\Big), 
		\end{align*}
		where we  also used the facts $\kap(\xi_i)=\kap(\xi_i^*)$, $|\mu|=o(1)$,  $\sum_i d_g(\xi^*_i,\xi_i)=o(1)$ and $\sum_i|\al _i-1|\ln \lam_i=o(1)$.
		Thus, \eqref{eq:est_mu_best} is concluded. 
		
		Proposition~\ref{prop:3} yields that 
		\begin{align*}
			\frac1 2(1+\mu) \frac{\partial_{(\xi_i)_l}\ff(\xi)}{\lam_{i}} &=\, O\Big( \frac{|\mu|}{\lambda_i}+\sum_j\frac{|\tau_j|}{\lam_i}+ \frac{\ln \lambda_i}{\lambda_i^2}\Big) +O\Big(\sum_j |\alpha_j-1|^2+ \|w\|^2\Big),
		\end{align*}
		for any $i=1,\ldots,p+q$ and $l=1,\ldots, \ii(\xi_i).$
		Using \eqref{eq:est_mu_best}, \eqref{eq:est_tau_al_best} and  \eqref{eq:est_w_best}, it follows that
		$ |\nabla\ff(\xi)|=O(\sum_i \frac{\ln \lam_{i}}{\lam_{i}}).$
		By the smoothness of $\ff$, we take the limit as  $\rho\rightarrow\rho_*$, which implies $\nabla \ff(\xi^*)=0$, where $\xi_1^*,\ldots,\xi^*_{p+q}$ are blow-up points. From condition \hyperref[item:C2]{(C2)}, 
		$\ff$ admits only non-degenerate critical points, meaning that  $\Hess\, (\ff)$ at $\xi^*$ is non-degenerate. Consequently, we obtain
		$\sum_i d_g(\xi_i,\xi_i^*)\leq C \sum_i \frac{\ln \lam_{i}}{\lam_{i}},$
		for some constant $C>0.$
		Thus, \eqref{eq:est_grad_J_zero} is concluded.
	\end{proof}


	\section{Topological degree formula}\label{sec:4}
	In this section, we present the proof of the paper's main results. Let   $p,q\in \N_0$ such that $m:=2p+q\in\N$, and denote $\rho_*:=4\pi m$.
	Let  $\xi^*:=(\xi^*_1,\ldots,\xi^*_{p+q})\in \Xi_{p,q}$ be a critical point of $\ff$,  and set $\mu=\frac{\rho -\rho_*}{\rho_*}$  with $|\mu|$ sufficiently small. We fix a constant  $\eta>0$  such that
	\[ 
	\eta<\frac 1 2 \min \big\{ d_g(\xi^*_i,\xi^*_j): i,j=1,\ldots, p+q \text{ with }i\neq j\big\}\cup \big\{ d_g(\xi_i^*,\pa\Si): i=1,\ldots, p\big\}.
	\]
	In the case $q\neq 0$,  we define the set
	\begin{align}
		V(\mu,\xi^*):=\Big\{ u\in \oH: &\, u= \sum_i \al _i P\delta_{\lam_i,\xi_i}+w\in V^\eta_{\rho_*} (p,q, C_0|\mu|^{\frac 1 2}) \text{ with } w\in E^{p,q}_{\lam,\xi},\, \|w\|<C|\mu|,\nonumber \\
		&\, \sum_i d_g(\xi_i,\xi^*_i
		)< -C |\mu|\ln |\mu|,  \sum_i(|\al _i-1|+|\tau_i|)<C|\mu| \text{ and}\nonumber\\ & \,  \frac {|\mu|} C< \frac 1 {\lam_i}< C |\mu| \text{ for } i=1,\ldots, p+q\Big \}; 	\label{eq:def_V1_xi_rho}
	\end{align}
	while for the case $q=0$, we define
	\begin{align}
		V(\mu,\xi^*):=\Big \{ u\in \oH: &\,  u= \sum_i \al _i P\delta_{\lam_i,\xi_i}+w\in V^\eta_{\rho_*} (p,q, C_0|\mu|^{\frac 1 3}) 
		\text{ with } w\in E^{p,q}_{\lam,\xi},\, \|w\|<C|\mu|^{\frac 34 },\nonumber \\
		&\, \sum_i d_g(\xi_i,\xi^*_i
		)< C |\mu|^{ \frac 38 }, \sum_i(|\al _i-1|+|\tau_i|)<C|\mu| \text{ and} \nonumber\\ &\,  \frac {|\mu|} C< \frac{\ln \lam_i}{\lam_i^2}< C |\mu| \text{ for } i=1,\ldots, p+q\Big\}, 	\label{eq:def_V2_xi_rho}
	\end{align}
	where  $C_0, C>0$  are  some positive constants. 
	\begin{thm}\label{thm:alter_blowup_sols}
		Let $\rho_*=4\pi m$ with $m\in  \N$,  and let  $V$ be a  positive  $C^{3}$ function on $\Si$    satisfying conditions \hyperref[item:C1]{(C1)} and \hyperref[item:C2]{(C2)} for all $p,q\in \N_0$ with  $m=2p+q$. Then there exist constants $\mu_{m}> 0$,   $C_\rho  > \bar{C}_{m} > 0$, where   $C_\rho $ depends continuously on $\rho $ and satisfies $\lim_{\rho  \to \rho_*} C_\rho  =+ \infty$, such that  any solution $u$ of \eqref{maineq}   with $u\in\oH$ and   $\frac{|\rho-\rho_*|}{\rho_*} \leq  \mu_{m}$ satisfies the following:
		\begin{itemize}
			\item[(1)] $\|u\| < C_\rho $  for  $\rho  \notin  4\pi \N$;
			\item[(2)] If $\rho  =\rho_*$, then $\|u\|< \bar{C}_{m}$;
			\item[(3)] If $\rho  \neq \rho_*$,  we have either
			\begin{itemize}
				\item[i)] $\|u\| < \bar{C}_{m}$ or 
				\item[ii)] $\|u\| \geq  \bar{C}_{m}$  and  $u\in V(\frac{\rho-\rho_*}{\rho_*},\xi^*)$ for some critical point $\xi^*:=(\xi_1^*,\ldots,\xi^*_{p+q})$ of $\ff$. 
			\end{itemize}
		\end{itemize}
	\end{thm}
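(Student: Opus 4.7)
The plan is to reduce the statement to the refined blow-up estimates of Propositions~\ref{prop:u_k_in_V^eta}, \ref{prop:unique}, and \ref{prop:estimates_rho_k}, treating each of the three items by a contradiction argument based on the no-blow-up consequence of the non-vanishing condition \hyperref[item:C2]{(C2)}.

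For (1), fix $\rho\in(4\pi(m-1),4\pi m)$ and suppose that no uniform bound existed; extracting a sequence of solutions $u_k$ with $\rho\equiv\rho_k$ and $\|u_k\|\to\infty$, the compactness theorem of \cite{LSY2023} produces a blow-up pattern whose total mass lies in $4\pi\N$, contradicting $\rho\notin 4\pi\N$. Taking $C_\rho$ to be any continuous upper envelope of these $\rho$-dependent a priori bounds which diverges at each critical value (the divergence is consistent with the existence of blow-up sequences approaching $\rho_*$, cf.\ Theorem~\ref{thm:blowup}) provides the required $C_\rho$.

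For (2), I argue by contradiction. If a sequence of solutions $u_k$ at $\rho\equiv\rho_*$ had $\|u_k\|\to\infty$, Proposition~\ref{prop:u_k_in_V^eta} places $u_k$ in $V^{\eta_0}_{\rho_*}(p,q,\var_k)$ with $\var_k\to 0^+$ for some $(p,q)$ with $2p+q=m$, and Proposition~\ref{prop:unique} gives the canonical decomposition. Proposition~\ref{prop:estimates_rho_k} with $\mu\equiv 0$ then reduces \eqref{eq:est_mu_best} to an identity of the form $\cL_1(\xi^*)\sqrt{\cF^1_\xi(\xi_1)}\lam_1+o(1/\lam_1)=0$ when $q\neq 0$, or $\cL_2(\xi^*)\ln\lam_1+o(\ln\lam_1/\lam_1^2)=0$ when $q=0$. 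Since $\lam_1\to\infty$, this forces $\cL(\xi^*)=0$, contradicting \hyperref[item:C2]{(C2)}. Hence $\|u\|$ is uniformly bounded by some absolute $\bar C_m$ at $\rho=\rho_*$.

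For (3), fix $\mu_m>0$ sufficiently small and $\bar C_m$ large enough that any solution $u$ with $|\mu|\leq\mu_m$ and $\|u\|\geq\bar C_m$ falls within the blow-up regime covered by Proposition~\ref{prop:u_k_in_V^eta}; such a choice is possible by a continuity argument, otherwise a diagonal sequence would contradict part~(2) in the limit $\mu_m\to 0^+$. For such $u$, Propositions~\ref{prop:unique} and \ref{prop:estimates_rho_k} yield the canonical decomposition $u=\sum_i\al_i P\delta_{\lam_i,\xi_i}+w$ near a critical point $\xi^*$ of $\ff$, together with the bounds
\[
\|w\|+\sum_i|\al_i-1|+\sum_i|\tau_i-\mu+\mu\tau_i|+\sum_i\frac{d_g(\xi_i,\xi_i^*)}{\ln\lam_i}=O\Big(\sum_i\frac{1}{\lam_i}\Big),
\]
and the $\mu$--$\lam_1$ relation \eqref{eq:est_mu_best}. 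Using Corollary~\ref{cor:lam_i_samespeed} to synchronize the $\lam_i$'s and inverting \eqref{eq:est_mu_best} gives $\lam_i\asymp 1/|\mu|$ when $q\neq 0$ and $\lam_i^2/\ln\lam_i\asymp 1/|\mu|$ when $q=0$. Substituting these back into the estimates verifies one by one the constraints defining $V(\mu,\xi^*)$ in \eqref{eq:def_V1_xi_rho} and \eqref{eq:def_V2_xi_rho}, placing $u$ in $V(\mu,\xi^*)$; the alternative $\|u\|<\bar C_m$ gives (3)(i).

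The main obstacle is the sharper bounds $\|w\|<C|\mu|^{3/4}$ and $\sum_i d_g(\xi_i,\xi_i^*)<C|\mu|^{3/8}$ required in the $q=0$ case, which exceed what Proposition~\ref{prop:estimates_rho_k} directly delivers. They must be obtained by bootstrapping Corollary~\ref{cor:est_w_w1}: once the leading-order location $\xi_i=\xi_i^*$ is identified, the critical-point condition $\nabla\ff(\xi^*)=0$ implies $|\nabla_{\xi_i}\cF^i_\xi(\xi_i)|=O(d_g(\xi_i,\xi_i^*))$, which plugged back into Corollary~\ref{cor:est_w_w1} and combined with the Hessian invertibility from \hyperref[item:C1]{(C1)} yields a self-improving inequality for $w$ and $\xi-\xi^*$. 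Careful bookkeeping of logarithmic factors so as to verify the $C_0|\mu|^{1/3}$-neighborhood condition in \eqref{eq:def_V2_xi_rho}, as well as the compatibility of all the power-law bounds, is the most delicate point.
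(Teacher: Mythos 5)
Your proposal matches the paper's argument for parts (1) and (2): for (1), invoke the compactness of \cite{LSY2023} and define $C_\rho$ as a continuous envelope; for (2), a blow-up sequence at $\rho = \rho_*$ forces $\cL(\xi^*) = 0$ via Proposition~\ref{prop:estimates_rho_k}, contradicting \hyperref[item:C2]{(C2)}. For (3), the paper runs a contradiction argument (assume $u_k \notin \bigcup_{\xi^*} V(\mu_k,\xi^*)$ and derive a contradiction), whereas you argue directly; the two are logically equivalent, and your ``diagonal sequence'' remark is in fact a disguised contradiction.

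Where your proposal adds genuine value is that you have correctly flagged a gap that the paper's proof of Theorem~\ref{thm:alter_blowup_sols} silently jumps over. The paper simply asserts that ``Propositions~\ref{prop:u_k_in_V^eta} and~\ref{prop:estimates_rho_k} imply $u_k \in V(\mu_k,\xi^*)$,'' but for $q=0$ the bound from Proposition~\ref{prop:estimates_rho_k} is $\|w\| + \sum_i |\al_i - 1| = O(\sum_i \lam_i^{-1})$, while $|\mu| \asymp \ln\lam_1 / \lam_1^2$. Since $\lam_1^{-1} \gg \ln^{3/4}\lam_1 / \lam_1^{3/2} \asymp |\mu|^{3/4} \gg |\mu|$, neither $\|w\| < C|\mu|^{3/4}$ nor $\sum_i|\al_i - 1| < C|\mu|$ in the definition~\eqref{eq:def_V2_xi_rho} follows from the cited proposition. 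The refinement you propose -- exploit $\mathfrak{f}_2 \equiv 0$ for interior concentration points, use the Morse nondegeneracy of $\ff$ to convert $|\nabla\ff(\xi)|$ into $d_g(\xi_i,\xi_i^*)$, and bootstrap through Corollary~\ref{cor:est_w_w1}, Corollary~\ref{cor:1} and Proposition~\ref{prop:lamexpansion} -- is precisely what the paper carries out later in Case~II of Lemma~\ref{lem:deformT}, obtaining the sharper bounds $\|w\| = O(\ln^{3/2}\lam_1/\lam_1^2)$ and $\sum_i|\al_i-1| = O(\ln\lam_1/\lam_1^2)$ of \eqref{eq:est_all_lam_minus2_refined}, which do verify membership in $V(\mu,\xi^*)$. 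Strictly speaking, this refinement should already be performed inside Proposition~\ref{prop:estimates_rho_k} (or inside the proof of Theorem~\ref{thm:alter_blowup_sols}), since Lemma~\ref{lem:deformT} comes afterward; there is no circularity because the bootstrap uses only the earlier expansions, but the paper's exposition leaves this implicit. Your identification of this as ``the most delicate point'' is accurate.

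One small overstatement in your write-up: the bound $\sum_i d_g(\xi_i,\xi_i^*) < C|\mu|^{3/8}$ in the $q=0$ case actually \emph{does} follow directly from Proposition~\ref{prop:estimates_rho_k}, since $d_g(\xi_i,\xi_i^*) = O(\ln\lam_1/\lam_1)$ and $\ln\lam_1/\lam_1 = o\big((\ln\lam_1)^{3/8}/\lam_1^{3/4}\big) = o(|\mu|^{3/8})$. The genuinely problematic quantities are $\|w\|$ and $\sum_i(|\al_i-1|+|\tau_i|)$, not $d_g(\xi_i,\xi_i^*)$.
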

	\begin{proof}
		For $\rho \notin 4\pi \N$,  utilizing the compactness results in \cite{LSY2023, Battaglia2019}, one obtains the existence of a constant $C_\rho>0 $  such that $\|u\|< C_\rho $. Furthermore,   $C_\rho $ can be chosen to depend continuously on $\rho $  by defining
		\[	
		C_\rho := \sup \{\|u\|:\text{$u$ is a solution of \eqref{maineq}}\}+1.
		\]
		Due to the existence of blow-up solutions as  $\rho \to  \rho_*$ (see \cite{HBA20240831,HBA20240829}), it follows that $\lim_{\rho  \to \rho_*} C_\rho  =+ \infty$.  Thus,  (1) is proved.
		
		Suppose that  (2) fails. Then, there exists a sequence of blow-up solutions $\{u_k\} $ of \eqref{maineq} with $\rho=\rho_k= \rho_*$. Let  $\{\xi_1,\ldots,\xi_{p+q}\}$  be the corresponding blow-up point set with $\xi=(\xi_1^*,\ldots, \xi_{p+q}^*)\in \Xi_{p,q}$ for some $p,q\in \N_0$ satisfying $2p+q=m$.  
		According to Proposition \ref{prop:estimates_rho_k},  we have $\cL_1(\xi)=0$ if   $q\neq 0 $, and $\cL_2(\xi)=0$ if $q=0$, which contradict to the condition  \hyperref[item:C2]{(C2)}. Hence, (2) is valid.

		Suppose that the statement (3) fails. Then, there exists   a sequence of solutions  $\{u_k\}$ of \eqref{maineq} with $\rho=\rho _k\to  \rho_*$, such that  $\|u_k\|\geq  \bar{C}_{m}$, but  
		\begin{equation}
			\label{criticalset}
			u_{k}\notin \bigcup_{\xi^* \in \mathcal{C}(\ff)} V\Big(\frac{\rho_k-\rho_*}{\rho_*},\xi^*\Big),
		\end{equation}
		where $\mathcal{C}(\ff)$ denotes the critical point set of $\ff$.
		
		If $\{u_k\} $ does not blow-up, the compactness property implies that $\{u_k\} $ is uniformly bounded and converges to some $u^*\in C^{2,\al}(\Sigma)$, which solves \eqref{maineq} with $\rho=\rho_*$. In this case, $\|u^*\|\geq \bar{C}_m$, which contradicts to (2). Otherwise,  if $\{u_k\} $  does blow up with  blow-up points $\{\xi^*_1,\ldots, \xi^*_{p+q}\}$ satisfying that  $\xi^*:=(\xi^*_1,\ldots, \xi^*_{p+q})\in \Xi_{p,q}$, then Propositions \ref{prop:u_k_in_V^eta} and   \ref{prop:estimates_rho_k} imply that 
		$u_k\in V(\frac{\rho_k-\rho_*}{\rho_*},\xi^*)$ for sufficiently large $k$, and $\xi^*$ is a critical point of $\ff$. This leads to a contradiction with \eqref{criticalset}. Therefore,  (3) holds as well.
	\end{proof}

	For any $\rho>0$, the Leray-Schauder degree associated with the functional $J_\rho$ is defined as
	\[ 
	d_\rho: = \deg\,( \nabla J_{\rho}, \oH, 0).
	\]
	When $\rho\notin 4\pi  \N$ (i.e., $\rho$ is non-resonant),  the solution set of $\nabla J_\rho$ is uniformly bounded from above (see Theorem \ref{thm:alter_blowup_sols}).  Moreover, since  $\nabla J_\rho=I-\rho\cK$, where
	\[ 
	\cK(u)=(-\Delta_g)^{-1}\Big(\frac{V e^u}{\int_\Si V e^u \, \d v_g}-1\Big) 
	\] is a compact operator, the degree $d_\rho$ is well-defined for all non-resonant values of $\rho$.
	
	Thanks to the compactness of the solution set for non-resonant values of $\rho$,  the Leray-Schauder degree $d_\rho$ is a constant in each interval $(4\pi (m-1), 4\pi m)$  for   $m\in  \N$. 
	It is also well-known that for $\rho \in (0, 4\pi)$, the functional $J_\rho $ is coercive and admits a unique minimizer. Herein, $d_\rho =1$ for $\rho \in (0,4\pi)$. 
	Let $\rho_*=4\pi m$ with $m\in \N$, and define 
	\[
	d^-_{m}:=\lim_{\rho \to \rho_*, \,\rho <\rho_*} d_\rho \quad \text{ and }\quad d^+_{m}:=\lim_{\rho \to \rho_*,\,\rho >\rho_*} d_\rho .
	\]
	By Lemma \ref{lem:projbubble_integral_interaction}, for any $u \in V(\frac{\rho-\rho_*}{\rho_*}, \xi^*)$,  there exists a constant  $c_0 > 0$ such that
	$$
	\|u\| \geq c_0 \ln \Big(\frac{1}{|\rho - \rho_*|}\Big) , 
	$$
	which implies that $\|u\| \geq \bar{C}_m$ when $|\rho-\rho_*|$ is sufficiently small, where $\bar{C}_m$ is the constant given in Theorem \ref{thm:alter_blowup_sols}.
	This allows us to establish the following degree-counting formula for $d_\rho$ in a small neighborhood of the critical value  $\rho_*$. \par  
	Recall that  
	$\mathcal{B}_{C}:=\{u\in\oH:\|u\|<C\},$ for any $C>0$.  Let $\mu=\frac{\rho-\rho_*}{\rho_*}$, where  $\rho_*= 4\pi m$ with  $m\in \N$. 
	When $\rho >\rho_*$ is sufficiently close to $\rho_*$, Theorem \ref{thm:alter_blowup_sols}  yields that 
	\begin{align}
		d_{m}^+=&\, d_\rho =\deg\,( \nabla J_\rho , \mathcal{B}_{C_\rho }, 0)\nonumber\\
		=&\deg\,( \nabla J_\rho  , \mathcal{B}_{\bar{C}_m}, 0)+ \sum_{ 2p+q=m}\frac 1 {p! q!} \sum_{\xi^*\in \cV_{p,q}^+}\deg\,( \nabla J_\rho , V(\mu,\xi^*), 0)\nonumber\\
		=& \, d_{\rho_*} + \sum_{ 2p+q=m}\frac 1 {p! q!} \sum_{\xi^*\in \cV_{p,q}^+} \deg\,( \nabla J_\rho , V(\mu,\xi^*), 0), \label{eq:def_d_plus}
	\end{align}
	where $\cV_{p,q}^+$ is defined in \eqref{eq:def_V+}, and  the factor $ p!q!$ is included because any permutation of $(\xi_1, \ldots, \xi_p) $  and $ (\xi_{p+1}, \ldots, \xi_{p+q})$  is considered as the same solution.
	Similarly, for $\rho <\rho_* = 4\pi m $ sufficiently close to $\rho_*$, 
	\begin{equation}
		\label{eq:def_d_mius} d_{m}^-= d_{\rho_*}+\sum_{ 2p+q=m}\frac 1 {p! q!} \sum_{\xi^*\in \cV_{p,q}^-}\deg\,( \nabla J_\rho , V(\mu,\xi^*), 0),
	\end{equation}
	where $\cV_{p,q}^-$ is defined in \eqref{eq:def_V-}. It remains to compute the degree  
	\begin{equation}\label{remain_degreee}
		\deg\,( \nabla J_\rho , V(\mu,\xi^*), 0),
	\end{equation}
	which is determined by the blow-up solutions that concentrate at the points $\{\xi^*_1,\ldots, \xi^*_{p+q}\}$. Since the topological degree is invariant under homotopy, we will compute  \eqref{remain_degreee}  by deforming $\nabla J_\rho$ into a simpler operator.

	Let $\mu=\frac{\rho-\rho_*}{\rho_*}$, where  $\rho_*= 4\pi m$ with $m\in \N$. We define 
	$
	T_0 = \nabla J_\rho: V(\mu, \xi^*) \to \oH.
	$
	For any $u\in V(\mu, \xi^*)$,  Proposition \ref{prop:unique} implies  that 
	$u$ admits a unique decomposition of the form $u=\sum_i \al _i  P\delta_{\lam_i , \xi_i}+w$, where $w \in E^{p,q}_{\lam,\xi}$.  Next, for each $t\in[0,1]$, we define a homotopy operator $T_t:V(\mu, \xi^*) \to \oH$  via the following  inner products:   
	\begin{align}   
		\langle T_t(u),  w_1\rangle=&\, (1-t)\left\langle\nabla J_\rho(u),  w_1\right\rangle+tQ_0( w,  w_1)\quad  \text{ for } \, w_1 \in E^{p,q}_{\lam,\xi}, \label{eq:deform_w}\\
		\Big\langle T_t(u),   \frac{\pa P\delta_{\lam_i,\xi_i}}{\pa (\xi_i)_j}\Big\rangle=&\, (1-t)\Big\langle\nabla J_\rho(u),  \frac{\pa P\delta_{\lam_i,\xi_i}}{\pa (\xi_i)_j}\Big\rangle-t\frac {1+\mu}{2}\frac{\pa \ff(\xi)}{\pa  (\xi_i)_j} , \label{eq:deform_xi} \\
		\Big\langle T_t(u),\lam_i\frac{\pa P\delta_{\lam_i,\xi_i}}{\pa \lam_i}\Big\rangle =& \,(1-t)	\Big\langle \nabla J_\rho(u),\lam_i\frac{\pa P\delta_{\lam_i,\xi_i}}{\pa \lam_i}\Big\rangle+ 2t\al_i \kap(\xi_i)   (\tau_i-\mu+\mu\tau_i)\nonumber\\
		&\,-\frac{t\pi}{2}\frac{\rho \mathfrak{f}_2(\xi_i) }{\int_{\Si} V e^u \,\d v_g} \lam_i^{4 \al _i-3}-t\sum_j\frac{\kap(\xi_i)\kap(\xi_j)}{|\Si|_g}\frac{\ln\lam_j}{\lam_j^2},\label{eq:deform_pa_lam}
	\end{align}  
	and 
	\begin{equation}\label{eq:deform_ddelta_lam_xi}
		\Big\langle T_t (u),  \frac{P\delta_{\lam_i,\xi_i}}{\ln \lam_i}-2\frac{\lam_i}{\al _i}  \frac{\pa  P\delta_{\lam_i,\xi_i}}{\pa  \lam_i}\Big\rangle 
		=(1-t) \epsilon(\al ,\xi,\lam,w) + 4\kap(\xi_i)(\al _i-1)
	\end{equation}
	for all $ i=1,\ldots,p+q$ and  $j\in\{1,\ii(\xi_i)\}$. It is clear that $T_1$ is simpler than $T_0$.
	
	The following lemma excludes the existence of solutions to $T_{t}$ on the boundary of  $V(\mu, \xi^*)$ for all $t\in [0,1]$. 
	\begin{lem}\label{lem:deformT}
		Assume that $\mu \cL(\xi^*)>0$, where $\mu:=\frac{\rho-\rho_*}{\rho_*}$ and $\xi^*$ is a non-degenerate critical point of $\ff$. 
		Then, there exists a constant $\mu_1>0$ such that for all  $|\mu|<\mu_1$, it holds that $T_t(u) \neq 0$ for all $u \in \pa  V(\mu,\xi^*)$ and $0 \leq t \leq 1$.
	\end{lem}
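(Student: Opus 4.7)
The plan is to argue by contradiction. Suppose there exist sequences $\mu_k\to 0$ with $\mu_k\cL(\xi^*)>0$, $t_k\in [0,1]$, and $u_k\in \pa V(\mu_k,\xi^*)$ with $T_{t_k}(u_k)=0$. By Proposition~\ref{prop:unique}, for $k$ large each $u_k$ admits a unique decomposition $u_k=\sum_i \al_i^k P\delta_{\lam_i^k,\xi_i^k}+w_k$ with $w_k\in E^{p,q}_{\lam^k,\xi^k}$. The strategy is to evaluate the equation $T_{t_k}(u_k)=0$ along the four distinguished families of test directions used in \eqref{eq:deform_w}--\eqref{eq:deform_ddelta_lam_xi}, and show that each one forces the corresponding parameter strictly inside its defining inequality in \eqref{eq:def_V1_xi_rho} or \eqref{eq:def_V2_xi_rho}, provided the universal constant $C$ in these definitions is chosen large enough. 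Since $T_t$ is a convex combination of the gradient $\nabla J_\rho$ and operators built from the right-hand sides of Propositions \ref{prop:lamexpansion}, \ref{prop:bubble_expansion}, \ref{prop:3} (together with Corollary \ref{cor:1}), the estimates produced by pairing are in effect uniform in $t\in [0,1]$.

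Testing against $w_k$ in \eqref{eq:deform_w}, Proposition~\ref{prop:Q_0positive} gives coercivity $Q_0(w_k,w_k)\ge c_0\|w_k\|^2$, while Corollary~\ref{cor:est_w_w1} expresses $\langle \nabla J_\rho(u_k),w_k\rangle$ in terms of $Q_0(w_k,w_k)$ plus controllable remainders. This yields $\|w_k\|=O(|\mu_k|)$ (respectively $O(|\mu_k|^{3/4})$ when $q=0$) with an implicit constant that does not involve the defining $C$. Next, testing against $P\delta_{\lam_i^k,\xi_i^k}/\ln\lam_i^k-2(\lam_i^k/\al_i^k)\pa_{\lam_i^k}P\delta_{\lam_i^k,\xi_i^k}$ in \eqref{eq:deform_ddelta_lam_xi} and invoking Corollary~\ref{cor:alpha_itau_i} give $\sum_i(|\al_i^k-1|+|\tau_i^k|)=O(|\mu_k|)$ with an absolute constant. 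Testing against $\pa_{(\xi_i^k)_j}P\delta_{\lam_i^k,\xi_i^k}$ in \eqref{eq:deform_xi} and using Proposition~\ref{prop:3}, the equation reduces to $\nabla \ff(\xi^k)=O(|\mu_k|\ln\lam_k)$; the non-degeneracy of the Hessian of $\ff$ at $\xi^*$ (condition \hyperref[item:C1]{(C1)}) then inverts this to $\sum_i d_g(\xi_i^k,\xi_i^*)=O(|\mu_k|\ln\lam_k)$, which is strictly inside the corresponding bound in \eqref{eq:def_V1_xi_rho}--\eqref{eq:def_V2_xi_rho} once $C$ is large.

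The decisive step is testing against $\lam_i^k\pa_{\lam_i^k}P\delta_{\lam_i^k,\xi_i^k}$ in \eqref{eq:deform_pa_lam}. Summing over $i$ and using Proposition~\ref{prop:lamexpansion}, Lemma~\ref{lem:Ve^u_integral} together with the previously obtained control on $w_k,\al^k,\tau^k,\xi^k$, one reproduces the asymptotic identity of Proposition~\ref{prop:estimates_rho_k}: in the case $q\neq 0$,
\[
\mu_k=\frac{\pi}{4\int_\Si Ve^{u_k}\,\d v_g}\cL_1(\xi^*)\sqrt{\cF^1_{\xi^k}(\xi_1^k)}\,\lam_1^k+o\Big(\tfrac{1}{\lam_1^k}\Big),
\]
while for $q=0$ one obtains the analogous identity with $\cL_2(\xi^*)\ln\lam_1^k/(\lam_1^k)^2$. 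Corollary~\ref{cor:lam_i_samespeed} then forces all $\lam_i^k$ to be comparable. The sign hypothesis $\mu_k\cL(\xi^*)>0$ is precisely what makes this formula solvable with $\lam_i^k>0$ and fixes the exact scale $1/\lam_i^k\sim |\mu_k|$ (or $\ln\lam_i^k/(\lam_i^k)^2\sim |\mu_k|$ when $q=0$); thus $\lam_i^k$ lies strictly inside the interval defining $V(\mu_k,\xi^*)$.

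Collecting the four inequalities, every defining constraint of $V(\mu_k,\xi^*)$ is strict at $u_k$, contradicting $u_k\in\pa V(\mu_k,\xi^*)$. The main obstacle is the $\lam$-direction: one must ensure that the sign of $\mu_k$ is consistent with that of $\cL(\xi^*)$ through the asymptotic formula, and that Corollary~\ref{cor:lam_i_samespeed} synchronizes the $\lam_i^k$'s with sharp enough remainders to localize them inside the range $(|\mu_k|/C,\,C|\mu_k|)$ uniformly in $t\in[0,1]$; this is exactly where the hypothesis $\mu\cL(\xi^*)>0$ enters in an essential way.
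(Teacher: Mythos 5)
Your proof follows the same overall strategy as the paper: assume $T_t(u)=0$ for some $u$ in the closure of $V(\mu,\xi^*)$, test against the four distinguished families of directions corresponding to \eqref{eq:deform_w}--\eqref{eq:deform_ddelta_lam_xi}, derive the $\mu$-asymptotic formula from the summed $\lam$-test, invoke \hyperref[item:C2]{(C2)} to pin $\lam_i$ to the correct scale, and use the non-degeneracy of $\ff$ to control $d_g(\xi_i,\xi_i^*)$, thereby forcing $u$ strictly inside $V(\mu,\xi^*)$ -- a contradiction. The split into $q\neq 0$ and $q=0$ (with the modified $\al$-test giving \eqref{eq:deform_2_prime} and the $\ln\lam/\lam^2$ scaling) and the role of the sign hypothesis are correctly identified.

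However, the bootstrap order you state is circular as written. Testing against $w$ alone, via Proposition~\ref{prop:Q_0positive} and Corollary~\ref{cor:est_w_w1}, yields only $\|w\|=O\big(\sum_i(|\al_i-1|+1/\lam_i)\big)$ -- not $\|w\|=O(|\mu|)$. Similarly, the $\al$-test gives $|\al_i-1|$ in terms of $|\tau_s|$, $|\mu|$, $\|w\|$ and $1/\lam_i$, not directly $O(|\mu|)$. The conversion from $1/\lam_i$-scale bounds to $|\mu|$-scale bounds requires the $\mu$-asymptotic formula together with \hyperref[item:C2]{(C2)}, which you place at the end (``the decisive step''). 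In the paper, the correct sequencing is: combine \eqref{eq:deform1}, \eqref{eq:deform2}, \eqref{eq:deform4} to obtain the closed bound \eqref{eq:est_all_lam_minus1}, $\max\{\sum|\al_i-1|,\|w\|,\sum|\tau_i|\}=O(\sum_i 1/\lam_i)$; only then derive the $\mu$-formula and apply \hyperref[item:C2]{(C2)} to conclude $|\mu|\lam_i+1/(|\mu|\lam_i)=O(1)$; then convert everything to $O(|\mu|)$; and finally use \eqref{eq:deform3} and \hyperref[item:C1]{(C1)} for the $\xi$-localization. Your later acknowledgment of ``the previously obtained control on $w_k,\al^k,\tau^k,\xi^k$'' when summing the $\lam$-test suggests you view this as a simultaneous system, but the linear presentation of the estimates would not survive a careful write-up as stated: you must first close the system in terms of $\sum 1/\lam_i$ before the $|\mu|$-scale emerges.
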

	\begin{proof}
		Let $u=\sum_i\al_iP\delta_{\lam_i,\xi_i}+w \in \overline{V(\mu, \xi^*)}$, where $w\in E^{p,q}_{\lam,\xi}$ and $\overline{V(\mu, \xi^*)}$ is the closure of $V(\mu, \xi^*)$. Suppose that $T_t(u)=0$ for some $t\in [0,1]$.  
		We will proceed with our proof by considering the following two cases. 
		
		\textbf{Case I.} $q\neq 0$. Since $u\in V^\eta_{\rho_*}(p,q, C_0|\mu|^{\frac 1 2})$ in this case, it follows from \eqref{tau_iestimate} that
		$$
		\max\Big\{ 	\sum_i\frac{1}{\lam_i}, \|w\|,  \sum_i|\al _i-1|, \sum_i |\tau_i|  \Big\}\leq C_0|\mu|^{\frac 1 2}.
		$$
		Using
		Proposition \ref{prop:lamexpansion}, Corollary \ref{cor:1}, Proposition \ref{prop:3}, and Corollary \ref{cor:est_w_w1}, together with   $T_t(u)=0$,  we have the following estimates for all $i=1,\ldots,p+q$ and $j\in\{1,,\ii(\xi_i)\}$:
		\begin{align} 
			2 \kap(\xi_i)\al_i   (\tau_i-\mu+\mu\tau_i)=&\,\frac \pi 2   \frac{\rho \mathfrak{f}_2(\xi_i) }{\int_{\Si} V e^u \,\d v_g} \lam_i^{4 \al _i-3}+\sum_j\frac{\kap(\xi_i)\kap(\xi_j)}{|\Si|_g}\frac{\ln\lam_j}{\lam_j^2}+O(|\al _i-1|^2+\|w\|^2)\nonumber\\&+o\Big (\frac{\ln \lam_i}{\lam_i^2}\Big), \label{eq:deform1}\\
			4\kap(\xi_i)(\al _i-1)=&\,	O\Big (\|w\|^2+\frac{\|w\|}{\ln \lam_i}+ \sum_s\frac{|\tau_s-\mu+\mu \tau_s|+|\al _s-1|}{\ln \lam_i}+\frac{1}{\lam_i}+\sum_s |\al_s-1|^2\Big ),\label{eq:deform2}
			\\
			\frac{1+\mu}{2}\frac{\pa \ff(\xi)}{\pa (\xi_i)_j} =&\,O\Big ( |\mu|+\sum_s |\tau_s|+\sum_s\frac{ \ln \lam_s}{\lam_s}\Big )+O\Big (\sum_s \lam_s |\al _s-1|^2+ \lam_s\|w\|^2 \Big ),\label{eq:deform3}\\
			\|w\|=& \,O\Big(|\al _i-1|+ \sum_s \frac{|\nabla_{\xi_s}\cF_{\xi}^s(\xi_s)|+|\mathfrak{f}_2(\xi_s)|}{\lam_s}+\frac{\ln ^{\frac 3 2}\lam_i}{\lam_i^2}\Big).\label{eq:deform4}
		\end{align}  
		By combining \eqref{eq:deform1}, \eqref{eq:deform2} and \eqref{eq:deform4}, we obtain
		\begin{equation}\label{eq:est_all_lam_minus1}
			\max\Big\{ 
			\sum_i |\al _i-1|, \|w\|, \sum_i 	|\tau_i|\Big\}= O\Big(\sum_i \frac 1 \lam_i \Big),
		\end{equation}
		where we used the facts that $\max\{\sum_i |\al _i-1|, \sum_i \frac 1 \lam_i, \|w\|\}=o(1)$. 
		Using the same argument as in the proof of Proposition \ref{prop:estimates_rho_k}, we deduce from \eqref{eq:est_all_lam_minus1} that
		\[\mu = 	\frac {\pi} { 4} \frac{1}{\int_\Si V e^u \, \d v_g
		}\cL_1(\xi^*)\sqrt{\cF_{\xi}^1(\xi_1)}  \lam_1 + o\Big(\frac 1 {\lam_1}\Big).
		\]
		Due to the condition \hyperref[item:C2]{(C2)},  we have $|\mu| \lam_i +\frac 1 {|\mu|\lam_i}=O(1)$, and thus there exists a large constant $C>0$ such that    $\max\{\|w\|, \sum_ i |\al _i-1|, \sum_i |\tau_i|\}< C |\mu|$. By \eqref{eq:deform3}, we derive that
		$ 
		|\nabla \ff(\xi)|=O(|\mu| +\sum_i \frac{\ln \lam_i}{\lam_i}).
		$
		Observe that,  by condition \hyperref[item:C1]{(C1)}, we have  for any $i=1,\ldots, p+q$
		\[ 
		d_g(\xi_i,\xi^*_i)=O( |\nabla_{\xi_i}\cF_{\xi}^i(\xi_i)- \nabla_{\xi_i}\cF_{\xi}^i(\xi^*_i)|)=O\Big(|\mu| +\sum_i \frac{\ln \lam_i}{\lam_i}\Big).
		\]
		For $\mu>0$ sufficiently small and $C>0$ sufficiently large, it follows that $\sum_id_g(\xi_i,\xi^*_i)<- C|\mu| \ln  |\mu|$.
		Using \eqref{eq:deform1} and \eqref{eq:est_all_lam_minus1}, we can deduce  \eqref{eq:est_mu_best} in this case.
		Moreover, it is evident that for sufficiently small $\mu>0$, $u\in V^\eta_{\rho_*}(p,q,C_0|\mu|^{\frac 12})$. Therefore, we conclude that $u\in V(\mu,\xi^*)$.
		
		\textbf{Case II.} $q=0$.  Since $u\in V^\eta_{\rho_*}(p,q, C_0|\mu|^{\frac 1 3})$ in this case, it follows from \eqref{tau_iestimate} that
		\[ 
		\max\Big\{  \sum_i \frac{1}{\lam_i}, \|w\|, \sum_i  |\al _i-1|, \sum_i  | \tau_i|\Big\}	\leq C_0|\mu|^{\frac 1 3}.
		\]
		As demonstrated in Case I, by utilizing the estimate of $\nabla J_\rho(u),$  we obtain similar bounds for  \eqref{eq:deform1}, \eqref{eq:deform3} and \eqref{eq:deform4}. However, since $\mathfrak{f}_2(\xi_i)=0$ in this case,  \eqref{eq:deform2}  requires a different estimate, which is given by
		\begin{equation}\label{eq:deform_2_prime}
			4\kap(\xi_i)(\al _i-1)=	O\Big(\|w\|^2+\frac{\|w\|}{\ln \lam_i}+ \sum_s\frac{|\tau_s-\mu+\mu \tau_s|+|\al _s-1|}{\ln \lam_i}+\frac{\ln \lam_i}{\lam^2_i}\Big).
		\end{equation}
		By combining \eqref{eq:deform1}-\eqref{eq:deform_2_prime}, we obtain 
		\begin{equation}\label{eq:est_all_lam_minus2}
			\max\Big\{\sum_i 	|\tau_i|,
			\sum_i |\al _i-1|, \|w\|  \Big\}= O\Big(\sum_i \frac 1 \lam_i \Big),
		\end{equation}
		where we used the facts that $\max\{|\al _i-1|, \frac 1 \lam_i, \|w\| \}=o(1)$. As in the proof of Proposition \ref{prop:estimates_rho_k}, we can apply \eqref{eq:est_all_lam_minus1} to deduce
		\[	\mu=\frac {1} { 16} \frac{1}{\int_\Si V e^u \, \d v_g
		}\cL_2(\xi^*) \ln  \lam_1  + o\Big(\frac{\ln \lam_1} {\lam^2_1}\Big).\]
		It then follows that
		$
		\frac{|\mu|\lam_i^2}{\ln \lam_i}+ \frac{\lam_i^2}{|\mu| \ln\lam_i }=O(1)
		$  by condition \hyperref[item:C2]{(C2)}. 
		
		Using the condition \hyperref[item:C1]{(C1)} and \eqref{eq:deform3},  we get
		\begin{equation}
			\label{eq:est_naba_FF} 
			|\nabla\ff(\xi)|=O\Big(|\mu|+\sum_i \frac{\ln \lam_i}{\lam_i}\Big)=O\Big(\sum_i \frac{\ln \lam_i}{\lam_i}\Big)
		\end{equation}
		and $d_g(\xi_i,\xi_i^*)=O(\sum_i |\nabla\ff(\xi)-\nabla\ff(\xi^*)|)=O(\sum_i \frac{\ln \lam_i}{\lam_i}), $
		where we used $\mu=O( \frac{\ln \lam_i}{\lam_i})$.
		
		Combining \eqref{eq:deform1}, \eqref{eq:deform2}, \eqref{eq:deform_2_prime}, \eqref{eq:est_naba_FF}, and  \eqref{eq:est_all_lam_minus2},  we derive the following refined estimates:
		\begin{equation} \label{eq:est_all_lam_minus2_refined}
			\|w\|=O\Big(\sum_i\frac{\ln^{\frac 3 2}\lam_i}{\lam_i^2}\Big) \quad \text{ and } \quad \max\Big\{ \sum_i |\al _i-1|, \sum_i |\tau_i|\Big\} =O\Big(\sum_i \frac{\ln\lam_i}{\lam_i^2}\Big).
		\end{equation}
		By utilizing  \eqref{eq:deform1} and \eqref{eq:est_all_lam_minus2_refined}, we can deduce \eqref{eq:est_mu_best} in this case. 
		Using $\frac{|\mu| \lam_i^2}{\ln \lam_i}+ \frac{\ln \lam_i}{|\mu|\lam_i^2}=O(1)$, there exists a sufficiently large constant $C>0$ such that 
		\[ \|w\|< C|\mu|^{\frac 3 4},\quad  \sum_i d_g(\xi_i,\xi_i^*)< C |\mu|^{\frac 3 8},\quad  \sum_i|\al _i-1|<C|\mu|,\quad  \text{ and }\quad  \sum_i |\tau_i|<C|\mu|,\]
		which implies that 	 $u\in V^\eta_{\rho_*}(p,q,C_0|\mu|^{\frac 13})$ for sufficiently small $|\mu|>0$. Consequently, we conclude that $u\in V(\mu,\xi^*)$.
	\end{proof}
	As a consequence of Lemma \ref{lem:deformT}, it holds  
	\[\deg\,( \nabla J_\rho , V(\mu,\xi^*), 0)=\deg\,( T_1(u) , V(\mu,\xi^*), 0)\]
	provided  that $\mu \cL(\xi^*)>0$ and $|\mu|$ is sufficiently small. We now proceed to compute 
	\[
	\deg\,( T_1(u) , V(\mu,\xi^*), 0).
	\]
	
	Define the set 
	\[
	V^*(\mu,\xi^*):=\Big\{(\xi,\lam,\al ) \in  \Xi_{p,q}\times (\R_+)^{p+q}\times (\R_+)^{p+q}:\,u=\sum_i \al _iP\delta_{\lam_i,\xi_i}+w\in V(\mu,\xi^*),\, w\in E^{p,q}_{\lam,\xi} \Big\}.
	\]
	We introduce the map $	\Psi_{\mu,\xi^*}:  V^*(\mu,\xi^*)\to  \R^m\times\R^{p+q}\times\R^{p+q},$
	\[
	\Psi_{\mu,\xi^*}(\xi,\lam,\al ):=\left(\begin{matrix}
		\Psi_1\\
		\Psi_2\\
		\Psi_3
	\end{matrix}\right)(\xi,\lam,\al )=\left(\begin{matrix}
		(\Psi^{(i,j)}_1)_{\mytop{i=1,\ldots,p+q}{j=1,\ldots,\ii(\xi_i)}}\\
		(\Psi^{(i)}_2)_{i=1,\ldots, p+q}\\
		(\Psi_3^{(i)})_{i=1,\ldots,p+q}
	\end{matrix}\right)(\xi,\lam,\al ),\]
	where the components are defined as follows for each 
	$i=1,\ldots, p+q$ and $j\in\{1,\ii(\xi_i)\}$:
	\begin{gather*}
		\Psi_1^{(i,j)}(\xi,\lam,\al ):=
		\Big\langle T_1(u), \frac{\pa P\delta_{\lam_i,\xi_i}}{\pa (\xi_i)_j}\Big\rangle\Big|_{w=0}=
		-
		\frac {(1+\mu)}{2} \frac{1}{\lam_i}\frac{\pa \ff(\xi)}{\pa  (\xi_i)_j},\\\Psi_2^{(i)}(\xi,\lam,\al ):=
		\Big\langle T_1(u), \lam_i\frac{\pa P\delta_{\lam_i,\xi_i}}{\pa \lam_i}\Big\rangle\Big|_{w=0},
	\end{gather*}
	and 
	$\Psi_3^{(i)}(\xi,\lam,\al ):=
	\langle T_1 (u),  \frac{P\delta_{\lam_i,\xi_i}}{\ln \lam_i}-2\frac{\lam_i}{\al _i}  \frac{\pa  P\delta_{\lam_i,\xi_i}}{\pa  \lam_i}\rangle|_{w=0}= 	4\kap(\xi_i)(\al _i-1).$
	Under assumptions  \hyperref[item:C1]{(C1)} and \hyperref[item:C2]{(C2)}, and using the fact that $Q_0$ is  a positive definite inner product (see Proposition  \ref{prop:Q_0positive}), we have 
	\[ \deg \,( T_1(u), V(\mu, \xi^*),0)=\deg\,( \Psi_{\mu,\xi^*},  V^*(\mu, \xi^*), 0). \]
	The reduced finite-dimensional degree can be computed as follows:
	\begin{lem}
		\label{lem:degee_reduced}
		Assume that conditions \hyperref[item:C1]{(C1)} and \hyperref[item:C2]{(C2)} hold. Then, we have 
		\begin{equation}
			\label{eq:degree_finite_dim}
			\deg\,( \Psi_{\mu,\xi^*},  V^*(\mu, \xi^*), 0)= 
			\sgn\, (\rho-\rho_*) (-1)^{p-\morse(\ff,\xi^*)}	.
		\end{equation}
	\end{lem}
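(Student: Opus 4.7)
The plan is to exploit the block-triangular structure of the Jacobian of $\Psi_{\mu,\xi^*}$ at its unique zero and reduce the degree computation to three separately tractable sign determinations. First, I would locate the zero: $\Psi_3=0$ forces $\al_i=1$ for every $i$; $\Psi_1=0$ then reduces to $\nabla\ff(\xi)=0$, whose only solution in $V^*(\mu,\xi^*)$ is $\xi=\xi^*$ by \hyperref[item:C1]{(C1)}; and $\Psi_2=0$ becomes a system in $\lam$ alone, with solution $\lam^*(\mu)$ provided by Proposition \ref{prop:estimates_rho_k}. At this zero the Jacobian $J$ is block-triangular: $\Psi_1$ is independent of $\al$ and its $\lam$-derivatives vanish because they are proportional to $\nabla_\xi\ff(\xi^*)=0$; the component $\Psi_3$ depends only on $\al$ because $\kap(\xi_i)$ is locally constant in each stratum of $\Xi_{p,q}$. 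Consequently,
\[
\sgn\det J \;=\; \sgn\det A_1 \cdot \sgn\det A_2 \cdot \sgn\det A_3,
\]
where $A_1=\partial_\xi\Psi_1$, $A_2=\partial_\lam\Psi_2$, and $A_3=\partial_\al\Psi_3$.

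Two of these three factors are immediate. The block $A_3=\mathrm{diag}(4\kap(\xi_i^*))$ is positive definite, so $\sgn\det A_3=+1$. For the block $A_1=-\tfrac{1+\mu}{2}\,\mathrm{diag}(\lam_i^{-1}\,I_{\ii(\xi_i)})\,\Hess\,\ff(\xi^*)$, using $\sum_i\ii(\xi_i)=2p+q=m$ one obtains
\[
\sgn\det A_1 \;=\; (-1)^m\,\sgn\det\Hess\,\ff(\xi^*) \;=\; (-1)^{m+\morse(\ff,\xi^*)}.
\]

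The bulk of the work lies in $\sgn\det A_2$. Setting $\al=e$ and $\xi=\xi^*$ in \eqref{eq:deform_pa_lam} and using the definition \eqref{def:tau_i} of $\tau_i$, I get an explicit system in $\lam$ whose Jacobian I analyze case-by-case. If $q\neq 0$, the dominant contribution to $\partial_{\lam_k}\Psi_2^{(i)}$ comes from the $\mathfrak{f}_2(\xi_i^*)\lam_i$ term on boundary indices and from differentiating $\tau_i$ on interior indices; using the synchronization \eqref{eq:est_lam_i_j} from Corollary \ref{cor:lam_i_samespeed} together with the asymptotic \eqref{eq:est_mu_best} expressing $\mu$ via $\cL_1(\xi^*)$, the matrix $A_2$ factors as a positive block times a sign-carrying piece of size $p+q$ whose total sign turns out to be $\sgn(\mu)(-1)^{p+q}$. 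If $q=0$, then $\mathfrak{f}_2\equiv 0$ on interior points, the leading balance in $\Psi_2^{(i)}$ is between $-2\kap(\xi_i^*)\mu$, the $\tau_i$-term, and the $\ln\lam_j/\lam_j^2$ contributions; using the second alternative in \eqref{eq:est_mu_best} (involving $\cL_2(\xi^*)$), the analogous computation gives $\sgn\det A_2=\sgn(\mu)(-1)^{p}$, which coincides with $\sgn(\mu)(-1)^{p+q}$ when $q=0$. In either case $\sgn\det A_2=\sgn(\rho-\rho_*)(-1)^{p+q}$.

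Assembling the three factors and using the identities $m+p+q=3p+2q\equiv p\pmod 2$ and $(-1)^{\morse(\ff,\xi^*)}=(-1)^{-\morse(\ff,\xi^*)}$ yields \eqref{eq:degree_finite_dim}. The principal obstacle I anticipate is the computation of $\sgn\det A_2$ in the mixed case $q\neq 0$: the interior rows (leading $\tau_i$-contributions of order $\lam_i^2$) and the boundary rows (leading $\mathfrak{f}_2(\xi_i^*)\lam_i$ contributions) enter at different asymptotic scales, and the couplings through the common denominator $\int_\Si V e^u\,\d v_g$ must be tracked carefully. Here Corollary \ref{cor:lam_i_samespeed} is essential, as it puts the various $\lam_i$ on a single scale so that the leading reduced matrix can be isolated and its determinantal sign read off cleanly from the sign of $\cL_1(\xi^*)=\sgn(\mu)$.
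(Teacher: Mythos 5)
Your proposal follows the same overall strategy as the paper: identify the unique zero $(\xi^*,\lam(\mu),e)$, observe that the Jacobian is block-triangular there, and factor the determinant sign into three pieces $\sgn\det A_1\cdot\sgn\det A_2\cdot\sgn\det A_3$. Your treatment of $A_1$ and $A_3$ is correct, and you handle one point more carefully than the paper does: the paper asserts $\partial_\lam\Psi_1\equiv 0$ "by construction," which is not literally true (since $\Psi_1^{(i,j)}=-\tfrac{1+\mu}{2}\lam_i^{-1}\partial_{(\xi_i)_j}\ff$ does depend on $\lam_i$); you correctly note that what vanishes is $\partial_\lam\Psi_1$ \emph{at the zero}, because $\nabla\ff(\xi^*)=0$. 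The final bookkeeping $(-1)^{m+p+q}=(-1)^p$ is also right.

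The gap is in the $A_2$ computation, which is the bulk of the paper's proof and which you only assert. Two issues. First, your characterization of the leading order is not accurate: you claim the dominant contribution to $\partial_{\lam_k}\Psi_2^{(i)}$ "comes from the $\mathfrak{f}_2(\xi_i^*)\lam_i$ term on boundary indices and from differentiating $\tau_i$ on interior indices." In fact, on \emph{every} row $i\le p+q-1$, the leading entries of $\partial_\lam\Psi_2^{(i)}$ come from differentiating $\tau_i$ through $\int_\Si V e^u\,\d v_g$ (this is what produces the rank-one coupling of all rows); the $\mathfrak{f}_2$ contribution is subleading in those rows. Second, the sign $\sgn(\cL(\xi^*))$ does not emerge from any single matrix entry directly: the paper replaces the last row of $\partial_\lam\Psi_2$ by $\partial_\lam\bigl(\sum_i\Psi_2^{(i)}\bigr)$, and it is only \emph{after this row sum} — in which the dominant $\tau$-contributions cancel, leaving the $\cL$-governed residue tracked by the $\mu$-asymptotic \eqref{eq:est_mu_best} — that a row of smaller scale carrying $-\cL(\xi^*)$ (or $-\cL_2(\xi^*)/\lam_{p+q}$ when $q=0$) appears. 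The resulting matrix has a diagonal-plus-rank-one top block with entries $b_i=\rho_*/\kap(\xi_i)$ (satisfying $1-\sum_{i<p+q}b_i^{-1}=\kap(\xi_{p+q})/\rho_*>0$), and expanding along the modified last row isolates $\sgn\det A_2=(-1)^{p+q}\sgn(\cL(\xi^*))=(-1)^{p+q}\sgn(\rho-\rho_*)$. Your claim "the matrix $A_2$ factors as a positive block times a sign-carrying piece" gestures at this but skips precisely the row-reduction step that makes the computation work; without it, the $\cL$-sign does not surface from the Jacobian entries you identified.
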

	\begin{proof}
		Due to the construction of $\Psi_{\mu,\xi^*}$, we have
		\[\frac{\pa \Psi_1}{\pa\lam_i}\equiv0,\quad \frac{\pa \Psi_1}{\pa \al _i}\equiv0, \quad \frac{\pa \Psi_3}{\pa (\xi_i)_j}\equiv 0\quad  \text{ and }\quad \frac{\pa \Psi_3}{\pa \lam_i}\equiv 0\]
		for all $i=1,\ldots,p+q$ and $j\in\{1,\ii(\xi_i)\}$.  Therefore, to compute the degree of $\Psi_{\mu,\xi^*}$, it suffices to focus on the component $\Psi_2$.

		From \eqref{eq:deform_pa_lam}, we deduce that
		\begin{align*}
			\Big\la 	T_1(u),\lam_i\frac{\pa P\delta_{\lam_i,\xi_i}}{\pa \lam_i}\Big\ra
			=&\, 2\al_i\kap(\xi_i)(\tau_i-\mu +\mu\tau_i )-\frac \pi 2   \frac{\rho\mathfrak{f}_2(\xi_i)  }{\int_{\Si} V e^u \, \d v_g} \lam_i^{4 \al _i-3}
			-\sum_j \frac{\kap(\xi_i)\kap(\xi_j)}{|\Si|_g}\frac{\ln \lam_j}{\lam_j^2}.
		\end{align*}
		
		Under conditions \hyperref[item:C1]{(C1)} and \hyperref[item:C2]{(C2)}, for  sufficiently small $|\mu|>0$, the system $\Psi_{\mu,\xi^*}=0$ admits a solution   $(\xi,\lam,\al)\in V^*(\mu, \xi^*)$ if and only if 
		\[ \al _i=1,\quad \xi_i =\xi^*_i\quad \text{ for all }\, i=1,\ldots, p+q;\]
		and each $\lam_i:=\lam_i(\mu)>0$  satisfies 
		$\la 	T_1(u),\lam_i\frac{\pa P\delta_{\lam_i,\xi_i}}{\pa \lam_i}\ra|_{w=0}=0.$
		Using \eqref{lem:A.8-2}, we have the following estimate for $\lam_i(\mu)$:
		$$
		\frac{\cF^i_{\xi^*}(\xi^*_i)\lam_i^2(\mu)}{\cF^j_{\xi^*}(\xi^*_j)\lam_j^2(\mu)}-1=O\Big(\frac 1 {\lam_1(\mu)}\Big)=o(1) \text{ as }|\mu|\to  0.
		$$
		By direct calculation, 
		\begin{equation}\label{degree1}
			\deg\,( \Psi_{\mu,\xi^*},  V^*(\mu, \xi^*), 0)=(-1)^{m-\morse(\ff,\xi^*)}\,\sgn\det\Big( \frac{\pa \Psi_2 }{\pa \lam}\Big)\Big|_{(\xi^*,\lam(\mu), e)},
		\end{equation}
		where $\lam(\mu):=(\lam_1(\mu),\ldots,\lam_{p+q}(\mu))$ and $e:=(1,\ldots, 1)\in\R^{p+q}$. At the point $(\xi^*,\lam(\mu), e)$, the Jacobian matrix of $\Psi_2$ with respect to $\lam:=(\lam_1,\ldots,\lam_{p+q})$ satisfies
		\[  \det\Big(\frac{\pa \Psi_2}{\pa \lam}\Big)=\det\left(\begin{matrix}
			\frac{\pa \Psi^{(1)}_2}{\pa \lam}\\
			\vdots\\
			\frac{\pa \Psi^{(p+q-1)}_2}{\pa \lam}\\
			\frac{\pa \Psi_2^{(p+q)}}{\pa \lam}
		\end{matrix}\right)= \det\left(\begin{matrix}
			\frac{\pa \Psi^{(1)}_2}{\pa \lam}\\
			\vdots\\
			\frac{\pa \Psi^{(p+q-1)}_2}{\pa \lam}\\
			\sum_i \frac{\pa \Psi_2^{(i)}}{\pa \lam}
		\end{matrix}\right).\]
		As in proof of Proposition \ref{prop:estimates_rho_k},  we can derive that 
		\begin{align*}
			\sum_i \Psi_2^{(i)}=& -2(\rho-\rho_*) +	\left\{\begin{aligned}
				&\,	\frac {\pi} { 2} \frac{\rho_*}{\int_\Si V e^u \, \d v_g
				}\cL_1(\xi^*)\sqrt{\cF_{\xi}^{p+q}(\xi_{p+q})} \lam_{p+q}+ o\Big(\frac 1 {\lam_1}\Big)&& \text{ if }\, q\neq 0,\\
				&\,		\frac {1} { 8} \frac{\rho_*}{\int_\Si V e^u \, \d v_g
				}\cL_2(\xi^*) \ln  \lam_{p+q}  + o\Big(\frac{\ln \lam_{1}} {\lam^2_{1}}\Big)	&& \text{ if }\, q=0,
			\end{aligned}	\right.\\
			=&\,	-2(\rho-\rho_*) +\left\{\begin{aligned}
				&\,	\frac{4\pi}{ \sqrt{\cF^{p+q}_{\xi^*}(\xi^*_{p+q})}\lam_{p+q}
				}\cL_1(\xi^*) + o\Big(\frac 1 {\lam_1}\Big)&& \text{ if }\, q\neq 0,\\
				&\,	 \frac{\ln  \lam_{p+q} }{  \cF^{p+q}_{\xi^*}(\xi^*_{p+q})\lam_{p+q}^2
				}\cL_2(\xi^*)  + o\Big(\frac{\ln \lam_{1}} {\lam^2_{1}}\Big)	&& \text{ if } \, q=0.
			\end{aligned}	\right.
		\end{align*}
		Then,  
		\[
		\frac{\pa }{\pa  \lam_{p+q}} \sum_i \Psi_2^{(i)}=\left\{\begin{aligned}&
			-\frac{4\pi}{\sqrt{\cF^{p+q}_{\xi^*}(\xi^*_{p+q})}
			}\cL_1(\xi^*)\frac 1 {\lam_{p+q}^2}+o\Big(\frac 1 {\lam_1^2}\Big) && \text{ if }\, q\neq 0,\\
			&	- \frac{2 }{ \cF^{p+q}_{\xi^*}(\xi^*_{p+q})
			}\cL_2(\xi^*)\frac {\ln\lam_{p+q}}{\lam_{p+q}^3}+	o\Big(\frac{\ln \lam_1} {\lam^3_1}\Big)	&& \text{ if }\, q=0,
		\end{aligned}	\right. 
		\]
		and for any $j\leq p+q-1$,
		\[
		\frac{\pa }{\pa\lam_j} \sum_i \Psi_2^{(i)}=\left\{\begin{aligned}
			&o\Big(\frac 1 {\lam_1^2}\Big)&& \text{ if }\,q\neq 0,\\
			&o\Big(\frac{\ln \lam_1} {\lam^3_1}\Big)	&& \text{ if }\, q=0.
		\end{aligned}\right.
		\]
		For $1\leq i,j\leq p+q-1$, using  \eqref{lem:A.8-2}  we obtain 
		\begin{align*}
			\frac{\pa }{\pa\lam_i} \Psi_2^{(i)}= &\,2\kap(\xi_i) \frac{\rho \lam_1^2 \cF_{\xi^*}^1(\xi_1^*)}{8\int_\Si V e^u \, \d v_g}\frac{- \sum_{j\neq i}  \frac{\kap(\xi_j)}{4} \lam_1^2 \cF_{\xi^*}^1(\xi_1^*)}{\lam_i \int_\Si V e^u \, \d v_g }+o\Big(\frac 1 {\lam_1}\Big);\\
			\frac{\pa }{\pa\lam_j} \Psi_2^{(i)}=& \,2\kap(\xi_i) \frac{\rho \lam_1^2 \cF_{\xi^*}^1(\xi_1^*)}{8\int_\Si V e^u \, \d v_g}\frac{ \frac{\kap(\xi_j)}{4} \lam_1^2 \cF_{\xi^*}^1(\xi_1^*)}{\lam_j \int_\Si V e^u \, \d v_g }+o\Big(\frac 1 {\lam_1}\Big),\quad i\neq j.
		\end{align*}
		Consequently, at point the $(\xi^*,\lam(\mu), e)$, we have
		\begin{align*}
			\sgn\det\Big(\frac{\pa \Psi_2}{\pa  \lam} \Big)= &\,	\sgn\det\left( \left(\begin{array}{ccccc}
				-( b_1-1)& 1& \ldots &1 & 1\\
				1& -(b_2-1)&  \ldots & 1& 1\\ \vdots&\vdots&     & \vdots&\vdots\\
				1& 1&\ldots & -(b_{p+q-1}-1)&1\\
				a_1 & a_2& \ldots & a_{p+q-1}& a_{p+q}
			\end{array}\right)+ A\right),
		\end{align*}
		in view of $\kap(\xi_i)>0$, $\cF^i_{\xi^*}(\xi_i^*)>0$ and $\lam_i>0$, where  	$b_i= \frac{\rho_*}{\kap(\xi_i)}>0$, $\forall\,i=1,\ldots,p+q$,
		\begin{gather*}
			a_i=\left\{\begin{aligned}
				&o(1) &&\text{ if }\,q\neq 0,\\
				&o\Big(\frac{\ln \lam_1}{\lam_1}\Big)& &\text{ if }\, q=0,
			\end{aligned}\right.\quad \forall\,i=1,\ldots,p+q-1,\quad  \quad a_{p+q}=\left\{\begin{aligned}
				&	-\cL_1(\xi^*)&& \text{ if }\, q\neq 0,\\
				&	-\frac{\cL_2(\xi^*)}{\lam_{p+q} } 	&&\text{ if }\, q=0,
			\end{aligned}\right.
		\end{gather*}
		and $A$ is a matrix satisfying that 
		$
		A=\left\{\begin{aligned}
			&	o(1) &&\text{ if }\, q\neq 0\\
			&	o\Big(\frac{\ln \lam_1}{\lam_1}\Big) &&\text{ if }\,  q=0
		\end{aligned}\right..
		$
		By direct calculation, we have 
		\[ \det \left( \begin{array}{cccc}
			-( b_1-1)& 1& \ldots &1 \\
			1& -(b_2-1)&  \ldots & 1\\ \vdots&\vdots&     & \vdots\\
			1& 1&\ldots & -(b_{p+q-1}-1)
		\end{array}\right)= (-1)^{p+q-1}(\Pi_{i=1}^{p+q-1}b_i)\Big(1-\sum_{i=1}^{p+q-1} \frac{1}{b_i}\Big).
		\]
		It follows that
		\begin{equation}\label{degree2}
			\sgn\det\Big(\frac{\pa \Psi_2}{\pa  \lam} \Big)\Big|_{(\xi^*,\lam(\mu), e)}=
			(-1)^{p+q}\,\sgn\,(\cL(\xi^*)).
		\end{equation}
		Finally, in view of  \eqref{eq:est_mu_best}, we obtain  $(\rho-\rho_*)\cL(\xi^*)>0$.  Consequently,  \eqref{eq:degree_finite_dim} follows directly from \eqref{degree1} and \eqref{degree2}.
	\end{proof}
	Before deriving the degree-counting formula, we first  prove  a Poincar\'e-Hopf index  type theorem:
	\begin{lem}\label{lem:deg_count_0}
		Let $\ff$ be  defined in \eqref{eq:reduce_fun_0},  and let $\Xi_{p,q}$  be given by \eqref{def:Xi_pq} for   $p,q\in \N_0$ with $2p+q\in\N$. Under the assumption of \hyperref[item:C1]{(C1)},
		the topological degree of the vector fields $\nabla\ff$  is given by
		\[
		\deg\,(\nabla \ff, \Xi_{p,q}, 0)=\left\{\begin{aligned}
			&	\prod_{i=0}^{p-1}	(\chi(\Si)-i) && \text{ if }\,q=0,\\
			&0	  &&  \text{ if }\, q\neq 0,
		\end{aligned}\right. 
		\]
		where $\chi(\Si)$ denotes the Euler characteristic of $\Si$. 
	\end{lem}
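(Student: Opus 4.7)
The plan is to prove the stated formula as a Poincar\'e--Hopf type identity, namely to show
\[
\deg(\nabla\ff,\Xi_{p,q},0)\;=\;\chi(\Xi_{p,q}),
\]
and then to compute the Euler characteristic using the product decomposition $\Xi_{p,q}=F(\intS,p)\times F(\pa\Si,q)$ together with the Fadell--Neuwirth fibration for ordered configuration spaces.

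\emph{Step 1 (blow-up of $\ff$ at the boundary of $\Xi_{p,q}$).} First I would show that $\ff(\xi)\to+\infty$ whenever $\xi$ approaches the topological boundary of $\Xi_{p,q}$ inside $\overline\Si^{\,p+q}$. Two modes of degeneration can occur: (a) a \emph{collision} $\xi_i\to\xi_j$ for some $i\neq j$, in which case the cross-term $\kap(\xi_i)\kap(\xi_j)G^g(\xi_i,\xi_j)$ diverges to $+\infty$ by the local expansion of the Green's function recalled in Section~\ref{sec:2.1}; (b) an interior point $\xi_i$ ($1\leq i\leq p$) tending to $\pa\Si$, in which case the Robin term $\kap^2(\xi_i)R^g(\xi_i)$ diverges to $+\infty$. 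The blow-up in (b) is produced by the Neumann reflection principle: near $\pa\Si$, the Green's function $G^g(\cdot,\xi_i)$ acquires a second logarithmic singularity at the mirror image $\xi_i^*$ of $\xi_i$ across $\pa\Si$, contributing a term $\tfrac{1}{2\pi}\log\tfrac{1}{2d_g(\xi_i,\pa\Si)}$ to the regular part $H^g(\xi_i,\xi_i)$. All remaining terms in $\ff$ (the $\log V$ pieces, and any off-diagonal Green's term with $\xi_j$ kept away from $\xi_i$) stay bounded along such sequences.

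\emph{Step 2 (Poincar\'e--Hopf on a sublevel set).} With the blow-up in hand, I would pick $c>0$ large enough that $M_c:=\{\ff\leq c\}\subset\Xi_{p,q}$ is compact, contains every critical point of $\ff$, and deformation retracts onto $\Xi_{p,q}$ via the downward gradient flow. Condition \hyperref[item:C1]{(C1)} makes $\ff$ Morse, so the critical points are isolated and finite in number, while $\nabla\ff$ points outward along $\pa M_c$. Standard Morse theory on compact manifolds with boundary then yields
\[
\deg(\nabla\ff,\Xi_{p,q},0)\;=\;\sum_{\nabla\ff(\xi)=0}(-1)^{\morse(\ff,\xi)}\;=\;\chi(M_c)\;=\;\chi(\Xi_{p,q}).
\]

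\emph{Step 3 (Computing $\chi(\Xi_{p,q})$).} Since $\intS\cap\pa\Si=\emptyset$, the configuration space factors as $\Xi_{p,q}=F(\intS,p)\times F(\pa\Si,q)$, hence $\chi(\Xi_{p,q})=\chi(F(\intS,p))\,\chi(F(\pa\Si,q))$. For $q\geq 1$, each connected component of $\pa\Si$ is a circle, and $F(S^1,k)\simeq S^1\times(\text{finite set})$ has vanishing Euler characteristic for every $k\geq 1$; summing over all distributions of the $q$ points among the circles gives $\chi(F(\pa\Si,q))=0$, which proves the vanishing claim. For $q=0$, the Fadell--Neuwirth fibration
\[
F(\intS,p)\;\longrightarrow\; F(\intS,p-1),\qquad\text{fiber}\;\;\intS\setminus\{p-1\text{ points}\},
\]
together with multiplicativity of $\chi$ in fibrations, gives
\[
\chi(F(\intS,p))\;=\;\chi(F(\intS,p-1))\cdot\bigl(\chi(\intS)-(p-1)\bigr).
\]
Induction, together with $\chi(\intS)=\chi(\Si)$ (as $\intS$ is homotopy equivalent to $\Si$), yields $\prod_{i=0}^{p-1}(\chi(\Si)-i)$, as claimed.

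The principal obstacle is Step~1(b): showing precisely that the Robin function blows up as the interior pole approaches $\pa\Si$ requires a careful local analysis via reflection in isothermal coordinates to separate the mirror singularity from the genuine smooth part of $H^g$. Once that asymptotic is pinned down, the remainder of the argument is standard Morse theory coupled with the classical configuration-space computation.
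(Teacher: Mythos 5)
Your proposal is correct and arrives at the same formula, but the way you set up the Poincar\'e--Hopf step is genuinely different from the paper's. Both arguments end with the Fadell--Neuwirth fibration (the paper cites Spanier's multiplicativity theorem) to reduce $\chi(\Xi_{p,q})$ to the stated product, and both observe that the boundary factor kills the Euler characteristic when $q\geq 1$. Where you diverge is upstream: the paper cites an external result (Lemma~4.1 of \cite{HB2024}) that $|\nabla\ff(\xi)|\to+\infty$ as $\xi\to\pa\Xi_{p,q}$, picks an exhausting compact submanifold $M^\varepsilon$ with smooth boundary, and glues $\nabla\ff$ to an outward normal field via a bump function in a collar neighborhood before applying Poincar\'e--Hopf. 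You instead prove directly that $\ff(\xi)\to+\infty$ at $\pa\Xi_{p,q}$ --- from the diverging cross-terms $\kap(\xi_i)\kap(\xi_j)G^g(\xi_i,\xi_j)$ at collisions and the diverging Robin function $R^g(\xi_i)$ as $\xi_i\to\pa\Si$ (which, with the paper's sign convention, indeed goes to $+\infty$ by the Neumann reflection) --- and then use a compact sublevel set $M_c=\{\ff\leq c\}$ on which $\nabla\ff$ is automatically outward-pointing along $\pa M_c$. Your route is more self-contained (no appeal to an external gradient estimate) and avoids having to argue that the paper's bump-function homotopy $(1-t)\nabla\ff + t\,v_{k,m}$ stays nonzero on $\pa M^\varepsilon$, which is automatic once $\nabla\ff$ already points outward but otherwise requires a little care. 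The price is that you do need to carry out the isothermal-coordinate reflection argument to establish $R^g(\xi)\sim\frac{1}{2\pi}\log\frac{1}{2d_g(\xi,\pa\Si)}$ near the boundary, which you correctly flag as the remaining technical step; that asymptotic is standard but should be written out. One small caution on the paper side: the intermediate formula $\chi(\tilde X^q)=(\chi(\pa\Si)-(q-1))\cdots\chi(\pa\Si)$ applies the ``remove a point subtracts one from $\chi$'' rule (valid on surfaces) to the $1$-manifold $\pa\Si$, where removing a point in fact \emph{adds} one; the final answer $0$ is still correct because the last factor $\chi(\pa\Si)=0$ vanishes, but your phrasing --- each $F(S^1,k)$ has a circle factor and hence $\chi=0$ --- is the cleaner justification.
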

	\begin{proof}
		We apply the Poincar\'e-Hopf index theorem (see \cite[p.35]{milnor1965topology}) to compute the degree of the vector field $\nabla \ff$. 
		Since $\Xi_{p,q}$ is a Riemann surface with boundary, it necessitates a homotopic perturbation of the vector field to ensure it points outward at the boundary. 
		According to \cite[Lemma 4.1]{HB2024}, we have 
		$|\nabla \ff(\xi)|\to +\infty$ as  $\xi\to \pa (\Xi_{p,q})$. 
		A compact submanifold $ M^{\var}\subset  \Xi_{p,q}$ with smooth boundary $\pa  M^{\var}$ can be selected
		for some $\var>0$ sufficiently small. 	
		By   \cite[Proposition 3.42]{Hatcher2002}, there exists a collar neighborhood around the boundary $\pa  M^{\var}$, characterized by a homeomorphism $l_0(x)=(p_x, s_x)$,  which maps a neighborhood of $\pa  M^{\var}$ onto $\pa  M^{\var}\times [0, 1)$.
		
		Next, 	we  introduce a bump function $\eta: M^{\var}\to  [0,1]$, defined by $\tilde{\eta}(x)=\chi (2s_x)$, where $\chi $ is the same cut-off function as in \eqref{eq:cut_off}. Since $\pa  M^{\var}$ is smooth, we can define an outward unit normal vector field $N(x)$ for each point $x\in \pa  M^{\var}$. We then define the vector field
		\[
		v_{k,m}(x):= N(p_x) \tilde{\eta}(x)+(1-\tilde{\eta}(x))\nabla \ff(x),
		\] which is a smooth vector field on $M^{\var}$ that points outward at $\pa  M^{\var}$.
		
		We now construct a homotopy $H: M^{\var}\times [0,1] \to  TM^{\var}$, 
		$
		(x,t)\mapsto (1-t) \nabla\ff(x)+tv_{k,m}(x),
		$
		which leads to 
		$
		\deg\,(\nabla\ff, \Xi_{p,q}, 0)=\deg\,(v_{k,m},M^{\var}, 0).
		$
		Applying the Poincar\'e-Hopf index theorem,  we find that
		$\deg\,(v_{k,m},M^{\var}, 0)= \chi( M^{\var}).$  Since $\Xi_{p,q}$ can be continuously deformed into $M^{\var}$, we have $\chi(\Xi_{p,q}) = \chi(M^{\var})$, and thus 	\[\deg\,(\nabla\ff, \Xi_{p,q}, 0) = \chi(\Xi_{p,q}).\]
		
		Finally, it remains to compute the Euler characteristic of the configuration set $\Xi_{p,q}$. 
		Recall that $\Xi_{p,q}= X^p \bigcup \tilde{X}^q$,
		where 
		\[
		X^p:= \intS^p \setminus \{ (x_1,\ldots,x_p)\in\intS^p:  \text{$x_i=x_j$ for some $i\neq j$}  \}	\] and 
		\[\tilde{X}^q:= (\pa \Si)^q\setminus \{ (x_1,\ldots,x_q)\in(\pa \Si)^q:  \text{$x_i=x_j$ for some $i\neq j$}  \}. 	\] 
		We introduce a fibration $\tilde{\mu}(x_1, \ldots, x_p) = (x_1, \ldots, x_{p-1})$, where the fiber $\tilde{\mu}^{-1}(x_1, \ldots, x_{p-1})$ is homeomorphic to $\intS \setminus \{x_1, \ldots, x_{p-1}\}$. 
		By \cite[Theorem 9.3.1]{Spanier1966},  we have
		\begin{align*}
			\chi(X^{p})=&\,\chi(\intS\setminus\{x_1,\ldots,x_{p-1}\})
			\chi(X^{p-1})= (\chi(\intS)-(p-1))	\chi(X^{p-1})\\
			=& \,\ldots\ldots=  (\chi(\intS)-(p-1))\ldots  (\chi(\intS)-1)\chi(\intS)\\
			=&\, (\chi(\Si)-(p-1))\ldots  (\chi(\Si)-1)\chi(\Si),
		\end{align*}
		where we  used the fact  that $\intS$ is a retract by deformation of $\Si$. 	Similarly, by induction on $q$, we derive that 
		$ \chi(\tilde{X}^q)= (\chi(\pa \Si)-(q-1))(\chi(\pa \Si)-1)\chi(\pa \Si)=0,$
		since $\chi(\pa \Si)=0$. 
	\end{proof}
	
	\begin{lem}\label{lem:deg_jump}
		Let $m\in\N$. 	Under the assumptions of \hyperref[item:C1]{(C1)} and \hyperref[item:C2]{(C2)}, we have
		\begin{align*}
			d_m^+-d_m^-=& \,\sum_{\mytop{2p+q=m}{p,q\in\N_0}}\sum_{\xi^*\in \cV_{p,q}^+\cup \cV^-_{p,q}} \frac{1}{p!q!}(-1)^{p+\morse(\ff,\xi^*)}\\
			=&\, \sum_{2p=m,\, p\in \N_0} \frac{1}{p!} (-\chi(\Si)+(p-1))\ldots (-\chi(\Si)+1) (-\chi(\Si)),
		\end{align*}
		where $\cV_{p,q}^-$ and $\cV_{p,q}^+$  are defined in \eqref{eq:def_V-} and \eqref{eq:def_V+}, respectively, and $\chi(\Si)$ is the Euler characteristic of the surface $\Si$.  
	\end{lem}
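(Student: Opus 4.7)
The plan is to start from the decompositions \eqref{eq:def_d_plus} and \eqref{eq:def_d_mius} and subtract them to obtain
\[
d_m^+ - d_m^- = \sum_{2p+q=m}\frac{1}{p!\,q!}\left[\sum_{\xi^*\in\cV_{p,q}^+}\deg(\nabla J_\rho,V(\mu,\xi^*),0)\Big|_{\rho>\rho_*} - \sum_{\xi^*\in\cV_{p,q}^-}\deg(\nabla J_\rho,V(\mu,\xi^*),0)\Big|_{\rho<\rho_*}\right].
\]
For each critical point $\xi^*$ with $\cL(\xi^*)>0$, the condition $\mu\cL(\xi^*)>0$ in Lemma \ref{lem:deformT} forces $\mu>0$; for $\xi^*$ with $\cL(\xi^*)<0$, it forces $\mu<0$. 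In both cases, the homotopy $T_t$ produces no zeros on $\partial V(\mu,\xi^*)$, so the degree equals $\deg(\Psi_{\mu,\xi^*},V^*(\mu,\xi^*),0)$, which Lemma \ref{lem:degee_reduced} evaluates as $\sgn(\rho-\rho_*)(-1)^{p-\morse(\ff,\xi^*)}$.

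Plugging these values in, the $\cV_{p,q}^+$ contributions pick up sign $+1$ while the $\cV_{p,q}^-$ contributions pick up sign $-(-1)=+1$ after accounting for the minus sign already present in the difference. Using $(-1)^{-\morse(\ff,\xi^*)}=(-1)^{\morse(\ff,\xi^*)}$, we obtain the first claimed equality
\[
d_m^+-d_m^- = \sum_{2p+q=m}\frac{1}{p!\,q!}\sum_{\xi^*\in\cV_{p,q}^+\cup\cV_{p,q}^-}(-1)^{p+\morse(\ff,\xi^*)}.
\]

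For the second equality, I would invoke condition \hyperref[item:C2]{(C2)} to note that every critical point of $\ff$ lies in $\cV_{p,q}^+\cup\cV_{p,q}^-$, and condition \hyperref[item:C1]{(C1)} to ensure $\ff$ is Morse so that each non-degenerate critical point contributes $(-1)^{\morse(\ff,\xi^*)}$ to the Poincaré--Hopf index of $\nabla\ff$. Lemma \ref{lem:deg_count_0} then gives
\[
\sum_{\xi^*\in\cV_{p,q}^+\cup\cV_{p,q}^-}(-1)^{\morse(\ff,\xi^*)} = \deg(\nabla\ff,\Xi_{p,q},0) = \begin{cases}\prod_{i=0}^{p-1}(\chi(\Si)-i) & q=0,\\ 0 & q\neq 0.\end{cases}
\]
Hence only the $q=0$ (equivalently $m=2p$) term survives, giving
\[
d_m^+-d_m^- = \frac{(-1)^p}{p!}\prod_{i=0}^{p-1}(\chi(\Si)-i) = \frac{1}{p!}\prod_{i=0}^{p-1}(i-\chi(\Si)),
\]
which is the desired expression. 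A consequence (important for Theorem \ref{thm:blowup} and the parity observation in the introduction) is that $d_m^+=d_m^-$ whenever $m$ is odd.

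The main obstacle is purely bookkeeping: correctly matching the sign $\sgn(\rho-\rho_*)$ from Lemma \ref{lem:degee_reduced} against the compatibility constraint $\mu\cL(\xi^*)>0$ required by Lemma \ref{lem:deformT}, and then verifying that the minus sign in the definition of $d_m^+-d_m^-$ combines with $\sgn(\rho-\rho_*)=-1$ on the $\cV_{p,q}^-$ side to give a uniform $(-1)^{p+\morse(\ff,\xi^*)}$. Everything else is a direct application of the previously established Poincaré--Hopf-type computation.
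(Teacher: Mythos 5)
Your proposal is correct and follows essentially the same route as the paper: subtract the decompositions \eqref{eq:def_d_plus} and \eqref{eq:def_d_mius}, invoke Lemma~\ref{lem:degee_reduced} for the local degree (noting that the $\sgn(\rho-\rho_*)$ factor combines with the minus sign in $d_m^+-d_m^-$ so both $\cV_{p,q}^+$ and $\cV_{p,q}^-$ contribute with the same sign), and then apply Lemma~\ref{lem:deg_count_0}. You make slightly more explicit than the paper does that (C1) and (C2) guarantee $\sum_{\xi^*\in\cV_{p,q}^+\cup\cV_{p,q}^-}(-1)^{\morse(\ff,\xi^*)}=\deg(\nabla\ff,\Xi_{p,q},0)$, but this is the same step the paper is silently performing.
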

	\begin{proof}
		By applying Lemma~\ref{lem:degee_reduced} together with \eqref{eq:def_d_plus} and~\eqref{eq:def_d_mius}, we obtain
		\begin{align*}
			d^+_m-d^-_m=&\, \sum_{2p+q=m} \frac{1}{p!q!}\Big( \sum_{\xi^*\in \cV_{p,q}^+} \deg\,(\nabla J_\rho, V(\mu,\xi^*),0) - \sum_{\xi^*\in \cV_{p,q}^-} \deg\,(\nabla J_\rho, V(\mu,\xi^*),0)\Big)\\
			=&\,\sum_{2p+q=m} \frac{1}{p!q!}\Big( \sum_{\xi^*\in \cV_{p,q}^+} \sgn\,(\rho-\rho_*)\,(-1)^{p+ \morse(\ff,\xi^*)}\\&\, - \sum_{\xi^*\in \cV_{p,q}^-} \sgn\,(\rho-\rho_*)\,(-1)^{p+ \morse(\ff,\xi^*)} \Big)\\
			=& \,\sum_{2p+q=m}  \sum_{\xi^*\in \cV_{p,q}^+\cup \cV_{p,q}^-} \frac{1}{p!q!} (-1)^{p+\morse(\ff,\xi^*)}. 
		\end{align*}
		Then, Lemma \ref{lem:deg_count_0} yields that
		\begin{align*}
			d_m^+-d_m^-=&\,\sum_{2p+q=m}\frac{(-1)^p}{p!q!}  \deg\,(\nabla \ff, \Xi_{p,q}, 0)\\
			=& \,\sum_{2p=m,\,p\in \N_0} \frac{(-1)^p}{p!} (\chi(\Si)-(p-1))\ldots  (\chi(\Si)-1)\chi(\Si) .
		\end{align*}
		This completes the proof.
	\end{proof}
	Having derived the explicit formula for the degree jump at the critical values  $4\pi m$,  we are now in a position to complete the proof of Theorem \ref{thm:blowup}.  
	\begin{proof}[Proof of Theorem \ref{thm:blowup}]
		According to Lemma \ref{lem:deg_jump}, for $m$ is an odd number we have  $d^+_m-d^-_m=0$, i.e., the blow-up solutions at $4\pi m$ contribute zero degree to the whole system. On the other hand, when $m=2\ell$ (an even number), for $\mu:=(\rho-4\pi m)/4\pi m$ close to 0, Lemma \ref{lem:deg_jump} gives that 
		\begin{align*}
			d_m^+-d_m^-=&\,\sum_{2p+q=m} \frac{1}{p!q!}\Big( \sum_{\xi^*\in \cV_{p,q}^+} \deg\,(\nabla J_\rho, V(\mu,\xi^*),0) - \sum_{\xi^*\in \cV_{p,q}^-} \deg\,(\nabla J_\rho, V(\mu,\xi^*),0)\Big)\\=& \, \frac{1}{\ell!}(-\chi(\Si)+(\ell-1))\ldots (-\chi(\Si)+1) (-\chi(\Si)),	
		\end{align*}
		which coincides with the result on closed Riemann surfaces in \cite[Lemma 5.2]{CL2003}.
		Under the assumption $\chi(\Si)<0$, we have $ d_m^+ -d_m^->0$.  Let $\mu=\frac{1}{k}$,  and for sufficiently large $k\in \N$, there exist integers $p^k, q^k$,  a configuration $(\xi^*)^k$, and a corresponding solution  $u_k\in V(\frac 1 k, (\xi^*)^k)$ of \eqref{maineq} with $\rho=\rho_k \to 4\pi m$, such that  $2p^k+q^k=m$ and  $(\xi^*)^k\in \cV_{p^k,q^k}^-\cup\cV_{p^k,q^k}^+$. Passing to a subsequence, $\{u_k\}$ blows up at $\xi^*:=(\xi_1^*, \ldots, \xi^*_{p+q})$ with the limiting configuration  $\xi^*\in \cV_{p,q}^-\cup\cV_{p,q}^+$ as $k\to +\infty$, where $2p+q=m$.  This verifies the existence of blow-up solutions to the problem \eqref{maineq}, thereby completing the proof.
	\end{proof}

	We now proceed to establish the degree-counting formula in the non-resonant case.
	\begin{proof}[Proof of Theorem \ref{thm:main}]
		It is well known that for $\rho \in (0, 4\pi)$, the functional $J_\rho $ is coercive and admits a unique minimizer. Herein, $d_\rho =1$ for all $\rho \in (0,4\pi)$. For $\rho\in (4\pi, 8\pi)$, we have $d_{\rho}=d_{1}^+=d_{1}^-+(d_{1}^+-d_{1}^-)=1$ in view of Lemma \ref{lem:deg_jump}.  Now, consider  $\rho\in (4\pi (m-1), 4\pi m)$ with $m \geq 3$ and denote $\ell:=[\frac{m-1}{2}]$.  By applying Lemma \ref{lem:deg_jump} once again, we obtain 
		\[	d_{\rho}=d_{m-1}^+=d^-_1+ \sum_{i=1}^{m-1}( d_i^+- d^-_i) 
		=
		1+\sum_{j=1}^{\ell} \binom{ j-1-\chi(\Si)}{j}    =
		\binom{ \ell-\chi(\Si)}{\ell}, \]
		where we have used 
		$d^-_{i+1}=d_i^+$ for $i=1,\ldots, m-1$. This completes the proof. 	
	\end{proof}



	\appendix
	\section{Technical estimates}\label{app:0}
	We provide some basic estimates from \cite{HBA20240829, EF2014} and omit some proofs of several intermediate results. 
	
	Let  $\xi,\zeta\in\Si$ and $\xi\neq \zeta$.
	For the isothermal coordinates $( U_{}(\xi),y_{\xi})$  and  $(U(\zeta),y_{\zeta})$, we  assume that 
	$U(\xi)\cap U(\zeta)=\emptyset$.  There exists a uniform radius
	$r_0>0$  such that $U_{2r_0}(\xi)\subset U(\xi)$ and $U_{2r_0}(\zeta)\subset U(\zeta)$.  And  we define the  cut-off  function
	$
	\chi_{\xi}(x):=\left\{\begin{aligned}
		&\chi\Big(\frac{|y_{\xi}|}{r_0}\Big) && \text{ if $x\in U(\xi)$}\\
		&0&& \text{ otherwise}
	\end{aligned}\right.,
	$
	where $\chi$ is given by \eqref{eq:cut_off}.\par 
	Let 
	\[
	f_{\xi}:=\frac{\Delta_g\chi_{\xi}}{|y_{\xi}|^2}+2 \langle\nabla \chi_{\xi},\nabla |y_{\xi}|^{-2}\rangle_g+\frac 2 {|\Si |_g} \int_{\Si }\frac 1 {r_0} \chi'\Big(\frac{|y_{\xi}|}{r_0}\Big)\frac{e^{-\varphi_{\xi}} }{ |y_{\xi}|^3} \,\d v_g\quad \text{ in }\, \Si,
	\]
	and
	$
	\Psi_{\lam,\xi}:=\chi_{\xi} ( \delta_{\lam,\xi}+\ln  \frac{\lam^2}{8})+\kap( \xi) H^g(\cdot, \xi).
	$
	By the definition of $f_{\xi}$,  it follows that as  $\lam\to+\infty$, we have
	$
	-\Delta_g \Psi_{\lam, \xi}= \frac{2}{\lam^2} f_{\xi}+O(\frac{1}{\lam^4})\text{ in } \Si,
	$
	so that 
	\[
	\int_{\Si} f_{\xi} \,\d v_g=-\frac{\lam^2}{2} \int_{\Si} \Delta_g \Psi_{\lam, \xi} \, \d v_g+O\Big(\frac{1}{\lam^2}\Big)=-\frac{\lam^2}{2} \int_{\pa \Si} \pa_{\nu_g} \Psi_{\lam, \xi} \, \d s_g+O\Big(\frac{1}{\lam^2}\Big)=O\Big(\frac{1}{\lam^2}\Big),
	\]
	which implies that $\int_{\Si} f_{\xi} \,\d v_g=0$.
	Therefore,  $F_{\xi}$ is  well-defined  as the unique solution of
	\begin{equation}\label{Fxi}
		\left\{\begin{aligned}
			&-\Delta_g F_{\xi}= f_{\xi}&& \text{  in }\,\Si, \\
			&\pa_{\nu_g}F_{\xi}=0 && \text{ on }\,\pa\Si, \\
			&\int_{\Si } F_{\xi} \,\d v_g=0,&&
		\end{aligned}\right.
	\end{equation}
	see, for instance, \cite{N2014,EF2014}. 
	
	Next, we present several lemmas concerning the asymptotic expansion of $ P \delta_{\lambda, \xi} $ and its derivatives as $ \lambda \to +\infty$, as proved in \cite{HBA20240829}.
	
	\begin{lem}[see Lemma 7.7 of \cite{HBA20240829}]\label{lem:projbubble_pointwise}
		As $\lam  \to + \infty$, the following expansion  holds in $C(\Si )$: 
		\begin{align*}
			P  \delta_{\lam,\xi}=&\,\chi_{\xi} \Big( \delta_{\lam,\xi}+\ln  \frac{\lam^2}{8} \Big)+\kap( \xi) H^g(\cdot, \xi)+h_{\lam,\xi}-\frac{2F_{\xi}}{\lam^2}  +O\Big(\frac{\ln  \lam }{\lam ^3}\Big)\nonumber\\=&\,2\chi_{\xi}\ln \Big( \frac{\lam^2}{1+\lam^2|y_{\xi}|^2}\Big)+ \kap( \xi) H^g(\cdot, \xi) +\frac{\kap( \xi)}{2|\Si |_g}\frac{\ln  \lam }{\lam^2}+O\Big(\frac{1}{\lam^2}\Big) ,
		\end{align*}
		where $F_{\xi}$ is given by \eqref{Fxi} and
		\[ h_{\lam,\xi}:=	\frac{\kap( \xi)}{2|\Si |_g}\frac{\ln  \lam }{\lam^2}+ \frac{2}{|\Si |_g}\frac{1}{\lam^2} \Big(\frac{\kap( \xi)}{8}+\int_{\Si } \chi_{\xi} \frac{1- e^{-\varphi_{\xi}}}{|y_{\xi}|^2} \,\d v_g -\int_{\Si } \frac{1}{r_0} \chi'\Big(\frac{|y_{\xi}|} {r_0}\Big) \frac{ \ln |y_{\xi}|}{|y_{\xi}|}\,\d v_g \Big). \] 
		In particular,  the following  expansion holds in $C_{loc}(\Si \setminus\{\xi\})$ as $\lam  \to + \infty$:
		\begin{align*}
			P  \delta_{\lam,\xi}=&\,\kap( \xi)G^g(\cdot, \xi)+h_{\lam,\xi}- \frac {2\chi_{\xi}} {|y_{\xi}|^2}\frac {1} {\lam^2}-\frac{2F_{\xi}}{\lam^2}+O\Big(\frac{\ln  \lam}{\lam ^3} \Big)\nonumber\\=&\,\kap( \xi)G^g(\cdot, \xi)+\frac{\kap( \xi)}{2|\Si |_g}\frac{\ln  \lam }{\lam^2}+O\Big(\frac{1}{\lam^2}\Big).
		\end{align*}
	\end{lem}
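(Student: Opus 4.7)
The plan is to build an explicit multi-scale ansatz for $P\delta_{\lam,\xi}$ that already captures the singular bubble profile, its Green's-function tail, the leading $1/\lam^2$ correction, and the mean-zero normalization, and then to treat everything else as a remainder to be estimated by elliptic regularity. Concretely, I would write
\[
P\delta_{\lam,\xi} = \Psi_{\lam,\xi} + h_{\lam,\xi} - \frac{2 F_{\xi}}{\lam^2} + R_{\lam,\xi},
\]
with $\Psi_{\lam,\xi}$ and $F_{\xi}$ as already defined in the appendix, $h_{\lam,\xi}\in\R$ a constant fixed so that $\int_\Si P\delta_{\lam,\xi}\,\d v_g=0$, and $R_{\lam,\xi}\in\oH$ the unknown remainder. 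The goal is then to prove $\|R_{\lam,\xi}\|_{C(\Si)}=O(\ln\lam/\lam^3)$; the two displayed expansions follow from this together with direct asymptotics of $\delta_{\lam,\xi}$.

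First I would compute $-\Delta_g\Psi_{\lam,\xi}$ term by term. Inside $U_{r_0}(\xi)$, the identity \eqref{deltachange} gives $-\Delta_g(\chi_\xi \delta_{\lam,\xi})=\chi_\xi e^{\delta_{\lam,\xi}-\varphi_\xi}-(\Delta_g\chi_\xi)\delta_{\lam,\xi}-2\la\nabla\chi_\xi,\nabla\delta_{\lam,\xi}\ra_g$, while outside a neighborhood of $\xi$ the cut-off kills the singular part. The factor $e^{-\varphi_\xi}=1+O(|y_\xi|^2)$ (using $\varphi_\xi(0)=0$ from \eqref{varphixi}) combined with $|y_\xi|^2 e^{\delta_{\lam,\xi}}=O(\lam^{-2})$ contributes a harmless $O(\lam^{-2})$ bulk term. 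Using \eqref{Hfunction} to handle $-\Delta_g(\kap(\xi)H^g(\cdot,\xi))$, all the Dirac-type singularities cancel with $\chi_\xi e^{\delta_{\lam,\xi}}-\overline{\chi_\xi e^{\delta_{\lam,\xi}}}$ appearing on the right of \eqref{projectedbubble}, and what is left is exactly $-\tfrac{2}{\lam^2}f_\xi$ to leading order plus an $O(\ln\lam/\lam^3)$ source in $L^p$ for $p\in(1,2)$. The choice of $F_\xi$ as the (well-defined, zero-mean) solution of \eqref{Fxi} then cancels this $\lam^{-2}f_\xi$ source, so $R_{\lam,\xi}$ solves a Neumann problem with right-hand side bounded in $L^p$ by $C\ln\lam/\lam^3$ and with zero mean. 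Standard $W^{2,p}$ estimates for the Neumann Laplacian on $(\Si,g)$ combined with the Sobolev embedding $W^{2,p}(\Si)\hookrightarrow C(\Si)$ for $p>1$ yield $\|R_{\lam,\xi}\|_{C(\Si)}=O(\ln\lam/\lam^3)$. The explicit value of $h_{\lam,\xi}$ is determined by computing $\int_\Si\Psi_{\lam,\xi}\,\d v_g$ and $\int_\Si F_\xi\,\d v_g=0$: the bubble integral splits into a piece over $U_{r_0}(\xi)$ handled by the change of variables $y=y_\xi(x)$, $y\mapsto \lam y$, and a complementary piece handled directly, yielding the displayed formula for $h_{\lam,\xi}$.

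For the second expansion, I would substitute the pointwise asymptotic $\delta_{\lam,\xi}+\ln(\lam^2/8)=-4\ln|y_\xi|-\tfrac{2}{\lam^2|y_\xi|^2}+O(\lam^{-4}|y_\xi|^{-4})$, valid uniformly on compact subsets of $\Si\setminus\{\xi\}$, and combine it with the decomposition $\kap(\xi)G^g(\cdot,\xi)=-4\chi_\xi\ln|y_\xi|+\kap(\xi)H^g(\cdot,\xi)$ coming from $G^g=\Gamma^g+H^g$; this converts the $\chi_\xi$-weighted singular piece into the full Green's function modulo the explicit $-2\chi_\xi/(\lam^2|y_\xi|^2)$ correction already present in the formula. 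The simplified form then follows by collecting the leading logarithmic piece of $h_{\lam,\xi}$.

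The main obstacle is the bookkeeping at order $1/\lam^2$: one must extract precisely the part of $-\Delta_g\Psi_{\lam,\xi}-(\chi_\xi e^{\delta_{\lam,\xi}}-\overline{\chi_\xi e^{\delta_{\lam,\xi}}})$ that equals $-(2/\lam^2)f_\xi$ plus a genuine $O(\ln\lam/\lam^3)$ remainder, and one must verify the compatibility condition $\int_\Si f_\xi\,\d v_g=0$ (sketched right after the definition of $f_\xi$) that makes \eqref{Fxi} solvable. A second delicate point is uniformity up to $\pa\Si$: the Neumann boundary condition for $P\delta_{\lam,\xi}$ is only approximately satisfied by the ansatz, so one has to absorb the boundary mismatch (of order $1/\lam^2$ and supported away from $\xi$ by the cut-off) into the $R_{\lam,\xi}$ problem via an auxiliary boundary correction, and then apply the standard Neumann $W^{2,p}$ estimate. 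Everything else is routine asymptotic expansion.
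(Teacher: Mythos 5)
Your overall strategy — peeling off the ansatz $\Psi_{\lam,\xi}+h_{\lam,\xi}-2F_{\xi}/\lam^{2}$ and estimating the residual $R_{\lam,\xi}$ through $W^{2,p}$ elliptic theory for the Neumann Laplacian plus Sobolev embedding — is the right one for this kind of projected-bubble expansion, and since the paper only cites Lemma 7.7 of \cite{HBA20240829} without reproving it, this is essentially the expected route. However, the step where you dispose of the conformal mismatch $\chi_{\xi}e^{\delta_{\lam,\xi}}\bigl(1-e^{-\varphi_{\xi}}\bigr)$ (which arises because the right-hand side in \eqref{projectedbubble} is $\chi_{\xi}e^{\delta_{\lam,\xi}}$, while $-\Delta_{g}\bigl(\chi_{\xi}\delta_{\lam,\xi}\bigr)$ produces $\chi_{\xi}e^{\delta_{\lam,\xi}-\varphi_{\xi}}$) contains a genuine gap.

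Concretely, the expansion $e^{-\varphi_{\xi}}=1+O(|y_{\xi}|^{2})$ is only correct when $\xi\in\intS$. By \eqref{varphixi}, at a boundary point one has $\nabla\hat\varphi_{\xi}(0)=(0,-2k_{g}(\xi))$, so $e^{-\varphi_{\xi}}=1+2k_{g}(\xi)(y_{\xi})_{2}+O(|y_{\xi}|^{2})$ and the mismatch is only $O(|y_{\xi}|)$ near $\xi$. More importantly, even in the interior case calling the resulting term a ``harmless $O(\lam^{-2})$ bulk term'' is not justified: pointwise $|y_{\xi}|^{2}e^{\delta_{\lam,\xi}}$ is $O(1)$ (its maximum, at $|y_{\xi}|\sim\lam^{-1}$, does not decay), and in $L^{p}(\Si)$ for $p\in(1,2)$ it is $O(\lam^{-2/p})$, which for every admissible $p$ is strictly larger than the claimed $O(\ln\lam/\lam^{3})$. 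So a naive $W^{2,p}\hookrightarrow C(\Si)$ bound applied to the residual equation can only yield the weaker remainder $O(1/\lam^{2})$ appearing in the second display, not the sharper $O(\ln\lam/\lam^{3})$ of the first. Moreover, this source term carries nontrivial structure that the ansatz does not see: since $\Delta\hat\varphi_{\xi}(0)=-2K_{g}(\xi)$ and $\int e^{\tilde\delta_{\lam,0}}|y|^{2}\,\d y\sim 16\pi\lam^{-2}\ln\lam$, its total mass is of order $K_{g}(\xi)\lam^{-2}\ln\lam$, and its Green's-function potential therefore produces a nonconstant contribution of order $\lam^{-2}\ln\lam$ proportional to $G^{g}(\cdot,\xi)$. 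Neither the constant $h_{\lam,\xi}$ nor $-2F_{\xi}/\lam^{2}$ (whose source $f_{\xi}$ is supported in the annulus $\{r_{0}\le|y_{\xi}|\le 2r_{0}\}$, far from the peak) can absorb this. To reach the sharp remainder you would need either to exhibit a second-moment cancellation, to enrich the ansatz with a $K_{g}$-dependent correction near $\xi$, or to track the boundary $k_{g}$-term separately; as written, the argument does not establish the $O(\ln\lam/\lam^{3})$ estimate in the first line of the lemma.
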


	\begin{lem}[see Lemma 7.4 of \cite{HBA20240829}] \label{lem:projbubble_xi_pointwise}  As $\lam  \to + \infty$, the following expansion  holds in $C(\Si )$:
		\begin{align*}
			\frac{\pa 	P\delta_{\lam,\xi}}{\pa\xi_j}=&\, \chi_{\xi} \frac{\pa  \delta_{\lam,\xi}}{\pa \xi_j}   +\kap( \xi) \frac{\pa H^g(\cdot,\xi)}{\pa \xi_j} -4\ln  (|y_{\xi}|)\frac{\pa \chi_{\xi}}{\pa \xi_j } +O \Big(\frac{\ln \lam}{\lam^2}\Big)\\=&\,-\frac{4 \chi_{\xi}\lam^2 (y_{\xi})_j}{1+\lam^2|y_{\xi}|^2}    +\kap( \xi) \frac{\pa H^g(\cdot,\xi)}{\pa \xi_j} -4\ln  (|y_{\xi}|)\frac{\pa \chi_{\xi}}{\pa \xi_j } +O \Big(\frac{\ln \lam}{\lam^2}\Big),
		\end{align*}
		where $j\in\{1,\ii(\xi)\}$.  In particular, we have
		$
		\frac{\pa 	P\delta_{\lam,\xi}}{\pa\xi_j}=\kap( \xi) \frac{\pa G^g(\cdot,\xi)}{\pa \xi_j}  +O(\frac{\ln \lam}{\lam^2}), 
		$  in $C_{loc}(\Si \setminus\{\xi\})$ as $\lam  \to + \infty$,
		where $j\in\{1,\ii(\xi)\}$.
	\end{lem}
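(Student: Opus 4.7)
The plan is to differentiate the pointwise expansion of $P\delta_{\lam,\xi}$ provided by Lemma \ref{lem:projbubble_pointwise} term by term with respect to the coordinate $\xi_j$, $j\in\{1,\ii(\xi)\}$, and then identify the leading contributions as $\lam\to+\infty$. Starting from
\[
P\delta_{\lam,\xi}=\chi_{\xi}\Bigl(\delta_{\lam,\xi}+\ln\tfrac{\lam^2}{8}\Bigr)+\kap(\xi)H^g(\cdot,\xi)+h_{\lam,\xi}-\frac{2F_{\xi}}{\lam^2}+O\Bigl(\tfrac{\ln\lam}{\lam^3}\Bigr),
\]
the $\xi_j$-derivative naturally splits into five pieces. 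The first two pieces, $\chi_{\xi}\,\pa_{\xi_j}\delta_{\lam,\xi}$ and $\kap(\xi)\,\pa_{\xi_j}H^g(\cdot,\xi)$, are exactly the main terms claimed, so I only need to control the remaining three.

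For the term $(\pa_{\xi_j}\chi_{\xi})\bigl(\delta_{\lam,\xi}+\ln\tfrac{\lam^2}{8}\bigr)$, I note that $\pa_{\xi_j}\chi_{\xi}$ is supported in the annulus $U_{2r_{\xi}}(\xi)\setminus U_{r_{\xi}}(\xi)$ where $|y_{\xi}|$ is uniformly bounded below. On that region
\[
\delta_{\lam,\xi}+\ln\tfrac{\lam^2}{8}=2\ln\frac{\lam^2}{1+\lam^2|y_{\xi}|^2}=-4\ln|y_{\xi}|+O(\lam^{-2}),
\]
uniformly, which produces the claimed $-4\ln(|y_{\xi}|)\,\pa_{\xi_j}\chi_{\xi}$ plus a term absorbed in the error. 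For the $h_{\lam,\xi}$ and $F_\xi$ pieces, I use that both depend smoothly on $\xi$ with uniform control as $\lam\to+\infty$: directly from the explicit formula of $h_{\lam,\xi}$ one gets $\pa_{\xi_j}h_{\lam,\xi}=O(\lam^{-2}\ln\lam)$, while $\pa_{\xi_j}F_{\xi}=O(1)$ by standard elliptic regularity applied to the $\xi_j$-differentiated Neumann problem \eqref{Fxi} (whose right-hand side $\pa_{\xi_j}f_{\xi}$ is a smooth bounded function), yielding an $O(\lam^{-2})$ contribution. Finally the remainder $O(\tfrac{\ln\lam}{\lam^3})$ in Lemma \ref{lem:projbubble_pointwise} is differentiable in $\xi$ with the same rate after losing one logarithm, producing at worst $O(\tfrac{\ln\lam}{\lam^2})$.

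To get the explicit bubble-derivative expression $-\frac{4\chi_{\xi}\lam^2(y_{\xi})_j}{1+\lam^2|y_{\xi}|^2}$, I use the identity $\delta_{\lam,\xi}(x)=\tilde\delta_{\lam,0}(y_\xi(x))$ together with the fact that the isothermal chart family satisfies $\pa_{\xi_j}y_\xi(x)\big|_{x=\xi}=-e_j$ to leading order (plus smooth $\xi$-dependent corrections of the conformal factor). A direct chain-rule computation then gives
\[
\pa_{\xi_j}\delta_{\lam,\xi}=-\frac{4\lam^2(y_{\xi})_j}{1+\lam^2|y_{\xi}|^2}+O\bigl(\lam^{-1}\bigr)
\]
on $U(\xi)$, and multiplying by $\chi_\xi$ yields the stated main term.

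The second assertion, i.e.\ the local expansion in $C_{loc}(\Si\setminus\{\xi\})$, follows because on any compact set $K\Subset\Si\setminus\{\xi\}$ one has $|y_\xi|$ bounded below on $K$ (for $\xi$ near a fixed point), so $\chi_\xi\equiv 0$ on $K$ for small enough $r_\xi$, killing both the bubble-derivative and cutoff-derivative terms; what survives is $\kap(\xi)\,\pa_{\xi_j}H^g(\cdot,\xi)=\kap(\xi)\,\pa_{\xi_j}G^g(\cdot,\xi)$ (since $\Gamma^g$ also vanishes on $K$) plus the $O(\tfrac{\ln\lam}{\lam^2})$ remainder. The main technical obstacle is tracking the $\xi$-dependence of the chart $y_\xi$ and of the conformal factor $\varphi_\xi$ through all the auxiliary terms (especially $F_\xi$ and $h_{\lam,\xi}$) uniformly in $\lam$; this is where the smooth dependence of isothermal coordinates on $\xi$, assumed in Section \ref{sec:2.1}, is crucially invoked.
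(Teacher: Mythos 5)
The main difficulty with your argument is that a $C(\Si)$ asymptotic expansion cannot be differentiated term by term with respect to the parameter $\xi$. Lemma \ref{lem:projbubble_pointwise} provides
\[
P\delta_{\lam,\xi}=\chi_\xi\Bigl(\delta_{\lam,\xi}+\ln\tfrac{\lam^2}{8}\Bigr)+\kap(\xi)H^g(\cdot,\xi)+h_{\lam,\xi}-\tfrac{2F_\xi}{\lam^2}+R_{\lam,\xi},
\]
with $\|R_{\lam,\xi}\|_{C(\Si)}=O(\ln\lam/\lam^3)$; this says nothing about $\|\pa_{\xi_j}R_{\lam,\xi}\|_{C(\Si)}$. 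Your step ``the remainder \dots is differentiable in $\xi$ with the same rate after losing one logarithm'' is exactly the nontrivial content of the lemma, and you assert it without argument. The way such statements are actually established (cf.\ \cite{HBA20240829, YZ2021, ABL2017, EF2014}) is to differentiate the \emph{defining boundary value problem} \eqref{projectedbubble} in $\xi_j$, observe that $\pa_{\xi_j}P\delta_{\lam,\xi}$ solves a Neumann problem with explicit right-hand side $\pa_{\xi_j}\bigl(\chi_\xi e^{\delta_{\lam,\xi}}\bigr)-\overline{\pa_{\xi_j}(\chi_\xi e^{\delta_{\lam,\xi}})}$, subtract the candidate $\chi_\xi\pa_{\xi_j}\delta_{\lam,\xi}+\kap(\xi)\pa_{\xi_j}H^g(\cdot,\xi)-4\ln|y_\xi|\,\pa_{\xi_j}\chi_\xi$, and apply elliptic ($L^p$ or Schauder) estimates to the resulting problem whose data are shown to be $O(\ln\lam/\lam^2)$. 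Your "term-by-term differentiation" bypasses this entirely and is not a proof.

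There is also a quantitative error in your chain-rule step. You assert $\pa_{\xi_j}\delta_{\lam,\xi}=-\frac{4\lam^2(y_\xi)_j}{1+\lam^2|y_\xi|^2}+O(\lam^{-1})$ on $U(\xi)$, derived from $\pa_{\xi_j}y_\xi(x)|_{x=\xi}=-e_j$ ``to leading order.'' But if $\pa_{\xi_j}y_\xi(x)$ differs from a constant by $O(|y_\xi(x)|)$, the chain-rule remainder is $O\bigl(\tfrac{\lam^2|y_\xi|^2}{1+\lam^2|y_\xi|^2}\bigr)$, which is $O(1)$ (not $O(\lam^{-1})$) on the support of $\chi_\xi$ where $|y_\xi|\sim r_0$. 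That would ruin the stated $O(\ln\lam/\lam^2)$ error in the second line of the lemma. For the two displayed lines to be equal up to that error, one needs $\pa_{\xi_j}y_\xi(\cdot)$ to be a fixed constant vector on the whole chart domain (so that $\chi_\xi\pa_{\xi_j}\delta_{\lam,\xi}$ and $-\tfrac{4\chi_\xi\lam^2(y_\xi)_j}{1+\lam^2|y_\xi|^2}$ coincide exactly), which is a global property of the smooth isothermal family of \cite{EF2014,F2024}, not a statement valid only ``at $x=\xi$.'' As written, your error analysis is off by essentially a factor of $\lam$ on the support of $\nabla\chi_\xi$, and the conclusion does not follow.
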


	\begin{lem}[see Lemma 7.5 of \cite{HBA20240829}] \label{lem:projbubble_lamderivative__pointwise} As $\lam  \to + \infty$, the following expansion  holds in $C(\Si )$: 
		\begin{align*}
			\lam  \frac{\pa P\delta_{\lam,\xi}}{\pa \lam} =&\,\chi_{\xi}\Big(\lam  \frac{\pa  \delta_{\lam,\xi}}{\pa \lam} +2\Big)+h^1_{\lam,\xi} +\frac{4F_{\xi}}{\lam^{2}}+o\Big(\frac{1}{\lam^{2}}\Big)\nonumber\\=&\,\frac{4\chi_{\xi}}{1+\lam^2|y_{\xi}|^2}-\frac{\kap( \xi)}{|\Si |_g}\frac{\ln \lam}{\lam^2}+O\Big(\frac{1}{\lam^2}\Big),
		\end{align*}
		where 
		\[h^1_{\lam,\xi}:=-\frac{\kap( \xi)}{|\Si |_g}\frac{\ln \lam}{\lam^2}-\frac{4}{|\Si |_g}\frac{1}{\lam^2}\Big( \int_{\Si } \chi_{\xi}\frac{1-e^{-\varphi_{\xi}}}{|y_{\xi}|^2}\,\d v_g-\int_{\Si } \frac 1 {r_0}\chi'\Big(\frac {|y_{\xi}|} {r_0}\Big)\frac{\ln  |y_{\xi}|}{|y_\xi|} \,\d v_g\Big).\]
	\end{lem}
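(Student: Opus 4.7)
The plan is to differentiate the defining Neumann problem \eqref{projectedbubble} of $P\delta_{\lam,\xi}$ with respect to $\lam$, construct an explicit ansatz for $\lam\pa_\lam P\delta_{\lam,\xi}$, and estimate the resulting remainder by standard elliptic theory. A direct computation from \eqref{bubble} and \eqref{def:bubble} gives
\[
\lam \frac{\pa \delta_{\lam,\xi}}{\pa \lam} + 2 = \frac{4}{1+\lam^2|y_\xi|^2},
\]
which will serve as the leading-order profile once localized by the cutoff $\chi_\xi$. Writing $v_\lam := \lam \pa_\lam P\delta_{\lam,\xi}$, differentiation of \eqref{projectedbubble} shows that $v_\lam$ solves a Neumann problem with zero boundary data, zero mean, and right-hand side $\chi_\xi \lam \pa_\lam e^{\delta_{\lam,\xi}} - \overline{\chi_\xi \lam\pa_\lam e^{\delta_{\lam,\xi}}}$.

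I would then introduce the ansatz
\[
\Phi_{\lam,\xi} := \chi_\xi\Big(\lam \pa_\lam \delta_{\lam,\xi} + 2\Big) + h^1_{\lam,\xi} - \frac{4F_\xi}{\lam^2},
\]
compute $-\Delta_g \Phi_{\lam,\xi}$ using $-\Delta_g\delta_{\lam,\xi} = e^{\delta_{\lam,\xi}-\varphi_\xi}$ from \eqref{deltachange} together with $-\Delta_g F_\xi = f_\xi$ from \eqref{Fxi}, and track the cancellations. The singular part $\chi_\xi/|y_\xi|^2$ and the cutoff-derivative terms produced when the Laplacian hits $\chi_\xi(\lam\pa_\lam\delta_{\lam,\xi}+2)$ are precisely, up to order $o(\lam^{-2})$, those produced by $-\frac{4}{\lam^2}f_\xi$, which is exactly the point of including the corrector $-\frac{4F_\xi}{\lam^2}$. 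The real additive constant $h^1_{\lam,\xi}$ is then forced by the two compatibility requirements $\int_\Si \Phi_{\lam,\xi}\,\d v_g = 0$ and $\int_\Si(-\Delta_g\Phi_{\lam,\xi})\,\d v_g = 0$; a careful bookkeeping using the explicit form of $\chi_\xi$, the local normalization $\hat{\varphi}_\xi(0)=0$ from \eqref{varphixi}, and $\int_\Si F_\xi\,\d v_g = 0$ yields the displayed formula for $h^1_{\lam,\xi}$.

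The boundary condition for $\Phi_{\lam,\xi}$ needs to be checked separately. When $\xi\in\intS$, one has $\chi_\xi\equiv 0$ near $\pa\Si$ for $\lam$ large, so $\pa_{\nu_g}\Phi_{\lam,\xi}=0$ on $\pa\Si$ after accounting for the smooth pieces $h^1_{\lam,\xi}$ and $F_\xi$ (which satisfies Neumann boundary conditions by \eqref{Fxi}). When $\xi\in\pa\Si$, the isothermal coordinates map $U(\xi)$ onto the half-disk, and $\delta_{\lam,\xi}$ is radial in $y_\xi$, so the $y_2$-derivative of $\chi_\xi(\lam\pa_\lam\delta_{\lam,\xi}+2)$ on $\pa\B^+_{2r_\xi}\cap\pa\R^2_+$ vanishes identically; combined with $\pa_{\nu_g}F_\xi=0$, this gives $\pa_{\nu_g}\Phi_{\lam,\xi}=O(\lam^{-3})$.

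The main obstacle, and the only step requiring care, is bootstrapping the $L^p$ control on the residual $r_\lam := -\Delta_g(v_\lam - \Phi_{\lam,\xi})$ to a uniform $o(\lam^{-2})$ bound. By the construction, $r_\lam$ is supported in the annulus $\{r_0\le|y_\xi|\le 2r_0\}$ together with a bulk error of order $\lam^{-4}$, and a direct computation gives $\|r_\lam\|_{L^p(\Si)}=o(\lam^{-2})$ for some $p\in(1,2)$. Since $v_\lam-\Phi_{\lam,\xi}$ has zero mean and Neumann data of size $O(\lam^{-3})$, standard $W^{2,p}$ regularity for the Neumann Laplacian on $(\Si,g)$, followed by the Sobolev embedding $W^{2,p}(\Si)\hookrightarrow C(\Si)$ (valid for any $p>1$), yields the first stated expansion. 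Finally, substituting the explicit form of $h^1_{\lam,\xi}$ and noting that $F_\xi=O(1)$ in $L^\infty(\Si)$ converts it into the second form of the statement.
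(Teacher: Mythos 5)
The paper does not prove this lemma: it cites it verbatim from Lemma 7.5 of \cite{HBA20240829}, so there is no in-paper argument to compare against. Your proposal is the natural proof and matches what the cited companion paper does. The strategy is sound: differentiate \eqref{projectedbubble} in $\lam$, write down an explicit ansatz $\Phi_{\lam,\xi}$ matching the right-hand side, and close by elliptic regularity. The key identities you use are all correct — in particular, $\lam\pa_\lam\delta_{\lam,\xi}+2 = \frac{4}{1+\lam^2|y_\xi|^2}$, and the observation that differentiating $-\Delta_g\delta_{\lam,\xi}=e^{\delta_{\lam,\xi}-\varphi_\xi}$ in $\lam$ shows that the bulk part $\chi_\xi(-\Delta_g)\frac{4}{1+\lam^2|y_\xi|^2}=\chi_\xi e^{\delta_{\lam,\xi}-\varphi_\xi}\lam\pa_\lam\delta_{\lam,\xi}$ cancels the bulk of $-\Delta_g(\lam\pa_\lam P\delta_{\lam,\xi})$ exactly, leaving only annulus terms of order $\lam^{-2}$ which are precisely the singular pieces of $-\frac{4}{\lam^2}f_\xi$. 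The $F_\xi$-corrector and the zero-mean constant $h^1_{\lam,\xi}$ then take care of these; you have identified the bookkeeping correctly.

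Two small corrections. First, note that \eqref{projectedbubble} as printed reads $\chi_\xi e^{\delta_{\lam,\xi}}$ on the right-hand side, but comparison with \eqref{Green2} and the computations in Lemma \ref{lem:projbubble_derivative_integral} shows the intended equation is $-\Delta_g P\delta_{\lam,\xi}=\chi_\xi e^{\delta_{\lam,\xi}-\varphi_\xi}-\overline{\chi_\xi e^{\delta_{\lam,\xi}-\varphi_\xi}}$; your cancellation argument relies on this (and would leave an uncontrolled bulk error of size $\lam^{-2}\ln\lam$ with the literal expression), so it is worth stating the correct normalization explicitly. Second, in the boundary case $\xi\in\pa\Si$ the Neumann data of $\Phi_{\lam,\xi}$ vanishes \emph{identically}, not merely to order $\lam^{-3}$: both $\chi_\xi$ and $\frac{4}{1+\lam^2|y_\xi|^2}$ are even in $y_2$, $h^1_{\lam,\xi}$ is constant, and $\pa_{\nu_g}F_\xi=0$ by \eqref{Fxi}, so $\pa_{\nu_g}(v_\lam-\Phi_{\lam,\xi})=0$ on $\pa\Si$ exactly. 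This simplifies the regularity step: the difference $v_\lam-\Phi_{\lam,\xi}$ is a zero-mean solution to a homogeneous Neumann problem with right-hand side of size $O(\lam^{-4})$ in $L^p$ (the annulus residual is uniformly bounded by $O(\lam^{-4})$ on a fixed compact set, and the global mean correction is of the same size), from which $W^{2,p}$ elliptic estimates and $W^{2,p}(\Si)\hookrightarrow C(\Si)$ for $p>1$ give the claimed uniform bound directly.
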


	\begin{lem}\label{lem:projbubble_derivative_integral}
		Assume that $\xi, \zeta \in \Si$ with $\xi \neq\zeta$, the following expansions 	hold as $\lam,\mu \to +\infty$: 
		\begin{align*}
			\Big\langle P\delta_{\lam,\xi}, \frac{1}{\lam}\frac{\pa P\delta_{\lam,\xi}}{\pa \xi_l}   \Big\rangle=& \,\frac{\kap^2( \xi)}{2\lam} \frac{\pa  R^g(\xi) }{\pa \xi_l}+ O\Big(\frac{\ln \lam }{\lam ^{3} }\Big), \\
			\Big\langle P\delta_{\lam,\xi},\frac{1}{\mu} \frac{\pa P\delta_{\mu,\zeta}}{\pa \zeta_l}   \Big\rangle =&\, \frac{\kap( \xi)\kap( \zeta)}{\mu}	\frac{\pa G^g(\xi,\zeta)}{\pa \zeta_l}	+O\Big( \frac{\ln \mu }{\mu ^{3} }\Big), \\
			\Big\langle P\delta_{\lam,\xi}, \lam  \frac{\pa P\delta_{\lam,\xi}}{\pa\lam}   \Big\rangle=&\,2\kap( \xi)-\frac{\kap^2( \xi)}{|\Si |_g} \frac{\ln  \lam }{\lam^2} +  O\Big(\frac{1}{\lam^2}\Big),  \\
			\Big\langle P\delta_{\lam,\xi}, \mu\frac{\pa P\delta_{\mu,\zeta}}{\pa \mu }   \Big\rangle=& \,-\frac{\kap( \xi)\kap( \zeta)}{|\Si |_g} \frac{\ln  \mu}{\mu^2}+  O\Big(\frac{1}{\mu^2}\Big),
		\end{align*}
		where $j\in\{1,\ii(\xi)\}$ and $l\in\{1,\ii(\zeta)\}$.
	\end{lem}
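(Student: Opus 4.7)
The unifying mechanism is the integration-by-parts identity coming from the defining PDE \eqref{projectedbubble} for $P\delta_{\lam,\xi}$. Since $\partial_{\nu_g}P\delta_{\lam,\xi}=0$ on $\partial\Sigma$, for any $v\in H^1(\Sigma)$,
\[
\langle P\delta_{\lam,\xi},v\rangle=\int_{\Si}(-\Delta_g P\delta_{\lam,\xi})\,v\,\d v_g=\int_{\Si}\chi_\xi e^{\delta_{\lam,\xi}}v\,\d v_g-\overline{\chi_\xi e^{\delta_{\lam,\xi}}}\int_{\Si}v\,\d v_g.
\]
In each of the four inner products the second factor is a derivative of a zero-mean function (either $\partial_\lam$ or $\partial_{\xi_l}$ of $P\delta_{\lam,\xi}$ or $P\delta_{\mu,\zeta}$), hence has zero integral; so the correction term drops and the task reduces to evaluating $\int_\Si \chi_\xi e^{\delta_{\lam,\xi}}v\,\d v_g$ using the asymptotic expansions of Lemmas \ref{lem:projbubble_pointwise}--\ref{lem:projbubble_lamderivative__pointwise}. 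Throughout I work in isothermal coordinates $y=y_\xi$ with conformal factor $e^{\hat\varphi_\xi}$, and I perform the change of variables $z=\lam y$, using $\int_{\R^2}e^{\tilde\delta_{\lam,0}}\,\d y=8\pi$ (interior) or $\int_{\R_+^2}e^{\tilde\delta_{\lam,0}}\,\d y=4\pi$ (boundary), i.e. $\kap(\xi)$.

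\textbf{Step 1 (first identity).} Substitute the expansion of Lemma \ref{lem:projbubble_xi_pointwise} for $\partial_{\xi_l}P\delta_{\lam,\xi}$, split into the singular piece $-\tfrac{4\chi_\xi\lam^2(y_\xi)_l}{1+\lam^2|y_\xi|^2}$ and the regular piece $\kap(\xi)\partial_{\xi_l}H^g(\cdot,\xi)+O(\ln\lam/\lam^2)$. After rescaling $z=\lam y_\xi$, the singular piece integrates against the radial kernel $e^{\delta_{\lam,\xi}}$ to $0$ by the odd parity of $z_l$ on both $\R^2$ and $\R_+^2$. The regular piece then contributes, by concentration of $e^{\delta_{\lam,\xi}}$ around $\xi$, $\kap(\xi)\cdot \kap(\xi)\,\partial_{\xi_l}H^g(x,\xi)|_{x=\xi}$; the identity $\partial_{\xi_l}H^g(x,\xi)|_{x=\xi}=\tfrac12\partial_{\xi_l}R^g(\xi)$, which follows from the symmetry $H^g(x,\xi)=H^g(\xi,x)$, yields the claimed $\tfrac{\kap^2(\xi)}{2\lam}\partial_{\xi_l}R^g(\xi)$. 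The $O(\ln\lam/\lam^3)$ error tracks through the remainder terms in Lemma \ref{lem:projbubble_xi_pointwise} and the higher-order Taylor correction of the regular part against the rescaled profile.

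\textbf{Step 2 (second identity).} Since $\xi\neq\zeta$, on the support of $\chi_\xi e^{\delta_{\lam,\xi}}$ the factor $\partial_{\zeta_l}P\delta_{\mu,\zeta}$ lives in the smooth regime of Lemma \ref{lem:projbubble_xi_pointwise}, namely $\kap(\zeta)\partial_{\zeta_l}G^g(\cdot,\zeta)+O(\ln\mu/\mu^2)$. Evaluating at the concentration point and using $\int\chi_\xi e^{\delta_{\lam,\xi}}\to\kap(\xi)$ gives $\tfrac{\kap(\xi)\kap(\zeta)}{\mu}\partial_{\zeta_l}G^g(\xi,\zeta)+O(\ln\mu/\mu^3)$.

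\textbf{Step 3 (third and fourth identities).} For the third identity I substitute the expansion of Lemma \ref{lem:projbubble_lamderivative__pointwise}, $\lam\partial_\lam P\delta_{\lam,\xi}=\tfrac{4\chi_\xi}{1+\lam^2|y_\xi|^2}+h^1_{\lam,\xi}+O(1/\lam^2)$. The scale-invariant singular piece $\tfrac{4}{1+\lam^2|y_\xi|^2}$ paired with $e^{\delta_{\lam,\xi}}=\tfrac{8\lam^2}{(1+\lam^2|y|^2)^2}$ gives, after $z=\lam y$, a universal integral $32\int\frac{\d z}{(1+|z|^2)^3}$ which equals $16\pi$ on $\R^2$ (interior) and $8\pi$ on $\R_+^2$ (boundary), i.e.\ precisely $2\kap(\xi)$ in both cases. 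The constant-in-$x$ factor $h^1_{\lam,\xi}$ multiplied by $\int\chi_\xi e^{\delta_{\lam,\xi}}=\kap(\xi)+O(\lam^{-2})$ produces the $-\tfrac{\kap^2(\xi)}{|\Si|_g}\tfrac{\ln\lam}{\lam^2}$ term. The fourth identity is the same computation for $\mu\partial_\mu P\delta_{\mu,\zeta}$: with $\xi\neq\zeta$ the singular contribution vanishes away from $\zeta$, so only the constant term $h^1_{\mu,\zeta}$ survives, giving $-\tfrac{\kap(\xi)\kap(\zeta)}{|\Si|_g}\tfrac{\ln\mu}{\mu^2}$.

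\textbf{Main obstacles and error tracking.} The delicate points are (i) keeping the conformal factor $e^{\hat\varphi_\xi}$ to the required order when expanding near a boundary concentration, where $\nabla\hat\varphi_\xi(0)\neq 0$ by \eqref{varphixi}; the linear-in-$y$ correction generates odd-in-$z$ integrands which vanish, so the Taylor remainder $O(|y|^2)$ drives the error estimates; and (ii) ensuring the integral $\int e^{\delta_{\lam,\xi}}\chi_\xi\,\d v_g$ expands as $\kap(\xi)+O(\lam^{-2})$, so that the $\ln\lam/\lam^2$ leading correction in Step 3 is isolated cleanly. Once these bookkeeping issues are handled, all four expansions follow from the same integration-by-parts identity and a careful rescaling argument.
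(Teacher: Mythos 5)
Your overall strategy is the same as the paper's: reduce each inner product to $\int_\Si \chi_\xi e^{\delta_{\lam,\xi}} v\,\d v_g$ via the defining PDE \eqref{projectedbubble} and the zero-mean property of $\partial_\lam P\delta$, $\partial_{\xi_l}P\delta$, then plug in the pointwise expansions of Lemmas \ref{lem:projbubble_pointwise}--\ref{lem:projbubble_lamderivative__pointwise} and rescale. Your Euclidean constant $32\int(1+|z|^2)^{-3}\,\d z=2\kap(\xi)$ in Step~3 and the evaluation of the regular/constant pieces in Steps~1--3 all match the paper's computation.

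There is, however, a genuine gap in your cancellation argument for boundary concentration points. You claim that the linear Taylor corrections "generate odd-in-$z$ integrands which vanish." This is correct on $\R^2$, but on the half-plane $\R^2_+$ only the $z_1$-moment vanishes by parity; the $z_2$-moment $\int_{\R^2_+} z_2(1+|z|^2)^{-2}\,\d z$ is a positive constant, so $\int_{\B^+_{r_0}} (y_\xi)_2 e^{\delta_{\lam,\xi}}\,\d y \sim c/\lam$, which after the $1/\lam$ prefactor contributes $O(1/\lam^2)$ --- strictly larger than the claimed error $O(\ln\lam/\lam^3)$ in the first two identities. The paper instead observes that the corresponding coefficients, $\partial_{y_2}\partial_{\xi_j} H^g(y_\xi^{-1}(y),\xi)|_{y=0}$ (resp.\ the analogous cross-derivative of $G^g$ for the second identity), vanish because $\partial_{\nu_g}H^g(\cdot,\xi)$ (resp.\ $\partial_{\nu_g}G^g(\cdot,\zeta)$) vanishes on $\pa\Si$ near the diagonal, and $\nu_g$ corresponds to $-\partial_{y_2}$ in the isothermal chart; differentiating this boundary identity in $\xi_j$ gives the required vanishing. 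Without this input, the error bound in the first two expansions fails for boundary bubbles. A secondary, more cosmetic point: you attribute $\partial_{\xi_l}H^g(x,\xi)|_{x=\xi}=\tfrac12\partial_{\xi_l}R^g(\xi)$ to "symmetry $H^g(x,\xi)=H^g(\xi,x)$," but $H^g$ is not literally symmetric (its definition uses the chart around the second variable); the identity is nonetheless standard and true, following from symmetry of $G^g$ and the first-order matching of $\Gamma^g(x,\xi)$ and $\Gamma^g(\xi,x)$ at the diagonal, so this is an imprecision rather than an error.
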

	\begin{proof}
		From   \eqref{deltachange} and  \eqref{projectedbubble}, we have  that
		$ 
		\langle P\delta_{\lam,\xi}, \frac{1}{\lam}\frac{\pa P\delta_{\lam,\xi}}{\pa \xi_j}  \rangle =\int_{\Si } \frac{ \chi_{\xi}}{\lam} \frac{\pa P\delta_{\lam, \xi}}{\pa \xi_j} e^{\delta_{\lam,\xi}-\varphi_{\xi}} \,\d v_g.
		$
		Now we deduce from Lemma 	 \ref{lem:projbubble_xi_pointwise}  that, for any $j\in\{1,\ii(\xi)\}$,
		\begin{align*}
			\Big\langle P\delta_{\lam,\xi}, \frac{1}{\lam}\frac{\pa P\delta_{\lam,\xi}}{\pa \xi_j}  \Big\rangle 	=&\,
			\int_{\Si }\frac{\chi_{\xi}}{\lam }  \Big(-\frac{4 \chi_{\xi}\lam^2 (y_{\xi})_j}{1+\lam^2|y_{\xi}|^2}   + \kap( \xi) \frac{\pa H^g(\cdot,\xi) }{\pa \xi_j} -4\ln | y_{\xi}| \frac{\pa \chi_{\xi}}{\pa \xi_j}+ O\Big(\frac{\ln \lam }{\lam ^{2} }\Big)\Big)	 e^{ \delta_{\lam,\xi}-\varphi_{\xi}}\,\d v_g
			\\=&\,	\int_{\Si }\frac{\chi_{\xi}}{\lam }  \Big(\kap( \xi) \frac{\pa H^g(\cdot,\xi) }{\pa \xi_j} + O\Big(\frac{\ln \lam }{\lam ^{2} }\Big)\Big)	 e^{ \delta_{\lam,\xi}-\varphi_{\xi}}\,\d v_g
			\\	=&\,	\frac{\kap^2( \xi) }{2\lam} \frac{\pa  R^g(\xi)}{\pa \xi_j}+ O\Big(\frac{\ln \lam }{\lam ^{3} }\Big), 
		\end{align*}
		where we  used  the Taylor expansion  around $\xi$:
		\[  
		\frac{\pa H^g(y_{\xi}^{-1}(y),\xi) }{\pa \xi_j} =	\frac{\pa H^g(y_{\xi}^{-1}(y),\xi) }{\pa \xi_j}\Big|_{y=0}+\nabla_y \frac{\pa H^g(y_{\xi}^{-1}(y),\xi) }{\pa \xi_j}\Big|_{y=0}\cdot  y+O(|y|^2),
		\] 
		\[  
		\frac{\pa^2 H^g(y_{\xi}^{-1}(y),\xi)}{ \pa y_2\pa \xi_j}\Big|_{y=0}= \pa _{x_2}\pa _{\xi_1} H^g(x,\xi)\Big|_{x=\xi}=\pa _{\xi_1} \pa _{x_2} H^g(x,\xi)\Big|_{x=\xi}=0, 
		\]
		and 
		\[  
		\int_{\B^+_{r_0}}\frac{1}{\lam}
		\frac{8\lam^2y_2}{(1+\lam^2|y|^2)^2}\,\d y=\frac{16}{\lam^2}\Big(\frac{1}{2}\ln (1+r_0^2\lam^2)+\frac{1}{1+r_0^2\lam^2}-\frac{1}{2}\Big), 
		\] 
		in the last equality. 
		Observe that
		$
		\frac{\pa^2 G^g(y_{\xi}^{-1}(y),\xi)}{\pa y_2\pa \xi_j}\Big|_{y=0}=\pa _{x_2}\pa _{\xi_1} G^g(x,\xi)|_{x=\xi}=0.
		$
		Similarly, we have 
		\begin{align*}
			\Big\langle P\delta_{\lam,\xi}, \frac{1}{\mu}\frac{\pa P\delta_{\mu,\zeta}}{\pa \zeta_l}  \Big\rangle 	=&\,
			\int_{\Si }\frac{\chi_{\xi}}{\mu }  \frac{\pa P\delta_{\mu,\zeta}}{\pa \zeta_l}	 e^{ \delta_{\lam,\xi}-\varphi_{\xi}}\,\d v_g
			\\=&\,\int_{\Si }  \frac{\chi_{\xi}}{\mu}\Big(\kap( \zeta ) \frac{\pa G^g(\cdot,\zeta )}{\pa \zeta_l} +O\Big(\frac{\ln \mu}{\mu ^{2} }\Big) \Big) e^{ \delta_{\lam,\xi}-\varphi_{\xi}} \,\d v_g
			\\	=&\,	\frac{\kap( \xi)\kap( \zeta) }{\mu} \frac{\pa  G^g(\xi,\zeta)}{\pa \zeta_l} + O\Big(\frac{\ln \mu}{\mu ^{3} }\Big).
		\end{align*}
		It remains to establish the last two estimates in Lemma \ref{lem:projbubble_derivative_integral}. 
		By Lemma \ref{lem:projbubble_lamderivative__pointwise}, we obtain
		\begin{align*}
			\Big\langle P\delta_{\lam,\xi}, \lam\frac{\pa P\delta_{\lam,\xi}}{\pa \lam}  \Big\rangle =&\,\int_{\Si } \chi_{\xi} \lam  \frac{\pa P\delta_{\lam, \xi} }{\pa \lam}e^{ \delta_{\lam,\xi}-\varphi_{\xi}}\,\d v_g\\
			=	&\,	\int_{\Si }	\chi_{\xi} \Big(\frac{4\chi_{\xi}}{1+\lam^2|y_{\xi}|^2}-\frac{\kap( \xi)}{|\Si |_g}\frac{\ln \lam}{\lam^2}+  O\Big(\frac{1}{\lam^2}\Big) \Big)e^{ \delta_{\lam,\xi}-\varphi_{\xi}}	\,\d v_g\\
			=& \,	2\kap( \xi)-\frac{\kap^2( \xi)}{|\Si |_g} \frac{\ln \lam}{\lam^2}+ O\Big(\frac{1}{\lam^2}\Big) 
		\end{align*}
		and 
		\begin{align*}
			\Big\langle P\delta_{\lam,\xi}, \mu   \frac{\pa P\delta_{\mu   ,\zeta }}{\pa \mu   }  \Big\rangle =&\int_{\Si } \chi_{\xi} \mu     \frac{\pa P\delta_{\mu   , \zeta } }{\pa \mu   }e^{ \delta_{\lam,\xi}-\varphi_{\xi}}\,\d v_g\\
			=	&\,\int_{\Si }\chi_{\xi}  \Big(  -\frac{\kap( \zeta )}{|\Si |_g}\frac{\ln  \mu}{\mu^2}+O\Big(\frac{1}{\mu ^2}\Big)  \Big)e^{ \delta_{\lam,\xi}-\varphi_{\xi}} \,\d v_g
			\\=&\,-\frac{\kap( \zeta )\kap( \xi)}{|\Si |_g}\frac{\ln \mu  }{\mu ^2}  + 	O\Big(\frac{1}{\mu ^2}\Big).   
		\end{align*}
		Lemma \ref{lem:projbubble_derivative_integral} is established.
	\end{proof}
	\begin{lem}\label{lem:projbubble_integral_interaction}
		Under the assumptions of Lemma \ref{lem:projbubble_derivative_integral},  the following expansions  hold as $\lam,\mu  \to +\infty$: 
		\begin{equation}
			\label{lem:projbubble_integral_interaction-1}
			\langle P\delta_{\lam,\xi}, P\delta_{\lam,\xi}\rangle = 4\kap( \xi) \ln  \lam  +\kap^2( \xi) R^g(\xi)-2\kap( \xi) +\frac{\kap^2( \xi)}{|\Si |_g}\frac{\ln  \lam }{\lam^2} + O\Big(\frac{1}{\lam^2}\Big) 
		\end{equation}
		and		
		\begin{equation}\label{lem:projbubble_integral_interaction-2}
			\langle P\delta_{\lam,\xi}, P\delta_{\mu,\zeta}\rangle =\kap( \xi)\kap( \zeta) G^g(\xi,\zeta)  +\frac{\kap( \xi)\kap( \zeta)}{2|\Si |_g}\frac{\ln  \lam }{\lam^2}+\frac{\kap( \xi)\kap( \zeta)}{2|\Si |_g}\frac{\ln  \mu}{\mu^2} + O\Big(\frac{1}{\lam^2}+\frac{1}{\mu^2}\Big).
		\end{equation}
	\end{lem}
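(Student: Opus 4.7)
The plan is to reduce both inner products to integrals of the form $\int_\Si \chi_\xi e^{\delta_{\lam,\xi}} \varphi\, \d v_g$ by integration by parts, and then to invoke the pointwise expansions of Lemma~\ref{lem:projbubble_pointwise}. Integration by parts, together with the Neumann boundary condition $\pa_{\nu_g} P\delta_{\mu,\zeta}=0$, the defining PDE~\eqref{projectedbubble} for $P\delta_{\lam,\xi}$, and the zero-mean property of $P\delta_{\mu,\zeta}$ (which annihilates the constant correction $\overline{\chi_\xi e^{\delta_{\lam,\xi}}}$), yields the clean identity
\[
\la P\delta_{\lam,\xi}, P\delta_{\mu,\zeta}\ra = \int_\Si \chi_\xi e^{\delta_{\lam,\xi}} P\delta_{\mu,\zeta}\, \d v_g.
\]

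For the cross-interaction \eqref{lem:projbubble_integral_interaction-2}, the key observation is the auxiliary identity
\[
\int_\Si \chi_\xi e^{\delta_{\lam,\xi}} G^g(\cdot,\zeta)\, \d v_g = P\delta_{\lam,\xi}(\zeta),
\]
which I would derive by applying the Green representation formula~\eqref{Greenrepresentation} to $P\delta_{\lam,\xi}$ at the point $\zeta$, using the Neumann condition, the zero-mean property of $P\delta_{\lam,\xi}$, and $\int_\Si G^g(\cdot,\zeta)\, \d v_g = 0$. Substituting the $C_{loc}(\Si\setminus\{\zeta\})$ expansion of $P\delta_{\mu,\zeta}$ from Lemma~\ref{lem:projbubble_pointwise} on the support of $\chi_\xi$ (valid since $\xi\neq\zeta$ and the bump radius is small), I would obtain
\[
\la P\delta_{\lam,\xi}, P\delta_{\mu,\zeta}\ra = \kap(\zeta) P\delta_{\lam,\xi}(\zeta) + \frac{\kap(\zeta)\ln\mu}{2|\Si|_g\mu^2}\int_\Si \chi_\xi e^{\delta_{\lam,\xi}}\, \d v_g + O\Big(\frac{1}{\mu^2}\Big).
\]
Applying Lemma~\ref{lem:projbubble_pointwise} once more to expand $P\delta_{\lam,\xi}(\zeta)$ at $\zeta\neq\xi$ supplies the main term $\kap(\xi)\kap(\zeta) G^g(\xi,\zeta)$ along with the correction $\frac{\kap(\xi)\kap(\zeta)}{2|\Si|_g}\ln\lam/\lam^2$, while the elementary mass estimate $\int_\Si \chi_\xi e^{\delta_{\lam,\xi}}\, \d v_g = \kap(\xi) + o(1)$ combines with the $\ln\mu/\mu^2$ coefficient to produce $\frac{\kap(\xi)\kap(\zeta)}{2|\Si|_g}\ln\mu/\mu^2$, modulo cross-errors of the form $\ln\lam\ln\mu/(\lam^2\mu^2)$ that are absorbable into $O(1/\lam^2+1/\mu^2)$ for $\lam,\mu$ large.

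For the self-interaction \eqref{lem:projbubble_integral_interaction-1}, direct substitution fails because $P\delta_{\lam,\xi}$ is singular at $\xi$; instead I would use the full $C(\Si)$ expansion from Lemma~\ref{lem:projbubble_pointwise},
\[
P\delta_{\lam,\xi} = 2\chi_\xi \ln\frac{\lam^2}{1+\lam^2|y_\xi|^2} + \kap(\xi) H^g(\cdot,\xi) + h_{\lam,\xi} + O\Big(\frac{1}{\lam^2}\Big),
\]
and split $\int \chi_\xi e^{\delta_{\lam,\xi}} P\delta_{\lam,\xi}\, \d v_g$ into three contributions. Localizing to $B^\xi$ and rescaling $z = \lam y$ reduces matters to the elementary integrals $\int 8/(1+|z|^2)^2\, \d z = \kap(\xi)$ and $\int 8\ln(1+|z|^2)/(1+|z|^2)^2\, \d z = \kap(\xi)$ (taken over $\R^2$ or $\R_+^2$ according as $\xi\in\intS$ or $\xi\in\pa\Si$), which combine to give the leading $4\kap(\xi)\ln\lam - 2\kap(\xi)$; a Taylor expansion of $H^g(\cdot,\xi)$ at $\xi$ delivers $\kap^2(\xi) R^g(\xi)$; and the explicit form of $h_{\lam,\xi}$ together with the subleading $\ln\lam/\lam^2$ correction to the bubble mass, arising from the conformal factor expansion $e^{\hat\varphi_\xi(z/\lam)} = 1 + O(|z|^2/\lam^2)$ interacting with the logarithmically divergent integral $\int_0^{\lam r_0}r^3/(1+r^2)^2\, \d r = \ln\lam + O(1)$, produces the $\kap^2(\xi)\ln\lam/(|\Si|_g\lam^2)$ contribution.

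The main obstacle is the precise bookkeeping of the $\ln\lam/\lam^2$ correction in the self-energy: the target coefficient $\kap^2(\xi)/|\Si|_g$ must be assembled from two distinct pieces of size $\kap^2(\xi)/(2|\Si|_g)$, while all other subleading contributions, including $(\ln\lam)^2/\lam^2$ terms, curvature-dependent boundary contributions in the case $\xi\in\pa\Si$ (where the additional linear term $-2k_g(\xi) y_2$ in the conformal factor must be tracked), and Taylor remainders of $H^g$, must be shown to collapse into the $O(1/\lam^2)$ remainder. A useful consistency check is that setting $\xi=\zeta$ and $\lam=\mu$ in~\eqref{lem:projbubble_integral_interaction-2} reproduces~\eqref{lem:projbubble_integral_interaction-1}: the two coefficients $\frac{\kap(\xi)\kap(\zeta)}{2|\Si|_g}$ of $\ln\lam/\lam^2$ and $\ln\mu/\mu^2$ coalesce into the single coefficient $\kap^2(\xi)/|\Si|_g$ of $\ln\lam/\lam^2$, confirming the normalization of the constants.
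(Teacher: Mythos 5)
Your cross--interaction argument for \eqref{lem:projbubble_integral_interaction-2} is sound and is essentially the route the paper indicates: reduce to $\int_\Si\chi_\xi e^{\delta_{\lam,\xi}-\varphi_\xi}P\delta_{\mu,\zeta}\,\d v_g$, substitute the $C_{loc}(\Si\setminus\{\zeta\})$ expansion of $P\delta_{\mu,\zeta}$, and then use $\int_\Si\chi_\xi e^{\delta_{\lam,\xi}-\varphi_\xi}G^g(\cdot,\zeta)\,\d v_g = P\delta_{\lam,\xi}(\zeta)$ followed by one more application of Lemma \ref{lem:projbubble_pointwise}. (Be aware the projected--bubble equation is applied in the paper with the conformal factor, i.e.\ $-\Delta_g P\delta_{\lam,\xi}=\chi_\xi e^{\delta_{\lam,\xi}-\varphi_\xi}-\overline{\chi_\xi e^{\delta_{\lam,\xi}-\varphi_\xi}}$, so the integral is actually $\int\chi_\xi e^{\delta_{\lam,\xi}-\varphi_\xi}P\delta_{\mu,\zeta}\,\d v_g$ rather than $\int\chi_\xi e^{\delta_{\lam,\xi}}P\delta_{\mu,\zeta}\,\d v_g$; since $e^{-\varphi_\xi}\d v_g=\d y$ in isothermal coordinates, this makes the local integrals \emph{exactly} flat.)

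For the self--interaction \eqref{lem:projbubble_integral_interaction-1} there is a genuine gap in your proposed bookkeeping of the $\ln\lam/\lam^2$ coefficient. Because $e^{-\varphi_\xi}\d v_g=\d y$, the conformal factor $e^{\hat\varphi_\xi(z/\lam)}$ never appears in the integrand, so there is no ``conformal-factor-times-log-divergent-integral'' contribution; if you omit the $e^{-\varphi_\xi}$ and keep the conformal factor, you would instead obtain a spurious $\ln\lam/\lam^2$ term proportional to $K_g(\xi)$ (or $k_g(\xi)$ on the boundary), which does not match the curvature-independent coefficient $\kap^2(\xi)/|\Si|_g$. The second piece of $\tfrac{\kap^2(\xi)}{2|\Si|_g}\ln\lam/\lam^2$ actually comes from the \emph{quadratic Taylor term of $H^g(\cdot,\xi)$} at $\xi$, which you explicitly relegate to the $O(1/\lam^2)$ remainder: since near $\xi$ eq.~\eqref{Hfunction} reduces to $-\Delta_g H^g=-1/|\Si|_g$, one has $\Delta H^g(\xi,\xi)=1/|\Si|_g$, and pairing $\tfrac14\Delta H^g(\xi,\xi)\int\chi_\xi|y|^2 e^{\tilde\delta_{\lam,0}}\,\d y = \tfrac{1}{4|\Si|_g}\cdot\tfrac{2\kap(\xi)\ln\lam}{\lam^2}+O(1/\lam^2)$ with the prefactor $\kap(\xi)$ yields exactly $\tfrac{\kap^2(\xi)}{2|\Si|_g}\ln\lam/\lam^2$. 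The paper sidesteps this delicate Taylor estimate by rewriting $2\chi_\xi\ln\frac{\lam^2}{1+\lam^2|y_\xi|^2}+\kap(\xi)H^g(\cdot,\xi)=2\chi_\xi\ln\frac{\lam^2|y_\xi|^2}{1+\lam^2|y_\xi|^2}+\kap(\xi)G^g(\cdot,\xi)$ and then evaluating $\kap(\xi)\int\chi_\xi G^g(\cdot,\xi)e^{\delta_{\lam,\xi}-\varphi_\xi}\,\d v_g=\kap(\xi)P\delta_{\lam,\xi}(\xi)$ via the Green representation, so that the $\ln\lam/\lam^2$ coefficient is read off directly from $h_{\lam,\xi}$ in Lemma~\ref{lem:projbubble_pointwise}; this is precisely the trick you do apply to the cross-interaction, and applying it to the self-interaction would repair your argument.
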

	\begin{proof}  It follows from \eqref{deltachange} and \eqref{projectedbubble} that
		$
		\langle P\delta_{\lam,\xi}, P\delta_{\lam,\xi}\rangle	=\int_{\Si }\chi_{\xi}  P\delta_{\lam , \xi}  e^{ \delta_{\lam,\xi}-\varphi_{\xi}} \,\d v_g.
		$
		Note that 
		\[
		\int_{\Si}\chi_{\xi}e^{ \delta_{\lam,\xi}-\varphi_{\xi}}\, \d v_g=\int_{B_{2r_0}^{\xi}}\chi\Big(\frac{|y|}{r_0}\Big)e^{ \delta_{\lam,\xi}}\, \d y=\kap( \xi)+O\Big(\frac{1}{\lam^2}\Big).
		\]
		Then we deduce from		Lemma \ref{lem:projbubble_pointwise} that 
		\begin{align*}
			\langle P\delta_{\lam,\xi}, P\delta_{\lam,\xi}\rangle	
			=& \,\int_{\Si } \chi_{\xi}\Big[2\chi_{\xi}\ln\Big( \frac{\lam^2}{1+\lam^2|y_{\xi}|^2}\Big)+\kap( \xi) H^g(\cdot, \xi)+\frac{\kap( \xi)}{2|\Si |_g}\frac{\ln  \lam }{\lam^2}+O\Big(\frac{1}{\lam^2}\Big) \Big]  e^{\delta_{\lam,\xi}-\varphi_\xi} \,\d v_g \\
			=&\,\int_{\Si } \chi_{\xi}    \Big[2\chi_{\xi}\ln \Big( \frac { \lam^2|y_{\xi}|^2}{1 +\lam^2|y_{\xi}|^2 }\Big) +\kap( \xi) G^g(\cdot,\xi) \Big]  e^{\delta_{\lam,\xi}-\varphi_\xi}\,\d v_g +\frac{\kap^2( \xi)}{2|\Si |_g}\frac{\ln  \lam }{\lam^2} + O\Big(\frac{1}{\lam^2}\Big)  \\
			=&\,
			2\int_{\Si}\chi_{\xi}^2\ln \Big( \frac { \lam^2|y_{\xi}|^2}{1 +\lam^2|y_{\xi}|^2 }\Big) e^{\delta_{\lam,\xi}-\varphi_\xi}\,\d v_g \\&\,+ \kap( \xi)\int_{\Si}\chi_{\xi}G^g(\cdot,\xi)e^{\delta_{\lam,\xi}-\varphi_\xi}\,\d v_g+\frac{\kap^2( \xi) }{2|\Si |_g}\frac{\ln  \lam}{\lam^2} + O\Big(\frac{1}{\lam^2}\Big) \\
			:	=& \, I_1+I_2+\frac{\kap^2( \xi) }{2|\Si |_g}\frac{\ln  \lam}{\lam^2} + O\Big(\frac{1}{\lam^2}\Big).
		\end{align*}
		A direct calculation shows that 
		\[
		I_1=2\int_{B^{\xi}_{2r_0}}\chi^2\Big(\frac{|y_{\xi}|}{r_0}\Big)\ln \Big( \frac { \lam^2|y_{\xi}|^2}{1 +\lam^2|y_{\xi}|^2 }\Big) \frac{8\lam^2}{ (1+\lam^2|y_{\xi}|^2)^2 } \,\d y=-2\kap( \xi)+O\Big(\frac{1}{\lam^2}\Big).
		\]
		Furthermore, 	using \eqref{Green}, \eqref{Greenrepresentation},  \eqref{projectedbubble}  and Lemma \ref{lem:projbubble_pointwise}, we have
		\[	
		I_2=\kap( \xi)P\delta_{\lam,\xi}(\xi)=4\kap( \xi)\ln \lam +\kap^2( \xi)R^g(\xi)+\frac{\kap^2( \xi) }{2|\Si |_g}\frac{\ln  \lam}{\lam^2} + O\Big(\frac{1}{\lam^2}\Big).
		\]
		Thus, the validity of \eqref{lem:projbubble_integral_interaction-1} follows immediately from the above estimates. The proof of  \eqref{lem:projbubble_integral_interaction-2} can be completed through analogous calculations, and for brevity, we omit the details here.
	\end{proof}

	Let $m=2p+q\in \N$, where $p,q\in \N_0$. In what follows, we assume that  $\xi:=(\xi_1,\ldots,\xi_{p+q})$ lies in a compact subset of $\Xi_{p,q}$ as defined in  \eqref{def:Xi_pq}. 
	For each $i=1,\ldots,p+q$, let $(y_{\xi_i}, U(\xi_i))$ be an isothermal coordinate  around $\xi_i$. We may assume that  
	$U(\xi_i)\cap U(\xi_j)=\emptyset$ for $i\neq j $, and $U(\xi_i)\cap \pa\Si =\emptyset$  for $i=1,\ldots,p$. Moreover,  there exists a uniform constant $r_0>0$  such that $U_{2r_0}(\xi_i)\subset U(\xi_i)$  for all $i$. Define the  cut-off  function
	$
	\chi_{\xi_i}(x):=\left\{\begin{aligned}
		&\chi\Big(\frac{|y_{\xi_i}|}{r_0}\Big) && \text{ if $x\in U(\xi_i)$}\\
		&0&& \text{ otherwise}
	\end{aligned}\right.,
	$
	for $i=1,\ldots,p+q$.

	The following two technical lemmas are given in \cite{ABL2017}. For the reader's convenience, we include them here without proofs.
	
	\begin{lem}[see Lemma A.4 of  \cite{ABL2017}]\label{lem:w_error_integral_estimates}
		There exists a constant $C=C(\Si)>0$ such that for  $i=1,\ldots,p+q$, we have
		\begin{gather*}
			\Big(\int_{\Si }  |w| e^{\delta_{\lam_i,\xi_i}}\, \d v_g\Big)^{\frac{1}{\beta}} \leq C\sqrt{\beta}\|w\|, \quad \forall\, w \in E_{\lam,\xi }^{p,q},\quad \forall\, \beta\geq 1,\\
			\int_{\Si} h|e^w-1-w|e^{\delta_{\lam_i,\xi_i}}\, \d v_g  \leq C\Big(\int_{\Si} h^2 e^{\delta_{\lam_i,\xi_i}}\, \d v_g\Big)^{1 / 2}\|w\|^2, \quad \forall\, w \in E_{\lam,\xi }^{p,q},\quad \forall\,h \in L^{\infty}(\Si), 
		\end{gather*}
		and
		\[
		\int_{\Si} \Big|e^w-1-w-\frac{w^2}{2}\Big| e^{\delta_{\lam_i,\xi_i}}\, \d v_g   \leq C\|w\|^3, \quad \forall\, w \in E_{\lam,\xi }^{p,q} .
		\]
	\end{lem}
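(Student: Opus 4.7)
The plan is to reduce all three estimates to a single weighted Moser--Trudinger inequality, namely
\begin{equation}\label{WMT-plan}
\int_{\Si} e^{|w|}\,e^{\delta_{\lam_i,\xi_i}}\,\d v_g \;\leq\; C \exp\bigl(C\|w\|^2\bigr),\qquad \forall\, w \in E^{p,q}_{\lam,\xi},
\end{equation}
with $C$ independent of $i$, $\lam$ and $\xi$. Once \eqref{WMT-plan} is in hand, the three inequalities of the lemma are routine consequences of Taylor expansion and elementary bookkeeping.

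To prove \eqref{WMT-plan}, I would first localize around $\xi_i$. On the complement $\Si\setminus U_{r_0}(\xi_i)$ one has $e^{\delta_{\lam_i,\xi_i}} = O(\lam_i^{-2})$, so that region contributes $O(\lam_i^{-2})\int_{\Si} e^{|w|}\,\d v_g$, which is already of the form $C\exp(C\|w\|^2)$ by the classical Moser--Trudinger inequality on $(\Si,g)$ applied to $w\in\oH$. Inside $U_{r_0}(\xi_i)$ I pull the integrand back through the isothermal chart $y_{\xi_i}$ and rescale by $z=\lam_i y$; conformal invariance of the Dirichlet energy gives $\|\nabla\tilde w\|_{L^2}\le\|w\|$ for $\tilde w(z) := w\circ y_{\xi_i}^{-1}(z/\lam_i)$, while the weighted measure becomes the fixed finite-mass measure $8(1+|z|^2)^{-2}\,\d z$ on a growing Euclidean domain. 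The orthogonality $\la w, P\delta_{\lam_i,\xi_i}\ra=0$ built into $E^{p,q}_{\lam,\xi}$ translates, via \eqref{projectedbubble}, into the vanishing-mean condition $\int\chi_{\xi_i} w\,e^{\delta_{\lam_i,\xi_i}}\,\d v_g=0$, which is exactly the normalization required to apply the Moser--Trudinger inequality for $\tilde w$ on the rescaled domain, yielding \eqref{WMT-plan} uniformly in $\lam_i$.

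From \eqref{WMT-plan}, the first estimate follows by observing that $x^\beta\le\beta!\,e^x$ for $x\ge 0$; taking $x=\sqrt{\beta}\,|w|/\|w\|$ gives
\[
|w|^\beta\;\le\;\frac{\beta!}{\beta^{\beta/2}}\|w\|^\beta\,\exp\!\bigl(\sqrt{\beta}\,|w|/\|w\|\bigr).
\]
Integrating against $e^{\delta_{\lam_i,\xi_i}}$, applying \eqref{WMT-plan} with $w$ replaced by $\sqrt{\beta}\,w/\|w\|$, and using Stirling's bound $\beta!/\beta^{\beta/2}\le C^\beta\,\beta^{\beta/2}$ yield $\bigl(\int_{\Si}|w|^\beta e^{\delta_{\lam_i,\xi_i}}\,\d v_g\bigr)^{1/\beta}\le C\sqrt{\beta}\,\|w\|$.

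For the remaining two inequalities, Taylor's theorem with integral remainder yields the pointwise bounds $|e^w-1-w|\le\tfrac{1}{2}w^2 e^{|w|}$ and $|e^w-1-w-\tfrac{1}{2}w^2|\le\tfrac{1}{6}|w|^3 e^{|w|}$. For the second estimate, Cauchy--Schwarz against the measure $e^{\delta_{\lam_i,\xi_i}}\,\d v_g$ reduces matters to controlling $\int_{\Si} w^4 e^{2|w|}\,e^{\delta_{\lam_i,\xi_i}}\,\d v_g$; expanding $e^{2|w|}=\sum_k (2|w|)^k/k!$ and bounding each term by the first inequality applied with $\beta=k+4$ produces a convergent series $\sum_{k\ge 0}\frac{2^k}{k!}(C\sqrt{k+4}\,\|w\|)^{k+4}$ which is $O(\|w\|^4)$ for $\|w\|$ small (the $k=0$ term dominates by Stirling). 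The third estimate is handled identically by applying the same device to $\int_{\Si}|w|^3 e^{|w|}\,e^{\delta_{\lam_i,\xi_i}}\,\d v_g$. The only real difficulty is the first step, namely the uniform-in-$\lam$ weighted bound \eqref{WMT-plan}; the rest is clean bookkeeping.
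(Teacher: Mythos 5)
Since the paper does not prove this lemma but simply cites Lemma~A.4 of \cite{ABL2017}, there is no internal proof to compare against; I can only assess your reconstruction on its own merits. Your overall strategy---reduce everything to a uniform weighted Moser--Trudinger inequality \eqref{WMT-plan} and deduce the three bounds by the $x^\beta\le\beta!\,e^x$ trick together with term-by-term series estimates---is the standard route for this type of lemma in the mean-field literature, and the \emph{deduction} from \eqref{WMT-plan} is correct: the Stirling bookkeeping for the first estimate checks out, the Taylor remainder bounds $|e^w-1-w|\le\frac12 w^2 e^{|w|}$ and $|e^w-1-w-\frac12 w^2|\le\frac16 |w|^3 e^{|w|}$ are right, Cauchy--Schwarz in the weighted $L^2$-space is used appropriately, and the power series $\sum_k \frac{2^k}{k!}(C\sqrt{k+4}\,\|w\|)^{k+4}$ indeed converges and is $O(\|w\|^4)$ for $\|w\|$ small (which is guaranteed by $\|w\|\le C\varepsilon$ in $E^{p,q}_{\lam,\xi}$).

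Two points deserve correction or sharpening. First, your statement of \eqref{WMT-plan} ``$\forall\,w\in E^{p,q}_{\lam,\xi}$'' is actually too narrow for your own application: when you substitute $\sqrt{\beta}\,w/\|w\|$, the resulting function has norm $\sqrt{\beta}\ge 1$ and thus lies outside $E^{p,q}_{\lam,\xi}$ (which bounds $\|w\|\le C\varepsilon$). The inequality you need is for \emph{all} $w\in\oH$ satisfying the orthogonality relations \eqref{orthogonality}, with the constant independent of $\|w\|$; you should state and use it in that form. Second, your claim that $e^{\delta_{\lam_i,\xi_i}}=O(\lam_i^{-2})$ on $\Si\setminus U_{r_0}(\xi_i)$ is not accurate: by definition \eqref{def:bubble}, $\delta_{\lam_i,\xi_i}\equiv 0$ on $\Si\setminus U(\xi_i)$, so $e^{\delta_{\lam_i,\xi_i}}\equiv 1$ there (only in the annulus $U(\xi_i)\setminus U_{r_0}(\xi_i)$ is it $O(\lam_i^{-2})$). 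This does not damage the argument, since the classical Moser--Trudinger inequality on $(\Si,g)$ for $w\in\oH$ already controls $\int_\Si e^{|w|}\,\d v_g\le Ce^{C\|w\|^2}$, which handles both the annulus and the complement of $U(\xi_i)$ uniformly; but the statement as written is wrong.

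The genuine remaining gap is the one you flag yourself: \eqref{WMT-plan} is only sketched. The rescaling argument, the translation of $\langle w,P\delta_{\lam_i,\xi_i}\rangle=0$ into the vanishing weighted mean $\int\chi_{\xi_i}w\,e^{\delta_{\lam_i,\xi_i}}\,\d v_g=0$ (which is correct, using also $\int_\Si w\,\d v_g=0$), and the uniform Moser--Trudinger inequality on growing Euclidean domains against the measure $8(1+|z|^2)^{-2}\,\d z$ all need to be carried out carefully; the last requires a weighted Moser--Trudinger/Onofri-type estimate (Fontana--McOwen) rather than the bare $\R^2$ inequality. Since you acknowledge this is the hard step and the rest is bookkeeping, the proposal is best read as a correct high-level blueprint with the crux left to a citation or a detailed auxiliary lemma.
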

	
	\begin{lem}[see Lemma A.5 of\cite{ABL2017}] \label{lem:A.5} 
		For any fixed constant  $\beta\in [0,2]$, there exists a constant $C=C(\beta)>0$ such that, for any  $i=1,\ldots,p+q$ and for all $w \in E_{\lam,\xi }^{p,q}$,   the following estimate holds:
		\[	
		\int_{\Si}\chi_{\xi_i}  |y_{\xi_i}|^\beta|w| e^{\delta_{\lam_i,\xi_i}}\, \d v_g \leq\left\{\begin{aligned}
			&C\frac{\|w\|}{\lam_i ^{\beta}} && \text { if }\, \beta \in[0,2), \\
			&C\frac{\ln^{\frac{3}{2}}\lam_i }{\lam_i^2}   \|w\| && \text { if }\, \beta = 2.
		\end{aligned} \right.
		\]
	\end{lem}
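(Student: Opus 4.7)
The plan is to combine Hölder's inequality with the Moser--Trudinger--type control already supplied by Lemma \ref{lem:w_error_integral_estimates}, and then do the bubble computation in isothermal coordinates. Concretely, for any dual exponents $p,p'>1$ with $1/p+1/p'=1$, I would start from
\[
\int_{\Si}\chi_{\xi_i}|y_{\xi_i}|^{\beta}|w|\,e^{\delta_{\lam_i,\xi_i}}\,\d v_g
\leq\Big(\int_{\Si}|w|^{p}e^{\delta_{\lam_i,\xi_i}}\,\d v_g\Big)^{\!1/p}
\Big(\int_{\Si}\chi_{\xi_i}^{p'}|y_{\xi_i}|^{\beta p'}e^{\delta_{\lam_i,\xi_i}}\,\d v_g\Big)^{\!1/p'}.
\]
The first factor is bounded by $C\sqrt{p}\,\|w\|$ by Lemma \ref{lem:w_error_integral_estimates} (in its standard Moser--Trudinger form $\|w\|_{L^{p}(e^{\delta})}\leq C\sqrt{p}\,\|w\|$). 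For the second factor I would pass to the isothermal chart around $\xi_i$, use that $e^{-\hat{\varphi}_{\xi_i}}$ is bounded, and perform the dilation $z=\lam_i y$, which yields
\[
\int_{\Si}\chi_{\xi_i}^{p'}|y_{\xi_i}|^{\beta p'}e^{\delta_{\lam_i,\xi_i}}\,\d v_g
=\frac{C}{\lam_i^{\,\beta p'}}\int_{B^{\xi_i}_{2r_{0}}\,\cdot\,\lam_i}\chi\!\Big(\frac{|z|}{r_{0}\lam_i}\Big)^{\!p'}\frac{|z|^{\beta p'}}{(1+|z|^{2})^{2}}\,\d z.
\]

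For $\beta\in[0,2)$ I simply fix any $p'\in(1,2/\beta)$ (if $\beta=0$, any $p'>1$). Then the integrand $|z|^{\beta p'}(1+|z|^{2})^{-2}$ behaves like $|z|^{\beta p'-4}$ at infinity with exponent $<-2$, so the $z$-integral is bounded uniformly in $\lam_i$. This gives $\big(\int\cdots\big)^{1/p'}=O(\lam_i^{-\beta})$, and combining with the Moser--Trudinger bound yields $C\|w\|/\lam_i^{\beta}$, which is the desired estimate.

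The case $\beta=2$ is the main obstacle, since $\beta p'<2$ forces $p'<1$, which is forbidden. The idea is to allow $p'=1+\var$ with $\var>0$ small and let $\var$ depend on $\lam_i$. In that regime the $z$-integral diverges like
\[
\int_{|z|\leq r_0\lam_i}\frac{|z|^{2+2\var}}{(1+|z|^{2})^{2}}\,\d z\;=\;O\!\Big(\frac{(r_{0}\lam_i)^{2\var}}{\var}\Big),
\]
so $\big(\int\cdots\big)^{1/p'}=O\big(\lam_i^{-2/p'}\var^{-1/p'}\big)=O\big(\lam_i^{-2}e^{2\var\ln\lam_i}\var^{-1}\big)$ as $\var\to 0^+$. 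With $p=(1+\var)/\var\approx 1/\var$ the Moser--Trudinger factor is $C\sqrt{p}\,\|w\|=O(\var^{-1/2}\|w\|)$, giving a total bound
\[
C\,\|w\|\,\lam_i^{-2}\,\var^{-3/2}\,e^{2\var\ln\lam_i}.
\]
Minimizing the function $\var\mapsto \var^{-3/2}e^{2\var\ln\lam_i}$ produces the optimal choice $\var=3/(4\ln\lam_i)$, at which the bound becomes $C\,\|w\|\,\ln^{3/2}\lam_i/\lam_i^{2}$, as claimed. I expect this optimization---precisely matching the logarithmic loss in the Moser--Trudinger constant against the logarithmic divergence of the bubble integral at the critical exponent---to be the only delicate step; the remaining verifications are routine changes of variables plus invocation of Lemma \ref{lem:w_error_integral_estimates}.
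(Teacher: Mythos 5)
Your proof is correct and complete. The paper itself does not provide a proof of Lemma~\ref{lem:A.5}, but instead cites \cite{ABL2017}; your argument (H\"older splitting of $|w|e^{\delta}$ against $\chi|y|^{\beta}e^{\delta}$, blow-up rescaling $z=\lam_i y$ for the weight, and the balanced choice $p'=1+\var$ with $\var\sim 1/\ln\lam_i$ at the critical exponent $\beta=2$) is precisely the standard mechanism that produces the characteristic $\ln^{3/2}\lam_i$ loss, combining the $\sqrt{p}$ Moser--Trudinger constant with the logarithmic divergence of $\int |z|^{2+2\var}(1+|z|^2)^{-2}\,\d z$. One small remark: the statement of Lemma~\ref{lem:w_error_integral_estimates} in this paper reads $(\int|w|e^{\delta})^{1/\beta}\le C\sqrt{\beta}\|w\|$, which as printed is a typo for $(\int|w|^{\beta}e^{\delta})^{1/\beta}\le C\sqrt{\beta}\|w\|$; you correctly read it as the $L^p(e^{\delta})$ Moser--Trudinger bound, which is what the argument needs. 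Your observation that the implied constant $C(\beta)$ necessarily degenerates as $\beta\uparrow 2$ (since $p'\to 1$ forces the Moser--Trudinger exponent $p\to\infty$) is also consistent with the $\beta$-dependence allowed in the statement.
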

	
	\begin{lem}\label{lem:Ve^{u_0}_pointwise}
		Let $u=  u_0+w\in V_{\rho}^\eta(p,q,\var)$ with $\rho>0$, where $u_0= \sum_{i}\al_i P\delta_{\lam_i,\xi_i}$ and
		$w\in E_{\lam,\xi }^{p,q}$. Then, there exists some small constant $r_0>0$ such that 
		\[
		V e^{u_0}=
		\frac{\lam_i^{4\al_i}\cF^i_{\xi}\rg^i_{\xi}}{ ( 1+\lam_i^2|y_{\xi_i}|^2)^{2\al_i}}\Big(1  +\sum_{j} \frac{\al_j\kap( \xi_j)}{2|\Si |_g} \frac{\ln  \lam_j}{\lam_j^2}+ O\Big( \sum_j\frac{\al_j}{\lam_j^2} \Big)\Big) \quad \text{ in }\, U_{r_0}(\xi_i)
		\]
		for any  $i=1,\ldots,p+q$, and 
		\[
		V e^{u_0}=V\Big(1+\sum_{j}  \frac{\al_j\kap( \xi_j)}{2|\Si |_g} \frac{\ln  \lam_j}{\lam_j^2}+O\Big(\sum_{j}\frac{\al_j}{\lam_j^2} \Big)\Big)\exp\Big\{  \sum_{j} \al_j \kap( \xi_j)G^g(\cdot,\xi_j)\Big\} \quad \text{  in }\,\Si \setminus \cup_{j} U_{r_0}(\xi_j) ,
		\]
		where  $\cF^i_{\xi}$ and 	$\rg_{\xi}^i$ are defined in \eqref{eq:def_F} and \eqref{def:rg}, respectively.
	\end{lem}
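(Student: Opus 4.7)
The proof is a direct consequence of the pointwise expansions of the projected bubbles established in Lemma \ref{lem:projbubble_pointwise}, combined with the algebraic identity that links the functions $\cF^i_\xi$ and $\rg^i_\xi$ to the linear combination appearing in the exponent of $e^{u_0}$. The plan is to split the surface into the two regions specified in the statement and compute $u_0 = \sum_j \al_j P\delta_{\lam_j,\xi_j}$ pointwise in each.

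First I would fix $i\in\{1,\ldots,p+q\}$ and work on $U_{r_0}(\xi_i)$ with $r_0>0$ small enough that $U_{r_0}(\xi_i)\cap U_{r_0}(\xi_j)=\emptyset$ for $j\neq i$ and $\chi_{\xi_i}\equiv 1$ on $U_{r_0}(\xi_i)$. On this set, the first (global) expansion in Lemma \ref{lem:projbubble_pointwise} gives
\[
\al_i P\delta_{\lam_i,\xi_i}=\al_i\ln\frac{\lam_i^{4}}{(1+\lam_i^{2}|y_{\xi_i}|^{2})^{2}}+\al_i\kap(\xi_i)H^g(\cdot,\xi_i)+\frac{\al_i\kap(\xi_i)}{2|\Si|_g}\frac{\ln\lam_i}{\lam_i^{2}}+O\Big(\frac{\al_i}{\lam_i^{2}}\Big),
\]
while for $j\neq i$ the point $x\in U_{r_0}(\xi_i)$ lies away from $\xi_j$, so the $C_{\mathrm{loc}}(\Si\setminus\{\xi_j\})$ expansion in the same lemma yields
\[
\al_j P\delta_{\lam_j,\xi_j}=\al_j\kap(\xi_j)G^g(\cdot,\xi_j)+\frac{\al_j\kap(\xi_j)}{2|\Si|_g}\frac{\ln\lam_j}{\lam_j^{2}}+O\Big(\frac{\al_j}{\lam_j^{2}}\Big).
\]
Summing these over $j=1,\ldots,p+q$ gives the pointwise formula for $u_0$ on $U_{r_0}(\xi_i)$.

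Next I would exponentiate and multiply by $V$. Recognising that
\[
\cF^i_{\xi}\,\rg^i_{\xi}=V\exp\Big\{\al_i\kap(\xi_i)H^g(\cdot,\xi_i)+\sum_{j\neq i}\al_j\kap(\xi_j)G^g(\cdot,\xi_j)\Big\}
\]
by the definitions \eqref{eq:def_F} and \eqref{def:rg}, this immediately produces
\[
Ve^{u_0}=\frac{\lam_i^{4\al_i}\,\cF^i_{\xi}\,\rg^i_{\xi}}{(1+\lam_i^{2}|y_{\xi_i}|^{2})^{2\al_i}}\cdot\exp\Big\{\sum_j\frac{\al_j\kap(\xi_j)}{2|\Si|_g}\frac{\ln\lam_j}{\lam_j^{2}}+O\Big(\sum_j\frac{\al_j}{\lam_j^{2}}\Big)\Big\}.
\]
Since $\lam_j>\var^{-1}$ and $|\al_j-1|<\eta^{-1}$ by the definition of $V_\rho^\eta(p,q,\var)$, the quantity in the exponent tends to $0$, and I would use the Taylor expansion $e^{s}=1+s+O(s^{2})$ to absorb the quadratic correction into the $O(\sum_j\al_j/\lam_j^{2})$ term. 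This yields the first asserted formula.

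For the second formula, I would observe that on $\Si\setminus\bigcup_j U_{r_0}(\xi_j)$ every point is uniformly away from each $\xi_j$, so the $C_{\mathrm{loc}}$ expansion applies to all indices simultaneously; summing gives
\[
u_0=\sum_j\al_j\kap(\xi_j)G^g(\cdot,\xi_j)+\sum_j\frac{\al_j\kap(\xi_j)}{2|\Si|_g}\frac{\ln\lam_j}{\lam_j^{2}}+O\Big(\sum_j\frac{\al_j}{\lam_j^{2}}\Big),
\]
and again multiplying by $V$, exponentiating, and expanding $e^s=1+s+O(s^{2})$ produces the stated expression. The argument is essentially bookkeeping; the only point that requires attention is ensuring that the $O$-terms remain uniform in the chosen coordinate neighbourhood, which follows from the smoothness of $G^g$ and $H^g$ together with the uniform bounds on $\al_j$ supplied by Proposition \ref{prop:unique}.
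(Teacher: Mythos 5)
Your proposal is correct and follows essentially the same route as the paper: fix $i$, apply the $C(\Si)$ expansion of Lemma \ref{lem:projbubble_pointwise} to the $i$-th projected bubble on $U_{r_0}(\xi_i)$ and the $C_{\mathrm{loc}}(\Si\setminus\{\xi_j\})$ expansion to the others, recognize the exponent as $\ln(\cF^i_\xi \rg^i_\xi/V)$, exponentiate, and Taylor-expand the small correction; the complement region is handled identically using only the $C_{\mathrm{loc}}$ expansion. The only cosmetic difference is that you spell out the $e^s = 1 + s + O(s^2)$ step that the paper leaves implicit.
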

	\begin{proof}
		Let us 	fix $i\in \{1,\ldots,p+q\}$. 	It follows from  Lemma \ref{lem:projbubble_pointwise} that   for $x\in U_{r_0}(\xi_i)$,
		\begin{align*}
			V(x) e^{u_0(x)}=& \,V(x) \exp\Big\{ \al_i\Big( 2\ln \Big(\frac{\lam_i^2}{1+\lam_i^2|y_{\xi_i}|^2}\Big)+ \kap( \xi_i)H^g(x,\xi_i) +  \frac{\kap( \xi_i)}{2|\Si |_g}\frac{\ln  \lam_i}{\lam_i^2}+O\Big(\frac{1}{\lam_i^2}\Big)\Big)\\&\,+\sum_{j\neq i}\al_j\Big(\kap( \xi_j)G^g(x,\xi_j)+\frac{\kap( \xi_j)}{2|\Si |_g}\frac{\ln  \lam_j}{\lam_j^2}+ O\Big(\frac{1}{\lam_j^2}\Big)\Big) \Big\}\\
			=&\,\frac{\lam_i^{4\al_i}(\cF^i_{\xi}\rg^i_{\xi})(x)}{ ( 1+\lam_i^2|y_{\xi_i}|^2)^{2\al_i}}\Big(1  +\sum_{j} \frac{\al_i\kap( \xi_j)}{2|\Si |_g} \frac{\ln  \lam_j}{\lam_j^2}+O\Big(\sum_j\frac{\al_j}{\lam_j^2}\Big) \Big),
		\end{align*}
		and for $x\in \Si \setminus\cup_{j} U_{r_0}(\xi_j)$, 
		\begin{align*}
			V(x) e^{u_0(x)}=&\, V(x) \exp\Big\{  \sum_{j}\al_j \Big[  \kap( \xi_j)G^g(x,\xi_j)  +  \frac{\kap( \xi_j)}{2|\Si |_g} \frac{\ln  \lam_j}{\lam_j^2}+ O\Big(\frac{1}{\lam_j^2}\Big) \Big]\Big\}\\
			=&\,V(x)  \Big(1+\sum_{j} \frac{\al_j\kap( \xi_j)}{2|\Si |_g} \frac{\ln  \lam_j}{\lam_j^2} + O\Big(\sum_{j} \frac{\al_j}{\lam_j^2}\Big)\Big) \exp\Big\{  \sum_{j}  \al_j\kap( \xi_j) G^g(x,\xi_j)\Big\}. 
		\end{align*}
		Lemma \ref{lem:Ve^{u_0}_pointwise}  is thus established.
	\end{proof}

	For $i=1,\ldots, p+q$, we define the auxiliary function as follows:
	\begin{equation}\label{def:Phi}
		\Phi _i(x):=\frac {1} {( 1+\lam_i^2|y_{\xi_i}(x)|^2)^{2\al_i-2}},\quad x \in  U(\xi_i).
	\end{equation}
	If $|\al_i-1|\ln \lam_i=o(1)$,  then  $\Phi _i(x)=1+o(1)$ uniformly on $U(\xi_i)$. In general, if $|\al_i-1|\leq \frac 1 8$, there exists a constant $C>0$ such that for all $x\in U(\xi_i)$,
	\begin{equation}\label{Phibound}
		|\Phi_i (x)-1|=	|\Phi _i (x)-\Phi _i (\xi_i)|= \Big|\int_0^1 \frac{4(1-\al_i)\lam_i^{2}|y_{\xi_i}|^2t }{( 1+\lam_i^2
			|y_{\xi_i}|^2t^2)^{2\al_i-1} } \, \d t\Big|\leq  C   |\al_i-1|\sqrt{\lam_i|y_{\xi_i}|}.
	\end{equation}
	
	\begin{lem}\label{lem:A.7}
		For    $u_0= \sum_{i}\al_i P\delta_{\lam_i,\xi_i}$ and
		$w\in E_{\lam,\xi }^{p,q}$, we have 
		\begin{equation} \label{lem:A.7-1}
			\int_{U_{r_0}(\xi_i)} \Phi _i  w   e^{\delta_{\lam_i,\xi_i}-\varphi_{\xi_i}} \,\d v_g=O\Big(\Big( |\al_i-1|+ \frac{1}{\lam_i^2}\Big)\|w\|\Big),\quad \forall\, 1\leq i\leq p+q,
		\end{equation}
		and 	
		\begin{equation}\label{lem:A.7-2}
			\int_{\Si } V e^{u_0} w \,\d v_g = O\Big(\sum_{i} \lam_i^{4\al_i-2}\Big(|\al_i-1|+\frac{	|\nabla_{\xi_i}\cF_{\xi}^i(\xi_i)|+|\mathfrak{f}_2(\xi_i)| }{\lam_i} +\frac{\ln^{\frac{3}{2}} \lam_i}{\lam_i^2}\Big) \|w\|\Big)+O(\|w\|),
		\end{equation}
		where $\cF_{\xi}^i$ and $\mathfrak{f}_2$ are defined in  \eqref{eq:def_F}  and \eqref{def:frak_f2}, respectively.
	\end{lem}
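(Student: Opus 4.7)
Since $w \in E^{p,q}_{\lam,\xi}$ is $L^2$-orthogonal to $P\delta_{\lam_i,\xi_i}$, equation \eqref{projectedbubble} combined with $\int_\Si w\,\d v_g = 0$ immediately yields the key identity
\[
\int_\Si \chi_{\xi_i}\, w\, e^{\delta_{\lam_i,\xi_i}}\,\d v_g = 0.
\]
My plan for \eqref{lem:A.7-1} is to compare the target integral against this vanishing quantity. In the isothermal chart $(y_{\xi_i}, U(\xi_i))$ one has $e^{\delta_{\lam_i,\xi_i} - \varphi_{\xi_i}}\d v_g = e^{\tilde\delta_{\lam_i,0}(y)}\d y$, whereas the orthogonality integrand carries the conformal factor $e^{\hat\varphi_{\xi_i}}$. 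Writing $\Phi_i = 1 + (\Phi_i - 1)$ and using $\chi_{\xi_i} \equiv 1$ on $U_{r_0}(\xi_i)$, the target decomposes as the sum of the vanishing orthogonality integral, a tail integral over $U_{2r_0}(\xi_i)\setminus U_{r_0}(\xi_i)$, and a remainder involving $\Phi_i - 1$ and $1 - e^{-\hat\varphi_{\xi_i}}$. The tail is $O(\|w\|/\lam_i^2)$ because $e^{\delta_{\lam_i,\xi_i}} = O(\lam_i^{-2})$ there. The $\Phi_i - 1$ piece is controlled by \eqref{Phibound} together with Cauchy-Schwarz and Lemma~\ref{lem:w_error_integral_estimates}, producing an $O(|\al_i-1|\|w\|)$ contribution. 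The conformal-factor remainder is handled by Taylor-expanding $1 - e^{-\hat\varphi_{\xi_i}}$ at the origin using \eqref{varphixi} and applying Lemma~\ref{lem:A.5}, which yields the residual $O(\|w\|/\lam_i^2)$ term.

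For \eqref{lem:A.7-2} I would partition $\Si = \bigcup_i U_{r_0}(\xi_i) \cup (\Si\setminus \bigcup_i U_{r_0}(\xi_i))$. On the complement, Lemma~\ref{lem:Ve^{u_0}_pointwise} shows that $Ve^{u_0}$ is uniformly bounded, and the Poincar\'e inequality $\|w\|_{L^1(\Si)} \le C\|w\|$ delivers the final $O(\|w\|)$ term. On each $U_{r_0}(\xi_i)$ I would substitute the pointwise expansion from Lemma~\ref{lem:Ve^{u_0}_pointwise} and Taylor-expand
\[
(\cF^i_\xi \rg^i_\xi)(x) = (\cF^i_\xi\rg^i_\xi)(\xi_i) + \nabla(\cF^i_\xi\rg^i_\xi)(\xi_i)\cdot y_{\xi_i}(x) + O(|y_{\xi_i}|^2).
\]
The constant term reduces, up to the factor $\lam_i^{4\al_i-2}$, precisely to the integral of \eqref{lem:A.7-1}, contributing $O(\lam_i^{4\al_i-2}(|\al_i-1| + \lam_i^{-2})\|w\|)$. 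The quadratic remainder yields the $\ln^{3/2}\lam_i/\lam_i^2$ contribution via Lemma~\ref{lem:A.5} with $\beta = 2$. The linear term is the heart of the proof and accounts for the $\nabla_{\xi_i}\cF^i_\xi(\xi_i)$ and $\mathfrak{f}_2(\xi_i)$ factors: for the tangential direction, Lemma~\ref{lem:projbubble_xi_pointwise} identifies the leading piece of $\pa P\delta_{\lam_i,\xi_i}/\pa(\xi_i)_1$ as $-4\chi_{\xi_i}\lam_i^2(y_{\xi_i})_1/(1+\lam_i^2|y_{\xi_i}|^2)$, so the orthogonality $\langle w,\pa P\delta_{\lam_i,\xi_i}/\pa(\xi_i)_1\rangle = 0$ converts the $y_1$-moment of $w$ into lower-order terms and produces the $|\nabla_{\xi_i}\cF^i_\xi(\xi_i)|/\lam_i$ factor.

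For boundary points $\xi_i \in \pa\Si$, no orthogonality is available in the normal direction (since $\ii(\xi_i) = 1$), so the $y_2$-moment from the Taylor expansion must be combined with the boundary value $\nabla\hat\varphi_{\xi_i}(0) = (0,-2k_g(\xi_i))$ from \eqref{varphixi}. The combined coefficient, once compared with the defining formula \eqref{def:frak_f2}, is exactly $\mathfrak{f}_2(\xi_i)$, and Lemma~\ref{lem:A.5} with $\beta = 1$ then yields the $|\mathfrak{f}_2(\xi_i)|/\lam_i$ contribution. For interior points the analogous normal moment vanishes to higher order (as $\nabla\hat\varphi_{\xi_i}(0) = 0$), consistently with $\mathfrak{f}_2 \equiv 0$ on $\intS$.

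The main obstacle is this precise identification of the boundary contribution: tracking how the tangential first moment is absorbed by orthogonality, while the normal first moment---arising simultaneously from $\pa_{\nu_g}(\cF^i_\xi\rg^i_\xi)(\xi_i)$ and from the conformal-factor expansion---assembles into exactly the quantity $\mathfrak{f}_2(\xi_i)$ appearing in the statement. Once this assembly is carried out, the remaining estimates reduce to direct applications of Lemmas~\ref{lem:w_error_integral_estimates} and~\ref{lem:A.5}, and summing the contributions from each $U_{r_0}(\xi_i)$ and the exterior region gives the claimed bound.
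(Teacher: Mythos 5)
Your overall strategy matches the paper's: decompose $\Sigma$ into bubble neighborhoods plus the exterior, Taylor-expand the concentrated density on each $U_{r_0}(\xi_i)$, kill the zeroth moment via orthogonality plus a tail estimate, and control the higher moments via \eqref{Phibound} and Lemma~\ref{lem:A.5}. Two of your intermediate steps, however, do not hold as stated. First, the conformal-factor remainder you introduce in \eqref{lem:A.7-1} does not close. You pass from your identity $\int_\Si\chi_{\xi_i}\,w\,e^{\delta_{\lam_i,\xi_i}}\,\d v_g=0$ to the integrand $e^{\delta_{\lam_i,\xi_i}-\varphi_{\xi_i}}$ by Taylor-expanding $1-e^{-\hat\varphi_{\xi_i}}$; but by \eqref{varphixi}, $\nabla\hat\varphi_{\xi_i}(0)=(0,-2k_g(\xi_i))\neq 0$ when $\xi_i\in\pa\Si$, so $1-e^{-\hat\varphi_{\xi_i}}=O(|y_{\xi_i}|)$ and Lemma~\ref{lem:A.5} with $\beta=1$ gives only $O(\|w\|/\lam_i)$ -- a full power of $\lam_i$ too large. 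Even at interior points, where the gradient vanishes, $\beta=2$ produces $O(\ln^{3/2}\lam_i\,\|w\|/\lam_i^2)$, still bigger than the claimed $O(\|w\|/\lam_i^2)$. The paper avoids the remainder altogether: since $-\Delta_g\delta_{\lam,\xi}=e^{\delta_{\lam,\xi}-\varphi_\xi}$, the operative consequence of $\langle w,P\delta_{\lam_i,\xi_i}\rangle=0$ (an $H^1$-orthogonality, incidentally, not $L^2$) together with $\int_\Si w\,\d v_g=0$ is $\int_\Si\chi_{\xi_i}\,w\,e^{\delta_{\lam_i,\xi_i}-\varphi_{\xi_i}}\,\d v_g=0$, the factor $e^{-\varphi_{\xi_i}}$ being implicit in \eqref{projectedbubble}; only the tail over $\Si\setminus U_{r_0}(\xi_i)$ survives and it is $O(\|w\|/\lam_i^2)$.

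Second, the claim that $\langle w,\pa P\delta_{\lam_i,\xi_i}/\pa(\xi_i)_1\rangle=0$ is what ``produces'' the $|\nabla_{\xi_i}\cF^i_\xi(\xi_i)|/\lam_i$ factor in \eqref{lem:A.7-2} is not correct. That constraint weights $w$ by roughly $\lam_i^4\,y_1/(1+\lam_i^2|y|^2)^3$, whereas the first-moment integrand coming from the Taylor expansion weights $w$ by roughly $\lam_i^2\,y_1/(1+\lam_i^2|y|^2)^2$; the decay profiles differ and the former does not annihilate the latter. No such cancellation is used in the paper, nor is one needed: the linear term is bounded directly by Lemma~\ref{lem:A.5} with $\beta=1$, giving $O(\|w\|/\lam_i)$, and the prefactor is simply the Taylor coefficient $\nabla(e^{\varphi_{\xi_i}}\cF^i_\xi\rg^i_\xi)(\xi_i)$, whose norm is $O(|\nabla_{\xi_i}\cF^i_\xi(\xi_i)|+|\mathfrak{f}_2(\xi_i)|+|\al_i-1|)$ once $\rg^i_\xi(\xi_i)=1+O(|\al_i-1|)$ is used. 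Your identification of the normal component of that gradient with $\mathfrak{f}_2(\xi_i)$ through the geodesic-curvature term in \eqref{varphixi} and the definition \eqref{def:frak_f2} is correct and is the genuine structural observation here; it should be used directly, without the superfluous orthogonality argument.
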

	\begin{proof}
		We first derive that from \eqref{Phibound} that  
		\[
		\int_{U_{r_0}(\xi_i)}  \Phi _i  w  e^{\delta_{\lam_i,\xi_i}}\,\d v_g=\int_{U_{r_0}(\xi_i)}  w e^{\delta_{\lam_i,\xi_i}} \,\d v_g+O\Big(|\al_i-1|\int_{U_{r_0}(\xi_i)}  \sqrt{\lam_i|y_{\xi_i}|} |w|e^{\delta_{\lam_i,\xi_i}} \,\d v_g  \Big).
		\]
		Next, by applying \eqref{orthogonality} and  the  Sobolev embedding, we get
		\begin{equation} \label{lem:A.7-5}
			\int_{U_{r_0}(\xi_i)}  w e^{\delta_{\lam_i,\xi_i}-\varphi_{\xi_i}} \,\d v_g=-\int_{\Si\backslash U_{r_0}(\xi_i)}  \chi_i  w e^{\delta_{\lam_i,\xi_i}-\varphi_{\xi_i}} \,\d v_g=O\Big(\frac{\|w\|}{\lam_i^2}\Big)	.
		\end{equation} 
		Now, by combining \eqref{lem:A.7-5} with Lemma \ref{lem:A.5}, the proof of \eqref{lem:A.7-1} is complete. It remains to establish \eqref{lem:A.7-2}. 
		
		Indeed, it follows from Lemma \ref{lem:Ve^{u_0}_pointwise} and the  Sobolev embedding that
		\begin{align}
			\int_{\Si } V e^{u_0} w\,\d v_g=& \,
			\sum_{i} \int_{U_{r_0}(\xi_i)}Ve^{u_0} w \,\d v_g + \int_{\Si \setminus \cup_{j} U_{r_0}(\xi_j)} Ve^{u_0} w \,\d v_g \nonumber\\
			=&\, \sum_{i} \frac{\lam_i^{4\al_i-2}}{8}\int_{U_{r_0}(\xi_i)} e^{\delta_{\lam_i,\xi_i}}\Phi _i  \cF^i_{\xi} \rg^i_{\xi} \Big(1+ O\Big( \frac{\ln \lam_i}{\lam_i^2}\Big)\Big) w\,\d v_g + O(\|w\|).\label{lem:A.7-3}
		\end{align}
		Then, 	using the Taylor expansion, along with \eqref{Phibound} and  \eqref{lem:A.7-5},    we obtain
		\begin{align}
			&\int_{U_{r_0}(\xi_i)} e^{\delta_{\lam_i,\xi_i}}\Phi _i  \cF^i_{\xi} \rg^i_{\xi} w\,\d v_g\nonumber\\
			=& \,(\cF^i_{\xi} \rg^i_{\xi})(\xi_i)\int_{B^{\xi_i}_{r_0}}  \Phi _i  w e^{\tilde\delta_{\lam_i,\xi_i}} \,\d y+ O\Big(|\nabla_g(e^{\varphi_{\xi_i}}\cF^i_{\xi} \rg^i_{\xi})(\xi_i)|\int_{B^{\xi_i}_{r_0}}|\Phi _i ||y| |w| e^{\tilde\delta_{\lam_i,\xi_i}} \,\d y \Big)\nonumber\\
			&\,+ O\Big(\int_{B^{\xi_i}_{r_0}}|\Phi _i ||y|^2 |w|  e^{\tilde\delta_{\lam_i,\xi_i}} \,\d y \Big)\nonumber\\=&\, (\cF^i_{\xi} \rg^i_{\xi})(\xi_i)\int_{B^{\xi_i}_{r_0}}  w  e^{\tilde\delta_{\lam_i,\xi_i}}\,\d y+ O((\cF^i_{\xi} \rg^i_{\xi})(\xi_i)|\al_i-1|\|w\|)\nonumber\\&\,+ O\Big(\frac{|\nabla_g(e^{\varphi_{\xi_i}}\cF^i_{\xi} \rg^i_{\xi})(\xi_i)|}{\lam_i}\|w\| \Big)+ O\Big(\frac{\ln^{\frac{3}{2}} \lam_i}{\lam_i^2} \|w\|  \Big)\nonumber\\=&\,O\Big(\Big(|\al_i-1|+ \frac{|\nabla(e^{\varphi_{\xi_i}}\cF^i_{\xi} \rg^i_{\xi})(\xi_i)|}{\lam_i}+ \frac{\ln^{\frac{3}{2} } \lam_i}{\lam_i^2}\Big)\|w\|\Big). \label{lem:A.7-4}
		\end{align} 
		Considering \eqref{varphixi} and $\rg_{\xi}^i(\xi_i)=1+O(|\al _i-1|)$, we have 
		$	\nabla(e^{\varphi_{\xi_i}}\cF^i_{\xi} \rg^i_{\xi})(\xi_i)= 	\mathfrak{f}_2(\xi_i) 	 + O(|\al _i-1|).$
		Therefore,  \eqref{lem:A.7-2} follows immediately from  \eqref{lem:A.7-3} and \eqref{lem:A.7-4}.
	\end{proof}
	\begin{lem}\label{lem:Ve^u_integral}
		Let $u=  u_0+w\in V_{\rho}^\eta(p,q,\var)$ with $\rho>0$, where  
		$u_0= \sum_{i}\al_i P\delta_{\lam_i,\xi_i}$ and $w\in E_{\lam,\xi }^{p,q}$. 
		Then, we have 
		\begin{align}
			\int_{\Si } V e^u \,\d v_g =&\,\sum_{i}  \frac{\kap( \xi_i)(\cF^i_{\xi} \rg^i_{\xi})(\xi_i)}{8(2\al_i-1)}\lam_i^{4\al_i-2} + \frac{\pi}{2}\sum_{i}    \mathfrak{f}_2(\xi_i)\lam_i^{4\al_i-3} \nonumber\\
			&\,+
			O\Big(\sum_{i} \lam_i^{4\al_i-2}\Big(
			\frac{\ln \lam_i}{\lam_i^{2}}+ |\al_i-1|^2 +\|w\|^2\Big)+1\Big),\label{lem:A.8-1}
		\end{align}
		where $\mathfrak{f}_2$ is defined in  \eqref{def:frak_f2}.	Furthermore, if $\sum_{i}|\al_i-1|\ln \lam_i=o(1)$,  then \eqref{lem:A.8-1} can be refined as follows:
		\begin{align}
			\int_{\Si } V e^u \,\d v_g =&\,\sum_{i}  \frac{\kap( \xi_i)(\cF^i_{\xi} \rg^i_{\xi})(\xi_i)}{8(2\al_i-1)}\lam_i^{4\al_i-2} \Big(1+\sum_j \frac{\kap(\xi_j)}{2|\Si|_g}\frac{\ln \lam_j}{\lam_j^2}\Big)\nonumber \\
			&\,+\frac{\pi}{2} \sum_{i}  \mathfrak{f}_2(\xi_i)\lam_i^{4\al_i-3}+\sum_{i}
			\frac{\kap( \xi_i)}{{16}} \mathfrak{f}_3(\xi_i)\ln  \lam_i\nonumber \\
			&\,+O\Big(\sum_{i} \lam_i^{2}(
			|\al_i-1|^2 +\|w\|^2)+1\Big),\label{lem:A.8-2}
		\end{align}
		where  
		\begin{align}
			\mathfrak{f}_3(x):=&\,
			\Delta_g (\cF^i_{\xi}\rg^i_{\xi}e^{{\varphi}_{\xi_i}})(x)\nonumber\\=&\,
			\left\{	\begin{aligned}
				&			(	\Delta_g(\cF^i_{\xi}\rg^i_{\xi})
				- 2K_g \cF^i_{\xi}\rg^i_{\xi})\Big|_{x} && \text{ if }\,x\in \intS,\\
				&	(\Delta_g(\cF^i_{\xi}\rg^i_{\xi})
				- 2K_g \cF^i_{\xi}\rg^i_{\xi} + 4 k_g  \cF^i_{\xi}\rg^i_{\xi}(\pa _{\nu_g} \ln V+k_g))\Big|_{x} && \text{ if }\,x \in \pa \Si.
			\end{aligned}	\right.\label{def:frak_f3}
		\end{align}
	\end{lem}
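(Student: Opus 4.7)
The plan is to decompose
\[
\int_{\Si} V e^u \, \d v_g = \sum_i \int_{U_{r_0}(\xi_i)} V e^{u_0} e^w \, \d v_g + \int_{\Si \setminus \cup_j U_{r_0}(\xi_j)} V e^{u_0} e^w \, \d v_g,
\]
and handle each piece separately. On the complement $\Si\setminus\cup_j U_{r_0}(\xi_j)$, Lemma \ref{lem:Ve^{u_0}_pointwise} shows that $Ve^{u_0}$ is uniformly bounded, so this term contributes $O(1)+O(\|w\|)$ after bounding $\int |w|\,\d v_g$ by $C\|w\|$. The $e^w$ factor inside each neighborhood is handled by writing $e^w = 1 + w + (e^w-1-w)$ and invoking Lemma \ref{lem:w_error_integral_estimates}; the quadratic remainder gives the $O(\lam_i^{4\al_i-2}\|w\|^2)$ term, while the linear part $\int Ve^{u_0} w\, \d v_g$ is estimated exactly as in Lemma \ref{lem:A.7} using the orthogonality \eqref{orthogonality}.

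The main work is extracting the deterministic expansion of $\int_{U_{r_0}(\xi_i)} V e^{u_0}\,\d v_g$. In isothermal coordinates around $\xi_i$, Lemma \ref{lem:Ve^{u_0}_pointwise} yields
\[
V e^{u_0} = \frac{\lam_i^{4\al_i}}{(1+\lam_i^2|y|^2)^{2\al_i}}\,\bigl(\cF^i_\xi\rg^i_\xi e^{\varphi_{\xi_i}}\bigr)(y)\Bigl(1 + \sum_j \frac{\al_j\kap(\xi_j)}{2|\Si|_g}\frac{\ln\lam_j}{\lam_j^2}+O(\sum_j \lam_j^{-2})\Bigr),
\]
and I would Taylor-expand $G_i(y):=(\cF^i_\xi\rg^i_\xi e^{\varphi_{\xi_i}})(y)$ to second order at $y=0$. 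The zeroth-order term is computed via the explicit integral
\[
\int_{B^{\xi_i}_{r_0}} \frac{\lam_i^{4\al_i}\,\d y}{(1+\lam_i^2|y|^2)^{2\al_i}} = \frac{\kap(\xi_i)}{8(2\al_i-1)}\lam_i^{4\al_i-2} + O(\lam_i^{-2+4(\al_i-1)}),
\]
where the constant $\kap(\xi_i)/8$ is $\pi/(2\al_i-1)$ for a full disk and $\pi/(2(2\al_i-1))$ for a half-disk, matching the two cases $\xi_i\in\intS$ and $\xi_i\in\pa\Si$. Using $G_i(0)=(\cF^i_\xi\rg^i_\xi)(\xi_i)$ (since $\hat\varphi_{\xi_i}(0)=0$) yields the leading $\kap(\xi_i)(\cF^i_\xi\rg^i_\xi)(\xi_i)/(8(2\al_i-1))\cdot\lam_i^{4\al_i-2}$.

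The next-order contribution comes from $\nabla G_i(0)\cdot y$. For $\xi_i\in\intS$ the bubble is radial and this term integrates to zero, which is why $\mathfrak{f}_2\equiv 0$ there. For $\xi_i\in\pa\Si$, using \eqref{varphixi} together with $\rg^i_\xi(\xi_i)=1+O(|\al_i-1|)$ gives $\nabla G_i(0)=-(\pa_{\nu_g}\ln(\cF^i_\xi\rg^i_\xi)+2k_g)(\cF^i_\xi\rg^i_\xi)(\xi_i)\,\mathbf{e}_2+O(|\al_i-1|)$, and the half-disk integral $\int_{B_R^+} y_2\,\lam_i^{4\al_i}(1+\lam_i^2|y|^2)^{-2\al_i}\,\d y = \tfrac{\pi}{2}\lam_i^{4\al_i-3}+o(\lam_i^{4\al_i-3})$ produces precisely the $\tfrac{\pi}{2}\mathfrak{f}_2(\xi_i)\lam_i^{4\al_i-3}$ term. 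This yields the basic estimate \eqref{lem:A.8-1}.

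For the refined \eqref{lem:A.8-2}, the assumption $\sum_i |\al_i-1|\ln\lam_i = o(1)$ ensures via \eqref{Phibound} that the factor $\Phi_i$ is essentially $1$, legitimizing a second-order Taylor expansion of $G_i$. Then $\int_{B^{\xi_i}_{r_0}} |y|^2\lam_i^{4\al_i}(1+\lam_i^2|y|^2)^{-2\al_i}\,\d y = \tfrac{\kap(\xi_i)}{16}\lam_i^{4\al_i-4}\ln\lam_i + O(\lam_i^{4\al_i-4})$, paired with $\Delta G_i(0)$, yields the $\mathfrak{f}_3(\xi_i)\ln\lam_i$ term. The prefactor $(1+\sum_j\tfrac{\kap(\xi_j)}{2|\Si|_g}\tfrac{\ln\lam_j}{\lam_j^2})$ transfers directly from the Lemma \ref{lem:Ve^{u_0}_pointwise} expansion. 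The main technical subtlety I expect is correctly identifying $\mathfrak{f}_3$ on the boundary: since $\Delta_g = e^{-\varphi_{\xi_i}}\Delta$ at the bubble center only up to the conformal factor, one must combine $\Delta G_i(0) = \Delta(\cF^i_\xi \rg^i_\xi e^{\varphi_{\xi_i}})(0)$ with the curvature equation $-\Delta\hat\varphi_{\xi_i}(0)=2K_g(\xi_i)$ and the boundary derivative in \eqref{varphixi} to extract the extra term $4k_g\cF^i_\xi\rg^i_\xi(\pa_{\nu_g}\ln V + k_g)|_{\xi_i}$; this is the computation that distinguishes the boundary from the interior case in \eqref{def:frak_f3}.
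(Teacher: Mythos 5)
Your proposal follows essentially the same route as the paper's own proof: the same three-way split $\int Ve^{u_0} + \int Ve^{u_0}w + \int Ve^{u_0}(e^w-1-w)$, the same reliance on Lemma~\ref{lem:Ve^{u_0}_pointwise} for the pointwise expansion of $Ve^{u_0}$ on each bubble neighborhood and on the complement, Lemma~\ref{lem:A.7} together with the orthogonality conditions for the linear-in-$w$ term, Lemma~\ref{lem:w_error_integral_estimates} plus Sobolev/Moser--Trudinger for the quadratic remainder, and a Taylor expansion of $G_i(y)=(\cF^i_\xi\rg^i_\xi)(y_{\xi_i}^{-1}(y))e^{\hat\varphi_{\xi_i}(y)}$ at $y=0$ paired with the explicit bubble integrals. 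Your identification of the source of $\mathfrak{f}_2$ (the $y_2$-linear term on a half-disk, with $\nabla\hat\varphi_{\xi_i}(0)=(0,-2k_g(\xi_i))$ producing the $2k_g$ contribution and the flat integral vanishing in the interior by radial symmetry) and of $\mathfrak{f}_3$ (the Laplacian of $G_i$ at $0$, expanded via $-\Delta\hat\varphi_{\xi_i}(0)=2K_g(\xi_i)$ and the boundary derivative) matches the paper. One small numerical slip: the bare moment integral is $\int_{B^{\xi_i}_{r_0}}|y|^2\lam_i^{4}(1+\lam_i^2|y|^2)^{-2}\,\d y=\tfrac{\kap(\xi_i)}{4}\ln\lam_i+O(1)$ (for $\al_i=1$), not $\tfrac{\kap(\xi_i)}{16}$; the extra $\tfrac14$ that brings you down to the $\tfrac{\kap(\xi_i)}{16}\mathfrak{f}_3(\xi_i)\ln\lam_i$ term in \eqref{lem:A.8-2} comes from the coefficient $\tfrac12\cdot\tfrac12$ in the second-order Taylor expansion (half from Taylor's formula, half from replacing $y_k^2$ by $\tfrac12|y|^2$). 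Otherwise the argument is sound.
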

	\begin{proof}
		We  decompose the integral	$\int_{\Si } V e^u \,\d v_g$ into the following parts:
		\[	\int_{\Si } V e^u \,\d v_g=	\int_{\Si } V e^{u_0}\,\d v_g+ \int_{\Si } V e^{u_0} w\,\d v_g+ \int_{\Si } V e^{u_0}(e^w -1-w)\,\d v_g
		:=I_1+I_2+I_3. \]
		We proceed to estimate $I_1$, $I_2$ and $I_3$ in the following steps.

		\textbf{Step 1: Estimate of  $I_1$.}  	By Lemma \ref{lem:Ve^{u_0}_pointwise}, we have 
		\begin{align}
			I_1=& \,\sum_{i} \int_{U_{r_0}(\xi_i)} V e^{u_0} \,\d v_g + \int_{\Si \setminus \cup_{j} U_{r_0}(\xi_j)}V e^{u_0} \,\d v_g \nonumber\\
			=&\,\sum_{i} \int_{U_{r_0}(\xi_i)} \frac{\lam_i^{4\al_i}\cF^i_{\xi} \rg^i_{\xi}}{ ( 1+\lam_i^2|y_{\xi_i}|^2)^{2\al_i}}\Big( 1+O\Big(\frac{\ln \lam_i}{\lam_i^2}\Big)\Big) \,\d v_g  + O(1).\label{lem:A.8-3}
		\end{align}
		Using the Taylor expansion and \eqref{varphixi}, we know that
		\begin{align}
			\int_{U_{r_0}(\xi_i)} \frac{\lam_i^{4\al_i}\cF^i_{\xi} \rg^i_{\xi}}{ ( 1+\lam_i^2|y|^2)^{2\al_i}} \,\d v_g=&\, (\cF^i_{\xi} \rg^i_{\xi})(\xi_i) \int_{B^{\xi_i}_{r_0}} \frac {\lam_i^{4\al_i}} { ( 1+\lam_i^2|y|^2)^{2\al_i}}\,\d y\nonumber \\&\,+\int_{B^{\xi_i}_{r_0}} \frac {\lam_i^{4\al_i}} { ( 1+\lam_i^2|y_{\xi_i}|^2)^{2\al_i}}\nabla_y ((\cF^i_{\xi}\rg^i_{\xi})(y_{\xi_i}^{-1}(y))e^{\hat\varphi_{\xi_i}})\Big|_{y=0}\cdot y \,\d y  \nonumber\\&\,+O\Big( \int_{B^{\xi_i}_{r_0}} \frac {\lam_i^{4\al_i}|y|^2} { ( 1+\lam_i^2|y|^2)^{2\al_i}}\,\d y\Big)\nonumber\\
			=& \,\frac{\kap( \xi_i)(\cF^i_{\xi} \rg^i_{\xi})(\xi_i)}{8(2\al_i-1)}\lam_i^{4\al_i-2}+B\Big(\frac{3}{2}, 2 \al_i-\frac{3}{2}\Big) \mathfrak{f}_2(\xi_i) \lam_i^{4 \al_i-3}\nonumber\\&\,+O\Big(\lam_i^{4 \al_i-2}\Big(\frac{\ln \lam_i}{\lam_i^2}+\frac{|\al_i-1|}{\lam_i^{3 / 2}}\Big)+1\Big),\label{eq:est_Fg_0rd}
		\end{align}
		where we used the estimates
		\begin{align*}
			\int_{B^{\xi_i}_{r_0}} \frac{\lam_i^{4\al_i} }{(1+\lam_{i}^2|y|^2)^{2\al_j}}  \, \d y	=&\,\frac{\kap(\xi_i)\lam_i^{4\al_i-2}}{8(2\al_i-1)}+O(1),\\
			\int_{B^{\xi_i}_{r_0}}\frac{ \lam_i^{4\al_i} y_2  }{(1+\lam_{i}^2|y_{\xi_i}|^2)^{2\al_i}}  \, \d y=&\,\lam_i^{4\al_i-3}\mathfrak{f}_2(\xi_i) \Big(B\Big(\frac{3}{2},2\al_i-\frac{3}{2}\Big)+O\Big(\frac{1}{\lam_i^{4\al_i-3}}\Big)\Big),
		\end{align*}
		and by using \eqref{Phibound},
		\[	\int_{B^{\xi_i}_{r_0}} \frac {\lam_i^{4\al_i}|y|^2} { ( 1+\lam_i^2|y|^2)^{2}}\,\d y+\int_{B^{\xi_i}_{r_0}} \frac {\lam_i^{4\al_i}|y|^2} { ( 1+\lam_i^2|y|^2)^{2}}(\Phi_i-1)\,\d y=O\Big(\lam_i^{4\al_i-2}\Big(\frac{\ln \lam_i}{\lam_i^2}+\frac{|\al_i-1|}{\lam_i^{3/2}}\Big)\Big).
		\]	
		By combining these estimates, we obtain 
		\begin{align}
			I_1= &\,\sum_{i} \frac{\kap( \xi_i)(\cF^i_{\xi} \rg^i_{\xi})(\xi_i)}{8(2\al_i-1)}\lam_i^{4\al_i-2} +\sum_{i} \lam_i^{4\al_i-3}B\Big(\frac{3}{2},2\al_i-\frac{3}{2}\Big)\mathfrak{f}_2(\xi_i) \nonumber\\&\, + O\Big(\sum_i \lam_i^{4\al_i-2}\Big(\frac{\ln \lam_i}{\lam_i^2}+|\al _i-1|
			\lam_i^{-\frac 3 2}\Big)\Big)+O(1).	\label{lem:A.8-I1}
		\end{align}
		
		\textbf{Step 2: Estimate of  $I_2$.} By Lemma \ref{lem:A.7} and the Cauchy-Schwarz inequality, we have
		\begin{equation}\label{lem:A.8-I2}
			I_2 = O\Big(\sum_{i} \lam_i^{4\al_i-2} \Big(|\al_i-1|^2 + \frac{1}{\lam_i^2} +\|w\|^2\Big)\Big)  + O(\|w\|).
		\end{equation}
		
		\textbf{Step 3: Estimate of  $I_3$.}
		By Lemma   \ref{lem:Ve^{u_0}_pointwise}, \eqref{def:Phi} and  the fact $|e^s - s - 1| = O(s^2)$ for $s \in \R$, we obtain
		\begin{align*}
			I_3 =&\,O\Big(\sum_{j} \lam_j^{4\al_j-2}\int_{U_{r_0}(\xi_j)}  e^{\delta_{j} }\Phi_j  |e^w - w- 1| \,\d v_g\Big) +O\Big(\int_{\Si\setminus\cup_{j} U_{r_0}(\xi_j)  }|e^w - w- 1| \,\d v_g\Big)\\=&\,O\Big(\sum_{j} \lam_j^{4\al_j-2}\int_{U_{r_0}(\xi_j)}  e^{\delta_{j} }\Phi_j  |e^w - w- 1| \,\d v_g\Big) +O(\|w\|^2),
		\end{align*}
		where we used the Sobolev embedding theorem in the last equality.
		In addition, Lemma \ref{lem:w_error_integral_estimates} yields that 
		\[
		\int_{U_{r_0}(\xi_j)}  e^{\delta_{j} }\Phi_j  |e^w - w- 1| \,\d v_g\leq C \Big(\int_{U_{r_0}(\xi_j)}e^{\delta_{j} }\Phi_j^2\, \d v_g\Big)^{1/2}\|w\|^2=O(\|w\|^2).
		\]
		By	combining the above estimates in this step,  we   obtain 
		\begin{equation}\label{lem:A.8-I3}
			I_3  = O\Big(\sum_{i} \lam_i^{4\al_i-2} \|w\|^2\Big).
		\end{equation}

		Observe that for $i=1,\ldots,p+q$, it holds $
		B(\frac{3}{2},2\al_i-\frac{3}{2})=B(\frac{3}{2},\frac{1}{2})+O(|\al_i-1|)=\frac{\pi}{2}+O(|\al_i-1|).
		$
		By combining the estimates \eqref{lem:A.8-I1}-\eqref{lem:A.8-I3}, we prove the validity of  \eqref{lem:A.8-1}. To establish \eqref{lem:A.8-2}, we need a refined estimate of \eqref{lem:A.8-3}.
		
		By Taylor expansion and   \eqref{varphixi}, we have
		\begin{align*}
			\int_{U_{r_0}(\xi_i)} \frac{\lam_i^{4\al_i}\cF^i_{\xi} \rg^i_{\xi}}{ ( 1+\lam_i^2|y_{\xi_i}|^2)^{2\al_i}} \,\d v_g
			=&\, (\cF^i_{\xi} \rg^i_{\xi})(\xi_i) \int_{B^{\xi_i}_{r_0}} \frac {\lam_i^{4\al_i}} { ( 1+\lam_i^2|y_{\xi_i}|^2)^{2\al_i}}\,\d y\\&\, +\int_{B^{\xi_i}_{r_0}} \frac {\lam_i^{4\al_i}} { ( 1+\lam_i^2|y|^2)^{2\al_i}} \nabla_y ((\cF^i_{\xi}\rg^i_{\xi})(y_{\xi_i}^{-1}(y))e^{\hat{\varphi}_{\xi_i}})\Big|_{y=0} \cdot y \,\d y \\
			&\, +\int_{B^{\xi_i}_{r_0}}\frac{\lam_i^{4\al_i}}{4}\frac {\Delta( (\cF^i_{\xi}\rg^i_{\xi})(y_{\xi_i}^{-1}(y)) e^{\hat{\varphi}_{\xi_i}})\Big|_{y=0}|y|^2} { ( 1+\lam_i^2|y|^2)^{2\al_i}} \,\d y\\&\,+O\Big( \int_{B^{\xi_i}_{r_0}} \frac {\lam_i^{4\al_i}|y|^3} { ( 1+\lam_i^2|y|^2)^{2\al_i}}\,\d y\Big)\\
			=&\,\frac{\kap( \xi_i)(\cF^i_{\xi}\rg^i_{\xi})(\xi_i) }{8(2\al_i-1)}\lam_i^{4\al_i-2} +   B\Big(\frac{3}{2}, 2\al_i- \frac{3}{2} \Big)   \mathfrak{f}_2(\xi_i)\lam_i^{4\al_i-3} \\&\, + \frac{\kap(\xi_i)}{16}\mathfrak{f}_3(\xi_i)\ln \lam_i +O(1),
		\end{align*}
		where we used the estimates
		\[	
		\int_{B^{\xi_i}_{r_0}} \frac { \lam_i^{4\al_i} |y|^2} { ( 1+\lam_i^2|y|^2)^{2\al_i}}\,\d y=
		\left\{\begin{aligned}
			&	\frac{\kap ( \xi_i)}{4} \ln  \lam_i+ O(1) && \text{ if }\, \al _i=1,\\
			&	O(1) && \text{ if }\,\al _i\neq 1,
		\end{aligned}\right.
		\]
		and $
		\int_{B^{\xi_i}_{r_0}} \frac {\lam_i^{4\al_i}|y|^3} { ( 1+\lam_i^2|y|^2)^{2\al_i}}\,\d y=O(1).
		$ Therefore, we get
		\begin{align}
			I_1=&   \,\sum_{i} \frac{\kap( \xi_i)(\cF^i_{\xi} \rg^i_{\xi})(\xi_i)}{8(2\al_i-1)}\lam_i^{4\al_i-2}\Big( 1+\sum_j \frac{\kap(\xi_j)}{2|\Si|_g}\frac{\ln \lam_j} {\lam_j^2}\Big)\nonumber\\&\, +\sum_{i} B\Big(\frac{3}{2},2\al_i-\frac{3}{2}\Big)\mathfrak{f}_2(\xi_i)\lam_i^{4\al_i-3}
			+\sum_{i} \frac{\kap( \xi_i)}{{16}}\mathfrak{f}_3(\xi_i)\ln \lam_i + O(1).\label{eq:B(3/2,1/2)}
		\end{align}
		By combining the above estimates for $I_1, I_2$ and $I_3$, we complete the proof of   \eqref{lem:A.8-2}.
	\end{proof}

	
	\section{Minimization problems}\label{app:B}
	In this appendix, we provide the proof of Proposition  \ref{prop:unique}, following the arguments presented in \cite{BC1,BC2}.
	
	Recall that $V_{\rho_*}^\eta(p,q,\var)$ is defined in \eqref{def:Vm} with $\rho=\rho_*$, and $\tilde{B}^\eta_{\var}$ is given by \eqref{def:ballB_var^eta}. In the following, we   assume that  $p,q\in \N_0$ with  $m:=2p+q\in\N$, $\rho_*:=4\pi m$, and $\eta>0$ is a fixed constant,  while $\var>0$ is  sufficiently small.
	%

		\begin{lem}\label{lem:pre_unique}
			Let $\{\var_k\}$ be a sequence satisfying  $\var_k>0$ and $\lim_{k\to+\infty}\var_k=0$. Assume that  $(\al^k, \xi^k, \lam^k), (\tilde{\al}^k, \tilde{\xi}^k, \tilde{\lam}^k) \in \tilde{B}^\eta_{\var_k}$, such that	
			\begin{equation}
				\label{eq:asss_0}
				\lim_{k\to+\infty}	 \|\psi(\al^k,\xi^k, \lam^k) -\psi(\tilde{\al}^k,\tilde{\xi
				}^k, \tilde{\lam}^k)\|=  0 .
			\end{equation}
			Then, up to a permutation of  $(\tilde{\al}^k,\tilde{\xi
			}^k, \tilde{\lam} ^k)$, we have 
			\begin{gather}
				\lim_{k\to+\infty}	\Big| \frac{\lam^k_i}{\tilde{\lam}   ^k_i}-1\Big|=0, \quad\forall i=1,\ldots,p+q,\label{eq:delta_eqs_1}\\
				\lim_{k\to+\infty}	\lam^k_i\tilde{\lam}^k_i d_g^2(\xi^k_i, \tilde{\xi}^k_i)  =0,\quad\forall i=1,\ldots,p+q, \label{eq:delta_xi_eqs_0}\\
				\lim_{k\to+\infty}	|\al^k_i-\tilde{\al}^k_i|^2\ln \lam_i^k=0,\quad\forall i=1,\ldots,p+q.\label{eq:alpha_eqs_0}
			\end{gather}
		\end{lem}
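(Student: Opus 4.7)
The plan is to follow the Bahri--Coron scheme \cite{BC1,BC2}: the map $(\al,\xi,\lam)\mapsto \psi(\al,\xi,\lam)$ behaves as an approximate immersion into $\oH$, so two nearly equal preimages must agree in every parameter. I set $\omega_k := \psi(\al^k, \xi^k, \lam^k) - \psi(\tilde\al^k, \tilde\xi^k, \tilde\lam^k)$, so that $\|\omega_k\|\to 0$ by \eqref{eq:asss_0}. Up to a subsequence I may assume $\xi^k_i \to \xi^*_i$, $\tilde\xi^k_i \to \tilde\xi^*_i$, $\al^k_i \to \al^*_i > 0$, $\tilde\al^k_i \to \tilde\al^*_i > 0$ (the positivity being uniform via $|\al_i - 1| < 1/\eta$ with $\eta$ large), and $\lam^k_i, \tilde\lam^k_i \to +\infty$; the $\eta$-separation condition in $\tilde B^\eta_{\var_k}$ forces $\{\xi^*_i\}_{i}$ and $\{\tilde\xi^*_i\}_i$ to be pairwise distinct collections of $p+q$ points. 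All three conclusions will be extracted by testing $\omega_k$ against suitable bubble-derived functions, using the inner-product expansions in Lemma~\ref{lem:projbubble_integral_interaction} and Lemma~\ref{lem:projbubble_derivative_integral}.

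The first move is to establish the permutation. I will test $\omega_k$ against $\phi_i := P\delta_{\lam^k_i, \xi^k_i}/\sqrt{\ln\lam^k_i}$, which by \eqref{lem:projbubble_integral_interaction-1} has uniformly bounded $\oH$-norm. The dominant term in $\langle \psi(\al^k, \xi^k, \lam^k), \phi_i\rangle$ is $4\al^*_i\kap(\xi^*_i)\sqrt{\ln\lam^k_i}$, whereas the $\tilde\psi$-contribution is $O(1/\sqrt{\ln\lam^k_i})$ unless some $\tilde\xi^*_j$ coincides with $\xi^*_i$, since by \eqref{lem:projbubble_integral_interaction-2} the inner product of two bubbles with distinct limit points is bounded by $\kap(\xi^*_i)\kap(\tilde\xi^*_j) G^g(\xi^*_i,\tilde\xi^*_j) + o(1)$. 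Because $\|\omega_k\|\to 0$ forces $\langle\omega_k,\phi_i\rangle\to 0$ and $\al^*_i > 0$, there must exist a (necessarily unique, by the $\eta$-separation of $\{\tilde\xi^*_j\}$) index $\sigma(i)$ with $\tilde\xi^*_{\sigma(i)} = \xi^*_i$. Relabelling, I may assume $\sigma = \mathrm{id}$, so $\tilde\xi^*_i = \xi^*_i$.

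To quantify the pairing, I first need a refinement of Lemma~\ref{lem:projbubble_integral_interaction} allowing both centers to cluster at the same limit. Introducing the shape functional $\Theta^k_i := \lam^k_i/\tilde\lam^k_i + \tilde\lam^k_i/\lam^k_i + \lam^k_i\tilde\lam^k_i\,d_g^2(\xi^k_i, \tilde\xi^k_i) \ge 2$ (with equality iff $\lam^k_i = \tilde\lam^k_i$ and $\xi^k_i = \tilde\xi^k_i$), the target identity is
\[
\langle P\delta_{\lam^k_i, \xi^k_i}, P\delta_{\tilde\lam^k_i, \tilde\xi^k_i}\rangle = 2\kap(\xi^*_i)\bigl(\ln\lam^k_i + \ln\tilde\lam^k_i - \ln\Theta^k_i\bigr) + O(1).
\]
Once this is in hand, three tests will yield the three conclusions: (a) testing against $\phi_i$ as above now gives $4\kap(\xi^*_i)(\al^k_i - \tilde\al^k_i)\sqrt{\ln\lam^k_i} + O(\ln\Theta^k_i/\sqrt{\ln\lam^k_i}) \to 0$, hence $(\al^k_i - \tilde\al^k_i)^2\ln\lam^k_i \to 0$, which is \eqref{eq:alpha_eqs_0}; (b) testing against $\lam^k_i\,\partial_{\lam^k_i} P\delta_{\lam^k_i,\xi^k_i}$, of bounded norm by Lemma~\ref{lem:projbubble_derivative_integral}, and computing $\partial_{\lam}\langle P\delta_{\lam, \xi}, P\delta_{\tilde\lam, \tilde\xi}\rangle$ from the formula above yields the relation $\al^*_i \simeq 2\tilde\al^*_i\tilde\lam^k_i/(\lam^k_i\Theta^k_i)$; the symmetric test with $\tilde\lam^k_i\,\partial_{\tilde\lam^k_i}P\delta_{\tilde\lam^k_i,\tilde\xi^k_i}$ gives $\tilde\al^*_i \simeq 2\al^*_i\lam^k_i/(\tilde\lam^k_i\Theta^k_i)$; multiplying these forces $\Theta^k_i \to 2$, which yields both \eqref{eq:delta_eqs_1} (from the equality case of the AM--GM bound $\lam/\tilde\lam + \tilde\lam/\lam \ge 2$) and \eqref{eq:delta_xi_eqs_0} (from the remaining term in $\Theta^k_i$).

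The hard part will be the clustering-regime interaction formula displayed above, which is not directly stated in the excerpt. Its derivation is classical in the Bahri framework \cite{Bahri:1989, BC2}: transplant both bubbles to the isothermal coordinate $(y_{\xi^*_i}, U(\xi^*_i))$, where $P\delta_{\lam^k_i,\xi^k_i}$ becomes, modulo a uniformly bounded regular remainder, the standard Euclidean Liouville bubble $\tilde\delta_{\lam^k_i, y_{\xi^*_i}(\xi^k_i)}$ on $\R^2$ (interior case) or $\R_+^2$ (boundary case), and compute the resulting Euclidean inner product explicitly via $\int \nabla \tilde\delta \cdot \nabla \tilde\delta' = \int e^{\tilde\delta}\tilde\delta'$. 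The implementation requires careful bookkeeping of (i) the overlap between the cut-offs $\chi_{\xi^k_i}$ and $\chi_{\tilde\xi^k_i}$ appearing in \eqref{projectedbubble}, (ii) the uniform treatment of the interior and boundary cases with the correct value of $\kap(\xi^*_i)\in\{4\pi, 8\pi\}$, and (iii) the regular parts $\kap H^g(\cdot,\xi^k_i)$, which remain uniformly bounded in a fixed neighborhood of $\xi^*_i$.
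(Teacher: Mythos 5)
Your scheme is genuinely different from the paper's. The paper never tests the difference $\omega_k = \psi(\al^k,\xi^k,\lam^k) - \psi(\tilde\al^k,\tilde\xi^k,\tilde\lam^k)$ against any functional; instead it expands $\|\omega_k\|^2$ directly using a clustering-regime interaction formula for $\langle P\delta_{\lam_i,\xi_i},P\delta_{\lam_j,\xi_j}\rangle$ with $d_g(\xi_i,\xi_j)\to 0$ (the paper's version involves the integral $\int_{\R_{\xi_i}}\frac{\ln(1+\lam_i^2/(\lam_j^2|y|^2))}{(1+|y|^2)^2}\,\d y$, which is an equivalent packaging of your $\Theta$-formula). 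It then rewrites that integral so that, after cancellations, the right-hand side of the expansion decomposes into converging constants plus three manifestly non-negative quantities (one integral term with non-negative integrand for $c\le 1$, plus $4\tilde\al\al\ln(1+\lam^2|y_\xi(\tilde\xi)|^2)$ and $4(\tilde\al-\al)^2\ln\lam$). Since $\|\omega_k\|^2\to 0$, each of these must vanish in the limit, which yields all three conclusions simultaneously without any test functions. This is the essential difference.

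Your test-function route is plausible, but two steps as written do not hold up.

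First, and most seriously, in step (b) you propose to obtain $\langle P\delta_{\tilde\lam_i,\tilde\xi_i},\,\lam_i\partial_{\lam_i}P\delta_{\lam_i,\xi_i}\rangle$ by "computing $\partial_\lam$ from the formula above." But your interaction formula has an unstructured $O(1)$ remainder, and an $O(1)$ (or even $o(1)$) error cannot be differentiated: its $\lam$-derivative is uncontrolled. The derivative inner product has to be computed independently, e.g.\ by writing $\langle P\tilde\delta_i,\lam_i\partial_{\lam_i}P\delta_i\rangle = \int\chi_i\,(\lam_i\partial_{\lam_i}e^{\delta_i-\varphi_i})\,P\tilde\delta_i\,\d v_g$ up to averages and expanding, which is the same flavour of computation as the interaction formula but a separate one. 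Without that, the relations $\al_i^*\simeq 2\tilde\al_i^*\tilde\lam^k_i/(\lam^k_i\Theta^k_i)$ and its twin are unjustified, and the conclusion $\Theta^k_i\to 2$ does not follow. This is the gap you would have to close.

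Second, in step (a) the inference "hence $(\al^k_i-\tilde\al^k_i)^2\ln\lam^k_i\to 0$" from $4\kap(\al^k_i-\tilde\al^k_i)\sqrt{\ln\lam^k_i}+O(\ln\Theta^k_i/\sqrt{\ln\lam^k_i})\to 0$ requires $\ln\Theta^k_i=o(\sqrt{\ln\lam^k_i})$, which you have not established at that stage (it would follow from step (b), but you present (a) first). You need either to reorder, or to first extract a crude bound $\ln\Theta^k_i=O(1)$ from the norm expansion itself before running the test; once you have the interaction formula, plugging it into $\|\omega_k\|^2$ already yields this, at which point you are essentially reconstructing the paper's argument. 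You may as well go that route directly: it uses only the interaction formula (no derivative version), and the non-negativity structure gives all three conclusions at once.

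The permutation step and the derivation of the clustering interaction formula are sound; the paper indeed derives and uses exactly such a formula.
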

		\begin{proof}
			From Lemma \ref{lem:projbubble_integral_interaction}, we have
			\begin{equation}\label{eq:1} 
				\langle P\delta_{\lam_i,\xi_i}, P\delta_{\lam_i,\xi_i}\rangle 	
				=  4\kap ( \xi_i)\ln \lam_i+\kap^2(\xi_i)R^g(\xi_i)-2\kap ( \xi_i) +  o(1),
			\end{equation}
			as $\lam_i\to  +\infty$ and for any $\xi_i,\xi_j$ satisfy $d_g(\xi_i,\xi_j)\geq 2\eta$, we have 
			\begin{equation}
				\label{eq:2} 
				\langle P\delta_{\lam_i,\xi_i}, P\delta_{\lam_j,\xi_j}\rangle 	=\kap ( \xi_i)\kap ( \xi_j) G^g(\xi_j,\xi_i)+o(1),
			\end{equation}
			as $\lam_i,\lam_j\to  +\infty$. 
			Since by \eqref{Green} and \eqref{Greenrepresentation},
			\begin{equation}\label{Green2}
				P\delta_{\lam_i,\xi_i}(\xi_j)=\int_{\Si}(-\Delta_g P\delta_{\lam_i,\xi_i})G^g(x,\xi_j)\,\d v_g=\int_{\Si}\chi_{\xi_i}e^{\delta_{\lam_i,\xi_i}-\varphi_{\xi_i}}G^g(x,\xi_j)\,\d v_g
			\end{equation}
			for all $i,j=1,\ldots,p+q$,	by Lemma \ref{lem:projbubble_pointwise},  \eqref{Greenrepresentation} and \eqref{Green2}, we have for any $d_g(\xi_i,\xi_j)\to  0$,
			\begin{align}
				\langle P\delta_{\lam_i,\xi_i}, P\delta_{\lam_j,\xi_j}\rangle =&\,	\int_{\Si }\chi_{\xi_i}  P\delta_{\lam_j,\xi_j} e^{\delta_{\lam_i,\xi_i}-\varphi_{\xi_i}}  \,\d v_g \nonumber\\
				=&\,\int_{\Si } \chi_{\xi_i} e^{-\varphi_{\xi_i}} \frac{8\lam_i^2}{ (1+\lam_i^2|y_{\xi_i}|^2)^2 }  \Big(2\chi_{\xi_j}\ln  \Big(\frac { \lam_j^2}{ 1+\lam_j^2|y_{\xi_j}|^2 }\Big) +\kap ( \xi_j) H^g(x,\xi_j) +o(1) \Big) \,\d v_g\nonumber\\
				=&\,\int_{\Si } \chi_{\xi_i} e^{-\varphi_{\xi_i}} \frac{8\lam_i^2}{ (1+\lam_i^2|y_{\xi_i}|^2)^2 }  \Big(2\chi_{\xi_j}\ln  \Big(\frac { \lam_j^2|y_{\xi_j}|^2}{ 1+\lam_j^2|y_{\xi_j}|^2 }\Big) +\kap ( \xi_j) G^g(x,\xi_j) +o(1) \Big) \,\d v_g\nonumber\\
				=&\, 
				-2\int_{B^{\xi_i}_{2r_0}}\chi^2\Big(\frac{|y|}{r_0}\Big) \frac{8\lam_i^2\ln\Big( 1+\frac{1}{\lam_j^2|y|^2}\Big)}{(1+\lam_i^2|y|^2)^2} \, \d y + \kap ( \xi_j)P\delta_{\lam_i,\xi_i}(\xi_j)+o(1)\Big)\nonumber\\
				=&\,-16 \int_{\R_{\xi_i}} \frac{\ln \Big(1+ \frac{\lam^2_i}{\lam_j^2|y|^2}\Big)}{(1+|y|^2)^2} \, \d y  -2\kap ( \xi_j)\ln  \Big( \frac{1}{\lam_i^{2}}+ |y_{\xi_i}(\xi_j)|^2\Big)\nonumber\\&\,+
				\kap ( \xi_i)\kap ( \xi_j) H^g(\xi_i,\xi_j)+ o(1),\label{eq:4}
			\end{align}
			as $\lam_i,\lam_j\to  +\infty$,
			where $\R_{\xi_i}= \R^2$ if $\xi_i\in\intS$, and $\R_{\xi_i}=\R^2_+$ if $\xi_i\in \pa\Si $. 
			
			\textbf{Case I.}  $p+q=1$. It is evident that $d_g(\xi^k_1, \tilde{\xi}^k_1)\to  0$ as $k\to+\infty$; otherwise, by \eqref{eq:1} and \eqref{eq:2}, we have 
			\[\|\psi(\al^k,\xi^k, \lam^k) -\psi(\tilde{\al}^k,\tilde{\xi
			}^k,\tilde{\lam}^k)\|= O(\ln \lam^k_1+ \ln\tilde{\lam}^k_1)\to  +\infty, \] which contradicts to the assumption \eqref{eq:asss_0}. 
			
			Without loss of generality, we assume that  $\frac{\lam    ^k_1}{\tilde{\lam}^k_1}:=c_k\leq  1$.  One obtains from  \eqref{eq:1} and \eqref{eq:4} that 
			\begin{align}
				&\frac 1{\kap ( \xi^k_1)}	\| \al^k_1 P\delta_{\lam^k_1,\xi^k_1}-\tilde{\al}^k_1 P\delta_{\tilde{\lam}   ^k_1,\tilde{\xi}^k_1}  \|^2\notag\\=&\,  -2(\al^k_1)^2  -2(\tilde{\al}^k_1)^2 +\kap(\xi_1^k)((\al_1^k)^2R^g(\xi_1^k) +(\tilde{\al}^k_1)^2 R^g(\tilde{\xi}^k_1)-2 \al_1^k \tilde{\al}^k_1 H^g(\xi^k_1,\tilde{\xi}^k_1)) \notag\\
				&\,+  \frac {4\tilde{\al}^k_1 \al^k_1} {\pi}\int_{\R^2} \frac{ \ln  ( 1+\frac { c_k^2 } {|y|^2})}{(1+|y|^2)^2} \, \d y+4\tilde{\al}^k_1 \al^k_1\ln  (1+({\lam }^k_1)^2 |y_{{\xi}^k_1}(\tilde{\xi}^k_1)|^2)+ 4(\tilde{\al}^k_1 -\al^k_1)^2 \ln  \lam^k_1\notag\\
				&\, +4(\tilde{\al}^k_1)^2 \ln \frac 1{ c_k}+o(1), \label{eq:3}
			\end{align} 
			as $ k\to  +\infty$.
			Using the assumption \eqref{eq:asss_0}, we obtain from \eqref{eq:3} that 
			\begin{gather*}
				|\tilde{\al}^k_1 -\al^k_1|^2= O\Big(\frac 1 {\ln  \lam^k_1}\Big)=o(1), \\
				\frac{\tilde{\lam}  ^k_1}{\lam^k_1}+ \frac{\lam^k_1}{\tilde{\lam}  ^k_1}+ {\tilde{\lam}  ^k_1}{\lam^k_1}d^2_g( \tilde{\xi}^k_1,\xi^k_1)  =O(1),
			\end{gather*}
			as $k\to +\infty$.
			We observe that 
			\begin{align*}
				\frac {4\tilde{\al}^k_1 \al^k_1} {\pi}\int_{\R^2} \frac{ \ln  ( 1+\frac { c_k^2 } {|y|^2})}{(1+|y|^2)^2} \, \d y =& \,\frac {4\tilde{\al}^k_1 \al^k_1} {\pi}\int_{\R^2} \Big(\frac{ \ln  ( 1+\frac { 1 } {|y|^2})}{(1+|y|^2)^2}+\frac{ \ln  ( 1+\frac { c_k-1 } {1+|y|^2})}{(1+|y|^2)^2} \Big)\, \d y \\
				=&\, 4\tilde{\al}^k_1 \al^k_1+ \frac {4\tilde{\al}^k_1 \al^k_1} {\pi}\int_{\R^2} \frac{ \ln  ( 1+\frac { c_k-1 } {1+|y|^2})}{(1+|y|^2)^2} \, \d y\\
				=&\, 4\tilde{\al}^k_1 \al^k_1+  4\tilde{\al}^k_1 \al^k_1\ln  c_k + \frac {4\tilde{\al}^k_1 \al^k_1} {\pi}\int_{\R^2} \frac{ \ln  ( \frac{1}{c_k}(1-\frac 1 {1+|y|^2})+\frac { 1 } {1+|y|^2})}{(1+|y|^2)^2} \, \d y.
			\end{align*}
			Passing  the limit $k\to +\infty$ in \eqref{eq:3}, and assuming, up to a subsequence, that $c:=\lim_{k\to +\infty} c_k$,  we obtain
			\begin{align}
				0=& \,\liminf_{k\to +\infty}\frac {4\tilde{\al}^k_1 \al^k_1} {\pi}\int_{\R^2} \frac{ \ln  ( \frac 1 {c}(1-\frac 1 {1+|y|^2})+\frac { 1 } {1+|y|^2})}{(1+|y|^2)^2} \, \d y+\liminf_{k\to +\infty} 4\tilde{\al}^k_1 \al^k_1\ln  (1+({\lam }^k_1)^2 |y_{{\xi}^k_1}(\tilde{\xi}^k_1)|^2)\notag\\
				&\,+\liminf_{k\to +\infty} 4(\tilde{\al}^k_1 -\al^k_1)^2 \ln  \lam^k_1.\label{eq:lim_+}
			\end{align}  
			Since each term on the right-hand side of \eqref{eq:lim_+} is non-negative,the following estimates hold as $k\to+\infty$:
			\[
			\frac{\tilde{\lam}^k_1}{\lam^k_1}-1=o(1), \quad	|\tilde{\al}^k_1 -\al^k_1|^2\ln  \lam^k_1= o(1),\quad
			\text{and} \quad {\tilde{\lam} ^k_1}{\lam^k_1}  d^2_g( \tilde{\xi}^k_1,\xi^k_i)=o(1). \] 
			This establishes Lemma \ref{lem:pre_unique} in the case 
			$p+q=1$.
			
			\textbf{Case II.}	$p+q\geq2$. Up to a subsequence and a permutation, we may assume that
			$d_g(\xi^k_l,\tilde{\xi}^k_l)=\min_{1\leq i\leq p+q} d_g(\xi^k_l,\tilde{\xi}^k_i)$ for all $l=1,\ldots,p+q$.   For any $i\neq j$,
			\[	
			\max\{\langle P\delta_{\lam^k_i,\xi^k_i}, P\delta_{\lam^k_j,\xi^k_j} \rangle ,\langle P\delta_{\tilde{\lam} ^k_i,\tilde{\xi}^k_i}, P\delta_{\tilde{\lam}^k_j,\tilde{\xi}^k_j}\rangle, \langle P\delta_{\lam^k_i,\xi^k_i}, P\delta_{\tilde{\lam} ^k_j,\tilde{\xi}^k_j}\rangle \}=O(1),
			\]
			in view of $\max\{d_g(\xi^k_i,\xi^k_j),d_g(\tilde{\xi}^k_i,\tilde{\xi}^k_j)\}\geq 2 \eta $. Then, as $k\to +\infty$,
			\[	
			o(1)=	\|\psi(\al^k,\xi^k, \lam^k) -\psi(\tilde{\al}^k,\tilde{\xi
			}^k,\tilde{\lam}^k)\|^2
			= \sum_{i}  \|{\al}^k_iP\delta_{\lam^k_i, {\xi}^k_i}- \tilde{\al}^k_iP\delta_{\tilde{\lam} ^k_i, \tilde{\xi}^k_i} \|^2+O(1).
			\]
			Applying the same reasoning as in the case $p+q=1$, we conclude that 
			\begin{gather*}
				|\tilde{\al}^k_i -\al^k_i|^2\ln  \lam^k_i= o( 1), \quad i=1,\ldots,p+q, \\
				\frac{\tilde{\lam}  ^k_i}{\lam^k_i}+ \frac{\lam^k_i}{\tilde{\lam}  ^k_i}+ {\tilde{\lam}  ^k_i}{\lam^k_i}d^2_g( \tilde{\xi}^k_i,\xi^k_i)  =O(1), \quad i=1,\ldots,p+q.
			\end{gather*}
			Moreover, by  \eqref{eq:2}, for $i\neq j$, we have
			\begin{align*}
				& \, \al^k_i \al^k_j \langle P\delta_{\lam^k_i,\xi^k_i}, P\delta_{\lam^k_j,\xi^k_j}\rangle +\tilde{\al}^k_i \tilde{\al}^k_j \langle P\delta_{\tilde{\lam}  ^k_i,\tilde{\xi}^k_i}, P\delta_{\tilde{\lam}   ^k_j,\tilde{\xi}^k_j}\rangle   \\
				&\,  - \tilde{\al}^k_i \al^k_j \langle P\delta_{\tilde{\lam}  ^k_i,\tilde{\xi}^k_i}, P\delta_{\lam^k_j,\xi^k_j}\rangle- \tilde{\al}^k_j \al^k_i \langle P\delta_{\lam^k_i,\xi^k_i},  P\delta_{\tilde{\lam}   ^k_j,\tilde{\xi}^k_j}\rangle  \\
				=&\, \al^k_i \al^k_j \kap ( \xi_i^k)\kap ( \xi^k_j)G^g(\xi^k_i,\xi^k_j)+ \tilde{\al}^k_i \tilde{\al}^k_j \kap ( \tilde{\xi}_i^k)\kap ( \tilde{\xi}^k_j) G^g(\tilde{\xi}^k_i,\tilde{\xi}^k_j)-  \tilde{\al}^k_i \al^k_j\kap ( \tilde{\xi}_i^k)\kap ( \xi^k_j)G^g(\tilde{\xi}^k_i,{\xi}^k_j)\\
				&\,-\tilde{\al}^k_j \al^k_i  \kap ( \xi_i^k)\kap ( \tilde{\xi}^k_j) G^g({\xi}^k_i,\tilde{\xi}^k_j) +o(1)\\
				\,\to & 0,
			\end{align*}
			which implies that $\sum_{i}  \|  {\al}^k_iP\delta_{\lam^k_i, {\xi}^k_i}-\tilde{\al}^k_iP\delta_{\tilde{\lam} ^k_i, \tilde{\xi}^k_i}\|^2=o(1)$. Applying the conclusion for $p+q=1$, we deduce Lemma \ref{lem:pre_unique} holds for $p+q\geq 2$. 
		\end{proof}
		\begin{lem}\label{lem:small_2vare}
			Let $\eta'>\eta$ be a fixed constant. Then there exists $\var_0 > 0$ such that, for any   $u\in  V^{\eta'}_{\rho_*}(p,q,\var)$ with $\var\in(0,\var_0]$, and  for any $R\geq \var$,   the infimum
			\begin{equation}
				\label{eq:problem_inf} 	\inf_{(\al, \xi, \lam ) \in \tilde{B}^{\eta} _{4R}} \|u - \psi(\al, \xi, \lam )\|
			\end{equation}
			is achieved in $\tilde{B}^{\eta} _{2R}$ and is not achieved in $\tilde{B}^{\eta} _{4R} \setminus \tilde{B}^{\eta} _{2R}$.
		\end{lem}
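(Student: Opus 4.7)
The plan is to combine a direct compactness argument with a quantitative sharpening of Lemma~\ref{lem:pre_unique}. First, since $u\in V^{\eta'}_{\rho_*}(p,q,\var)$ with $\eta<\eta'$ and $\var\le R$, the reference point $(e,\tilde\xi,\tilde\lam)$ from the definition of $V^{\eta'}_{\rho_*}(p,q,\var)$ lies in $\tilde B^\eta_\var\subset \tilde B^\eta_{4R}$, which immediately bounds the infimum in \eqref{eq:problem_inf} by $\var\le R$. Consequently any minimizing sequence $(\al^k,\xi^k,\lam^k)\subset \tilde B^\eta_{4R}$ satisfies the triangle-inequality bound $\|\psi(\al^k,\xi^k,\lam^k)-\psi(e,\tilde\xi,\tilde\lam)\|\le 2\var+o_k(1)$, which drives all subsequent matching estimates.

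Next, I would expand the square of this difference using the interaction formulas of Lemma~\ref{lem:projbubble_integral_interaction}, mirroring the computation \eqref{eq:3} in the proof of Lemma~\ref{lem:pre_unique}. After relabelling by a permutation $\sigma$ pairing each $\xi^k_i$ with its nearest $\tilde\xi_{\sigma(i)}$, this expansion decomposes the left-hand side into a sum of \emph{non-negative} contributions: terms of the form $4(\al^k_i-1)^2\ln\lam^k_i$, terms of the form $\ln(1+(\lam^k_i)^2|y_{\xi^k_i}(\tilde\xi_{\sigma(i)})|^2)$ that control $d_g^2(\xi^k_i,\tilde\xi_{\sigma(i)})$, and Liouville interaction integrals that vanish precisely when $\lam^k_i/\tilde\lam_{\sigma(i)}=1$. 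Bounding each non-negative contribution by the total, itself of size $O(\var^2)$, produces quantitative estimates; in particular the ratio $\lam^k_i/\tilde\lam_{\sigma(i)}$ tends to $1$ uniformly in $k$, provided $\var_0$ is chosen small enough. Combined with $\tilde\lam_{\sigma(i)}>\var^{-1}\ge R^{-1}$, this forces $\lam^k_i\ge \tilde\lam_{\sigma(i)}/2 > (2R)^{-1}$ for all $k$ large.

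With the components of $\lam^k$ bounded below uniformly, and bounded above by the crude estimate $\|\psi(\al^k,\xi^k,\lam^k)\|\le \|u\|+R$ combined with the leading-order asymptotic $\|P\delta_{\lam,\xi}\|^2\sim 4\kap(\xi)\ln\lam$ from Lemma~\ref{lem:projbubble_integral_interaction}, standard compactness in the finite-dimensional parameter space supplies a convergent subsequence whose limit $(\al^*,\xi^*,\lam^*)$ lies in $\tilde B^\eta_{2R}$ and, by continuity of $\psi$, realizes the infimum. The same quantitative lower bound $\lam_i>(2R)^{-1}$ applies verbatim to any minimizer in $\tilde B^\eta_{4R}$, so no minimizer can lie in $\tilde B^\eta_{4R}\setminus \tilde B^\eta_{2R}$.

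The main obstacle is upgrading the qualitative $o(1)$ conclusions of Lemma~\ref{lem:pre_unique} to the quantitative $O(\var)$-bounds required here. The crucial ingredient is a lower bound on the auxiliary integral $\int_{\R^2}\frac{\ln(c^{-1}(1-(1+|y|^2)^{-1})+(1+|y|^2)^{-1})}{(1+|y|^2)^2}\,\d y$ appearing in \eqref{eq:lim_+}, showing it grows quadratically in $c-1$ for $c$ near $1$; this, together with the uniform validity of the expansions in Lemma~\ref{lem:projbubble_integral_interaction} on compact subsets of $\Xi_{p,q}$ on which the parameters live, yields the needed uniform quantitative matching.
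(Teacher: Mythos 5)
Your plan reaches the same conclusion as the paper's, but by a genuinely different and more laborious route, and the hard step you yourself flag is left as an assertion rather than proved.

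The paper argues by contradiction, and this is exactly what lets it sidestep the quantitative sharpening you identify as ``the main obstacle.'' Negating the lemma produces sequences $\var_k\to 0^+$, $R_k\ge\var_k$, $u^*_k\in V^{\eta'}_{\rho_*}(p,q,\var_k)$ with minimizers $(\al_k,\xi_k,\lam_k)\in\tilde B^\eta_{4R_k}\setminus\tilde B^\eta_{2R_k}$; then $\|\psi(\al_k,\xi_k,\lam_k)-\psi(e,\tilde\xi_k,\tilde\lam_k)\|=O(\var_k)\to 0$, so the \emph{sequential} (qualitative, $o(1)$) statement of Lemma~\ref{lem:pre_unique} applies verbatim: $\lam_{k,i}/\tilde\lam_{k,i}\to 1$, $|\al_{k,i}-1|\to 0$, $\lam_{k,i}\tilde\lam_{k,i}d_g^2(\xi_{k,i},\tilde\xi_{k,i})\to 0$, forcing $(\al_k,\xi_k,\lam_k)\in\tilde B^\eta_{2\var_k}\subset\tilde B^\eta_{2R_k}$ for $k$ large, a contradiction. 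No estimate beyond Lemma~\ref{lem:pre_unique} as already stated is needed.

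Because you instead fix $\var$ and argue directly with a minimizing sequence, you cannot invoke Lemma~\ref{lem:pre_unique} as written (its hypothesis requires the $\psi$-difference to vanish along a sequence, whereas you only control it by $2\var+o_k(1)$). You correctly recognize this forces $o(1)$ to be upgraded to uniform $O(\var)$ bounds, with uniform error control in the expansions of Lemma~\ref{lem:projbubble_integral_interaction} over compact subsets of $\Xi_{p,q}$. That upgrade is the content of the lemma and is not carried out; ``the crucial ingredient is a lower bound on the auxiliary integral'' is a gap, not a proof. Moreover the asserted \emph{quadratic} growth of that integral in $c-1$ is not right: setting $s=c^{-1}-1\ge 0$, the integrand is $\ln\bigl(1+s\,|y|^2/(1+|y|^2)\bigr)$, so $\frac{1}{\pi}\int_{\R^2}\frac{\ln(\cdots)}{(1+|y|^2)^2}\,\d y=\frac{s}{2}+O(s^2)$, i.e.\ \emph{linear}. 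This happens to be a stronger lower bound and so does not break the plan, but it shows the step needs to be checked rather than quoted. One thing your version does handle more explicitly than the paper is attainment of the infimum, via coercivity of $\|u-\psi\|$ in $\lam$ together with compactness of the parameter domain; the paper takes attainment for granted when it asserts the contradictory minimizer exists.
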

		\begin{proof}
			We proceed by contradiction. Suppose  that  Lemma \ref{lem:small_2vare} does not hold.
			Then there exists sequences $\var_k\to 0^+$,  $R_k\geq \var_k$, and $u^*_k\in V^{\eta'}_{\rho_*}(p,q,\var_k) $ such that the infimum \eqref{eq:problem_inf} is achieved  at some point $(\al_k,\xi_k,\lam_k)\in\tilde{B}^{\eta} _{4R_k}\setminus\tilde{B}^{\eta} _{2R_k}$.  To simplify notation, we omit the subscript $k$ and continue to write
			$\var, \xi, \lam , \al, u^*, R$ in place of  $\var_k, \xi_k, \lam _k, \al_k, u^*_k, R_k$, respectively. 
			
			Let $e:=(1,\ldots, 1)\in\R^{p+q}$.  Then, there exists $ (\tilde{\xi} ,\tilde{\lam})$ such that $(e,\tilde{\xi} ,\tilde{\lam} )\in \tilde{B}^{\eta} _{\var}$ and  $\|u^*-\psi(e,\tilde{\xi} ,\tilde{\lam})\|< \var$. Since $(\al,\xi, \lam )$ is the minimizer to the problem \eqref{eq:problem_inf} with $u=u^*$, we have 
			$\|u^*- \psi(\al,\xi, \lam  )\|\leq \|u^*-\psi(e,\tilde{\xi} ,\tilde{\lam})\|<\var$. It follows that 
			\[ \|\psi(\al,\xi, \lam )-\psi(e,\tilde{\xi} ,\tilde{\lam} )\|=O(\var)=o(1),\]
			as $\var\to  0$. From Lemma \ref{lem:pre_unique}, we know that, for each $i=1,\ldots,p+q$,
			\[	 \frac{\lam_i}{\tilde{\lam} _i}-1=o(1), \quad \lam_i\tilde{\lam} _id_g^2(\xi_i,\tilde{\xi}_i)=o(1), \text{ and }
			\quad  |\al_i -1|=o(1).\]
			Hence, by the definition of $\tilde{B}^{\eta} _{\var}$, we conclude that for sufficiently small $\var>0$, it  holds 
			$(\al, \xi,\lam )\in  \tilde{B}^{\eta} _{ 2\var}$, which is a contradiction. 	
		\end{proof}
		Now we are ready to prove Proposition \ref{prop:unique}.
		\begin{proof}[Proof of Proposition \ref{prop:unique}]
			We proceed by contradiction. Suppose Proposition \ref{prop:unique} fails. Then, by Lemma \ref{lem:pre_unique},  there exist sequences $\var_k\to 0^+$,  $u^*_k \in V_{\rho_*}^\eta(p,q,\var_k)$, and $ (\al^k, \xi^k, \lam^k)$, $(\tilde{\al}^k, \tilde{\xi}^k,\tilde{\lam}^k)\in \tilde{B}^{\eta} _{2 \var_k}$ with $(\al^k, \xi^k, \lam^k)\neq (\tilde{\al}^k, \tilde{\xi}^k,\tilde{\lam}^k) $, however both $\psi(\al^k, \xi^k, \lam^k)$ and $\psi(\tilde{\al}^k, \tilde{\xi}^k,\tilde{\lam}^k)$ are minimizers of  the  problem \eqref{eq:min}. 
			To simplify notation, we omit the index  $k$ and continue to write 
			$\var, \al,\xi,\lam , \tilde{\al}, \tilde{\xi},\tilde{\lam}, u^*$ in place of $\var_k, \al^k,\xi^k,\lam^k, \tilde{\al}^k, \tilde{\xi}^k,\tilde{\lam}  ^k, u^*_k$, respectively. 
			Denote that  $w = u^* - \psi (\al, \xi, \lam 
			)$, $\tilde{w} = u^* - \psi(\tilde{\al},\tilde{\xi} ,\tilde{\lam})$. 
			Since $ (\al^k, \xi^k, \lam^k)$ and $(\tilde{\al}^k, \tilde{\xi}^k,\tilde{\lam}^k)$ solve the minimization problem \eqref{eq:min}, we have 
			\begin{align}
				0 =& \langle w, P\delta_{\lam_i,\xi_i}\rangle  = \langle \tilde{w}, P\delta_{\tilde{\lam} _i,\tilde{\xi} _i} \rangle , \label{eq:or_0}\\
				0 =& \Big\langle w, \frac{\pa P\delta_{\lam_i,\xi_i}}{\pa(\xi_i)_j}\Big\rangle= \Big\langle \tilde{w}, \frac{\pa P\delta_{\tilde{\lam}   _i,\tilde{\xi} _i} }{\pa(\tilde{\xi} _i)_l}\Big \rangle,\label{eq:or_1}\\
				0 =& \Big\langle w, \frac{\pa P\delta_{\lam_i,\xi_i}}{\pa\lam_i}\Big\rangle=  \Big\langle \tilde{w}, \frac{\pa P\delta_{\tilde{\lam}  _i,\tilde{\xi} _i}}{\pa \tilde{\lam}   _i}   \Big\rangle.\label{eq:or_2}
			\end{align}
			for all $i=1,\ldots,p+q$, $j\in\{1,\ii(\xi_i)\}$ and $l\in\{1,\ii(\tilde{\xi}_i)\}$.
			Denote $\varphi_i=\varphi_{\xi_i}$,  $\tilde \varphi_i=\tilde\varphi_{\xi_i}$,  $\chi_i=\chi_{\xi_i}$,  $\tilde \chi_i=\tilde\chi_{\xi_i}$, $\delta_i=\delta_{\lam_i,\xi_i}$, $P\delta_i= P\delta_{\lam_i,\xi_i}$ $\tilde{\delta}_i=\delta_{\tilde{\lam} _i,\tilde{\xi} _i}$ and $P\tilde{\delta}_i=P\delta_{\tilde{\lam}   _i,\tilde{\xi} _i}$. 
			The orthogonal property \eqref{eq:or_0} implies that 
			\begin{align*}
				\sum_{j} \langle \al_j P\delta_j-\tilde{\al}_j P\tilde{\delta}_j, P\delta_i\rangle =&\,
				\langle \psi(\al,\xi,\lam)- \psi(\tilde{\al},\tilde{\xi} ,\tilde{\lam}   ), P\delta_i\rangle \\
				=&\, \langle \tilde{w}, P\delta_i\rangle - \langle w, P\delta_i \rangle 
				= \langle \tilde{w}, P\delta_i- P\tilde{\delta}_i\rangle. 
			\end{align*}
			As in the proof of Lemma \ref{lem:small_2vare},  
			there exist $(e, \xi,\lam ^*) \in \tilde{B}^{\eta} _{\var}$ such that 
			$\|u^* - \psi(e, \xi, \lam ^*)\|<\var$. 
			Lemma \ref{lem:pre_unique}  implies that for each $i=1,\ldots,p+q$,
			\begin{equation}\label{eq:est_alpha_1}
				\frac{\lam ^*_i}{\lam_i}-1=o(1)\quad\text{ and }\quad
				|\al_i-1|^2 = o\Big( \frac1 {\ln  \lam ^*_i}\Big)=o\Big(\frac{1}{|\ln  \var|}\Big). 
			\end{equation}
			Moreover,  as $\var\to  0$, we have
			\[	
			\|\psi(\al,\xi,\lam)- \psi(\tilde{\al},\tilde{\xi} ,\tilde{\lam}   )\|=o(1).
			\] Let 
			$a_i= \lam_i d_g(\xi_i,\tilde{\xi} _i)$, $b_i = \frac{\lam   _i}{\tilde{\lam}_i}-1$, and $\iota_i= \al_i-\tilde{\al}_i$. Then, by Lemma \ref{lem:pre_unique}, we immediately obtain
			\[ |a_i|=o(1), \quad |b_i|=o(1)\quad  \text{ and } \quad |\iota_i|=o(1). \]
			On the one hand, we have
			\[	
			\langle \al_j P\delta_j-\tilde{\al}_j P\tilde{\delta}_j, P\delta_i\rangle =(\al_j-\tilde{\al}_j) \langle P\delta_j, P\delta_{i}\rangle + \tilde{\al}_j \langle P\delta_j -P\tilde{\delta}_j, P\delta_i\rangle.
			\]
			As a result, it follows that
			\begin{equation}
				\label{eq:app_B1} 	\langle \tilde{w}, P\delta_i-P\tilde{\delta}_i\rangle =
				\sum_{j}(	(\al_j-\tilde{\al}_j) \langle P\delta_j, P\delta_i\rangle + \tilde{\al}_j \langle P\delta_j -P\tilde{\delta}_j, P\delta_i\rangle).
			\end{equation}

			For any $j=1,\ldots, p+q$ and $\var>0$ sufficiently small, we choose the isothermal coordinates $( U(\tilde{\xi} _j), y_{\tilde\xi_j})$ around $\tilde\xi_j$ such that 
			\[ y_{\tilde{\xi}_j }(x)= y_{\xi_j}(x)-y_{\xi_i}(\tilde{\xi} _j),\quad U(\tilde{\xi}_j )= y_{\xi_j}^{-1}( B^{\xi_j}\cap \B_{2r_0}(y_{\xi_j}(\tilde{\xi} _j)))\]
			with $\tilde\varphi_{j}(y)= \varphi_{j}(y)- \varphi_{j}( y_{\xi_j}(\tilde\xi_j))- \langle \nabla\varphi_{j}( y_{\xi_j}(\tilde\xi_j)), y\rangle_{\R^2} $. 
			Then, by the mean value theorem, for any $x\in  U_{2r_0}(\xi_j)\cap U_{2r_0}(\tilde{\xi} _j)$, we obtain
			\begin{equation}\label{eq:b02}
				|\chi_{j}  e^{-\varphi_{j}} - \tilde\chi_{j} e^{-\tilde\varphi_{j}}| \leq C \sup_{x\in \Si } |\nabla (\chi_{j}  e^{-\varphi_{j}})| d_g(\xi_j, \tilde\xi_j)=O(d_g(\xi_j,\tilde\xi_j)),
			\end{equation}
			where $C>0$ is a constant.
			The estimate \eqref{eq:b02}  implies that
			\begin{align}
				\chi_{j}    e^{ \delta_j-\varphi_{j}}- \tilde\chi_{j}  e^{\tilde{\delta}_j-\tilde\varphi_{j}}\notag=&\, \chi_{j}  e^{-\varphi_{j}} ( e^{ \delta_j}-e^{\tilde{\delta}_j})+  (\chi_{j}  e^{-\varphi_{j}} - \tilde\chi_{j} e^{-\tilde\varphi_{j}} ) e^{\tilde{\delta}_j}\\
				\,
				=& \,-2b_j(1+o(1)) \chi_{j}   e^{\delta_j-\varphi_{j}}+ 2(1+o(1))\Big(  \lam_j^2|y_{\xi_j}(\tilde{\xi} _j)|\notag\\
				&\, -2\lam_j^2\langle y_{\xi_j}(x),y_{\tilde{\xi} _j}(\xi_j) \rangle_{\R^2}\Big) \frac{\chi_{j}   e^{ \delta_j-\varphi_{j}}} {1+\lam_j^2 |y_{\xi_j}|^2 }+
				O\Big( \frac{|a_j|}{\lam_j} \mathbbm{1} _{U_{4r_0}(\tilde{\xi}_j)}  e^{ \delta_j}+\frac{1}{\lam_2}\Big).\label{eq:b03}
			\end{align}
			Applying Lemmas \ref{lem:projbubble_xi_pointwise}  and  \ref{lem:A.5}, we deduce that 
			\begin{align}
				|\langle \tilde{w}, P\delta_i-P\tilde{\delta}_i\rangle |
				=&\, \Big| \int_{\Si } -\Delta_g(P\delta_i-P\tilde{\delta}_i) \tilde{w}\,\d v_g \Big|\leq  \int_{\Si }|\chi_{j}    e^{ \delta_j-\varphi_{j}}- \tilde\chi_{j} e^{\tilde{\delta}_j-\tilde\varphi_{j}}| |\tilde{w}| \,\d v_g \nonumber\\
				=&\, O((|a_i|+|b_i|)\|\tilde{w}\|)=o( |a_i|+|b_i|),\label{eq:app_B2}
			\end{align}
			since $ \|\tilde w\|=o(1)$. 
			
			From estimate \eqref{eq:b04} with Lemma \ref{lem:projbubble_integral_interaction}, we obtain
			\begin{align}
				&	\langle P\delta_j-P\tilde{\delta}_j,  P\delta_i\rangle = \int_{\Si } (-\Delta_g( P\delta_j- P\tilde{\delta}_j)) P\delta_i \,\d v_g \notag\\
				=& \,O\Big(  \int_{\Si } \Big( (|a_j|+|b_j|)\chi_{j}  e^{ \delta_j-\varphi_{j}} + \frac{|a_j|}{\lam_j}\mathbbm{1}_{U_{4r_0}(\xi_j)} e^{ \delta_j} \Big)\Big(\chi_{i}  \Big(\delta_i+\ln \Big(\frac{\lam_i^2}{8}\Big)\Big) + O(1)\Big) \, \d v_g \Big)\notag\\
				=&\,\left\{\begin{aligned}	
					&O( ( |a_i|+|b_i|)\ln \lam_i)&& \text{ if }\, i=j, \\
					&	O\Big(\frac{|a_j|}{\lam_j} \Big) && \text{ if }\, i\neq j.
				\end{aligned}\right.\label{eq:b12}
			\end{align}	
			Using \eqref{eq:app_B1}  together with  \eqref{eq:1}, \eqref{eq:2}, \eqref{eq:app_B2} and \eqref{eq:b12}, we deduce 
			as $\var\to  0$ that 
			\begin{equation}\label{eq:b0_final}
				\iota_i= o\Big(  \frac{|a_i|+|b_i|}{ \ln \lam_i}\Big)
				+ O\Big( |a_i|+|b_i|+ \sum_{j\neq i} o( |a_j|+|\iota_j|)\Big),
			\end{equation}
			in view of $\|\tilde{w}\|=o(1)$. 
			Similarly, we have
			\begin{equation}
				\label{eq:de_or_1} 
				\Big\langle	 \tilde{w}, \frac{\pa P\delta_i}{\pa \zeta_i} -\frac{\pa P\tilde{\delta}_i}{\pa\tilde{\zeta} _i} \Big\rangle =\sum_{j}\Big( (\al_j-\tilde{\al}_j) \Big\langle P\delta_j, \frac{\pa P\delta_i}{\pa\zeta_i}  \Big\rangle+ \tilde{\al}_j 		\Big\langle  P\delta_j- P\tilde{\delta}_j , \frac{\pa P\delta_i}{\pa \zeta_i} \Big\rangle\Big).
			\end{equation}
			Here and in the following, we use $\frac{\pa }{\pa \zeta_i}$ to denote  $\frac{\pa }{\pa (\xi_i)_j}$ for some fixed $j\in\{1,\ii(\xi_i)\}$;   similarly,  $\frac{\pa }{\pa \tilde{\zeta}_i}$ denotes  $\frac{\pa }{\pa \tilde{(\xi_i)}_l}$ for some fixed $l\in\{1,\ii(\tilde\xi_i)\}$.
			By straightforward calculation, we find
			\begin{align}
				&	\Big|\frac{\pa }{\pa \zeta_j}(\chi_{j}    e^{ \delta_j-\varphi_{j}})- \frac{\pa }{\pa \tilde{\zeta} _j}(\tilde\chi_{j}  e^{\tilde{\delta}_j-\tilde\varphi_{j}} )\Big|\notag \\
				=&\, \Big|\chi_{j}  e^{-\varphi_{j}} \Big( e^{ \delta_j}\frac{\pa \delta_j}{\pa\zeta_j}   - e^{\tilde{\delta}_j}\frac{\pa \tilde{\delta}_j}{\pa\tilde{\zeta} _j} \Big) \Big| + O\Big(\Big(\frac{|a_j|}{\lam_j} + |a_j|+|b_j|\Big)\mathbbm{1}_{U_{4r_0}(\xi_j)}  e^{ \delta_j}\Big)\notag\\
				=& \,  O\Big(\chi_{j}   e^{ \delta_j-\varphi_{j}}\Big( ( |a_j|+|b_j|) \Big(\Big| \frac{\pa \delta_j}{\pa\zeta_j } \Big|+1\Big)+ \lam_j|b_j|\Big) + \frac{|a_j|}{\lam_j} \mathbbm{1}_{U_{4r_0}(\xi_j)} e^{ \delta_j} \Big).
				\label{eq:b04}
			\end{align}
			Applying Lemmas \ref{lem:projbubble_xi_pointwise}  and \ref{lem:A.5}, we obtain
			\begin{align*}
				\Big|\Big\langle \tilde{w},\frac{\pa P\delta_i}{\pa\zeta_i}- \frac{\pa P\tilde{\delta}_i}{\pa\tilde{\zeta} _i} \Big\rangle	\Big| 
				=& \,\Big| \int_{\Si }\Big(\frac{\pa }{\pa\zeta_j} (\chi_{j}   e^{ \delta_j-\varphi_{j}})- \frac{\pa }{\pa\tilde{\zeta} _j}(\tilde\chi_{j}  e^{\tilde{\delta}_j-\tilde\varphi_{j}})\Big) \tilde{w} \,\d v_g  \Big|\\
				=&\,O(  (|a_i|+|b_i|) \|\tilde{w}\|)=
				o( |a_i|+|b_i|  ). 
			\end{align*}
			By Lemma \ref{lem:projbubble_xi_pointwise}  and \eqref{eq:b03}, we have  
			\begin{align*}
				&	\Big\langle P\delta_j-P\tilde{\delta}_j, \frac{\pa P\delta_i}{\pa\zeta_i} \Big\rangle= \int_{\Si }(-\Delta_g)(P\delta_j-P\tilde{\delta}_j ) \frac{\pa P\delta_{i}}{\pa\zeta_i}   \,\d v_g \\
				=& \, \int_{\Si } (\chi_{j}    e^{ \delta_j-\varphi_{j}}- \tilde\chi_{j} e^{\tilde{\delta}_j-\tilde\varphi_{j}})\\
				&\, \times  \Big( \chi_{i} \frac{\pa \delta_i}{\pa\zeta_i}   + \kap ( \xi_i)\frac{\pa  H^g(\cdot,\xi_i)}{\pa\zeta_i}  -4\ln  |y_{\xi_i}| \frac{\pa \chi_{i}}{\pa\zeta_i}  + O\Big(\frac{\ln \lam_i}{\lam_i^2}\Big) \Big) \,\d v_g \\
				=&\,\left\{\begin{aligned}			
					&	-64\lam_i \int_{\lam_i B^{\xi_i}_{2r_0}} | \lam_iy_{\xi_i}(\tilde{\xi} _i)_l| \frac{y_l^2}{(1+|y|^2)^4} \,\d y + o( \lam_i(|a_i|+|b_i|))
					&& \text{ if }\, i=j ,\\
					&	O(|a_j|+|b_j|) && \text{ if }\, i\neq j.
				\end{aligned}
				\right.
			\end{align*}
			Moreover, Lemma \ref{lem:projbubble_xi_pointwise}   gives that 
			\begin{equation}\label{eq:13}
				\Big\langle P\delta_j, \frac{\pa  P\delta_i}{\pa\zeta_i}\Big\rangle=O(1).
			\end{equation}
			Collecting the estimates \eqref{eq:de_or_1}-\eqref{eq:13} and using  $\|\tilde{w}\|=o(1)$, we arrive at
			\begin{equation}\label{eq:b1_final}
				\lam_i  |a_i|=o(\lam_i(|a_i|+|b_i|) )+ O\Big(\sum_{j} (|a_j|+|b_j|+|\iota_j|)\Big).
			\end{equation}
			By the orthogonality condition \eqref{eq:or_2},  we find
			\begin{equation}
				\label{eq:de_or_2} 
				\Big\langle \tilde{w}, \frac{\pa P\delta_i}{\pa \lam_i} -\frac{\pa P\tilde{\delta}_i}{\pa \tilde{\lam}  _i} \Big\rangle=\sum_{j}\Big( (\al_j-\tilde{\al}_j) 	\Big\langle P\delta_j, \frac{\pa  P\delta_i}{\pa\lam_i} 	\Big\rangle  + \tilde{\al}_j \Big\langle P\delta_j- P\tilde{\delta}_j, \frac{\pa P\delta_j}{\pa\lam_i} \Big\rangle\Big).
			\end{equation}
			By Lemma \ref{lem:projbubble_lamderivative__pointwise}, we derive that 
			\begin{align}
				&\frac{\pa }{\pa\lam_j}(\chi_{j}  e^{ \delta_j-\varphi_{j}})- \frac{\pa }{\pa\tilde{\lam}   _j}(\tilde\chi_{j}  e^{\tilde{\delta}_j-\tilde\varphi_{j}} )\notag = \chi_{j}  e^{-\varphi_{j}} \Big( e^{ \delta_j}\frac{\pa \delta_j}{\pa\lam_j}   - e^{\tilde{\delta}_j}\frac{\pa \tilde{\delta}_j}{\pa\tilde{\lam}  _j} \Big)\notag \\
				=&\,-(1+o(1)) b_j \chi_{j}    e^{ \delta_j-\varphi_{j}}\frac{\pa \delta_{j}}{\pa\lam_j} + (1+o(1)) \chi_{j}  e^{ \delta_j-\varphi_{j}} \Big(2b_j + \lam_j^2|y_{\xi_j}(\tilde{\xi} _j)|^2-2 \langle \lam_jy_{\xi_j}(\tilde{\xi} _j), \lam_j y_{\xi_j}(x) \rangle_{\R^2} \Big) \notag\\
				&\,\times\Big(\frac{\pa \delta_j}{\pa \lam_j}  -\frac{ 2}{\lam_j} \Big). \label{eq:b21}
			\end{align}
			Similarly, using Lemmas \ref{lem:projbubble_xi_pointwise}  and \ref{lem:A.5},
			we deduce that 
			\[	
			\Big|\Big\langle\tilde{w}, \frac{\pa P\delta_i}{\pa \lam_i} -\frac{\pa P\tilde{\delta}_i}{\pa\tilde{\lam} _i} \Big\rangle\Big| =O\Big(\frac{ |a_i|+ |b_i|}{\lam_i}\|\tilde{w}\|\Big)=o\Big(\frac{ |a_i|+ |b_i|}{\lam_i}\Big),
			\]	in view of $\|w\|=o(1)$.
			Providing that if $\xi_i\in \pa\Si $, then $\tilde{\xi} _i\in \pa\Si $ with $y_{\xi_i}(\tilde{\xi} _i)_2=0$,	Lemma \ref{lem:projbubble_lamderivative__pointwise}   and \eqref{eq:b03}   yield that 
			\begin{align}
				\Big	\langle P\delta_j-P\tilde{\delta}_j, \frac{\pa P\delta_i}{\pa \lam_i} 	\Big\rangle =&\,  \int_{\Si } (-\Delta_g)( P\delta_j-P\tilde{\delta}_j) \frac{\pa  P\delta_i}{\pa \lam_i} \,\d v_g\notag \\
				=& \, \int_{\Si } (\chi_{j}   e^{ \delta_j-\varphi_{j}}- \tilde\chi_{j} e^{\tilde{\delta}_j-\tilde\varphi_{j}}) \Big( \frac{ 4\chi_{i}}{\lam_i(1+\lam_i^2|y_{\xi_i}|^2)} +O\Big(\frac{\ln \lam_i
				}{\lam_i^3}\Big)\Big)\,\d v_g
				\notag \\
				=&\, \left\{\begin{aligned}
					&\frac{4\kap ( \xi_i)b_i}{3\lam_i} 
					+o\Big(\frac{|a_i|+|b_i|}{\lam_i}\Big)&&\text{ if }\, j=i, \\
					&	o\Big(\frac{(|a_j|+|b_j|) }{\lam_i^2}\Big)&& \text{ if }\, i\neq j,
				\end{aligned}\right.\notag
			\end{align}
			in view of $\int_{\R^2}\frac{1}{(1+|y|^2)^3} \, \d y =\frac \pi 2$,  $ \int_{\R^2}\frac{2}{(1+|y|^2)^4} \, \d y =\frac{2\pi}{3}$, and 
			\begin{equation}
				\int_{\Si }\chi_{j}   e^{ \delta_j-\varphi_{j}}\frac{ 4\chi_{i}}{\lam_i(1+\lam_i^2|y_{\xi_i}|^2)}\,\d v_g 
				= O\Big(\frac{1}{\lam_i}\Big). \label{eq:b24}	
			\end{equation}
			Using the estimates \eqref{eq:de_or_2}-\eqref{eq:b24},  we deduce  that, for $\xi_i\in \pa\Si $,
			\begin{equation}
				\label{eq:b2_final}
				\frac{	|b_i|} {\lam_i} =O\Big(\sum_{j} \frac{|a_j+|b_j|}{\lam_i^2}\Big)  + o\Big( \sum_{j} \frac{|\iota_j|}{\lam_i^2}+ \frac{|a_i|}{\lam_i}+ \frac{|b_i|}{\lam_i} \Big).
			\end{equation}
			Combing \eqref{eq:b0_final}, \eqref{eq:b1_final} and \eqref{eq:b2_final}, we obtain
			\[ \sum_{j} \Big( |\iota_j|\ln \lam_j+ (|a_j|+|b_j|) \lam_j\Big)=o\Big( |\iota_j|\ln \lam_j+ (|a_j|+|b_j|) \lam_j\Big).  \]
			This implies that there exists a sufficiently small constant $\var_0>0$ such that for any $ \var\in (0,\var_0)$, it  holds 
			\[ a_i=0, \quad b_i=0\quad \text{ and }\quad  \iota_i=0,\]
			which contradicts to the assumption $(\al,\xi,\lam)\neq (\tilde{\al},\tilde{\xi} ,\tilde{\lam}  )$. 
			Hence, the minimizer of problem \eqref{eq:min} is unique in  $\tilde{B}^{\eta} _{4\var}$, and the estimates \eqref{eq:est_alpha_1} complete the proof.
		\end{proof}

		\section{Gradient expansions in the neighborhood at infinity}\label{app:C}
		In this appendix, we provide the details of the proof omitted in Section \ref{sec:3.2}.
		To simplify notation,  we set
		$\chi_i= \chi( |y_{\xi_i}|/ r_0)$, $\varphi_i=\varphi_{\xi_i}$, $\hat{\varphi}_i=\hat{\varphi}_{\xi_i}$, $\delta_{i}=\delta_{\xi_i,\lam_i}$ and $P\delta_{i}=P\delta_{\xi_i,\lam_i}$ for $i=1,\ldots,p+q$. 
		\begin{proof}[Proof of Proposition \ref{prop:lamexpansion}]
			It follows from \eqref{orthogonality} that
			\begin{equation}\label{eq:lam_expansion}
				\Big\langle \nabla J_{\rho }(u),\lam_i\frac{\pa P\delta_{i}}{\pa \lam_i}\Big\rangle=
				\sum_{j} \al_j\Big\langle P\delta_{j}, \lam_i\frac{\pa P\delta_{i}}{\pa \lam_i}\Big\rangle -\frac{\rho \int_{\Si } V e^u \lam_i\frac{\pa P\delta_{i}}{\pa \lam_i} \,\d v_g}{\int_{\Si } V e^u \,\d v_g}.
			\end{equation}
			We begin by estimating the first term on the R.H.S. of \eqref{eq:lam_expansion}. By 	using Lemma \ref{lem:projbubble_derivative_integral} and \eqref{eq:apha_i_estimate}, we obtain 	\begin{align}
				\sum_{j} \al_j\Big\langle P\delta_{j}, \lam_i\frac{\pa P\delta_{i}}{\pa \lam_i}\Big\rangle = &\,\al_i\Big(2\kap(\xi_i)-\frac{\kap^2( \xi_i)}{|\Si|_g}\frac{\ln \lam_i}{\lam_i^2}+O\Big(\frac{1}{\lam_i^2}\Big)\Big)\nonumber\\&\,+\sum_{j\neq i} \al_j\Big(-\frac{\kap( \xi_i)\kap( \xi_j)}{|\Si|_g}\frac{\ln \lam_i}{\lam_i^2}+O\Big(\frac{1}{\lam_i^2}\Big)\Big)\nonumber
				\\=&\,2\al_i\kap( \xi_i)- \sum_{j}  \frac{\al_j\kap( \xi_i)\kap( \xi_j)}{|\Si |_g} \frac{\ln \lam_i}{\lam_i^2}+O\Big(\sum_{j} \frac{\al_j}{\lam_i^2}\Big)\nonumber\\=&\,2\al_i\kap( \xi_i)- \rho_* \frac{\kap( \xi_i)}{|\Si |_g}\frac{\ln \lam_i}{\lam_i^2} +O\Big(\sum_{j}|\al_j-1|\frac{\ln\lam_i}{\lam_i^2}+\sum_j \frac{1}{\lam_j^2}\Big). \label{prop:lamexpansionfirstpart}
			\end{align}	
			Next, we proceed to estimate the last term on the R.H.S. of \eqref{eq:lam_expansion}.  Lemma \ref{lem:projbubble_lamderivative__pointwise} implies that 	
			\begin{align*}
				\int_\Si V e^u \lam_i \frac{\pa  P\delta_{i}}{\pa \lam_i}\, \d v_g=&\, 
				\int_\Si  V e^{u} \Big( \frac{4\chi_i }{1+\lam_i^2|y_{\xi_i}|^2}-\frac{\kap( \xi_i)}{|\Si |_g}\frac{\ln \lam_i}{\lam_i^2} +O\Big(\frac{1}{\lam_i^2}\Big) \Big) \,\d v_g \nonumber\\
				=&\,\int_{U_{r_0}(\xi_i)} \frac{4V e^{u}}{1+\lam_i^2|y_{\xi_i}|^2}   \,\d v_g -\frac{\kap( \xi_i)}{|\Si |_g} \frac{ \ln  \lam_i}{\lam_i^2}\int_\Si  V e^{u} \,\d v_g \\
				&\,+O\Big( \frac{1}{\lam_i^2}\int_\Si  V e^{u} \,\d v_g+\frac 1 {\lam_i^2}\Big),
			\end{align*}
			which  yields that 
			\[
			\frac{ \int_{\Si }  Ve^{u} \lam_i\frac{\pa P\delta_{i}  }{\pa \lam_i } \,\d v_g}{\int_{\Si } Ve^{u} \,\d v_g}= \frac{\int_{U_{r_0}(\xi_i)} \frac{4Ve^{u}}{1+\lam_i^2|y_{\xi_i}|^2}   \,\d v_g }{\int_{\Si } Ve^{u} \,\d v_g } - 	 \frac{ \kap( \xi_i)}{|\Si |_g} \frac{ \ln  \lam_i}{\lam_i^2} +O\Big( \frac{1}{\lam_i^2}\Big). 
			\]
			Now, we decompose 
			\begin{align*}
				\int_{U_{r_0}(\xi_i)} \frac{4Ve^{u}}{1+\lam_i^2|y_{\xi_i}|^2}   \,\d v_g	 =&\,\int_{U_{r_0}(\xi_i)} \frac{4Ve^{u_0}}{1+\lam_i^2|y_{\xi_i}|^2}   \,\d v_g+\int_{U_{r_0}(\xi_i)} \frac{4Ve^{u_0}}{1+\lam_i^2|y_{\xi_i}|^2}w   \,\d v_g\\&\,+\int_{U_{r_0}(\xi_i)} \frac{4Ve^{u_0}}{1+\lam_i^2|y_{\xi_i}|^2}(e^w -1-w)   \,\d v_g    
				\\:=&\,I_1+I_2+I_3.
			\end{align*}
			The rest of the proof proceeds in three steps.
			
			\textbf{Step 1: Estimate of  $I_1$.} 	Using Lemma \ref{lem:Ve^{u_0}_pointwise} we have 
			\[	
			I_1	=\int_{U_{r_0}(\xi_i)}\frac{4\lam_i^{4\al_i}\cF^i_{\xi} \rg^i_{\xi}}{ ( 1+\lam_i^2|y_{\xi_i}|^2)^{2\al_i+1}}\Big(1  +\sum_{j} \frac{\al_j\kap( \xi_j)}{2|\Si |_g} \frac{\ln  \lam_j}{\lam_j^2}+ O\Big(\sum_{j} \frac{\al_j}{\lam_j^2} \Big)\Big)\,\d v_g.
			\]
			By applying Taylor expansion and \eqref{varphixi}, we derive that 
			\begin{align}
				\int_{U_{r_0}(\xi_i)} \frac{4\lam_i^{4\al_i}\cF^i_{\xi} \rg^i_{\xi}}{ ( 1+\lam_i^2|y_{\xi_i}|^2)^{2\al_i+1}} \,\d v_g =&\, (\cF^i_{\xi} \rg^i_{\xi})(\xi_i) \int_{B^{\xi_i}_{r_0}} \frac {4\lam_i^{4\al_i}} { ( 1+\lam_i^2|y|^2)^{2\al_i+1}}\,\d y \nonumber\\&\,+\int_{B^{\xi_i}_{r_0}} \frac {4\lam_i^{4\al_i}} { ( 1+\lam_i^2|y|^2)^{2\al_i+1}}\nabla_y (e^{\hat{\varphi}_i}(\cF^i_{\xi} \rg^i_{\xi})(y_{\xi_i}^{-1}(y)))\Big|_{y=0}\cdot y \,\d y \nonumber\\&\,+O\Big( \int_{B^{\xi_i}_{r_0}} \frac {\lam_i^{4\al_i}|y|^2} { ( 1+\lam_i^2|y|^2)^{2\al_i+1}}\,\d y\Big)\nonumber\\
				=&\, I_1^{(1)}+I_1^{(2)}+I_1^{(3)}. \label{Taylor2alpha+1}
			\end{align}
			A direct calculation gives that 
			\begin{align*}
				I_1^{(1)}=&\,\frac{\kap( \xi_i)}{4\al_i}(\cF^i_{\xi} \rg^i_{\xi})(\xi_i)\lam_{i}^{4\al_i-2}+O\Big(\frac{1}{\lam_i^2}\Big),\\
				I_1^{(2)}=&\,4\lam_i^{4\al_i-3}\mathfrak{f}_2(\xi_i) \Big(B\Big(\frac{3}{2},2\al_i-\frac{1}{2}\Big)+O\Big(\frac{1}{\lam_i^{4\al_i-1}}\Big)\Big),\\	I_1^{(3)}=&\,O\Big(\frac{\kap(\xi_i)}{8} B(2,2\al_i-1)\lam_i^{4\al_i-4}+\frac 1 {\lam_{i}^2}\Big),
			\end{align*}
			where $\mathfrak{f}_2$ is defined in  \eqref{def:frak_f2} and $B(a,b)= \int_0^1 t^{a-1}(1-t)^{b-1} \, \d t$  is the Beta function.  Hence, we have 
			\begin{align*}
				\int_{U_{r_0}(\xi_i)} \frac{4\lam_i^{4\al_i}\cF^i_{\xi} \rg^i_{\xi}}{ ( 1+\lam_i^2|y_{\xi_i}|^2)^{2\al_i+1}} \,\d v_g=&\,	\frac{\kap( \xi_i)}{4\al_i}(\cF^i_{\xi} \rg^i_{\xi})(\xi_i)\lam_{i}^{4\al_i-2}+\frac{\pi}{2}\lam_i^{4\al_i-3}\mathfrak{f}_2(\xi_i) \\&\,+O\Big(\lam_i^{4\al_i-4}\Big(\frac{|\al_i-1|}{\lam_i^2}+\frac{|\mathfrak{f}_2(\xi_i)|}{\lam_i}\Big)\Big)
			\end{align*}	
			in view of  \begin{align*}
				B\Big(\frac{3}{2},2\al_i-\frac{1}{2}\Big)=&\,B\Big(\frac{3}{2},\frac{3}{2}\Big)+O(|\al_i-1|)=\frac{\pi}{8}+O(|\al_i-1|),\\	B(2,2\al_i-1)=&\,B(2,1)+O(|\al_i-1|)=\frac{1}{2}+O(|\al_i-1|).
			\end{align*}
			By	combining the above estimates in this step, we have 
			\begin{align}
				I_1=&\, \frac{\kap( \xi_i)}{4\al_i}(\cF^i_{\xi} \rg^i_{\xi})(\xi_i)\lam_{i}^{4\al_i-2}\Big(1+\sum_{j} \frac{\al_j\kap(\xi_j)}{2|\Si|_g}\frac{\ln \lam_j}{\lam_j^2}\Big) +\frac{\pi}{2}
				\lam_i^{4\al_i-3}\mathfrak{f}_2(\xi_i)\nonumber\\&\,+O\Big(\lam_i^{4\al_i-4}\Big(\frac{|\al_i-1|}{\lam_i^2}+\frac{|\mathfrak{f}_2(\xi_i)|}{\lam_i}\Big)\Big).\label{prop:lamexpansionI1fina}	
			\end{align}

			\textbf{Step 2: Estimate of  $I_2$.}   
			Applying  Lemma \ref{lem:Ve^{u_0}_pointwise}, we get 
			\[	
			I_2=\int_{U_{r_0}(\xi_i)} 	\frac{4\lam_i^{4\al_i}\cF^i_{\xi}\rg^i_{\xi}}{ ( 1+\lam_i^2|y_{\xi_i}|^2)^{2\al_i+1}}w\Big(1  +O\Big(\sum_{j} \frac{\ln  \lam_j}{\lam_j^2}\Big) \Big)  \,\d v_g.
			\]	 
			Using Taylor expansion, \eqref{Phibound} and Lemma \ref{lem:A.5}, the leading term in $I_2$ can be rewritten as 
			\begin{align*}
				\int_{U_{r_0}(\xi_i)} 	\frac{4\lam_i^{4\al_i}\cF^i_{\xi} \rg^i_{\xi}  }{ ( 1+\lam_i^2|y_{\xi_i}|^2)^{2\al_i+1}}w\,\d v_g=&\,\frac{1}{2}\lam_i^{4\al_i-2}(\cF^i_{\xi} \rg^i_{\xi})(\xi_i)\int_{B^{\xi_i}_{r_0}} \frac{e^{\tilde\delta_{i}  }}{1+\lam_i^2|y|^2}w\,\d y\\&\,+ O\Big(\lam_i^{4\al_i-2}\int_{U_{r_0}(\xi_i)}e^{\delta_{i} -\varphi_i}|w|(|\Phi _i -1|+|y_{\xi_i}|)\, \d v_g\Big)\\=&\,\frac{1}{2}\lam_i^{4\al_i-2}(\cF^i_{\xi} \rg^i_{\xi})(\xi_i)\int_{U_{r_0}(\xi_i)} \frac{e^{\delta_{i}  -\varphi_{i}}}{1+\lam_i^2|y_{\xi_i}|^2}w\,\d v_g\\&\,+ O\Big(\lam_i^{4\al_i-2}\Big(|\al_i-1|+\frac{1}{\lam_i}\Big)\|w\| \Big).	
			\end{align*}	
			Furthermore,  \eqref{projectedbubble} and  \eqref{orthogonality} yield that $\int_{\Si}\chi_{i} we^{\delta_{i}  -\varphi_{i}}\, \d v_g=0$ and	
			\[	
			\int_{\Si}\chi_{i}  \frac{\pa \delta_{i}  }{\pa \lam_i  }we^{\delta_{i}  -\varphi_{i}}\, \d v_g=\int_{\Si}\chi_{i}  \frac{2}{\lam_i}\frac{1-\lam_i^2|y_{\xi_i}|^2}{1+\lam_i^2|y_{\xi}|^2}w e^{\delta_{i}  -\varphi_{i}}\, \d v_g=0, 
			\]	 	
			which  imply that	
			\[
			\int_{\Si}\chi_{i}  \frac{ e^{\delta_{i}  -\varphi_{i}}}{1+\lam_i^2|y_{\xi_i}|^2}w\, \d v_g=0.
			\]
			Then, it follows from  the Sobolev embedding theorem that 
			\[
			\int_{B^{\xi_i}_{r_0}} \frac{e^{\tilde\delta_{i}  }}{1+\lam_i^2|y|^2}w\,\d y=-\int_{B^{\xi_i}_{2r_0}\backslash B^{\xi_i}_{r_0}} \chi_i\frac{e^{\tilde\delta_{i}  }}{1+\lam_i^2|y|^2}w\,\d y=O\Big(\frac{\|w\|}{\lam_i^{4}}\Big).
			\]
			By	combining the above estimates in this step, we have 
			\begin{equation}\label{prop:lamexpansionI2fina}
				I_2=	O\Big(\lam_i^{4\al_i-2}\Big(|\al_i-1|+\frac{1}{\lam_i}\Big)\|w\| \Big)=O\Big(\Big(|\al_i-1|+\frac{1}{\lam_i}\Big)\sum_{j}\lam_j^{4\al_j-2}\|w\| \Big).	
			\end{equation}

			\textbf{Step 3: Estimate of  $I_3$.} By Lemma   \ref{lem:Ve^{u_0}_pointwise} and \eqref{def:Phi}, one has
			\[
			I_3=O\Big( \lam_i^{4\al_i-3}\int_{U_{r_0}(\xi_i)}  e^{3\delta_{i} /2}\Phi_i  |e^w - w- 1| \,\d v_g\Big).
			\]
			Using the same argument in Lemma A.4 of  \cite{ABL2017}, we obtain 
			\[
			\int_{U_{r_0}(\xi_i)}  e^{3\delta_{i} /2} \Phi_i  |e^w - w- 1| \,\d v_g\leq C \Big(\int_{U_{r_0}(\xi_i)}e^{3\delta_{i} /2}\Phi_i^2\, \d v_g\Big)^{1/2}\|w\|^2=O(\|w\|^2).
			\]
			By	combining the above estimates in this step, we derive that 
			\begin{equation}\label{prop:lamexpansionI3fina}
				I_3=O( \lam_i^{4\al_i-3}\|w\|^2).
			\end{equation}

			Finally, we conclude from the  estimates \eqref{prop:lamexpansionfirstpart}--\eqref{prop:lamexpansionI3fina} and  Lemma \ref{lem:Ve^u_integral} that 
			\begin{align*}
				\Big\langle \nabla J_{\rho }(u),\lam_i\frac{\pa P\delta_{i} }{\pa \lam_i }\Big\rangle =&\,2\al_i \kap(\xi_i)   (\tau_i-\mu+\mu\tau_i)-\frac{\pi}{2}  \frac{\rho  \mathfrak{f}_2(\xi_i) }{\int_{\Si} Ve^{u} \,\d v_g} \lam_{i}^{4 \al_i-3}\\
				&\,+\mu\rho_*\frac{\kap(\xi_i)}{|\Si|_g}\frac{\ln\lam_i}{\lam_i^2}-(1+\mu)(1-\tau_i)\sum_{j}  \frac{\kap(\xi_i)\kap(\xi_j)}{|\Si|_g}\frac{\ln \lam_j}{\lam_j^2}\\
				&\,+O\Big(|\al_i-1|^2+\|w\|^2+\frac{1}{\lam_{i}^2}+ \frac{(\sum_{j\in}|\al_j-1|+|\mu|+|\tau_i-\mu +\mu\tau_i|)\ln \lam_i}{\lam_i^2}\Big),
			\end{align*}
			in view of 
			\begin{gather*}
				\frac{1}{\al_i^2}-\frac{1}{2\al_i-1}=O(|\al_i-1|^2),\\1-\frac{(\cF^i_{\xi} \rg^i_{\xi})(\xi_i)}{8\al_i^2}\frac{\rho_*}{\int_{\Si}Ve^u\,\d v_g}\lam_i^{4\al_i-2}=\tau_i+O(|\al_i-1|^2|).
			\end{gather*}
			This completes the proof of  \eqref{prop:lamexpansion-1}. The estimate \eqref{eq:est_tau_i} then follows directly from    \eqref{prop:lamexpansion-1},  $\|\nabla J_{\rho_*}(u)\|< \var$, $\|w\|<\var$ and the fact  $\|\lam_i\frac{\pa P\delta_{i}}{\pa \lam_i} \|=O(1)$.
			%
		\end{proof}
		
		\begin{proof}[Proof of Proposition \ref{prop:bubble_expansion}]
			It follows from \eqref{orthogonality} that
			\begin{equation}\label{eq:expansion_projectedbubble}
				\Big\langle\nabla J_{\rho}(u), \frac{P \delta_{i} }{\ln \lam_i}\Big\rangle=\sum_{j }  \al_j\Big\langle P \delta_{j} ,\frac{ P \delta_{i} }{\ln \lam_i}\Big\rangle-\frac{\rho \int_{\Si} Ve^{u} \frac{P  \delta_{i} }{\ln \lam_i}\, \d v_g}{\int_{\Si} Ve^{u} \, \d v_g}  .
			\end{equation}
			We begin by estimating the first term on the R.H.S. of  \eqref{eq:expansion_projectedbubble}. Lemma \ref{lem:projbubble_integral_interaction} yields that 
			\begin{align}
				\sum_{j } \al_j \Big\la P\delta_{j}  ,\frac{ P\delta_{i} }{\ln \lam_i}\Big\ra
				=&
				\,\frac{\al_i }{\ln \lam_i} (4 \kap(\xi_i)\ln \lam _i+\kap^2(\xi_i) R^g(\xi_i)-2\kap(\xi_i)) \nonumber\\&\,+ \sum_{j\neq i} \frac{\al_j}{\ln \lam_i}\kap(\xi_j)\kap(\xi_i) G^g(\xi_i,\xi_j)
				+O\Big( \sum_{j }  \frac{1}{\ln \lam_i}\frac{\ln \lam_j}{\lam_j^{2}}\Big).\label{eq:expansion_projectedbubble_first}
			\end{align}
			Next, we  proceed to estimate the second term on the R.H.S. of  \eqref{eq:expansion_projectedbubble}. To this end,   we decompose
			\begin{align*}
				\int_{\Si} Ve^{u} \frac{P  \delta_{i} }{\ln \lam_i}\, \d v_g=&\,	\int_{\Si} V e^{u_0} \frac{P\delta_{i}  }{\ln \lam_i}   \, \d v_g  + \int_{\Si} V e^{u_0} \frac{P\delta_{i}  }{\ln \lam_i}  w \, \d v_g  +\int_{\Si} V e^{u_0} \frac{P\delta_{i}  }{\ln \lam_i} (e^w-1-w) \, \d v_g 
				\\	:=&\, I_1+I_2+I_3.
			\end{align*}
			The estimates for $I_1$, $I_2$ and $I_3$ will be carried out in the following steps.

			\textbf{Step 1: Estimate of $I_1$.} 
			By Lemmas \ref{lem:projbubble_pointwise} and \ref{lem:Ve^{u_0}_pointwise}, we get   
			\begin{align*}
				I_1 \times \ln \lam_i
				=&\,\sum_{j }  \int_{U_{r_0}(\xi_j)} \frac{ \lam_{j}^{4\al_j}\cF^j_{\xi} \rg^j_{\xi}( 1+ O(\sum_{s }\frac{\ln \lam_s}{\lam_s^{2}}))}{(1+\lam_j^2|y_{\xi_j}|^2)^{2\al_j}} \Big( 2\chi_{i}\ln\Big(\frac{\lam_{i}^2}{1+\lam_{i}^2|y_{\xi_i}|^2}\Big) \\&\,+ \kap(\xi_i) H^g(\cdot, \xi_i) + O\Big(\frac{\ln \lam_i}{\lam_i^{2}}\Big)\Big) \, \d v_g + O(1) \\
				=&\,\int_{U_{r_0}(\xi_i)} \frac{ \lam_{i}^{4\al_i}\cF^i_{\xi} \rg^i_{\xi}}{(1+\lam_i^2|y_{\xi_i}|^2)^{2\al_i}}(4\ln \lam_{i} -2\ln(1+\lam_{i}^2|y_{\xi_i}|^2))   \, \d v_g\nonumber\\&\,+ \sum_{j } \int_{U_{r_0}(\xi_j)} \frac{ \kap(\xi_i) \lam_j^{4\al_j} \cF^j_{\xi} \rg^j_{\xi} H^g(\cdot,\xi_i) }{(1+\lam_{j}^2|y_{\xi_j}|^2)^{2\al_j}}  \, \d v_g  \\
				&\,
				+O\Big(  \sum_{j } 
				\int_{U_{r_0}(\xi_j)}
				\frac{\lam_j^{4\al_j} \cF^j_{\xi} \rg^j_{\xi}  }{(1+\lam_{j}^2|y_{\xi_j}|^2)^{2\al_j}} \frac{\ln \lam_i}{\lam_i^{2}} \, \d v_g + 1 \Big)\\
				:=&\, I^{(1)}_1+ I^{(2)}_1+  I^{(3)}_1.
			\end{align*}
			We decompose
			\begin{align*}
				I^{(1)}_1=&\,\int_{U_{r_0}(\xi_i)} \frac{ 4\lam_{i}^{4\al_i}\ln \lam_{i}\cF^i_{\xi} \rg^i_{\xi}}{(1+\lam_i^2|y_{\xi_i}|^2)^{2\al_i}}\, \d v_g-\int_{U_{r_0}(\xi_i)} \frac{ 2\lam_{i}^{4\al_i}\ln(1+\lam_{i}^2|y_{\xi_i}|^2)\cF^i_{\xi} \rg^i_{\xi}}{(1+\lam_i^2|y_{\xi_i}|^2)^{2\al_i}} \, \d v_g
				\\:=&\, I^{(1,1)}+I^{(1,2)}.
			\end{align*}
			It follows from  \eqref{eq:est_Fg_0rd} that 	
			\begin{align*}
				I^{(1,1)}=	&\,\frac{\kap ( \xi_i)(\cF^i_{\xi}\rg^i_{\xi})(\xi_i) }{2(2\al_i-1)}\lam_{i}^{4\al_i-2}\ln\lam_i+4 B\Big(\frac{3}{2},2\al_i-\frac{3}{2}\Big)\mathfrak{f}_2(\xi_i)\lam_i^{4\al_i-3}\ln\lam_i\nonumber\\
				&\,+O\Big(\lam_i^{4\al_i-2}\Big(\frac{\ln^2 \lam_i}{\lam_i^2}+\frac{|\al_i-1|\ln\lam_i}{\lam_i^{3/2}}\Big)+\ln\lam_i\Big).
			\end{align*}
			Using Taylor expansion and  \eqref{varphixi}, we derive that 
			\begin{align*}
				I^{(1,2)} =&\,-2(\cF^i_{\xi} \rg^i_{\xi})(\xi_i) \int_{B^{\xi_i}_{r_0}} \frac {\lam_i^{4\al_i} \ln( 1+\lam_i^2|y|^2)} { ( 1+\lam_i^2|y|^2)^{2\al_i}}\,\d y \nonumber\\&\,-2\int_{B^{\xi_i}_{r_0}} \frac {\lam_i^{4\al_i} \ln( 1+\lam_i^2|y|^2)} { ( 1+\lam_i^2|y|^2)^{2\al_i}}\nabla_y (e^{\hat\varphi_i}\cF^i_{\xi} \rg^i_{\xi}(y_{\xi_i}^{-1}(y)))\Big|_{y=0}\cdot y \,\d y\\&\,+O\Big( \int_{B^{\xi_i}_{r_0}} \frac {\lam_i^{4\al_i}|y|^2 \ln( 1+\lam_i^2|y|^2)} { ( 1+\lam_i^2|y|^2)^{2\al_i}}\,\d y\Big)\\
				=&\,-\frac{\kap(\xi_i) (\cF^i_{\xi} \rg^i_{\xi})(\xi_i)}{4(2\al_i-1)^2}  \lam_i^{4\al_i-2}- 2\mathfrak{B}_{\al_i} \mathfrak{f}_2(\xi_i)  \lam_i^{4\al_i-3} \\
				&\,+O\Big(\lam_i^{4\al_i-2}\Big(\frac{\ln^2\lam_i}{\lam_i^2}+\frac{|\al_i-1|}{\lam_{i}^{3/2}} \Big)+\ln\lam_i\Big),	
			\end{align*}
			where $\mathfrak{B}_{\al_i}:=\int_{0}^{+\infty}\frac{\sqrt{t}\ln (1+t)}{(1+t)^{2\al_i}}\, \d t$,  and we used the estimates
			\begin{align*}
				\int_{B^{\xi_i}_{r_0}} \frac {\lam_i^{4\al_i} \ln( 1+\lam_i^2|y|^2)} { ( 1+\lam_i^2|y|^2)^{2\al_i}}\,\d y=&\,\frac{\kap(\xi_i)}{8}\lam_i^{4\al_i-2}\Big(\frac{1}{(2\al_i-1)^2}+O\Big(\frac{\ln\lam_i}{\lam_i^{4\al_i-2}}\Big)\Big),\\\mathfrak{f}_2(\xi_i)\int_{B^{\xi_i}_{r_0}} \frac {\lam_i^{4\al_i} \ln( 1+\lam_i^2|y|^2)y_2} { ( 1+\lam_i^2|y|^2)^{2\al_i}} \,\d y=&\,\mathfrak{f}_2(\xi_i)\Big(\lam_i^{4\al_i-3}\int_{0}^{+\infty}\frac{\sqrt{t}\ln (1+t)}{(1+t)^{2\al_i}}\,\d t+O(\ln\lam)\Big),
			\end{align*}
			and 
			\[
			\int_{B^{\xi_i}_{r_0}} \frac {\lam_i^{4\al_i}|y|^2 \ln( 1+\lam_i^2|y|^2)} { ( 1+\lam_i^2|y|^2)^{2\al_i}}\,\d y=O(\lam_i^{4\al_i-4}\ln\lam_i(\ln\lam_i+|\al_i-1|\sqrt{\lam_i})).	
			\]
			Thus, we get
			\begin{align*}
				I^{(1)}_1=&\, \frac{\kap( \xi_i)(\cF^i_{\xi} \rg^i_{\xi})(\xi_i)}{2(2\al_i-1)} \lam_{i}^{4\al_i-2}\ln \lam_i+4 B\Big(\frac{3}{2},2\al_i-\frac{3}{2}\Big)\mathfrak{f}_2(\xi_i) \lam_i^{4\al_i-3} \ln \lam_i \\
				&\,- \frac{\kap(\xi_i)(\cF^i_{\xi} \rg^i_{\xi})(\xi_i)}{4(2\al_i-1)^2}   \lam_i^{4\al_i-2}  -2 \mathfrak{B}_{\al_i} \mathfrak{f}_2(\xi_i)   \lam_i^{4\al_i-3}\\
				&\,+ O\Big(\lam_i^{4\al_i-2}\Big(\frac{\ln^2\lam_i}{\lam_i^2}+\frac{|\al_i-1|\ln \lam_{i}}{\lam_{i}^{3/2}} \Big)+\ln\lam_i\Big). 
			\end{align*}
			Note that   $\pa_{\nu_g} G^g(x,\xi_i)=0$ for any $x\in  \pa\Si\setminus\{\xi_i\}$, and for $\xi_i\in \pa\Si$, it holds  $\pa_{\nu_g} H^g(x,\xi_i)|_{x=\xi_i}=0$. Moreover, by the definition of $H^g(x,\xi_i)$, we have $H^g(x,\xi_i)= G^g(x,\xi_i)$ for any $x\in U_{r_0}(\xi_j)$ with $j\neq i$.  Proceeding as in the estimate \eqref{eq:est_Fg_0rd}, we have 
			\begin{align*}
				I^{(2)}_1=&\, \sum_{j }\int_{B^{\xi_j}_{r_0}} \frac{ \kap(\xi_i) \lam_j^{4\al_j} (H^g(\xi_j,\xi_i)(\cF^j_{\xi} \rg^j_{\xi})(\xi_j))   }{(1+\lam_{j}^2|y|^2)^{2\al_j}}  \, \d y 
				\\
				&\,+ \sum_{j }  \int_{B^{\xi_j}_{r_0}}\frac{ \kap(\xi_i) \lam_j^{4\al_j} \nabla_y(( e^{\hat\varphi_j}  \cF^j_{\xi} \rg^j_{\xi} H^g(\cdot,\xi_i))\circ y_{\xi_j}^{-1})(0)\cdot y  }{(1+\lam_{j}^2|y_{\xi_j}|^2)^{2\al_j}}  \, \d y \\
				&\,+ O\Big(\sum_{j }  \int_{B^{\xi_j}_{r_0}} \frac {\lam_j^{4\al_j}|y|^2 } { ( 1+\lam_j^2|y|^2)^{2\al_j}}\,\d y\Big)\\
				=&\, \sum_{j } \frac{ \kap(\xi_i)\kap(\xi_j) }{ 8(2\al_j-1)}( H^g(\xi_j,\xi_i)(\cF^i_{\xi}\rg^i_{\xi})(\xi_j) )\lam_j^{4\al_j-2}\\&\,+ \sum_{j } B\Big(\frac{3}{2},2\al_j-\frac{3}{2}\Big) \kap(\xi_i)  H^g(\xi_j,\xi_i)\mathfrak{f}_2(\xi_j) \lam_j^{4\al_j-3} \\
				&\,+ O\Big( \sum_{j } \lam_{j}^{4\al_j-2} 
				\Big( \frac{\ln \lam_j}{\lam_{j}^2}+ \frac{ |\al_j-1|}{\lam_{j}^{3/2}}\Big) +1\Big).
			\end{align*}
			From  \eqref{eq:est_Fg_0rd} we also know that 
			\[
			I^{(3)}_1=O\Big(\sum_{j } \lam_j^{4\al_j-2}\frac{\ln \lam_i}{\lam_i^2}+ 1\Big).
			\]
			Summing the estimates  for $I_1^{(1)}$, $I_1^{(2)}$ and $I_1^{(3)}$, we obtain 
			\begin{align}	
				I_1=& \,\frac{\kap( \xi_i)(\cF^i_{\xi} \rg^i_{\xi})(\xi_i)}{2(2\al_i-1)} \lam_{i}^{4\al_i-2}+4 B\Big(\frac{3}{2},2\al_i-\frac{3}{2}\Big)\mathfrak{f}_2(\xi_i) \lam_i^{4\al_i-3} \nonumber\\
				&\,- \frac{\kap(\xi_i)(\cF^i_{\xi} \rg^i_{\xi})(\xi_i)}{4(2\al_i-1)^2}   \frac{\lam_i^{4\al_i-2}}{\ln \lam_i}  -2 \mathfrak{B}_{\al_i}  \mathfrak{f}_2(\xi_i)\frac{ \lam_i^{4\al_i-3}}{\ln \lam_i}\nonumber\\
				& +\sum_{j } \frac{ \kap(\xi_i)\kap(\xi_j) }{ 8(2\al_j-1)}( H^g(\xi_j,\xi_i)(\cF^i_{\xi} \rg^i_{\xi})(\xi_j))\frac{\lam_j^{4\al_i-2}}{\ln \lam_i}\nonumber\\&\,+ \sum_{j }  B\Big(\frac{3}{2},2\al_j-\frac{3}{2}\Big) \kap(\xi_i)H^g(\xi_j,\xi_i)\mathfrak{f}_2(\xi_j) \frac{\lam_j^{4\al_j-3}}{\ln \lam_{i}} \nonumber\\
				&\,+ O\Big(1+\sum_{j } \lam_j^{4\al_j-2} \Big(\frac{\ln\lam_j}{\lam_j^2}+\frac{|\al_j-1|}{\lam_{j}^{3/2}} \Big)\Big). \label{prop:bubble_expansionI1final}
			\end{align}

			\textbf{Step 2: Estimate of $I_2$.}  
			By applying Lemmas \ref{lem:projbubble_pointwise} and \ref{lem:w_error_integral_estimates}, together with  the H\"older inequality and  the Sobolev embedding theorem, we deduce that  
			\begin{align*}
				\int_{\Si}e^{u_0}|w|\,\d v_g=&\,\sum_{j } 	\int_{U_{r_0}(\xi_j)} e^{u_0}|w|\,\d v_g+\int_{\Si\setminus \cup_{j } U_{r_0}(\xi_j)} e^{u_0}|w|\,\d v_g\\=&\,O\Big(\sum_{j }  \int_{U_{r_0}(\xi_j)} \frac{\lam_j^{4\al_j}}{(1+\lam_j|y_{\xi_j}|^2)^{2\al_j}}|w|\,\d v_g\Big)+O(\|w\|)
				\\=&\,O\Big(\sum_{j }   \lam_j^{4\al_j-2}\Big(\int_{U_{r_0}(\xi_j)} e^{\delta_{j} }|w|^2\,\d v_g\Big)^{1/2}\Big(\int_{U_{r_0}(\xi_j)} \frac{\lam_j^2}{(1+\lam_j^2|y_{\xi_j}|^2)^{4\al_j-2}} \,\d v_g\Big)^{1/2}\Big)\\
				&\,+O(\|w\|)\nonumber
				\\=&\,O\Big(\sum_{j }  \lam_j^{4\al_j-2}\|w\|\Big).
			\end{align*}
			As a result, using Lemma \ref{lem:projbubble_pointwise} again, we obtain  
			\begin{align*}
				I_2= &\,\frac1 {\ln \lam_{i}} \int_{U_{r_0}(\xi_i)} V e^{u_0} P\delta_{i}  w\,\d v_g+O\Big( \frac1 {\ln \lam_{i}}\int_{\Si \setminus U_{r_0}(\xi_i)} e^{u_0}  | w|\,\d v_g\Big)\\=&\, \frac1 {\ln \lam_{i}}\int_{U_{r_0}(\xi_i)} V e^{u_0} P\delta_{i}  w \,\d v_g+O\Big(\sum_{j }  \frac{\lam_j^{4\al_j-2} } {\ln \lam_{i}}\|w\|\Big).
			\end{align*}
			Moreover, we deduce from  Lemmas \ref{lem:projbubble_pointwise} and \ref{lem:Ve^{u_0}_pointwise} that
			\begin{align*}
				\int_{U_{r_0}(\xi_i)} V e^{u_0} P\delta_{i}  w\,\d v_g=&\,	\int_{U_{r_0}(\xi_i)} \frac{\lam_i^{4\al_i}\cF^i_{\xi}\rg^i_{\xi}}{ ( 1+\lam_i^2|y_{\xi_i}|^2)^{2\al_i}}\Big(1  +\sum_{j }  \frac{\al_j\kap( \xi_j)}{2|\Si |_g} \frac{\ln  \lam_j}{\lam_j^2}+ O\Big( \sum_{j } \frac{\al_j}{\lam_j^2} \Big)\Big) \\&\,\times \Big(2\ln \Big(\frac{\lam_i^2}{1+\lam_i^2|y_{\xi_i}|^2}\Big)+O(1)\Big)w \,\d v_g
				\\=&\,\frac{\lam_i^{4\al_i-2}}{2}\ln\lam_i\int_{U_{r_0}(\xi_i)}e^{\delta_{i} }\Phi_i \cF^i_{\xi}\rg^i_{\xi}w\,\d v_g\\&\,+O\Big(	\int_{U_{r_0}(\xi_i)}\frac{\lam_i^{4\al_i}\ln (1+\lam_i^2|y_{\xi_i}|^2)}{ ( 1+\lam_i^2|y_{\xi_i}|^2)^{2\al_i}}|w|\, \d v_g\Big)\\&\,+O\Big(	\int_{U_{r_0}(\xi_i)}\frac{\lam_i^{4\al_i}}{ ( 1+\lam_i^2|y_{\xi_i}|^2)^{2\al_i}}|w|\, \d v_g\Big).
			\end{align*}
			By	using Taylor expansion, \eqref{Phibound}, \eqref{orthogonality} and Lemma \ref{lem:A.5}, we have
			\begin{align*}
				\int_{U_{r_0}(\xi_i)}e^{\delta_{i} }\Phi_i \cF^i_{\xi}\rg^i_{\xi}w\,\d v_g=&\,\int_{U_{r_0}(\xi_i)}e^{\delta_{i} } \cF^i_{\xi}\rg^i_{\xi}w\,\d v_g+O\Big(\int_{U_{r_0}(\xi_i)}e^{\delta_{i} }|\Phi_i-1| |w|\,\d v_g\Big)
				\\=&\,(\cF^i_{\xi}\rg^i_{\xi})(\xi_i)\int_{U_{r_0}(\xi_i)} e^{\delta_{i} } w\,\d v_g\\&\,+O\Big(\int_{U_{r_0}(\xi_i)}e^{\delta_{i} }(|y_{\xi_i}|+|\al_i-1|\sqrt{\lam_i|y_{\xi_i}|}) |w|\,\d v_g\Big)
				\\=&\,O\Big(\frac{\|w\|}{\lam_i^2}\Big)+O\Big(\Big(\frac{1}{\lam_i}+|\al_i-1|\Big)\|w\|\Big)
				\\=&\, O\Big(\Big(\frac{1}{\lam_i}+|\al_i-1|\Big)\|w\|\Big).
			\end{align*}
			Applying the H\"older inequality  together with Lemma \ref{lem:w_error_integral_estimates}  gives that 
			\[
			O\Big(\int_{U_{r_0}(\xi_i)}\frac{\lam_i^{4\al_i}\ln (1+\lam_i^2|y_{\xi_i}|^2)}{ ( 1+\lam_i^2|y_{\xi_i}|^2)^{2\al_i}}|w|\, \d v_g+\int_{U_{r_0}(\xi_i)}\frac{\lam_i^{4\al_i}}{ ( 1+\lam_i^2|y_{\xi_i}|^2)^{2\al_i}}|w|\, \d v_g\Big)=O(\lam_i^{4\al_i-2}\|w\|).
			\]
			By combining the estimates obtained in this step, we derive that 
			\begin{equation}\label{prop:bubble_expansionI2final}
				I_2=
				O\Big(\Big(|\al_i-1|+\frac{1}{\ln\lam_i }\Big)\sum_{j }  \lam_j^{4\al_j-2}\|w\|\Big).
			\end{equation}

			\textbf{Step 3: Estimate of $I_3$.}
			Lemma \ref{lem:projbubble_pointwise} implies that 
			\[	I_3=O\Big(\frac 1{\ln \lam_{i}}\int_{\Si} V e^{u_0}|e^w-1-w|\, \d v_g\Big).\]	
			By the same argument  as in Step 3 of the proof of Lemma \ref{lem:Ve^u_integral}, we obtain
			\begin{equation}\label{prop:bubble_expansionI3final}
				I_3=O\Big(\sum_{j}\frac{\lam_j^{4\al_j-2}}{\ln \lam_i} \|w\|^2\Big).
			\end{equation}

			Finally, we conclude  from   the  estimates \eqref{eq:expansion_projectedbubble_first}--\eqref{prop:bubble_expansionI3final} and Lemma  \ref{lem:Ve^u_integral} that  
			\begin{align*}
				\Big\la\nabla J_{\rho} (u), \frac{P \delta_i  }{\ln \lam_i }\Big\ra=&\, 4\kap (\xi_i)( (\al _i-1)+(\tau_i-\mu+\mu\tau_i))\nonumber\\
				&\,- \Big(4 B\Big(\frac{3}{2},2\al_i-\frac{3}{2}\Big)  +\frac{2\mathfrak{B}_{\al _i}}{\ln \lam_{i}} 
				\Big) \frac{\rho \mathfrak{f}_2(\xi_i)}{\int_\Si  V e^u  \, \d v_g} \lam_i^{4\al_i-3}\nonumber \\
				&\,-\sum_{j }  B\Big(\frac{3}{2},2\al_j-\frac{3}{2}\Big) \frac{\rho  H^g(\xi_j,\xi_i)\mathfrak{f}_2(\xi_j) }{\int_\Si   V e^u \, \d v_g } \frac{\lam_{j}^{4\al _j-3}}{\ln \lam _i}\nonumber\\
				&
				\,+O \Big(\sum_{j }  \frac{|\al _j-1|}{\ln \lam _i}+\sum_{j }  \frac{ |\tau_j-\mu+\mu\tau_j|}{\ln \lam _i}+\Big(\frac 1 {\ln \lam_{i}}+ |\al _i-1|\Big)\|w\|+\|w\|^2\Big)\nonumber\\
				&\,+ O \Big(\frac{\ln \lam _i}{\lam_{i}^2}+\frac 1 {\lam _i^{4\al _i-2}}\Big).
			\end{align*}
			Combining Lemma \ref{lem:Ve^u_integral} with \eqref{eq:B(3/2,1/2)} and  $\mathfrak{B}_{\al _i}=\mathfrak{B}_{1}+O(|\al_i-1|)\to \frac{\pi}{2}(1+2\ln 2)$ as $\al_i\to 1$, 
			the desired estimate	 \eqref{eq:est_PU_0_gradient}  follows.
		\end{proof}

		\begin{proof}[Proof of Proposition \ref{prop:3}]
			It follows from \eqref{orthogonality} that
			\begin{equation}
				\label{eq:expansion_projectedbubble_xi}
				\Big\langle\nabla J_{\rho }(u), \frac{1}{\lam_i}\frac{\pa P\delta_{i}  }{\pa (\xi_i)_j}\Big\rangle=\sum_{s}  \al_s\Big\langle P \delta_{s} , \frac{1}{\lam_i}\frac{\pa P\delta_{i}  }{\pa (\xi_i)_j}\Big\rangle-\frac{\rho \int_{\Si } Ve^{u} \frac{1}{\lam_i}\frac{\pa P\delta_{i}  }{\pa (\xi_i)_j}\, \d v_g}{\int_{\Si } Ve^{u} \,\d v_g}.
			\end{equation}
			We begin by estimating the first term on the R.H.S. of  \eqref{eq:expansion_projectedbubble_xi}. Using Lemma \ref{lem:projbubble_derivative_integral}, we have
			\[
			\sum_{s}  \al_s\Big\langle P \delta_{s} , \frac{1}{\lam_i}\frac{\pa P\delta_{i}  }{\pa (\xi_i)_j}\Big\rangle= \al_i\frac{\kap^2( \xi_i)}{2\lam_i} \frac{ \pa  R^g(\xi_i) }{\pa (\xi_i)_j}+\sum_{s\neq i} \al_s \frac{\kap( \xi_i)\kap(\xi_s)}{\lam_i} \frac{ \pa  G^g(\xi_i,\xi_s) }{\pa (\xi_i)_j}+O\Big(\sum_{s}\frac{\ln \lam_i }{\lam_i^{3} }  \Big).
			\]
			Next, we proceed to estimate the second term in the R.H.S. of \eqref{eq:expansion_projectedbubble_xi}.	 To this end,   we decompose
			\begin{align*}
				\int_{\Si } \frac{Ve^{u}}{\lam_i}\frac{\pa P\delta_{i}  }{\pa (\xi_i)_j} \,\d v_g=&	\,\int_{\Si }   \frac{V e^{u_0}}{\lam_i} \frac{\pa P\delta_{i}  }{\pa (\xi_i)_j} \,\d v_g+	\int_{\Si }  \frac{V e^{u_0}}{\lam_i} \frac{\pa P\delta_{i}  }{\pa (\xi_i)_j} w\,\d v_g
				\\&\,+	\int_{\Si }  \frac{V e^{u_0}}{\lam_i} \frac{\pa P\delta_{i}  }{\pa (\xi_i)_j} (e^w-w-1) \,\d v_g\\:=&\, I_1+I_2+I_3.	
			\end{align*}
			The estimates for $I_1$, $I_2$ and $I_3$ will be carried out in the following steps.
			
			\textbf{Step 1: Estimate of $I_1$.} 
			Using  Lemmas \ref{lem:projbubble_xi_pointwise} and \ref{lem:Ve^{u_0}_pointwise}, we get
			\begin{align*}
				I_1= & \,\sum_{s}  \int_{U_{r_0}(\xi_s)} \frac{\lam_{s}^{4\al_s}\mathcal{F}_{\xi}^s \mathrm{g}_{\xi}^s(1+O(\frac{\ln \lam_s}{\lam_s^2}))}{(1+\lam_s^2|y_{\xi_s}|)^{2 \al_s}} \frac{1}{\lam_i}\Big(\frac{-4\chi_{i} \lam_{i}^2 (y_{\xi_i})_j}{1+\lam_i^2|y_{\xi_i}|^2}\\&\,+\kap(\xi_i)\frac{ \pa H^g(\cdot, \xi_i)}{\pa (\xi_i)_j}+O\Big( \frac{\ln \lam_i}{\lam_i^2}\Big)\Big)\, \d v_g 
				+O\Big(\frac{1}{\lam_i}\Big) \\
				=&\, -4 \int_{U_{r_0}(\xi_i)} \frac{\lam_i^{4\al_i+1}\cF_{\xi}^i\rg_{\xi}^i(y_{\xi_i})_j}{(1+\lam_i^2|y_{\xi_i}|^2)^{2\al_i+1}}\, \d v_g \nonumber\\&\,+\sum_{s}  \int_{U_{r_0}(\xi_s)}   \frac{\lam_s^{4\al_s}\cF_{\xi}^s\rg_{\xi}^s   }{(1+\lam_s^2|y_{\xi_s}|^2)^{2\al_s}}\frac{\kap(\xi_i)}{\lam_i}\frac{ \pa H^g(\cdot, \xi_i)}{\pa (\xi_i)_j}\, \d v_g  \nonumber  \\
				&\,+O\Big(\sum_{s}  \frac{\ln \lam_i}{\lam_i^3} \int_{U_{r_0}(\xi_s)}   \frac{\lam_s^{4\al_s}\cF_{\xi}^s\rg_{\xi}^s }{(1+\lam_s^2|y_{\xi_s}|^2)^{2\al_s}}\, \d v_g  +\frac 1 {\lam_i} \Big)\\
				:=&\, I^{(1)}_1+I^{(2)}_1+O(I^{(3)}_1).
			\end{align*}
			By Taylor expansion and \eqref{varphixi}, we obtain 
			\begin{align*}
				I_1^{(1)}	=&\, -4(\cF^i_{\xi}\rg^i_{\xi})(\xi_i) \int_{B_{r_0}^{\xi_i}} 	\frac{ \lam_i^{4\al_i+1}   y_j
				}{ ( 1+\lam_i^2|y|^2)^{2\al_i+1}}  \,\d y\\&\, -4\int_{B_{r_0}^{\xi_i}} 	\frac{  \lam_i^{4\al_i+1} y_j 
				}{ ( 1+\lam_i^2|y|^2)^{2\al_i+1}} \nabla_y (e^{\hat\varphi_{i}(y)}(\cF^i_{\xi} \rg^i_{\xi})(y_{\xi_i}^{-1}(y))\Big|_{y=0}\cdot y \,\d y\\&\,+ O\Big(\int_{B_{r_0}^{\xi_i}} 	\frac{ \lam_i^{4\al_i+1} |y|^3 
				}{ ( 1+\lam_i^2|y|^2)^{2\al_i+1}} \,\d y\Big)\\
				=&\,- \frac{\kap(\xi_i)}{8\al_i(2\al_i-1)}\frac{\pa(e^{\varphi_{i}}\cF_{\xi}^i \rg_{\xi}^i )(\xi_i)}{\pa (\xi_i)_j }\lam_i^{4\al_i-3}
				+	O(\lam_i^{4\al_i-4}),
			\end{align*}
			where we used the estimates 
			\begin{gather*}
				\int_{B_{r_0}^{\xi_i}} 	\frac{   y_j 
				}{ ( 1+\lam_i^2|y|^2)^{2\al_i+1}}\, \d y=0, \quad \forall\, j\in\{1,\ii(\xi_i)\} ,\\\int_{B_{r_0}^{\xi_i}} 	\frac{ 4 \lam_i^{4\al_i+1} y_j^2 
				}{ ( 1+\lam_i^2|y|^2)^{2\al_i+1}}\, \d y=\frac{\kap(\xi_i)}{8}\frac{\lam_i^{4\al_i-3}}{\al_i(2\al_i-1)}+O\Big(\frac 1 \lam_i\Big),
			\end{gather*}
			and
			\[ 
			\int_{B_{r_0}^{\xi_i}} 	\frac{ \lam_i^{4\al_i+1} |y|^3 
			}{ ( 1+\lam_i^2|y|^2)^{2\al_i+1}} \,\d y =O(\lam_i^{4\al_i-4} ).
			\]
			Similarly, 	by using that  for any $x\in  \pa\Si\setminus\{\xi_s\}$, $\pa_{\nu_g} G^g(x,\xi_s)=0$ and for $\xi_s\in \pa\Si$, $\pa_{\nu_g} H^g(x,\xi_i)|_{x=\xi_i}=0$,    $H^g(x,\xi_s)= G^g(x,\xi_s)$ for any $x\in U_{r_0}(\xi_s)$ with $s\neq i$, we have
			\begin{align*}
				I^{(2)}_1
				=&\, \sum_{s}  \frac{\kap(\xi_i)}{\lam_i} (\cF^s_{\xi} \rg^s_{\xi})(\xi_s)	\frac{\pa  H^g(x,\xi_i)}{\pa (\xi_i)_j}\Big|_{x=\xi_s}	\int_{U_{r_0}(\xi_s)} 	\frac{\lam_s^{4\al_s} 
				}{( 1+\lam_s^2|y_{\xi_s}|^2)^{2\al_s}} \,\d v_g  \\&\,+O\Big(\sum_{s}\frac{\lam_s^{4\al_s-4}(\ln\lam_s+|\al_s-1|\sqrt{\al_s})}{\lam_i}\Big) \\
				=& \,\sum_{s}  \frac{1}{\lam_{i}}\frac {\kap( \xi_i)\kap(\xi_s)} {8(2\al_s-1)}\frac{\pa  H^g(x,\xi_i)}{\pa (\xi_i)_j}\Big|_{x=\xi_s} (\cF^s_{\xi} \rg^s_{\xi})(\xi_s)
				\lam_s^{4\al_s-2}\\&\,+O\Big(\sum_{s}\frac{\lam_s^{4\al_s-4}(\ln\lam_s+|\al_s-1|\sqrt{\al_s})}{\lam_i}\Big),
			\end{align*}	
			where we used the estimates 
			\[	
			\int_{B_{r_0}^{\xi_s}} 	\frac{\lam_s^{4\al_s} 
			}{( 1+\lam_s^2|y|^2)^{2\al_s}} \,\d y=\frac{\kap(\xi_s)}{4}\lam_s^{4\al_s-2}\Big(\frac{1}{2(2\al_i-1)}+O\Big(\frac{1}{\lam_s^{4\al_s-2}}+\frac{1}{\lam_s^2}\Big)\Big)
			\]
			and
			\[
			\int_{B_{r_0}^{\xi_s}} 	\frac{  \lam_s^{4\al_s} |y|^2 
			}{ ( 1+\lam_s^2|y|^2)^{2\al_s}}\, \d y=O\Big(\sum_{s}\lam_s^{4\al_s-4}(\ln\lam_s+|\al_s-1|\sqrt{\al_s})\Big).
			\]
			By \eqref{eq:est_Fg_0rd}, we have 
			\[ 
			I^{(3)}_1= O\Big(\sum_{s}  \lam_s^{4\al_s-2}\frac{\ln\lam_i}{\lam_i^3} +\frac{1}{\lam_i}\Big).
			\]
			Observe that for any $s\in\{1,\ldots,p+q\}$, $H^g(x,\xi_s)=G^g(x,\xi_s)$ for any $x\notin U_{2r_0}(\xi_s)$.
			Therefore, we have 
			\begin{align*}
				I_1=&\, -\frac{\kap(\xi_i)}{8\al_i(2\al_i-1)} \frac{\pa(e^{\varphi_{i}}\cF_{\xi}^i \rg_{\xi}^i )(\xi_i)}{\pa (\xi_i)_j}\lam_i^{4\al_i-3}+
				\frac {\kap ^2(\xi_i)(\cF^i_{\xi}\rg^i_{\xi})(\xi_i)}{16(2\al_i-1)}  
				\frac{\pa R(\xi_i)}{\pa(\xi_i)_j}\lam_i^{4\al_i-3}\\ &\,+\sum_{s\neq i} \frac{\kap( \xi_s)\kap( \xi_i)(\cF^s_{\xi} \rg^s_{\xi} )(\xi_s)}{8(2\al_s-1)\lam_i}  \frac{\pa  G^g(\xi_s,\xi_i)}{\pa (\xi_i)_j}\lam_s^{4\al_s-2}
				\\
				&\,+	O(\lam_i^{4\al_i-4}\ln  \lam_{i}). 
			\end{align*}
			Since $\rg_{\xi}^s(\xi_s)=1+O(|\al_s-1|)$
			and \[ 
			\kap(\xi_i)\frac{\pa ( e^{\varphi_{i}}\cF_{\xi}^i \rg_{\xi}^i)(\xi_i)}{\pa (\xi_i)_j}= (\cF^i_{\xi}\rg^i_{\xi})(\xi_i)\Big(\frac{1}{2}\frac{\pa \ff(\xi)}{\pa(\xi_i)_j}+ O(|\al_i-1|)\Big) ,
			\]
			we have 
			\begin{align*}
				I_1=&\,
				\frac {\kap ^2(\xi_i)(\cF^i_{\xi}\rg^i_{\xi})(\xi_i)}{16(2\al_i-1)}  
				\frac{\pa R(\xi_i)}{\pa(\xi_i)_j}\lam_i^{4\al_i-3}\\&\, +\sum_{s\neq i} \frac{\kap( \xi_s)\kap( \xi_i)(\cF^s_{\xi} \rg^s_{\xi} )(\xi_s)}{8(2\al_s-1)\lam_i}  \frac{\pa  G^g(\xi_s,\xi_i)}{\pa (\xi_i)_j}\lam_s^{4\al_s-2}\\
				&\,+ \frac{\kap(\xi_i)(\cF^i_{\xi}\rg^i_{\xi})(\xi_i))}{16\al_i(2\al_i-1)}\frac{\pa \ff}{\pa(\xi_i)_j }\lam_i^{4\al_i-3}+	O\Big(\lam_i^{4\al_i-2}\Big(\frac{|\al_i-1|}{\lam_i}+\frac{\ln  \lam_{i}}{\lam_i^{2}}\Big)\Big).
			\end{align*}
			
			\textbf{Step 2: Estimate of $I_2$.} 	It follows from   Lemmas \ref{lem:projbubble_xi_pointwise}, \ref{lem:Ve^{u_0}_pointwise} and the Sobolev embedding theorem that   \begin{align*}
				I_2
				=& \, \int_{U_{r_0}(\xi_i)} \frac{\lam_i^{4\al_i-1}\cF_{\xi}^i\rg_{\xi}^i}{(1+\lam_i^2|y_{\xi_i}|^2)^{2\al_i}}\frac{\pa \delta_i}{\pa (\xi_i)_j}w\, \d v_g\\&\, +\sum_{s}  \int_{U_{r_0}(\xi_s)}   \frac{\lam_s^{4\al_s}\cF_{\xi}^s\rg_{\xi}^s   }{(1+\lam_s^2|y_{\xi_s}|^2)^{2\al_s}}\frac{\kap(\xi_i)}{\lam_i}\frac{ \pa H^g(\cdot, \xi_i)}{\pa (\xi_i)_j}w\, \d v_g  \nonumber  \\
				&\,+O\Big(\sum_{s}  \frac{\ln \lam_i}{\lam_i^3} \int_{U_{r_0}(\xi_s)}   \frac{\lam_s^{4\al_s}\cF_{\xi}^s\rg_{\xi}^s |w|}{(1+\lam_s^2|y_{\xi_s}|^2)^{2\al_s}}\, \d v_g  +\frac{\|w\|} {\lam_i} \Big).
			\end{align*}
			By \eqref{projectedbubble}, \eqref{orthogonality},  the H\"{o}lder inequality, and the Sobolev embedding theorem,  we have 
			\begin{align*}
				&\int_{U_{r_0}(\xi_i)} \frac{\lam_i^{4\al_i-1}  }{(1+\lam_i^2|y_{\xi_i}|^2)^{2\al_i}} \Big(e^{-\varphi_{i}} \frac{\pa \delta_{i}}{\pa (\xi_i)_j} \Big)w\, \d v_g \nonumber\\
				=&\,\lam_i^{4\al_i-3}\Big\la  \frac{\pa P\delta_{i}}{\pa (\xi_i)_j} ,w\Big\ra -\int_{\Si\setminus U_{r_0}(\xi_i)} \frac{\lam_i^{4\al_i-1}  }{(1+\lam_i^2|y_{\xi_i}|^2)^{2}}\Big(e^{-\varphi_{i}} \frac{\pa \delta_{i}}{\pa (\xi_i)_j} \Big)w\, \d v_g\nonumber\\
				&\, -\lam_i^{4\al_i-3}\int_\Sigma \chi_{i}e^{-\varphi_{i}} \frac{\pa \delta_{i}}{\pa(\xi_i)_j}  \, \d v_g \int_{\Si}w  \, \d v_g \\=& O(\lam_i^{4\al_i-5}\|w\|)
			\end{align*}
			and 
			\[
			\sum_{s} \frac{1}{\lam_i} \int_{U_{r_0}(\xi_s)}   \frac{\lam_s^{4\al_s} |w|}{(1+\lam_i^2|y_{\xi_i}|^2)^{2}}\big( \lam_{i} |\al_i-1|\sqrt{\lam_{s}|y_{\xi_s}|}+ 1\big)\, \d v_g=O\Big(\sum_{s}  \frac{\lam_s^{4\al_j-2}}{\lam_i}(1+\lam_i |\al_i-1|)\|w\|\Big).
			\]	
			By the above estimates, Lemma \ref{lem:w_error_integral_estimates} and  \eqref{Phibound}, we deduce that  
			\[  	
			I_2= O\Big(\sum_{j}  \lam_j^{4\al_j-2}\Big(|\al_j-1|\|w\|+ \frac{\|w\|}{\lam_{i}}\Big )+ \frac{\|w\|}{\lam_i}\Big).   
			\]

			\textbf{Step 3: Estimate of $I_3$.}	
			Lemma  \ref{lem:projbubble_xi_pointwise} implies that 
			\[
			|I_3|=O\Big(\int_{\Si } V e^{u_0}|e^w-w-1| \,\d v_g\Big) .
			\]	
			By the same argument  as in Step 3 of the proof of Lemma \ref{lem:Ve^u_integral}, we obtain 
			\begin{equation}\label{prop2:I3}
				I_3  = O\Big(\sum_{s}  \lam_s^{4\al_s-2} \|w\|^2\Big).
			\end{equation}

			Finally, combining all the estimates above  with Lemma \ref{lem:Ve^u_integral}, we conclude the proof of  \eqref{eq:gradient_pa_xi_i}. 
		\end{proof}

		\begin{proof}[Proof of Proposition \ref{prop:J(u)expansion}]
			By applying Lemmas  \ref{lem:w_error_integral_estimates} and \ref{lem:Ve^{u_0}_pointwise}, we obtain
			\begin{align*}	
				\int_\Si  V e^{u_0}\Big( e^w-1-w-\frac{1}{2} w^2\Big) \,\d v_g =&O\Big(\sum_{j} \int_{U_{r_0}(\xi_j)} e^{\delta_{j}} |\Phi_j  | |w|^3 \,\d v_g +\|w\|^3\Big)
				\\	=&O\Big( \sum_{j} \lam_j^{4\al_j-2} \|w\|^3\Big).
			\end{align*}
			Since $w\in E_{\lam,\xi}^{p,q}$  and by using \eqref{lem:A.8-I1}, we deduce that
			\begin{align*}
				&J_{\rho }(u)-J_{\rho}( u_0)\\=& \,\frac 1 2\|w\|^2-\rho \ln \Big( 1 +\frac{\int_{\Si } V e^{u_0} w \,\d v_g}{\int_{\Si } V e^{u_0}\,\d v_g} +\frac 1 2 \frac{\int_{\Si } V e^{u_0} w^2 \,\d v_g}{\int_{\Si } V e^{u_0} \,\d v_g} + O\Big( \sum_{j} \frac{\lam_j^{4\al_j-2} \|w\|^3} {\int_{\Si } V e^{u_0} \,\d v_g} \Big) \Big)\\
				=& \,\frac 1 2\|w\|^2 -\rho \Big(\frac{\int_{\Si } V e^{u_0} w \,\d v_g}{\int_{\Si } V e^{u_0}\,\d v_g} +\frac{1}{2}\frac{\int_{\Si } V e^{u_0} w^2 \,\d v_g}{\int_{\Si } V e^{u_0} \,\d v_g}-\frac{1}{2} \Big(\frac{\int_{\Si } V e^{u_0} w \,\d v_g}{\int_{\Si } V e^{u_0}\,\d v_g} \Big)^2 + O(  \|w\|^3)\Big)\\
				=&\,f(w)+\frac{1}{2} Q(w,w)+O(\var\|w\|^2).
			\end{align*}
			The estimate \eqref{eq:J_lam} is concluded. 
		\end{proof}
		
		\begin{proof}[Proof of Proposition \ref{prop:Q_0positive}]
			By combining Lemma \ref{lem:A.7}, the proof in Lemma \ref{lem:Ve^u_integral}  and \eqref{tau_iestimate},  we obtain 
			\[ \rho \Big(\frac{\int_{\Si } V e^{u_0} w  \,\d v_g }{\int_{\Si } V e^{u_0} \,\d v_g }\Big)^2=O\Big(\sum_{i} \|w\|^2\Big( |\al_i-1|^2 + \frac 1 {\lam_i^2}\Big)\Big)=O(\var^2\|w\|^2). \]
			Using Lemmas \ref{Phibound}-\ref{lem:Ve^{u_0}_pointwise}, we further deduce that 
			\begin{align*}
				\int_{\Si } V e^{u_0} w^2 \,\d v_g =&\,
				\sum_{i} \int_{U_{r_0}(\xi_i)} \frac{\cF^i_{\xi} \rg^i_{\xi}}{ ( 1+\lam_i^2|y_{\xi_i}|^2)^{2\al_i}}\Big( 1+O\Big(\frac{\ln \lam_i}{\lam_i^2} \Big)\Big)w^2 \,\d v_g+ O( \|w\|^2)\\
				=&\,  \sum_{i} \frac{(\cF^i_{\xi} \rg^i_{\xi})(\xi_i) \lam_i^{4\al_i-2}}{8}\int_{U_{r_0}(\xi_i)} e^ {\delta_{i}}\Phi _i  w^2 \,\d v_g \\&\,+ O\Big( \lam_i^{4\al_i-2}\int_{U_{r_0}(\xi_i)} e^{\delta_{i}}\Phi _i  |y_{\xi_i}|w^2 \,\d v_g\Big)
				+O(\|w\|^2)\\
				=& \sum_{i} \frac{(\cF^i_{\xi} \rg^i_{\xi})(\xi_i)\lam_i^{4\al_i-2}}{8}\int_{U_{r_0}(\xi_i)} e^ {\delta_{i}}\Phi _i  w^2 \,\d v_g+O\Big( \sum_{i} \lam_i^{4\al_i-3}\|w\|^2\Big)\\
				=& \sum_{i} \frac{(\cF^i_{\xi} \rg^i_{\xi})(\xi_i) \lam_i^{4\al_i-2}}{8}\int_{\Si}\chi_i e^ {\delta_{i}} w^2 \,\d v_g+O\Big( \sum_{i} \lam_i^{4\al_i-2}\Big( \frac{1}{\lam_i}+|\al _i-1|\Big)\|w\|^2\Big) ,
			\end{align*}
			where $\Phi _i $ is defined in  \eqref{def:Phi}.
			Moreover, from the proof of Lemma \ref{lem:Ve^u_integral}, it follows that 
			\begin{align*}
				\int_{\Si } V e^{u_0} \,\d v_g=&\,\sum_{i} \lam_i^{4\al_i-2}\Big( \frac{\kap( \xi_i)(\cF^i_{\xi} \rg^i_{\xi})(\xi_i)}{8(2\al_i-1)} +\frac{\pi}{2}\lam_i   \mathfrak{f}_2(\xi_i)\Big)\\
				&\,+ O\Big(\sum_{i}(\ln \lam_i+|\al_i-1|)\Big)  +O(1)
			\end{align*}
			and 
			\[\int_{\Si } V e^{u_0} \,\d v_g- \int_{\Si } V e^u \,\d v_g= 	 O\Big(\sum_{i} \lam_i^{4\al_i-2} \Big(|\al_i-1|^2 + \frac{1}{\lam_i^2} +\|w\|^2\Big)\Big)  + O(\|w\|).  \]
			Thus, using  \eqref{def:tau_i} and \eqref{tau_iestimate},  we conclude that 
			\begin{align*}
				\frac{ \int_{\Si } V e^{u_0} w^2 \,\d v_g}{\int_{\Si } V e^{u_0} \,\d v_g}
				=&\, \sum_{i} \int_{U_{r_0}(\xi_i)} e^ {\delta_{i}} w^2 \,\d v_g+ O\Big( \sum_{i}\Big( |\tau_i|+|\al_i-1|+\frac{1}{\lam_i}\Big)\|w\|^2\Big)\\
				=&\,\sum_{i} \int_{\Si } \chi_{i} e^ {\delta_{i}} w^2 \,\d v_g+ O\Big( \sum_{i}\Big( |\tau_i|+|\al_i-1|+\frac{1}{\lam_i}\Big)\|w\|^2\Big)\\
				=& \, \sum_{i} \int_{\Si } \chi_{i} e^ {\delta_{i}} w^2 \,\d v_g+O(\var\|w\|^2).
			\end{align*}
			It remains to prove $Q_0(w,w)$ is coercive  in $E^{p,q}_{\lam,\xi}$, which  can be established via a standard blow-up argument (see, for instance, \cite[Lemma 6.4]{CL2003}), and we omit the details.
		\end{proof}
		



		
		\bigskip
		
		\noindent M. Ahmedou
		
		\noindent  Mathematisches Institut der Justus-Liebig-Universit\"at Giessen \\ 
		Arndtsrasse 2, D-35392 Giessen, Germany \\
		Email: \textsf{\href{mailto:Mohameden.Ahmedou@math.uni-giessen.de}{Mohameden.Ahmedou@math.uni-giessen.de}}

		\medskip
		\noindent Z. Hu
		
		\noindent  School of Mathematical Sciences, Shanghai Jiao Tong University\\
		800 Dongchuan, RD, Minhang District, Shanghai, 200240, China \\ 
		Email: \textsf{\href{mailto:zhengni_hu2021@outlook.com}{zhengni\_hu2021@outlook.com}}
		
		\medskip
		\noindent H. Wang
		
		\noindent School of Mathematical Sciences, Laboratory of Mathematics and Complex Systems, MOE\\
		Beijing Normal University, Beijing 100875,  China\\
		Email: \textsf{\href{mailto:hmw@mail.bnu.edu.cn}{hmw@mail.bnu.edu.cn}}

	\end{document}